\newtheorem{prop}{Proposition}[section]
\newtheorem{thm}{Theorem}[section]
\newtheorem{cor}{Corollary}[section]
\newtheorem{lemma}{Lemma}[section]
\theoremstyle{definition}
\newtheorem{defn}{Definition}[section]
\newtheorem{example}{Example}[section]
\newtheorem*{remark}{Remark}
\newcommand\A{{\mathbb A}}
\newcommand\C{{\mathbb C}}
\newcommand\Q{{\mathbb Q}}
\newcommand\bP{{\mathbb P}}
\newcommand\cO{{\mathcal O}}
\newcommand\Z{{\mathbb Z}}
\newcommand\cP{{\mathcal P}}
\newcommand\cQ{{\mathcal Q}}
\newcommand\cS{{\mathcal S}}
\newcommand\cV{{\mathcal V}}
\newcommand\la{\lambda}
\newcommand\s{{\sigma}}
\newcommand\ta{{\tau}}
\newcommand\ssm{\smallsetminus}
\newcommand\gequ{\geq}
\newcommand\lequ{\leq}
\newcommand\noin{\noindent}
\newcommand\bull{{\scriptscriptstyle \bullet}}
\newcommand\eqto{\stackrel{\lower1.5pt\hbox{$\scriptstyle\sim\,$}}\to}
\newcommand\ov{\overline}
\newcommand\wt{\widetilde}
\newcommand\dis{\displaystyle}
\newcommand\pp[2]{{({#1}\mathchoice{\,}{\,}{}{}|\mathchoice{\,}{\,}{}{}{#2})}}
\newcommand{\floor}[1]{\lfloor #1 \rfloor}
\DeclareMathOperator{\Ker}{Ker}
\DeclareMathOperator{\Span}{Span}
\DeclareMathOperator{\codim}{codim}
\DeclareMathOperator{\HH}{\mathrm{H}}
\DeclareMathOperator{\QH}{\mathrm{QH}}
\DeclareMathOperator{\Sp}{Sp}
\DeclareMathOperator{\SO}{SO}
\DeclareMathOperator{\SL}{SL}
\DeclareMathOperator{\GL}{GL}
\DeclareMathOperator{\LG}{LG}
\DeclareMathOperator{\IG}{IG}
\DeclareMathOperator{\IF}{IF}
\DeclareMathOperator{\OG}{OG}
\DeclareMathOperator{\G}{G}
\DeclareMathOperator{\OF}{OF}
\DeclareMathOperator{\type}{\mathrm{type}}
\DeclareMathOperator{\rank}{\mathrm{rank}}
\DeclareMathOperator{\point}{\mathrm{point}}
\newcommand{\gw}[2]{\langle #1 \rangle^{\mbox{}}_{#2}}
\newcommand{\pic}[2]{\includegraphics[scale=#1]{#2}}
\newcommand{\hmm}[1]{\hspace{#1mm}}
\begin{document}

\title[Quantum Pieri rules for isotropic Grassmannians]
{Quantum Pieri rules for isotropic Grassmannians}

\date{September 29, 2008}

\author{Anders Skovsted Buch}
\address{Department of Mathematics, Rutgers University, 110
  Frelinghuysen Road, Piscataway, NJ 08854, USA}
\email{asbuch@math.rutgers.edu}

\author{Andrew Kresch}
\address{Institut f\"ur Mathematik,
Universit\"at Z\"urich, Winterthurerstrasse 190,
CH-8057 Z\"urich, Switzerland}
\email{andrew.kresch@math.uzh.ch}

\author{Harry~Tamvakis} \address{University of Maryland, Department of
Mathematics, 1301 Mathematics Building, College Park, MD 20742, USA}
\email{harryt@math.umd.edu}

\subjclass[2000]{Primary 14N35; Secondary 14M15, 14N15}

\thanks{The authors were supported in part by NSF Grant DMS-0603822
  (Buch), the Swiss National Science Foundation (Kresch), and NSF
  Grants DMS-0401082 and DMS-0639033 (Tamvakis).}

\begin{abstract}
We study the three point genus zero Gromov-Witten invariants on the
Grassmannians which parametrize non-maximal isotropic subspaces in
a vector space equipped with a nondegenerate symmetric or
skew-symmetric form. We establish Pieri rules for the classical cohomology 
and the small quantum cohomology ring of these varieties, which give a 
combinatorial formula for the product of any Schubert class with 
certain special Schubert classes. We also give presentations of these
rings, with integer coefficients, in terms of special Schubert class
generators and relations.
\end{abstract}

\maketitle 

\setcounter{section}{-1}

\section{Introduction}

Let $G$ be a classical Lie group of type B, C, or D, and $P$ any
maximal parabolic subgroup of $G$. The homogeneous space $X=G/P$ is a
Grassmannian which parametrizes isotropic subspaces in a vector space
equipped with a nondegenerate symmetric or skew-symmetric bilinear
form.  The small quantum cohomology ring $\QH(X)$ is a deformation of
the cohomology ring $\HH^*(X,\Z)$, defined using structure constants
given by the (three point, genus zero) Gromov-Witten invariants of
$X$. The ring $\QH(X)$ is generated as an algebra over $\Z[q]$ by
certain special Schubert classes, in a similar fashion to the ordinary
cohomology ring of $X$, which is recovered when the formal variable $q$ is
set equal to zero. The main purpose of this paper is to formulate and
prove a {\em quantum Pieri rule}, a combinatorial formula which
describes the product of a special Schubert class with an arbitrary
one in the quantum cohomology ring.

A quantum Pieri rule for the usual type A Grassmannian was proved by
Bertram \cite{Bertram}, while corresponding rules for the
Grassmannians of maximal isotropic subspaces were established by
the second and third authors \cite{KTlg, KTog}. We deal here with the
remaining cases, where $X$ is a non-maximal isotropic symplectic or
orthogonal Grassmannian. Our results are applied to obtain
presentations of $\QH(X)$ with integer coefficients in terms of special
Schubert class generators and relations.

One notable feature is that the quantum Pieri rule in the
orthogonal cases involves quadratic $q$ terms, which is a new
phenomenon for Grassmannians. Algebraically, it is simplest to
understand the transition between the maximal and non-maximal
isotropic cases in type D, by considering the space $\OG(n-k,2n)$ of
isotropic $(n-k)$-dimensional subspaces of $\C^{2n}$, for $k\gequ
0$. When $k=1$, the Picard group of this orthogonal Grassmannian has
rank two. Therefore the quantum cohomology ring of $\OG(n-1,2n)$
involves two deformation parameters $q_1$ and $q_2$, whereas
$\OG(n,2n)$ and $\OG(n-k,2n)$ for $k>1$ have only one deformation
parameter $q$. Formally, the single quantum parameter on $\OG(n,2n)$
is replaced with two square roots on $\OG(n-1,2n)$, which in turn are
identified on $\OG(n-2,2n)$.

The proofs of the quantum Pieri rules in this article use the
kernel--span and dimension counting ideas of the first author
\cite{Buch} in an essential way; the earlier papers \cite{Bertram,
KTlg, KTog} used intersection theory on certain Quot scheme
compactifications of the moduli space of maps from the projective line
to $X$. Furthermore, we continue the program set out in \cite{BKT} of
equating the Gromov-Witten invariants on $X$ (of any degree) with
classical triple intersection numbers on related varieties
$X'$. However the analysis here is considerably more subtle, as the
parameter space $X'$ of kernel--span pairs will no longer be a
homogeneous space for the group $G$, in general.  For instance, the
parameter space of lines on an orthogonal Grassmannian is a two step
orthogonal flag variety, but the analogous statement is false in the
symplectic case, where additional geometric arguments are needed to
relate the degree one Gromov-Witten invariants to classical Schubert
structure constants (Proposition \ref{P:gwdegone}).

On the other hand, in degrees greater than one, the symplectic
Grassmannians are much better behaved than the orthogonal ones, from
this point of view.  We are able to show that the function $\phi$
which sends a rational map (or curve) counted by a Gromov-Witten
invariant to the pair consisting of its kernel and span is a bijection
in the symplectic case, but for orthogonal Grassmannians the map
$\phi$ is $N$ to $1$, where $N$ is an explicitly determined power of
two.  Additional complications stem from the fact that for any two
points on the maximal orthogonal Grassmannian $\OG(d,2d)$, the
corresponding subspaces of $\C^{2d}$ must intersect in at least a
line, if the integer $d$ is {\em odd}; this makes it impossible to
place three points of $\OG(d,2d)$ in ``pairwise general position'' in
the sense of \cite[Prop.\ 4]{BKT}.

Our Pieri rules and presentations are new even for the classical
cohomology ring $\HH^*(X,\Z)$.  Over each Grassmannian $X$, there is a
universal exact sequence of vector bundles
\[ 0 \to \cS \to \cV_X \to \cQ \to 0 \]
with $\cS$ the tautological subbundle of the trivial bundle $\cV_X$.
Pragacz and Ratajski \cite{PRpieri, PRpieri2} proved Pieri-type rules
for the product of an arbitrary Schubert class with the Chern classes
of $\cS^*$.  An important part of our work was the discovery of new
classical Pieri rules for multiplying with the Chern classes of $\cQ$.
Aside from being much simpler than the formulas in \cite{PRpieri,
PRpieri2}, the new Pieri rules are also essential for our applications
to quantum cohomology.  To be specific, we prove that any quantum
product of $c_p(\cQ)$ with another Schubert class on a symplectic
Grassmannian can be obtained from the product of these classes in the
ordinary cohomology ring of a larger Grassmannian.  However, we do not
know such a relation for all products involving the Chern classes of
$\cS^*$ (see Example \ref{E:lengthcond}).

Our method for proving the new classical Pieri rules is completely
different (and much shorter) than that of loc.\ cit. We use a
geometric approach, following Hodge's classical proof of
the Pieri rule for the usual Grassmannians via triple intersections
\cite{H}; see also \cite{Sertoz, Sottile}.  In type D, the
Chern classes of the tautological bundles $\cS$ and $\cQ$ do not
generate the cohomology ring $\HH^*(X,\Q)$, and thus a direct approach
by intersecting Schubert cells seems necessary to obtain the full
picture. We also note that the classical Borel presentation \cite{Bo}
of the cohomology ring of orthogonal Grassmannians requires rational
coefficients, and uses a different set of generators.

Another innovation of this article is our parametrization of the
Schubert varieties by {\em $k$-strict partitions}, a convention which
makes their codimension apparent.  This notation is most convenient
when working algebraically, in the classical or quantum cohomology
ring, and extends the traditional parametrization for type A and
maximal isotropic Grassmannians. An underlying reason which vindicates
this choice, but will be explained elsewhere, is our work on
Giambelli-type formulas for these varieties; in fact, the latter
project has affected the exposition here in more ways than one.  After
the first three sections we introduce a different parametrization of
the Schubert varieties by {\em index sets}, which is closer to their
geometric definition as the closures of Schubert cells. Our proof of
the classical Pieri rules is in this latter language, where they are
somewhat easier to state.

The aforementioned motivation for the new classical Pieri rules is the
reason behind the {\em in medias res} organization of this paper,
where we begin by studying the quantum cohomology rings and follow
this with a more detailed look at the Schubert varieties, culminating
with our proofs of the classical Pieri rules.  The various sections
are ordered according to the three different Lie types considered.
The quantum cohomology of isotropic Grassmannians of type C, B, and D
is examined in Sections \ref{QCIG}, \ref{QCOGodd}, and \ref{QCOGeven},
respectively. In Section \ref{schvars} we begin our study of the
Schubert varieties from scratch, parametrizing them by index sets and
relating these to $k$-strict partitions.  Section \ref{Pierirule}
contains short proofs of our classical Pieri rules, using the language
of index sets, and then translates these rules to the initial
statements in terms of $k$-strict partitions. The reader who is
willing to grant the classical Pieri rules can omit Section
\ref{Pierirule} entirely. Finally, we provide an appendix which
discusses the quantum cohomology of the orthogonal Grassmannian
$\OG(n,2n+2)$. Most of the results of this paper were announced in the
survey \cite{T}, which employed the older notation of Weyl group
elements and partition pairs.

The authors wish to thank Michael Brion, William Fulton, Laurent
Manivel, Piotr Pragacz, and Miles Reid for helpful discussions.
We also thank Sheldon Katz and Stein Arild Str{\o}mme, the authors
of the Maple package `Schubert', and Jan-Magnus {\O}kland for helpful
comments about this software.


\section{The Grassmannian $\IG(n-k,2n)$}
\label{QCIG}

\subsection{Schubert classes}
\label{igscs}

Fix a vector space $V \cong \C^{2n}$ with a non-degenerate
skew-symmetric bilinear form $(\ ,\,)$, and fix a non-negative integer
$m \lequ n$.  Throughout this section we let the symbol $\IG$, without
parameters, denote the Grassmannian $\IG(m,2n)$ which parametrizes
$m$-dimensional isotropic subspaces of $V$.  This algebraic variety
has dimension $2m(n-m)+m(m+1)/2$. The Schubert varieties in $\IG$ are
described below; our indexing conventions for these varieties appear
to be new, and we refer the reader to Section \ref{schvars} for
elementary proofs of the relevant facts.

A partition is a weakly decreasing sequence of non-negative integers
$\lambda = (\lambda_1 \gequ \dots \gequ \lambda_m \gequ 0)$.  We will
identify a partition $\lambda$ with its Young diagram of boxes, which
has $\lambda_i$ boxes in row $i$.  The weight of $\lambda$ is the sum
$|\lambda| = \sum \lambda_i$ of its parts, and the length
$\ell(\lambda)$ is the number of non-zero parts of $\lambda$.  We set
$\lambda_i = 0$ for $i > \ell(\lambda)$.  

\begin{defn}
Let $k$ be a non-negative integer. We say that the partition $\lambda$ 
is {\em $k$-strict\/} if no part greater than $k$ is repeated, 
i.e.,\ $\lambda_j > k \Rightarrow \lambda_{j+1}<\la_j$.
\end{defn}

Set $k = n-m$.  The Schubert varieties in $\IG$ are indexed by
$k$-strict partitions $\lambda$ which are contained in an $m \times
(n+k)$ rectangle; we denote the set of all such partitions by
$\cP(k,n)$. An isotropic flag in $V$ is a complete flag $0 =
F_0 \subsetneq F_1 \subsetneq \dots \subsetneq F_{2n} = V$ of subspaces
such that $F_{n+i} = F_{n-i}^\perp$ for each $0 \lequ i \lequ n$.  The
Schubert variety for any $\lambda\in \cP(k,n)$ relative to
the isotropic flag $F_\bull$ is defined by
\[ X_\lambda(F_\bull) = \{ \Sigma \in \IG \mid \dim(\Sigma \cap
   F_{p_j(\lambda)}) \gequ j \ \ \forall\, 1 \lequ j \lequ
   \ell(\lambda) \} \,,
\]
where 
\[
p_j(\lambda) = n+k+1-\lambda_j + \#\{i<j : \lambda_i+\lambda_j
\lequ 2k+j-i \}.
\]
The codimension of this variety is equal to $|\lambda|$.  We let
$\sigma_\lambda = [X_\lambda] \in \HH^{2|\lambda|}(\IG) =
\HH^{2|\lambda|}(\IG,\Z)$ denote the cohomology class Poincar\'e dual
to the cycle determined by $X_\lambda(F_\bull)$.  These Schubert
classes form a $\Z$-basis for the cohomology ring of $\IG$.

The $k$-strict partition $\lambda$ has a unique dual partition
$\lambda^\vee\in\cP(k,n)$, for which
$p_j(\lambda^\vee) = 2n+1-p_{m+1-j}(\lambda)$ for $1 \lequ j \lequ m$.
With this notation we have
\[ \int_{\IG} \sigma_\lambda \cdot \sigma_\mu =
   \delta_{\mu,\lambda^\vee} \,.
\]

\subsection{Classical Pieri rule}
\label{igclpieri}
The Schubert varieties $X_p(F_\bull) = X_{(p)}(F_\bull)$ for $1 \lequ p
\lequ n+k$ are called special, and are defined by the single Schubert
condition $\Sigma \cap F_{n+k+1-p} \neq 0$. 
Consider the exact sequence of vector bundles over $\IG$
\[
0 \to \cS \to \cV_{\IG} \to \cQ \to 0,
\]
where $\cV_{\IG}$ denotes the trivial bundle of rank $2n$ and $\cS$ is 
the tautological subbundle of rank $m$. The
special Schubert class $\sigma_p=[X_p(F_\bull)]$ 
is equal to the Chern class $c_p(\cQ)$
of the tautological quotient bundle on $\IG$.
These classes generate the cohomology ring of $\IG$. 
When $m<n$, one also has a set of special
classes $\sigma_{(1^p)}$, for $1 \lequ p \lequ m$, which are
the Chern classes $c_p(\cS^*)$. These latter classes form a different 
set of generators of $\HH^*(\IG)$.

When $m=n$ and $\IG = \LG(n,2n)$ is the Lagrangian Grassmannian of
maximal isotropic subspaces, Hiller and Boe proved a Pieri rule
for any product involving a special class $\sigma_p$ \cite{HB}.  For
$m<n$, Pragacz and Ratajski have proved a more general formula for
multiplying with the classes $\sigma_{(1^p)}$ \cite{PRpieri}.  Our
first result is a new Pieri rule for products involving the special
classes $\sigma_p = c_p(\cQ)$ for general isotropic Grassmannians.  This
rule is easier to state than the rule proved in \cite{PRpieri}.
Furthermore, our methods for obtaining quantum generalizations of the
Pieri rules only work for the special classes $\sigma_p$ (see
Proposition~\ref{P:gwdegone} and Example~\ref{E:lengthcond}).

\begin{defn}
Let $\lambda$ be a $k$-strict partition.  We will say that the box in
row $r$ and column $c$ of $\lambda$ is {\em $k$-related\/} to the
box in row $r'$ and column $c'$ if $|c-k-1|+r = |c'-k-1|+r'$.  For
example, the grey boxes in the following partition are $k$-related.
\[ \pic{0.65}{shiftrel} \]
\end{defn}

The notion of $k$-related boxes is one of the ingredients of Pragacz
and Rataj\-ski's formula for products involving the special classes
$\sigma_{(1^p)}$ \cite{PRpieri}. 

Given two Young diagrams $\la$ and $\mu$ with $\la\subset\mu$, the skew
diagram $\mu/\la$ is called a horizontal (resp.\ vertical) strip if
it does not contain two boxes in the same column (resp.\ row).

\begin{defn}
\label{D:pieriarrow}
For any two $k$-strict partitions $\lambda$ and $\mu$, we have a
relation $\lambda \to \mu$ if $\mu$ can be obtained by removing a
vertical strip from the first $k$ columns of $\lambda$ and adding a
horizontal strip to the result, so that

(1) if one of the first $k$ columns of $\mu$ has the same number of
boxes as the same column of $\lambda$, then the bottom box of this
column is $k$-related to at most one box of $\mu \smallsetminus
\lambda$; and

(2) if a column of $\mu$ has fewer boxes than the same column of
$\lambda$, then the removed boxes and the bottom box of $\mu$ in this
column must each be $k$-related to exactly one box of $\mu
\smallsetminus \lambda$, and these boxes of $\mu \smallsetminus
\lambda$ must all lie in the same row.

If $\lambda \to \mu$, we let $\A$ be the set of boxes of $\mu\ssm \la$
in columns $k+1$ through $k+n$ which are {\em not} mentioned in (1) or
(2). Then define $N(\lambda,\mu)$ to be the number of connected
components of $\A$ which do not have a box in column $k+1$.  Here two
boxes are connected if they share at least a vertex.
\end{defn}

\begin{thm}[Pieri rule for $\IG(m,2n)$] \label{T:pieriC}
For any $k$-strict partition $\lambda$ and integer $p \in [1,n+k]$,
we have
\[ \sigma_p \cdot \sigma_\lambda = \sum_{\lambda \to \mu, \,
  |\mu|=|\lambda|+p} 2^{N(\lambda,\mu)} \, \sigma_\mu \,.
\]
\end{thm}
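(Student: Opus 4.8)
The plan is to prove the Pieri rule geometrically via triple intersections, following the Hodge method alluded to in the introduction. The strategy is to express the structure constant
\[
\int_{\IG} \sigma_p \cdot \sigma_\lambda \cdot \sigma_{\mu^\vee}
\]
as the number of points in a transverse intersection $X_p(F_\bull) \cap X_\lambda(G_\bull) \cap X_{\mu^\vee}(H_\bull)$ for general isotropic flags $F_\bull, G_\bull, H_\bull$, and then to count these points combinatorially. First I would translate the problem into the language of index sets (introduced in Section \ref{schvars}), since the statement of the classical Pieri rule is cleaner there; the final step will then be to check that the index-set count matches the $k$-strict partition formula, i.e. that the admissible $\mu$ with $|\mu| = |\lambda| + p$ correspond exactly to the relations $\lambda \to \mu$, and that the multiplicity of each is $2^{N(\lambda,\mu)}$. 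Because Section \ref{Pierirule} is advertised as containing ``short proofs'' in this language, I expect the geometric input to be genuinely elementary once set up correctly.

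The key geometric step is a dimension-count analysis of the locus of isotropic $m$-planes $\Sigma$ satisfying the Schubert conditions imposed by $\sigma_\lambda$ and $\sigma_{\mu^\vee}$ simultaneously, intersected with the single special condition $\Sigma \cap F_{n+k+1-p} \neq 0$. Concretely, one introduces the incidence variety of pairs $(\Sigma, v)$ with $v \in \Sigma \cap F_{n+k+1-p}$ nonzero, and projects to the $\bP^1$-bundle side to control where $v$ can lie. The combinatorics of the two index sets governs, row by row and column by column, how the flag steps $F_{p_j(\lambda)}$ and the dual conditions interact; a box of $\mu \ssm \lambda$ records an ``increment'' of one of the intersection dimensions, and the constraints (1) and (2) in Definition \ref{D:pieriarrow} encode exactly which configurations are geometrically realizable without forcing the dimension to drop (the $k$-relatedness conditions reflect the perp-pairing $F_{n+i} = F_{n-i}^\perp$). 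The factor $2^{N(\lambda,\mu)}$ will emerge because each connected component of $\A$ not touching column $k+1$ contributes a binary choice — geometrically, a quadric splitting into two rulings, or equivalently a two-to-one phenomenon of the kind that recurs throughout the paper; components meeting column $k+1$ are pinned down by the isotropy constraint and contribute no such choice.

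The main obstacle will be the transversality and the precise multiplicity bookkeeping: one must show that for general flags the triple intersection is zero-dimensional and reduced away from the points being counted, and that each counted configuration occurs with the asserted multiplicity $2^{N(\lambda,\mu)}$ rather than some configuration being hit with higher multiplicity or a ``spurious'' configuration of the wrong dimension contributing. This requires a careful stratification by the intersection dimensions $\dim(\Sigma \cap F_\bull)$ and an inductive argument — likely on $m$, on $n-m = k$, or on $|\lambda|$ — reducing to a smaller isotropic Grassmannian or to the type A case handled by Hodge. A secondary technical point is handling the boundary cases $p$ near $n+k$ and columns at the edge of the $m \times (n+k)$ rectangle, where the flag indices $p_j(\lambda)$ can collide; these degenerate correctly but need separate verification. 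Once transversality is in hand, the remaining work is the purely combinatorial identity between the index-set description of admissible $\mu$ and the $k$-strict partition conditions, which I expect to be a direct, if somewhat intricate, translation.
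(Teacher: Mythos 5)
Your overall strategy is the right one and matches the paper's: pass to index sets, realize the special class through the incidence variety $\bP(\cS)$ of pairs $(\Sigma,v)$ with $v\in\Sigma$, and extract the coefficient $2^{N}$ from quadrics in $\bP(V)$. But the decisive step is missing, and what you offer in its place is precisely the part you defer as ``the main obstacle.'' The paper never establishes transversality of the triple intersection and does no multiplicity bookkeeping for it. Instead, since $\sigma_p=c_p(\cQ)$ is the pushforward from $\bP(\cS)$ of a power of the hyperplane class pulled back from $\bP(V)$, the projection formula (\ref{projform}) converts the structure constant into the degree of the image cycle $\psi_*\pi^*[Y_{P,Q}]$ in $\bP(V)$, where $Y_{P,Q}=X_P(F_\bull)\cap X_{Q^\vee}(G_\bull)$ is the intersection of just \emph{two} Schubert varieties for explicit opposite flags. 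All the real work then sits in Proposition~\ref{YtoZinC}: the image is contained in an explicitly defined variety $Z_{P,Q}$, cut out by one quadric for each pair of consecutive cuts of $D(P,Q)$ at distance at least two together with certain coordinate hyperplanes, hence an irreducible complete intersection of degree $2^{N(P,Q)}$; and $\psi:\pi^{-1}(Y_{P,Q})\to Z_{P,Q}$ is birational exactly when $P\to Q$, while otherwise it has positive-dimensional fibers and the integral vanishes. Without this reduction, the ``careful stratification and induction'' you invoke has no concrete handle: you would have to exhibit, for each admissible $\mu$, exactly $2^{N(\lambda,\mu)}$ reduced points of a transverse triple intersection, and nothing in your sketch says where those points come from or why no stratum contributes with excess multiplicity.

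Two smaller corrections. First, $\bP(\cS)$ is a $\bP^{m-1}$-bundle over $\IG$, not a $\bP^1$-bundle. Second, the factor of two per component of $\A$ is not ``a quadric splitting into two rulings'' (that mechanism belongs to the type D analysis via the classes $e$ and $f$ in Theorem~\ref{indexpieriD}); in type C it is simply B\'ezout applied to the quadrics defining $Z_{P,Q}$, each of which doubles the degree. The final translation you describe between index sets and $k$-strict partitions is indeed how the paper concludes (Propositions~\ref{pqlm} and~\ref{multpqlm}), and that part of your plan is sound.
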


This theorem will be proved in Section~\ref{Pierirule}.

\begin{example} \label{E:pieriC}
  For the Grassmannian $\IG(4,12)$ we have $k=2$.  For $\lambda =
  (5,3,2,2)$ we get the following shapes $\mu\in\cP(2,6)$ such that
  $\lambda \to \mu$ and $|\mu|=|\lambda|+4$:
\[
\includegraphics[scale=0.40,viewport=0 0 148 100]{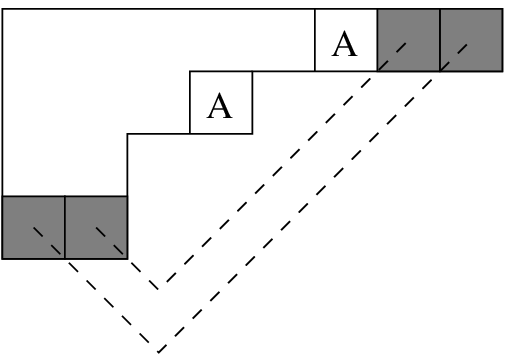}
\hmm{5}
\includegraphics[scale=0.40,viewport=0 -18 129 82]{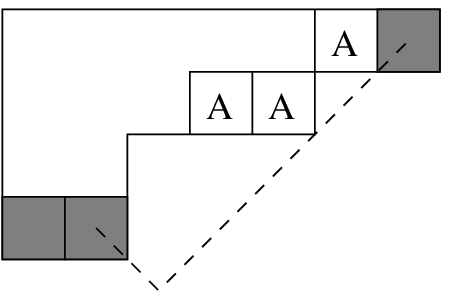}
\hmm{5}
\includegraphics[scale=0.40,viewport=0 -18 129 82]{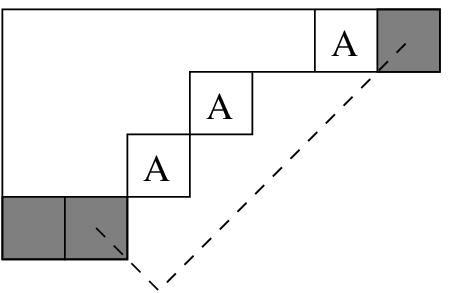}
\hmm{5}
\includegraphics[scale=0.40,viewport=0 -27 110 73]{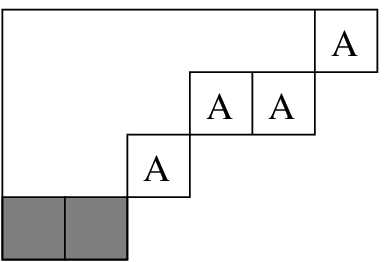}
\hmm{5}
\includegraphics[scale=0.40,viewport=0 0 148 100]{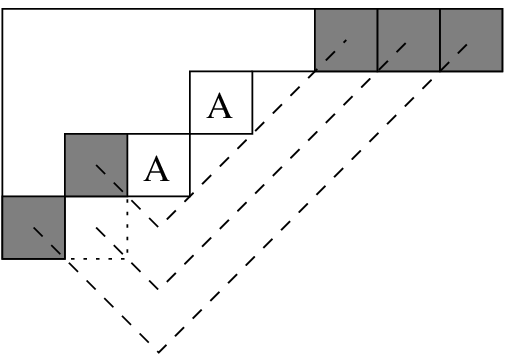}
\]
By Theorem~\ref{T:pieriC} we therefore obtain
\[ \sigma_4 \cdot \sigma_\lambda = 4 \sigma_{(8,4,2,2)} + 2
   \sigma_{(7,5,2,2)} + 2 \sigma_{(7,4,3,2)} + \sigma_{(6,5,3,2)} +
   \sigma_{(8,4,3,1)} \,.
\]
\end{example}

\subsection{Presentation of $\HH^*(\IG)$}
\label{igpresentation}

For every partition $\lambda$, define a monomial $\s^{\la}$ in the
special Schubert classes by $\s^{\la}=\prod_i\s_{\la_i}$.  We also use
the convention that $\sigma_0 = 1$ and $\sigma_p = 0$ for $p<0$ or
$p>n+k$.

\begin{thm} \label{presIG}
 {\em a)} The cohomology ring $\HH^*(\IG,\Z)$ is presented as a
 quotient of the polynomial ring $\Z[\sigma_1,\ldots,\sigma_{n+k}]$
 modulo the relations
\begin{equation}
\label{R1}
\det(\sigma_{1+j-i})_{1\lequ i,j \lequ r} = 0\,, \ \ \ \ 
n-k+1\lequ r \lequ n+k
\end{equation}
and
\begin{equation}
\label{R2}
\sigma_r^2 + 2\sum_{i=1}^{n+k-r}(-1)^i \sigma_{r+i}\sigma_{r-i}= 0\,,
 \ \  \ \ k+1\lequ r \lequ n.
\end{equation}

\medskip
\noin
{\em b)} The monomials $\s^{\la}$ with $\la\in\cP(k,n)$ 
form a $\Z$-basis for $\HH^*(\IG,\Z)$.
\end{thm}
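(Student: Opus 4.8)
The plan is to prove part (b) first and deduce part (a) from it by a standard dimension-count argument, then verify that the proposed relations actually hold in $\HH^*(\IG,\Z)$. For part (b), I would argue that the monomials $\sigma^\lambda$ with $\lambda\in\cP(k,n)$ span $\HH^*(\IG,\Z)$ and that their number equals $\dim_{\Z}\HH^*(\IG,\Z)=|\cP(k,n)|$, which forces them to be a basis since $\HH^*(\IG,\Z)$ is free. The counting half is immediate. For spanning, I would proceed by induction on the weight (or on the dominance order on $k$-strict partitions), using the classical Pieri rule (Theorem \ref{T:pieriC}): given any Schubert class $\sigma_\mu$ with $\mu\in\cP(k,n)$, I want to write it as a $\Z$-combination of monomials $\sigma^\nu$ in the special classes. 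Pick the first part $\mu_1$ and consider the product $\sigma_{\mu_1}\cdot\sigma_{\widehat\mu}$ where $\widehat\mu=(\mu_2,\mu_3,\dots)$; by Theorem \ref{T:pieriC} this equals $\sigma_\mu$ plus a $\Z$-combination of classes $\sigma_{\mu'}$ with $|\mu'|=|\mu|$ but $\mu'$ strictly smaller than $\mu$ in dominance order (each arrow $\widehat\mu\to\mu'$ with $|\mu'|=|\widehat\mu|+\mu_1$ and $\mu'\neq\mu$ lowers $\mu_1'\leq\mu_1$ strictly or moves weight down the diagram). By induction each such $\sigma_{\mu'}$ is a $\Z$-polynomial in the $\sigma_p$'s, and $\sigma_{\widehat\mu}$ is one by induction on the number of parts, so $\sigma_\mu$ is too. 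This establishes (b).

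For part (a), let $R$ denote the quotient of $\Z[\sigma_1,\dots,\sigma_{n+k}]$ by the relations \eqref{R1} and \eqref{R2}, and let $\pi\colon R\to\HH^*(\IG,\Z)$ be the surjection sending $\sigma_p\mapsto\sigma_p$ (surjective because the $\sigma_p$ generate the cohomology ring). I would first check that \eqref{R1} and \eqref{R2} do hold in $\HH^*(\IG,\Z)$: the determinantal relations \eqref{R1} express the vanishing of $c_r(\cV_{\IG}-\cS)=c_r(\cQ)$-type Schur polynomials forced by $\rank\cS=m$ — more precisely they come from $c_r(\cS)=0$ for $r>m$ after expressing Chern classes of $\cS$ via the total Chern class identity $c(\cS)c(\cQ)=1$, and the Whitney formula gives $c_j(\cS)$ as the degree-$j$ part of $c(\cQ)^{-1}=\det(\sigma_{1+j-i})$; since $\cS$ has rank $m=n-k$, these vanish for $r\geq n-k+1$. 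The quadratic relations \eqref{R2} encode the self-duality $\cS^\perp/\cS$ coming from the symplectic form: the form gives $\cS^\vee\cong\cV_{\IG}/\cS^\perp$ and hence a relation among the Chern classes of $\cQ$ and $\cQ^\vee$, namely $c_r(\cQ\oplus\cQ^\vee)$ has a controlled top, which unwinds to \eqref{R2} for the stated range of $r$; I would verify this directly from $c(\cQ)c(\cQ^\vee)\cdot(\text{correction})=c(\cV_{\IG}/\cS\oplus\cS^\perp/\cV_{\IG}?)$ — the cleanest route is to note $0\to\cS\to\cS^\perp\to\cS^\perp/\cS\to 0$ with $\cS^\perp/\cS$ carrying a nondegenerate form, forcing its odd Chern classes to be $2$-torsion-free multiples matching \eqref{R2}. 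Granting these, $\pi$ factors through no further and we have a surjection $R\to\HH^*(\IG,\Z)$.

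To see $\pi$ is an isomorphism, it suffices to show the monomials $\sigma^\lambda$, $\lambda\in\cP(k,n)$, span $R$ over $\Z$: then $R$ is generated by $|\cP(k,n)|$ elements, $\HH^*(\IG,\Z)$ is free of that rank by (b), and a surjection of $\Z$-modules from a module generated by $N$ elements onto a free module of rank $N$ is an isomorphism. The spanning statement in $R$ is a purely combinatorial consequence of the relations: using \eqref{R1} one rewrites any $\sigma_p$ with $p$ large in terms of lower ones (straightening long rows), and using \eqref{R2} one removes repeated parts exceeding $k$ (straightening the $k$-strictness condition), so that an arbitrary monomial in the $\sigma_i$ is reduced modulo the relations to a $\Z$-combination of the $\sigma^\lambda$ with $\lambda\in\cP(k,n)$; this is a finite terminating rewriting because each application of a relation strictly decreases a suitable monomial order. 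The main obstacle is precisely this last straightening argument — verifying that the two families of relations suffice to reduce \emph{every} monomial to the span of $\{\sigma^\lambda:\lambda\in\cP(k,n)\}$, and in particular that no integer denominators are introduced (the coefficient $2$ in \eqref{R2} is used, not divided by), so that the argument genuinely works over $\Z$ and not merely over $\Q$. Handling the interaction between the determinantal relations and the quadratic relations — i.e.\ checking they do not produce relations among the $\sigma^\lambda$ that would collapse the rank — is what makes the parenthetical dimension count in (b) indispensable: (b) is what certifies that the reduced monomials remain $\Z$-linearly independent in the quotient.
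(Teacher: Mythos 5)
Your part (b) is essentially the paper's argument: the Pieri rule makes the transition matrix between the monomials $\sigma^\la$ and the Schubert classes unitriangular with respect to dominance order. One small correction: the extra terms in $\sigma_{\mu_1}\cdot\sigma_{(\mu_2,\mu_3,\dots)}$ strictly \emph{dominate} $\mu$ rather than being dominated by it, so the induction runs downward from the dominance-maximal partitions; the structure of the argument is unaffected.

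For part (a) your route diverges from the paper's, and the divergence is exactly where the gap sits. You propose to show that the monomials $\sigma^\la$, $\la\in\cP(k,n)$, span the abstract quotient ring by a straightening/rewriting argument, and you acknowledge that this straightening is ``the main obstacle'' --- but you do not carry it out, and as described it does not work: relation \eqref{R2} exists only for $k+1\lequ r\lequ n$, so it cannot remove a repeated part $r>n$ (e.g.\ the monomial $\sigma_{n+k}^2$), and \eqref{R1} is a determinantal identity rather than a rule expressing a single $\sigma_p$ in terms of lower ones, so ``straightening long rows'' with it requires exactly the nontrivial combinatorial bookkeeping you omit; no monomial order or termination proof is supplied. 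The paper avoids this entirely with Lemma \ref{preslemma} (Reid): one needs only (i) the numerical coincidence $\rank\HH^*(\IG)=\#\cP(k,n)=2^m\binom{n}{k}=\prod_i(\deg b_i/\deg a_i)$ and (ii) finite-dimensionality of $(R/I)\otimes_\Z K$ for every field $K$, the latter established by a short power-series computation showing $d_r\in I$ for $n+k<r\lequ 2n$, so that $R/I$ is a quotient of a type A presentation; the complete-intersection/Hilbert-series argument then substitutes for any straightening. Separately, your verification that \eqref{R2} holds in $\HH^*(\IG,\Z)$ is too vague to check: the relevant bundle is $\cQ\oplus\cQ^*$, and the paper uses the injection $\cS\hookrightarrow\cQ^*$ induced by the symplectic form to conclude $(-1)^r\phi(b_r)=c_{2r}(\cQ\oplus\cQ^*)=c_{2r}(\cQ^*/\cS)=0$ for $r>k$, since $\cQ^*/\cS$ has rank $2k$; your appeal to $\cS^\perp/\cS$ does not produce this bundle.
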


We prove Theorem~\ref{presIG} below based on the following lemma,
which was shown to us by Miles Reid.  It provides a method to obtain
presentations of cohomology rings with integer coefficients which
simplified our earlier proofs of Theorems \ref{presIG},
\ref{presOGodd}, and \ref{presOGeven}.

\begin{lemma}[Reid]
\label{preslemma}
Let $R = \Z[a_1,\dots,a_d]$ be a free polynomial ring with homogeneous
generators $a_i$, let $I \subset R$ be an ideal generated by
homogeneous elements $b_1,\dots,b_d \in R$, and let $\phi : R/I \to H$
be a surjective ring homomorphism.  Assume that (i) $H$ is a free
$\Z$-module of rank $\prod_i (\deg b_i/\deg a_i)$, and (ii) for any
field $K$, the $K$-vector space $(R/I)\otimes_{\Z}K$ has finite
dimension. Then $\phi$ is an isomorphism.
\end{lemma}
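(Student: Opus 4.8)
The plan is to exploit the surjection $\phi$ together with a dimension count over every field to force $\phi$ to be injective. First I would pass to the graded setting: since the $a_i$ are homogeneous and the $b_i$ are homogeneous, $R/I$ is a graded ring, $\phi$ is graded, and $H$ inherits the structure of a graded $\Z$-module of the stated finite rank $r = \prod_i(\deg b_i/\deg a_i)$. The key observation is that $R/I$ is a \emph{complete intersection} quotient: $R$ has Krull dimension $d$, and $I$ is generated by $d$ elements. Condition (ii) says that $(R/I)\otimes_\Z K$ is finite-dimensional over $K$ for every field $K$; applied to $K = \Q$ and to $K = \mathbb{F}_\ell$ for each prime $\ell$, this means that modulo any prime the $d$ elements $b_1,\dots,b_d$ cut out a zero-dimensional scheme in $\A^d_K$, hence form a regular sequence there (a system of parameters in a Cohen--Macaulay, indeed regular, ring is automatically a regular sequence). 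Therefore the Koszul complex on $b_1,\dots,b_d$ over $R\otimes_\Z K$ is exact except at the end, and one gets the classical formula for the total dimension
\[
\dim_K (R/I)\otimes_\Z K \;=\; \prod_{i=1}^d \frac{\deg b_i}{\deg a_i} \;=\; r,
\]
by comparing Hilbert series: the Hilbert series of $R\otimes_\Z K$ is $\prod_i (1-t^{\deg a_i})^{-1}$, and killing a regular sequence of homogeneous elements of degrees $\deg b_i$ multiplies it by $\prod_i (1 - t^{\deg b_i})$, leaving the polynomial $\prod_i (1+t^{\deg a_i}+\dots+t^{\deg b_i - \deg a_i})$, whose value at $t=1$ is $r$.

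Next I would feed this uniform dimension bound back to $R/I$ over $\Z$. Let $M = R/I$, a finitely generated graded $\Z$-module (finitely generated in each degree, and zero in high degrees since $\dim_\Q M\otimes\Q < \infty$ forces $M\otimes\Q$ to vanish above some degree, while each graded piece is a finitely generated abelian group). Write each graded piece $M_j \cong \Z^{a_j}\oplus T_j$ with $T_j$ finite. Then $\dim_\Q(M\otimes\Q) = \sum_j a_j$ and, for a prime $\ell$, $\dim_{\mathbb{F}_\ell}(M\otimes\mathbb{F}_\ell) = \sum_j a_j + \sum_j \dim_{\mathbb F_\ell}(T_j/\ell T_j)$. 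Since both equal $r$ by the complete-intersection computation above, we conclude $\sum_j \dim_{\mathbb F_\ell}(T_j/\ell T_j) = 0$ for every prime $\ell$, hence every $T_j = 0$. Thus $M = R/I$ is itself a \emph{free} $\Z$-module, of rank $r$.

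Finally, $\phi : M \to H$ is a surjection of free $\Z$-modules of the same finite rank $r$. I would conclude it is an isomorphism by the standard argument: a surjection $\Z^r \twoheadrightarrow \Z^r$ is represented by a square integer matrix that becomes invertible after $\otimes\Q$ (being surjective, hence bijective, over $\Q$), so its determinant is a nonzero integer $e$; surjectivity over $\mathbb{F}_\ell$ for every prime $\ell$ forces $\ell \nmid e$ for all $\ell$, whence $e = \pm 1$ and the matrix lies in $\GL_r(\Z)$. Therefore $\phi$ is an isomorphism.

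The main obstacle is the middle step --- establishing that $R/I$ is $\Z$-free of rank exactly $r$. The subtlety is that conditions (i) and (ii) only constrain $R/I$ after tensoring with fields, so one must convert pointwise-over-$\mathrm{Spec}\,\Z$ dimension information into a torsion-freeness statement; the Koszul/regular-sequence argument over each residue field, combined with the numerical coincidence forced by hypothesis (i), is exactly what bridges this gap. One should take care that condition (ii) is used for \emph{all} fields $K$ (equivalently, for $\Q$ and all $\mathbb{F}_\ell$) to ensure $b_1,\dots,b_d$ is a system of parameters in each characteristic; if it failed in some characteristic $\ell$, the $\mathbb F_\ell$-dimension would be infinite and the lemma's hypothesis would not hold.
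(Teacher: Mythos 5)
Your proposal is correct, and it shares the paper's two key inputs: condition (ii) forces $(b_1,\dots,b_d)$ to be a regular sequence in $K[a_1,\dots,a_d]$ for every field $K$, and the Hilbert series of the resulting complete intersection, evaluated at $t=1$, gives $\prod_i(\deg b_i/\deg a_i)$. Where you diverge is in how the field-by-field information is converted into a statement over $\Z$. The paper invokes \cite[0.15.1.16]{EGA} and the local criterion for flatness to conclude that $R/I$ is $\Z$-flat, then notes that the kernel of $\phi$ is flat (being the kernel of a surjection onto a free module) and so vanishes once it vanishes after $\otimes\,\Q$; thus only the $\Q$-dimension count is needed. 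You instead apply the structure theorem for finitely generated abelian groups to each graded piece $M_j\cong \Z^{a_j}\oplus T_j$ and compare the total dimensions over $\Q$ and over each $\mathbb{F}_\ell$ (all equal to $r$ by the same Koszul/Hilbert-series computation) to kill the torsion directly, concluding that $R/I$ is free of rank $r$ and that $\phi$ is a surjection of free modules of equal finite rank. Your route is more elementary and self-contained, at the cost of needing the dimension count in every characteristic rather than just characteristic zero; the paper's route is shorter once the flatness citation is granted. Both are complete proofs.
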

\begin{proof}
  The second assumption implies that $(b_1,\ldots,b_d)$ is a regular
  sequence in $K[a_1,\ldots,a_d]$, for any field $K$. We deduce that
  $R/I$ is flat as a $\Z$-module; this follows from
  \cite[0.15.1.16]{EGA}, which reduces the result to an application
  of the local criterion for flatness, as explained in \cite[Lemma
  4.3.16]{L}.  Since $\phi$ is a surjection from $R/I$ onto a free
  $\Z$-module, the kernel of $\phi$ is also flat, so it is enough to
  show that $C = (R/I)\otimes_{\Z} \Q$ and $H\otimes_{\Z} \Q$ have the
  same dimension as $\Q$-vector spaces.  As the graded ring $C$ is a
  complete intersection, its Hilbert series \cite{Sta} is given by
  \[
  \HH(t)=\sum_{j=0}^{\infty}\dim(C_j)\, t^j = 
  \prod_{i=1}^d\frac{1-t^{\deg b_i}}{1-t^{\deg a_i}}.
  \]
  This is a polynomial which evaluates to $\prod_i (\deg b_i /\deg a_i)$
  at $t=1$.
\end{proof}

\begin{lemma}
\label{typeApres}
The quotient of the graded ring $\Z[a_1,\ldots, a_d]$ with 
$\deg a_i=i$ modulo the relations 
\[
\det(a_{1+j-i})_{1\lequ i,j \lequ r} = 0, \ \ \ \ 
m+1\lequ r \lequ m+d
\]
is a free $\Z$-module of rank $\binom{m+d}{d}$.
\end{lemma}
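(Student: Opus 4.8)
The plan is to identify the quotient ring in the statement with the cohomology ring $\HH^*(\G(d,m+d),\Z)$ of the usual type A Grassmannian of $d$-dimensional subspaces in $\C^{m+d}$, and then invoke the well-known fact that this cohomology ring is a free $\Z$-module of rank $\binom{m+d}{d}$, with basis the Schubert classes indexed by partitions inside the $d\times m$ rectangle. Concretely, over $\G(d,m+d)$ one has the tautological exact sequence $0\to \cS\to \cV\to \cQ\to 0$ with $\rank\cS=d$ and $\rank\cQ=m$, and it is classical (the Borel presentation, or equivalently the splitting-principle computation of Chern classes together with Whitney's formula) that $\HH^*(\G(d,m+d),\Z)$ is generated by the Chern classes $a_i:=c_i(\cV/\cS)$... — wait, let me instead take $a_i := c_i(\cS^*)$ so that the determinantal relations come out as the Giambelli-expanded Segre classes of $\cS$; either convention works, and I will use $a_i = c_i(\cQ)$ below, so $a_i = 0$ for $i>m$ automatically won't hold, so: set $a_i := c_i(\cS^*)$, $\deg a_i = i$, $1\le i\le d$, no wait the ring in the lemma has $d$ generators $a_1,\dots,a_d$; so I match $a_i = c_i(\cS^*)$ for $1\le i\le d$. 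The relations $\det(a_{1+j-i})_{1\le i,j\le r}=0$ for $m+1\le r\le m+d$ are precisely the statements that the Segre classes $s_r(\cS) = c_r(\cV-\cS) = c_r(\cQ)$ vanish for $r>m$ (using $\cV$ trivial and $\rank\cQ=m$), after rewriting each Segre class of $\cS$ as a determinant in the Chern classes of $\cS^*$ via the standard Jacobi–Trudi/Giambelli identity $s_r(\cS) = \det(c_{1+j-i}(\cS^*))$.

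The key steps, in order, are as follows. First I construct the ring homomorphism $\phi$ from the quotient ring $Q := \Z[a_1,\dots,a_d]/(\text{relations})$ to $\HH^*(\G(d,m+d),\Z)$ by sending $a_i\mapsto c_i(\cS^*)$; this is well-defined because the determinantal relations hold in $\HH^*(\G(d,m+d),\Z)$ (they are the vanishing of $c_r(\cQ)$ for $r>\rank\cQ=m$, rewritten via Giambelli). Second I show $\phi$ is surjective: this is the standard fact that the Chern classes of $\cS^*$ (equivalently, the special Schubert classes $\sigma_{(1^p)}$) generate $\HH^*(\G(d,m+d),\Z)$ as a ring, which follows from the type A Pieri rule or directly from the Giambelli formula expressing $\sigma_\lambda$ as a determinant in the $c_i(\cS^*)$. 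Third I apply Lemma~\ref{preslemma} (Reid's lemma): here $R = \Z[a_1,\dots,a_d]$ with $\deg a_i=i$, and the ideal is generated by the $d$ homogeneous elements $b_r = \det(a_{1+j-i})_{1\le i,j\le r}$ of degrees $\deg b_r = r$ for $r=m+1,\dots,m+d$. Condition (i) requires $H := \HH^*(\G(d,m+d),\Z)$ to be free of rank $\prod_{r=m+1}^{m+d} r/\prod_{i=1}^d i = \binom{m+d}{d}$, which is exactly the classical count of Schubert classes; condition (ii) requires $(R/I)\otimes K$ to be finite-dimensional over any field $K$, which I verify by noting that the relations force every monomial of sufficiently high degree to vanish — e.g. by downward induction the relation $b_{m+1}=0$ expresses $a_{m+1}$-type... more precisely $s_r(\cS)=0$ for $r\ge m+1$ gives a recursion showing $\dim_K (R/I)\otimes K \le \binom{m+d}{d} < \infty$. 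Then Reid's lemma yields that $\phi$ is an isomorphism, and in particular $Q$ is free of rank $\binom{m+d}{d}$.

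The main obstacle I anticipate is verifying condition (ii) of Reid's lemma cleanly, i.e. that the quotient is finite-dimensional over every field $K$ — one must argue that the determinantal relations $\det(a_{1+j-i})_{1\le i,j\le r}=0$ for $m+1\le r\le m+d$ genuinely cut the polynomial ring down to finite dimension, not merely that they do so after inverting some primes. The clean way is: in $\Z[a_1,\dots,a_d]$ define the generating series $s(t) = 1/(1 + a_1 t + a_2 t^2 + \cdots + a_d t^d)$, so that its coefficient of $t^r$ is $\pm s_r$ and equals $\pm b_r$ for all $r\ge m+1$ (here I'm identifying $b_r$ with the Segre-class determinant, extended to all $r\ge 1$ by the same formula); the relations say $s_r=0$ for $m+1\le r\le m+d$, and then the functional equation $\big(\sum_{i=0}^d a_i t^i\big)\big(\sum_{r\ge 0} s_r t^r\big)=1$ shows by induction that $s_r = 0$ for \emph{all} $r > m$ modulo $I$ (each $s_{r}$ for $r>m+d$ being an $\Z$-linear combination of $a_1 s_{r-1},\dots,a_d s_{r-d}$, hence in $I$ by induction). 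Having $s_r\equiv 0$ in $R/I$ for all $r>m$ shows $R/I$ is spanned over $\Z$ — hence over any $K$ after tensoring — by the images of the "sub-Jacobi–Trudi" monomials, which are finite in number, giving (ii) and in fact re-proving the rank bound directly. Once this book-keeping with Segre series is set up, the rest is a direct application of the cited results.
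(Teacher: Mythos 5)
Your proposal is correct, and at bottom it rests on the same geometric fact as the paper's proof: the displayed quotient is a presentation of the integral cohomology of a type A Grassmannian (you use $\G(d,m+d)$ with $a_i=c_i(\cS^*)$, the paper uses the isomorphic $\G(m,m+d)$ with $a_i=c_i(\cQ)$; these match under duality). The difference is in how much is taken as known: the paper simply cites this as ``one of the standard presentations'' together with the Euler characteristic count $\binom{m+d}{d}$, whereas you reprove the presentation from scratch by running Lemma~\ref{preslemma} a second time — building the surjection $\phi$, checking condition (i) via the classical Schubert basis, and checking condition (ii) via the Segre/Jacobi--Trudi recursion showing $s_r\in I$ for all $r>m$. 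This is not circular (Lemma~\ref{preslemma} is independent of Lemma~\ref{typeApres}), and it buys you a more self-contained argument at the cost of extra machinery; note also that once you know $s_r\in I$ for all $r>m$, expanding $\det(a_{\lambda_i+j-i})$ along its first row shows every Schur determinant $s_\lambda$ with $\lambda_1>m$ lies in $I$, so $R/I$ is spanned by the $s_\lambda$ with $\lambda\subseteq(m^d)$ — this is the one step you gesture at (``sub-Jacobi--Trudi monomials'') that should be said explicitly, and it in fact gives the rank bound directly. The only other blemish is expository: the first paragraph's back-and-forth between $c_i(\cQ)$ and $c_i(\cS^*)$ should be cleaned up to a single fixed convention before the argument starts.
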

\begin{proof}
  The displayed quotient ring is one of the standard presentations of
  the cohomology ring of the usual (type A) Grassmannian of
  $m$-dimensional subspaces of a vector space of dimension $m+d$.
  Moreover, the Euler characteristic of this Grassmannian equals
  $\binom{m+d}{d}$.
\end{proof}

\begin{proof}[Proof of Theorem \ref{presIG}]
  We write $\mu \succ \la$ if the partition $\mu$ strictly dominates
  $\lambda$, i.e.\ $\mu \neq \lambda$ and $\mu_1 + \dots + \mu_i \geq
  \lambda_1 + \dots + \lambda_i$ for each $i \geq 1$.  It follows from
  the Pieri rule that $\sigma^\lambda = \sigma_\lambda + \sum_{\mu
  \succ \lambda} c_\mu \sigma_\mu$, where $c_\mu \in \Z$ and the sum
  is over partitions $\mu$ with $\mu \succ\la$. This implies (b).  In
  particular, the special Schubert classes $\sigma_1, \dots,
  \sigma_{n+k}$ generate the ring $\HH^*(\IG,\Z)$.
  
  Set $R = \Z[a_1,\dots,a_{n+k}]$, where $a_i$ is a homogeneous
  variable of degree $i$, and let $\phi : R \to \HH^*(\IG, \Z)$ be the
  surjective ring homomorphism defined by $\phi(a_i) = \sigma_i$.  We
  also write $a_0 = 1$ and $a_i = 0$ for $i<0$ or $i > n+k$.  For $r
  \geq 1$ we define 
\[
d_r = \det(a_{1+j-i})_{1 \lequ i,j \lequ r} \ \ \ \text{and} \ \ \ 
b_r = a_r^2 + 2 \sum_{i\geq 1} (-1)^i a_{r+i} a_{r-i}.
\]  
  
   Let $t$ be a formal variable.
  By expanding the determinant $d_r$ along the top row, we obtain $d_r
  = a_1 d_{r-1} - a_2 d_{r-2} + \dots + (-1)^{r-1} a_r$, which is
  equivalent to the identity of formal power series
  \begin{equation} \label{E:powerd}
     \Big( \sum_{i= 0}^{n+k} a_it^i \Big)
     \Big( \sum_{i \geq 0} (-1)^i d_i t^i \Big) = 1.
  \end{equation}
  The definition of $b_r$ implies that
  \begin{equation} \label{E:powerb}
   \sum_{i= 0}^{n+k} (-1)^i b_i t^{2i} = 
   \Big( \sum_{i= 0}^{n+k} a_it^i \Big)
   \Big( \sum_{i= 0}^{n+k} (-1)^i a_i t^i \Big) \,.
  \end{equation}
 
  Consider the ideal $I = (d_{m+1}, \dots, d_{n+k}, b_{k+1}, \dots,
  b_n) \subset R$.  We claim that $\phi(I) = 0$.  Indeed, since $\s_i
  = c_i(\cQ)$ for each $i$, the Whitney sum formula $c_t(\cS) c_t(\cQ)
  = 1$ and equation (\ref{E:powerd}) imply that $\phi(d_r) =
  c_r(\cS^*)$, so $\phi(d_r) = 0$ for $r > m$.  Notice that the
  symplectic form gives a pairing $\cS \otimes \cQ \to \cO$, which in
  turn produces an injection $\cS \hookrightarrow \cQ^*$.  It
  therefore follows from (\ref{E:powerb}) that
\[
(-1)^r \phi(b_r) = c_{2r}(\cQ \oplus \cQ^*) = 
c_{2r}(\cV_{\IG}/\cS \oplus \cQ^*) = c_{2r}(\cQ^*/\cS) = 0 \ \ \text{for} 
\ r > k,
\] 
which proves the claim.  Using the induced map $\phi : R/I \to
\HH^*(\IG)$ we are left with checking (i) and (ii) of
Lemma~\ref{preslemma}.  Property (i) follows because $\deg(d_r) = r$,
$\deg(b_r) = 2r$, and $\rank \HH^*(\IG) = \# \cP(k,n) = 2^m
\binom{n}{k}$.
  
  To prove (ii) it is enough to show that $d_r \in I$ for $n+k < r
  \leq 2n$, since this implies that $R/I$ is a quotient of
  $R/(d_{m+1},\dots,d_{2n})$, which is a free $\Z$-module of finite
  rank by Lemma~\ref{typeApres}.  It follows from (\ref{E:powerd}) and
  (\ref{E:powerb}) that 
\[
   \sum_{i= 0}^{n+k} a_it^i = \Big(\sum_{i= 0}^{n+k} (-1)^i b_i t^{2i}\Big)
   \Big(\sum_{i \geq 0} d_it^i\Big),
\]
which implies that
\[
\sum_{i=0}^{n+k}a_it^i \equiv
\Big(\sum_{i=0}^k(-1)^ib_it^{2i}+\sum_{i=n+1}^{n+k}(-1)^ib_it^{2i}\Big)
\Big(\sum_{i=0}^m d_it^i + \sum_{i\geq n+k+1} d_it^i \Big)
\]
  modulo the homogeneous ideal $I \subset R$.  By equating terms of
  equal degrees in this congruence, we obtain $d_{n+k+1} \equiv
  d_{n+k+2} \equiv \dots \equiv d_{2n} \equiv 0$ modulo $I$, as
  required.
\end{proof}

\subsection{Gromov-Witten invariants}
\label{iggwis}

A rational map of degree $d$ to $\IG$ is a morphism of varieties $f :
\bP^1 \to \IG$ such that
\[ \int_{\IG} f_*[\bP^1] \cdot \sigma_1 = d \,, \]
i.e.\ $d$ is the number of points in $f^{-1}(X_1(E_\bull))$ when the
isotropic flag $E_\bull$ is in general position.  All the
Gromov-Witten invariants considered in this paper are three-point and
genus zero.  Given a degree $d \gequ 0$ and $k$-strict partitions
$\lambda, \mu, \nu$ such that $|\lambda| + |\mu| + |\nu| = \dim(\IG) +
d(n+1+k)$, we define the Gromov-Witten invariant $\gw{\sigma_\lambda,
\sigma_\mu, \sigma_\nu}{d}$ to be the number of rational maps $f :
\bP^1 \to \IG$ of degree $d$ such that $f(0) \in X_\lambda(E_\bull)$,
$f(1) \in X_\mu(F_\bull)$, and $f(\infty) \in X_\nu(G_\bull)$, for
given isotropic flags $E_\bull$, $F_\bull$, and $G_\bull$ in general
position. As in \cite{BKT}, we will
show that these morphisms 
are maps to a
Lagrangian Grassmannian $\LG(d,2d)$ contained in $\IG$ whose image
curves pass through three general points of $\LG$. 

Following \cite{Buch}, we define the {\em kernel\/} of a rational map
$f : \bP^1 \to \IG$ as the intersection of all the subspaces $\Sigma_t
= f(t)$ in the image of $f$, and the {\em span\/} of $f$ as the linear
span of these subspaces in $V$:
\[ \Ker(f) = \bigcap \Sigma_t \subset V  \text{\ \ \ and\ \ \ }
   \Span(f) = \sum \Sigma_t \subset V \,.
\]
If $f$ has degree $d$ then $\dim \Ker(f) \gequ m-d$ and $\dim
\Span(f) \lequ m+d$ (see \cite[Lemma 1]{Buch}).  Notice that $\Ker(f)
\subset \Span(f) \subset \Ker(f)^\perp$.

For any integer $d \lequ m$, let $Y_d$ be the variety parametrizing
pairs $(A,B)$ of subspaces of $V$ such that $A \subset B \subset
A^\perp$, $\dim A = m-d$, and $\dim B = m+d$.  Since $Y_d$ is a
$\G(2d,2k+2d)$-bundle over $\IG(m-d,2n)$, we compute that $\dim(Y_d) =
\dim(\IG) + d(n+1+k) - 3d(d+1)/2$.  The algebraic group $\Sp_{2n}$
acts on the ambient vector space $V$, and hence also on $Y_d$, but
this action is not transitive. Let $T_d$ be the variety of triples 
$(A,\Sigma,B)$ of subspaces of $V$
such that $(A,B) \in Y_d$, $\Sigma \in \IG$, and $A \subset \Sigma
\subset B$.  Let $\pi : T_d \to Y_d$ be the projection.

For each $k$-strict partition $\lambda$, define a subvariety
$Y_\lambda \subset Y_d$ by the prescription
\[ Y_\lambda(E_\bull) = \{ (A,B) \in Y_d \mid \exists\, \Sigma \in
   X_\lambda(E_\bull) : A \subset \Sigma \subset B \} \,. 
\]
Let $T_\lambda(E_\bull) = \{(A,\Sigma,B) \in T_d \mid \Sigma \in
X_\lambda(E_\bull) \}$.  Then we have $\pi(T_\lambda(E_\bull)) =
Y_\lambda(E_\bull)$.  Since the general fibers of $\pi$ are isomorphic
to Lagrangian Grassmannians $\LG(d,2d)$, it follows from this that the
codimension of $Y_\lambda(E_\bull)$ in $Y_d$ is at least $|\lambda| -
d(d+1)/2$.  The following lemma implies that this codimension is
obtained if and only if $\lambda$ contains the staircase partition
$\rho_d = (d,d-1,\dots,2,1)$ with $d$ rows.

\begin{lemma} \label{L:fiberC}
  The restricted projection $\pi : T_\lambda(E_\bull) \to
  Y_\lambda(E_\bull)$ is generically one to one when $\rho_d \subset
  \lambda$, and has fibers of positive dimension when $\rho_d
  \not\subset \lambda$.
\end{lemma}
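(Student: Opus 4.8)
The plan is to analyze the fiber of $\pi$ over a general point $(A,B)$ of $Y_\lambda(E_\bull)$. Such a fiber, by definition of $T_d$ and $T_\lambda$, is the set of isotropic $\Sigma$ with $A \subset \Sigma \subset B$ that additionally lie in $X_\lambda(E_\bull)$. Since $A \subset \Sigma \subset B \subset A^\perp$ and $\dim A = m-d$, $\dim B = m+d$, the space $B/A$ carries the induced symplectic form and the $\Sigma$ with $A \subset \Sigma \subset B$ isotropic of dimension $m$ correspond exactly to the points of the Lagrangian Grassmannian $\LG(d,2d)$ of $B/A$; this is the general fiber of the unrestricted $\pi : T_d \to Y_d$ already noted in the text. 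So I must understand, for general $(A,B) \in Y_\lambda(E_\bull)$, the locus inside this $\LG(d,2d)$ cut out by the Schubert conditions $\dim(\Sigma \cap E_{p_j(\lambda)}) \geq j$.

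The key step is a dimension count translating $X_\lambda(E_\bull)$ into a Schubert condition on $\LG(d,2d)$. For a general flag $E_\bull$ and general $(A,B) \in Y_\lambda(E_\bull)$, the flag $E_\bull$ induces a flag on $B/A$; one checks that the induced flag is again isotropic for the symplectic form on $B/A$, and that the conditions defining $X_\lambda$ descend to the Schubert conditions for $\rho_d \subset \lambda$ precisely capturing a single Lagrangian subspace. Concretely, the hypothesis $|Y_\lambda| = |\lambda| - d(d+1)/2$ (codimension attained, which the lemma's statement ties to $\rho_d \subset \lambda$) forces the Schubert subvariety of the fiber $\LG(d,2d)$ to have codimension exactly $\dim \LG(d,2d) = d(d+1)/2$, i.e.\ to be a point, when $\rho_d \subset \lambda$; and to have strictly positive dimension otherwise, since then the Schubert conditions impose strictly fewer constraints on the $d(d+1)/2$-dimensional Lagrangian Grassmannian. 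I would make this precise by writing $\lambda = \rho_d + \mu$ when $\rho_d \subset \lambda$ and matching the "extra" part $\mu$ with the Schubert data already accounted for by the inclusion into $Y_\lambda$, so that nothing is left to cut down the fiber beyond a single point; when $\rho_d \not\subset \lambda$, some $\lambda_j < d+1-j$, and the corresponding condition $\dim(\Sigma \cap E_{p_j}) \geq j$ is automatically satisfied on the whole fiber (because $A$ already meets $E_{p_j}$ in dimension $\geq j$), leaving the $j$-th constraint vacuous and hence a positive-dimensional Lagrangian Schubert variety.

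The main obstacle I anticipate is verifying that for \emph{general} $(A,B) \in Y_\lambda(E_\bull)$ the induced flag on $B/A$ is sufficiently generic — i.e.\ that a general such pair does not lie on any proper subvariety where the Lagrangian Schubert locus degenerates to something larger than expected. This is where genericity of $E_\bull$ must be used carefully: one needs that $Y_\lambda(E_\bull)$ is genuinely irreducible of the stated codimension, and that over its dense open subset the pair $(A,B)$ together with $E_\bull$ puts $A$, $B$, and the relevant $E_{p_j(\lambda)}$ into the expected position. I would handle this by a transversality/dimension argument: parametrize the incidence variety of triples $(A, \Sigma, B)$ with $\Sigma \in X_\lambda(E_\bull)$, project to $\IG$ (fibers being $\G(2d,2k+2d)$-bundle data, independent of $\lambda$), and compare dimensions, concluding that the generic fiber of $\pi|_{T_\lambda}$ has the claimed dimension $0$ exactly when $\rho_d \subset \lambda$ and positive dimension otherwise. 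Once the fiber is identified as a Lagrangian Schubert variety of the predicted codimension, the dichotomy is immediate.
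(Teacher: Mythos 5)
Your overall picture (the fiber of $\pi$ over $(A,B)$ is a locus in the space of isotropic $d$-planes of $B/A$ cut out by the conditions induced from $E_\bull$) is the right starting point, but the proposal has several genuine gaps. First, the argument for the case $\rho_d\subset\lambda$ is circular: you invoke ``codimension attained, which the lemma's statement ties to $\rho_d\subset\lambda$'' as an input, but the equivalence between codimension being attained and $\rho_d\subset\lambda$ is exactly what the lemma establishes (see the sentence preceding it in the paper). Second, even granting the dimension count, it would only give that the generic fiber is \emph{finite}, whereas the lemma asserts it is a \emph{single point}; this stronger statement is what is used later (e.g.\ in Theorem \ref{T:qclasC}(b) and Proposition \ref{P:gwdegone}). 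The paper gets uniqueness from the equalities \eqref{Cbd2}--\eqref{Cbd4}, which force each isotropic extension $\Sigma_{j-1}\subset\Sigma_j\subset B_j$ to be unique; a zero-dimensional Schubert locus argument does not by itself rule out several points. Third, the ``main obstacle'' you identify --- that a general $(A,B)\in Y_\lambda(E_\bull)$ is in sufficiently general position relative to $E_\bull$ --- cannot be dispatched by a soft transversality argument: $Y_d$ is not homogeneous under $\Sp_{2n}$, and $Y_\lambda$ is defined using the \emph{same} flag $E_\bull$ that induces the conditions on the fiber, so Kleiman--Bertini is unavailable. The paper resolves this by explicitly constructing a point of the good open locus $U_\lambda$ (via a permutation $\pi\in{\mathfrak S}_d$ and vectors $u_i=e_{2n+1-p_{\pi(i)}}+e_{p_j-1-s_j+i}$), and it is precisely this construction that uses $\rho_d\subset\lambda$ (through $s_j\le\#_j$). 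Relatedly, the identification of the fiber with $\LG(d,2d)$ requires $B\cap B^\perp=A$, which fails on part of $Y_d$ and must be checked on $Y_\lambda$.

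The mechanism you propose for the case $\rho_d\not\subset\lambda$ is also incorrect as stated: if $\lambda_j<d+1-j$, it is not true that ``$A$ already meets $E_{p_j}$ in dimension $\ge j$'' (take $d=m$, so $A=0$), and the $j$-th Schubert condition is not vacuous on the fiber. The correct point, as in the paper, is that $\lambda_j\le d-j$ gives $s_j>\#_j$, hence $\dim(\Sigma_{j-1}^\perp\cap B_j)>j$, which provides a \emph{second} isotropic extension of $\Sigma_{j-1}$ inside $B\cap E_{p_j-1}$ and therefore a point of $X_\lambda\smallsetminus X_\lambda^\circ$ in the fiber over every $(A,B)$ with $A_d=0$; combined with the cell decomposition of $X_\lambda$ this forces $\dim(T_\lambda)>\dim(Y_\lambda)$. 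To repair your proof you would need to supply: (i) the explicit nonemptiness of the good locus, (ii) the uniqueness (not just finiteness) of $\Sigma$ over that locus, and (iii) a correct construction of extra points in the fiber when $\rho_d\not\subset\lambda$.
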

\begin{proof}
  We can assume that $E_r = \Span\{e_1,\dots,e_r\}$ where
  $\{e_1,\dots,e_{2n}\}$ is a standard symplectic basis for $V$, i.e.\
  $(e_i,e_j) = 0$ when $i+j \neq 2n+1$.  We will drop the flag
  $E_\bull$ from the notation and write simply $X_\la$, $Y_\lambda$,
  and $T_\lambda$.  Set $p_j = p_j(\lambda)$ for $1 \leq j \leq m$,
  $p_0=0$, $p_{m+1}=2n+1$, and $\#_j = \#\{i<j \mid p_i+p_j > 2n+1
  \}$.  Then we have $p_j = n+k+1-\lambda_j+\#_j$ by Proposition
  \ref{P:trnslC}.  We also define 
\[
s_j = \max(m+d+p_j-2n-j,0) = \max(d+1-j-\lambda_j+\#_j,0).  
\]
  For $(A,B)$ in $Y_\la$, we set
  $A_j=A\cap E_{p_j}$ and $B_j=B\cap E_{p_j}$.  By definition there
  exists $\Sigma$ in $X_\la$, with $A\subset \Sigma\subset B$.  We set
  $\Sigma_j=\Sigma\cap E_{p_j}$, so that $\dim(\Sigma_j)\ge j$.  Now,
\begin{gather}
\dim(A_j)\ge j-d,
\label{Cbd1} \\
\dim(B_j)\ge j+s_j,
\label{Cbd2}
\end{gather}
  since $A$ is of codimension $d$ in $\Sigma$, and $B_j$ is the
  intersection of spaces of dimension $m+d$ and $p_j$, while also
  $\Sigma_j\subset B_j$.  For $j$ such that equality holds in
  \eqref{Cbd2} we have $j\le \dim(\Sigma_{j-\varepsilon}^\perp\cap
  B_j)= j+s_j-\dim(\Sigma_{j-\varepsilon})
  +\dim(\Sigma_{j-\varepsilon}\cap B_j^\perp)$ for
  $\varepsilon\in\{0,1\}$, since $(\Sigma^\perp_{j-\varepsilon}\cap
  B_j)^\perp = \Sigma_{j-\varepsilon}+B_j^\perp$. Therefore, equality
  in \eqref{Cbd2} implies
\begin{gather}
\dim(B_j\cap B_j^\perp)
\ge j-s_j,
\label{Cbd3} \\
\dim(B_{j-1}\cap B_j^\perp)
\ge j-1-s_j.
\label{Cbd4}
\end{gather}

Define $U_\la$ to be the open subset of points $(A,B)$ in $Y_\lambda$
satisfying equality in \eqref{Cbd1} for $j\ge d$ and in \eqref{Cbd2},
\eqref{Cbd3}, \eqref{Cbd4} for $1\le j\le d$.  We will show that if
$\rho_d \subset \lambda$, then $U_\lambda \neq \emptyset$, and for
$(A,B) \in U_\lambda$ there is a unique point $\Sigma\in X_\la$ with
$A\subset\Sigma \subset B$.

For the nonemptiness, we have from $\la_j\ge d+1-j$ that $s_j \le
\#_j$.  For $j>d$ and $s_j>0$ we also have $s_j \leq d+1-j + \#_j \leq
\#\{ i \leq d \mid p_i+p_j > 2n+1\}$.  We can therefore choose a
permutation $\pi\in {\mathfrak S}_d$ such that for all $1\le j\le m+1$
and $i\le s_j$,
\[
\pi(i)<j\quad\text{and}\quad
p_{\pi(i)}+p_j\ge 2n+2+s_j-i \,.
\]
Now we define $A=\Span(e_{p_{d+1}},\ldots,e_{p_m})$ and
$B=\Span(e_{p_1},\ldots,e_{p_m},u_1,\ldots,u_d)$, where
$u_i=e_{2n+1-p_{\pi(i)}}+e_{p_j-1-s_j+i}$ for $s_{j-1}<i\le s_j$.
Equality holds in \eqref{Cbd1} for $j\ge d$.  Notice that
$2n+1-p_{\pi(i)}\le p_j-1-s_j+i$ and $p_{j-1}<p_j-1-s_j+i$ when
$s_{j-1}<i\le s_j$.  It follows that $B_j =
\Span(e_{p_1},\dots,e_{p_j},u_1,\dots,u_{s_j})$, so equality in
\eqref{Cbd2} holds for all $j$.  By the pairings
$(e_{p_{\pi(i)}},u_i)\ne 0$ and $(e_{p_{\pi(i)}},u_{i'})=0$ for
$i'>i$, we have that the restriction of the bilinear form to the span
of $e_{p_{\pi(1)}}$, $\ldots$, $e_{p_{\pi(s_j)}}$, $u_1$, $\ldots$,
$u_{s_j}$ is nondegenerate, for any $j$.  So $\dim(B_j\cap
B_j^\perp)\le \dim(B_j)-2s_j=j-s_j$.  Since $B_{j-1}$ contains the
first $s_j+s_{j-1}$ of these vectors, $\dim(B_{j-1}\cap B_j^\perp)\le
\dim(B_{j-1})-(s_j+s_{j-1})=j-1-s_j$.  Equality in \eqref{Cbd3} and
\eqref{Cbd4} is established.

Let $(A,B) \in U_\lambda$ and $0 \leq j \leq d$.  We claim that there
exists a unique isotropic subspace $\Sigma_j \subset B_j$ of dimension
$j$, such that 
\begin{equation} \label{CSigmajineq}
  \dim(\Sigma_j\cap B_i) \ge i
\end{equation}
for $1\le i\le j$.  We prove this by induction on $j$, the base case
$j=0$ being clear.  The equalities \eqref{Cbd2} and \eqref{Cbd3} imply
that a maximal isotropic subspace of $B_j$ has dimension $j$.  If
$\Sigma_j$ satisfies \eqref{CSigmajineq} then the induction hypothesis
implies that $\Sigma_j \cap B_{j-1} = \Sigma_{j-1}$, so it is enough
to show that $\Sigma_{j-1}$ extends to a unique isotropic subspace of
dimension $j$ in $B_j$, i.e.\ $\dim(\Sigma_{j-1}^\perp \cap B_j) = j$.
This follows from equality \eqref{Cbd4} because
$\dim(\Sigma_{j-1}^\perp \cap B_j) = s_j+1+\dim(\Sigma_{j-1}\cap
B_j^\perp) \leq j$.  Now $\Sigma_d \cap A = 0$ by the equality
\eqref{Cbd1} for $j=d$, hence, if we set $\Sigma = \Sigma_d + A$, then
$\Sigma \in X_\lambda$.  Conversely, for any $\Sigma \in X_\lambda$
with $A \subset \Sigma \subset B$ we have $\dim(\Sigma \cap B_i) \geq
i$ for all $i$, therefore $\Sigma \cap B_d = \Sigma_d$ and $\Sigma =
\Sigma_d + A$.

We finally show that when $\rho_d \not\subset \lambda$ we have
$\dim(Y_\lambda) < \dim(T_\lambda)$.  Let $X_\lambda^\circ \subset
X_\lambda$ be the Schubert cell.  It is enough to show that if $(A,B)
\in Y_\lambda$ satisfies $A_d = 0$, then there exists $\Sigma' \in
X_\lambda \smallsetminus X_\lambda^\circ$ with $A \subset \Sigma'
\subset B$.  If this is false, then we can choose $\Sigma \in
X^\circ_\lambda$ with $A \subset \Sigma \subset B$.  Set $\Sigma_i =
\Sigma \cap E_{p_i}$ for $1 \leq i \leq m$.  For some $j$ we have
$\lambda_j \leq d-j$ which implies that $s_j > \#_j$, hence
$\dim(\Sigma_{j-1}^\perp \cap E_{p_j}) \geq p_j - \#_j > p_j-s_j$.
Using \eqref{Cbd2} we obtain $\dim(\Sigma_{j-1}^\perp \cap B_j) > j$,
so there exists a $j$-dimensional isotropic extension $\Sigma'_j$ of
$\Sigma_{j-1}$ contained in $B \cap E_{p_j-1}$.  For $i=j+1, \dots, d$
we now choose an $i$-dimensional isotropic extension $\Sigma'_i$ of
$\Sigma'_{i-1}$ contained in $B_i$.  This is possible because $B_i$
already contains the $i$-dimensional isotropic subspace $\Sigma_i$.
The subspace $\Sigma' = \Sigma'_d + A$ then satisfies $\Sigma' \in
X_\lambda \smallsetminus X_\lambda^\circ$ and $A \subset \Sigma'
\subset B$.
\end{proof}

The next proposition is used to
reconstruct rational maps from their kernel and span.  Special cases
are equivalent to computing the Gromov-Witten invariant counting
curves through three general points in maximal isotropic Grassmannians,
which was first done in \cite[Cor.~8]{KTlg} and \cite[Cor.~7]{KTog}.
Although these results suffice to handle the case of $\IG$, for the
non maximal orthogonal Grassmannians we will need to work more generally.

Consider a vector space $W\cong \C^{2d}$ with an 
arbitrary bilinear form $(\ ,\, )$, and let
$\ov{\G}(d,W)$ denote the Grassmannian of dimension $d$ subspaces of $W$
which are isotropic with respect to $(\ ,\, )$.
There is a closed embedding $\iota$ of
$\ov{\G}(d,W)$ into the type A Grassmannian $\G(d,W)$, and we say
that a morphism $f:\bP^1 \to \ov{\G}(d,W)$ has degree $d$ if the
composite $\iota\circ f$ has degree $d$. We say that two points $U_1$,
$U_2$ of $\ov{\G}(d,W)$ are in general position if the intersection
$U_1\cap U_2$ is trivial.

\begin{prop}
\label{Gd2d}
Let $U_1$, $U_2$, and $U_3$ be three points of $\ov{\G}(d,W)$ which 
are pairwise in general position.  Then there is a unique morphism 
$f:\bP^1\to\ov{\G}(d,W)$ of degree $d$ such that $f(0)=U_1$, $f(1)=
U_2$, and $f(\infty)=U_3$. 
\end{prop}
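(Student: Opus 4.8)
The plan is to first reduce to a normal-form situation by exploiting the $\GL(W)$-action. Since $U_1 \cap U_2 = 0$ and $\dim W = 2d$, we have $W = U_1 \oplus U_2$, so we may choose a basis $e_1,\dots,e_d$ of $U_1$ and $f_1,\dots,f_d$ of $U_2$. Then $U_3$, being in general position with both $U_1$ and $U_2$, is the graph of an isomorphism $U_1 \to U_2$; after further change of basis within $U_1$ (and adjusting the $f_i$ correspondingly) we may assume $U_3 = \Span\{e_i + f_i : 1 \le i \le d\}$. In these coordinates I would write down the explicit candidate map $f : \bP^1 \to \G(d,W)$ by $f([s:t]) = \Span\{s e_i + t f_i : 1 \le i \le d\}$; this has $f(0) = U_1$, $f(\infty) = U_2$, $f(1) = U_3$, and its composite with the Plücker embedding is visibly of degree $d$. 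This handles existence \emph{as a map to $\G(d,W)$}; the first genuine step is then to check that $f$ actually lands in $\ov{\G}(d,W)$, i.e.\ that each subspace $\Span\{se_i+tf_i\}$ is isotropic for the form $(\ ,\,)$. This is the step that uses the hypothesis that all three given subspaces are isotropic: isotropy of $U_1$, $U_2$, and $U_3$ translates into $(e_i,e_j)=0$, $(f_i,f_j)=0$, and $(e_i+f_i,e_j+f_j)=0$ for all $i,j$, and the last relations together with the first two force $(e_i,f_j) + (f_i,e_j) = 0$, from which $(se_i+tf_i, se_j+tf_j) = st\big((e_i,f_j)+(f_i,e_j)\big) = 0$. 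So $f$ maps into $\ov{\G}(d,W)$.

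For uniqueness, suppose $g : \bP^1 \to \ov{\G}(d,W)$ is a degree $d$ morphism with $g(0)=U_1$, $g(1)=U_2$, $g(\infty)=U_3$. Composing with $\iota$, it suffices to show a degree $d$ map to $\G(d,W)$ through three subspaces in pairwise general position is unique — the isotropy is then automatic once we know $g$ agrees with our $f$. A degree $d$ morphism $\bP^1 \to \G(d,W)$ corresponds to a rank $d$ subbundle of the trivial bundle $W \otimes \cO_{\bP^1}$ whose quotient has degree $d$; dually, after identifying with a quotient construction, one gets a surjection $W^* \otimes \cO \to \cE$ with $\cE$ a vector bundle of rank $d$ and degree $d$ on $\bP^1$, hence $\cE \cong \cO(a_1) \oplus \cdots \oplus \cO(a_d)$ with $a_i \ge 0$ (surjectivity of the evaluation forces no negative summands after the twist) and $\sum a_i = d$. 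The key point — this is the crux of the argument — is that the generic behaviour forced by three points in general position pins down $\cE \cong \cO(1)^{\oplus d}$: at the three points $0,1,\infty$ the fibres of the kernel sub-bundle are the three \emph{pairwise disjoint} subspaces $U_1,U_2,U_3$, and a sub-bundle of $W\otimes\cO$ whose fibres at two points of $\bP^1$ span all of $W$ cannot have a summand of the form $\cO(a)$ with $a=0$ contributing to a common sub-line at those two points; a short dimension count on the splitting type, using that $U_1,U_2,U_3$ are in pairwise general position, eliminates every splitting type except $\cO(1)^{\oplus d}$ (equivalently, the sub-bundle is $\cO(-1)^{\oplus d}$ inside $W\otimes\cO$).

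Once the splitting type is fixed, the map $g$ is given by $d$ sections of $\cO(1)$ spanning a $d$-dimensional space — concretely, a $2\times d$ matrix of linear forms — and the three conditions $g(0)=U_1$, $g(\infty)=U_2$, $g(1)=U_3$ determine this matrix up to the action of $\GL_d$ (reparametrizing the basis of sections) and $\PGL_2$ fixing $0,1,\infty$ (which is trivial). In the normalized coordinates above this says exactly that $g = f$. Assembling: $f$ exists, lands in $\ov{\G}(d,W)$, and is the unique such map, which is the assertion of Proposition~\ref{Gd2d}.

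I expect the main obstacle to be the rigidity step — showing the splitting type of the relevant sub-bundle of $W\otimes\cO_{\bP^1}$ is forced to be $\cO(-1)^{\oplus d}$ by the pairwise general position hypothesis, and then extracting uniqueness of the map from that. Everything else (the explicit normal form, the isotropy verification) is elementary linear algebra; the bundle-theoretic rigidity is where one must argue carefully, and it is presumably exactly the place where the hypothesis ``pairwise in general position'' (rather than merely distinct) is indispensable. A cleaner alternative for this step, which I would pursue if the bundle argument gets unwieldy, is to argue directly with the $2\times d$ coordinate matrix: a degree $d$ map is \emph{some} $\bP^1 \ni [s:t] \mapsto \Span\{\text{columns of }M(s,t)\}$ with $M$ of size $2d \times d$ and entries linear in $s,t$, up to column operations; the conditions at $0$ and $\infty$ make two disjoint $d$-dimensional coordinate blocks nondegenerate, which forces (after column operations) the honest $2 \times d$ block form, and then the condition at $1$ finishes it.
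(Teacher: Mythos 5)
Your proof is correct and takes essentially the same route as the paper: the same explicit interpolating map built from a direct-sum decomposition of $W$ into two of the three subspaces (you use $W=U_1\oplus U_2$, the paper uses $U_1\oplus U_3$), and the same isotropy argument, namely that each pairing $(se_i+tf_i,\,se_j+tf_j)$ is a quadratic form in $(s,t)$ vanishing at three points of $\bP^1$ and hence identically zero. The only difference is that you reprove the type~A existence and uniqueness from scratch via the splitting type $\cO(-1)^{\oplus d}$ of the pulled-back tautological subbundle (a trivial summand would give a constant line in $U_1\cap U_2=0$, forcing all twists to equal $1$, after which the $2d\times d$ matrix of linear forms is pinned down by the three incidence conditions), a step the paper simply outsources to \cite[Prop.~1]{BKT}; your version of that argument is sound.
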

\begin{proof}
  For any basis $\{v_1,\ldots,v_d\}$ of $U_2$, write $v_i=u_i+w_i$
  with $u_i\in U_1$ and $w_i\in U_3$. As in the proof of
  \cite[Prop.~1]{BKT}, one shows that the map $f(s:t) =
  \text{Span}(su_1+tw_1,\ldots, su_d+tw_d\}$ is the unique one
  satisfying the condition in the proposition. Here $(s:t)$ are the
  homogeneous coordinates on $\bP^1$. Since any quadratic form
  $q(s,t)$ which vanishes at $(1,0)$, $(1,1)$, and $(0,1)$ must vanish
  identically, the subspaces of $W$ which lie in the image of
  $f$ must all be isotropic.
\end{proof}

\begin{example}
  We have $\gw{\sigma_{\rho_m}, [\point], [\point]}{m} = 1$ for all
  isotropic Grassmannians $\IG = \IG(m,2n)$.  In fact, it follows from
  Lemma~\ref{L:fiberC} that $Y_{\rho_m}$ has codimension zero in
  $Y_m$, so $Y_{\rho_m} = Y_m$.  Let $U,V \in \IG$ be general points.
  By setting $(A,B)=(0,U\oplus V) \in Y_m$, the lemma implies that
  exactly one point $\Sigma \in X_{\rho_m}$ satisfies $\Sigma \subset
  U \oplus V$.  Note that the span of any curve counted by the
  Gromov-Witten invariant must be contained in $U\oplus V$, and
  therefore the curve is itself contained in $\LG(m,U\oplus V) \subset
  \IG$.  The claim now follows
  from Proposition \ref{Gd2d}.
\end{example}

The following theorem generalizes \cite[Thm.~2]{BKT}.  The vanishing
statement in part (d) was proved for Lagrangian Grassmannians in
\cite{KTlg}.

\begin{thm} \label{T:qclasC}
  Let $d \gequ 0$ and choose $\lambda,\mu,\nu \in \cP(k,n)$ such that
  $|\lambda|+|\mu|+|\nu| = \dim(\IG) + d(n+1+k)$.  Let $X_\lambda$,
  $X_\mu$, and $X_\nu$ be Schubert varieties of $\IG(m,2n)$ in general
  position, and let $Y_\lambda,Y_\mu,Y_\nu$ be the associated
  subvarieties of $Y_d$.
  
  \smallskip \noin {\rm (a)} The subvarieties $Y_\lambda$, $Y_\mu$,
  and $Y_\nu$ intersect transversally in $Y_d$, and $Y_\lambda \cap
  Y_\mu \cap Y_\nu$ is finite.  For each point $(A,B) \in Y_\lambda
  \cap Y_\mu \cap Y_\nu$ we have $A = B \cap B^\perp$.
  
  \smallskip \noin {\rm (b)} The assignment $f \mapsto
  (\Ker(f),\Span(f))$ gives a bijection of the set of rational maps
  $f:\bP^1\to\IG$ of degree $d$ such that $f(0)\in X_\lambda$,
  $f(1)\in X_\mu$, $f(\infty)\in X_\nu$, with the points of the
  intersection $Y_\lambda \cap Y_\mu \cap Y_\nu$ in $Y_d$.
  
  \smallskip \noin {\rm (c)} We have
  $\dis \gw{\sigma_\lambda,\sigma_\mu,\sigma_\nu}{d} = \int_{Y_d} [Y_\lambda]
  \cdot [Y_\mu] \cdot [Y_\nu]$.
  
  \smallskip \noin {\rm (d)} If $\lambda$ does not contain the
  staircase partition $\rho_d$, then $\gw{\sigma_\lambda, \sigma_\mu,
    \sigma_\nu}{d} = 0$. 
\end{thm}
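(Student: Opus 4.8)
The plan is to reconstruct rational maps from their kernel--span pairs and transport the Gromov-Witten count to a classical triple intersection on $Y_d$, closely following the strategy of \cite[Thm.~2]{BKT} but with the new input of Lemma~\ref{L:fiberC} and Proposition~\ref{Gd2d}. First I would establish part (b): given a rational map $f$ of degree $d$, set $A = \Ker(f)$ and $B = \Span(f)$. By \cite[Lemma~1]{Buch} we have $\dim A \gequ m-d$ and $\dim B \lequ m+d$, and since $\Sigma_t$ is isotropic for all $t$ one checks $A \subset B \subset A^\perp$; a dimension count (using that $f$ has positive degree in each relevant direction, or rather that the map is nonconstant of degree exactly $d$) forces $\dim A = m-d$ and $\dim B = m+d$, so $(A,B) \in Y_d$. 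The incidence conditions $f(0)\in X_\lambda$, etc., immediately give $(A,B)\in Y_\lambda\cap Y_\mu\cap Y_\nu$. For the inverse map, given $(A,B)$ in the triple intersection, the quotient form on $B/A \cong \C^{2d}$ is nondegenerate (this is where part (a) is used, as I explain below), so $\LG(d, B/A)$ is a genuine Lagrangian Grassmannian; the three Schubert conditions produce, via Lemma~\ref{L:fiberC}, three points $U_1, U_2, U_3$ of $\LG(d,B/A)$, and part (a) will guarantee these are pairwise in general position, so Proposition~\ref{Gd2d} yields a unique degree $d$ map through them. Pulling back to $V$ gives a map to $\IG$ with the prescribed kernel and span, and one verifies the two constructions are mutually inverse.

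The crux is part (a), and this is the main obstacle. I would argue as follows. Since $X_\lambda$, $X_\mu$, $X_\nu$ are in general position, Kleiman's transversality theorem applies to the $\Sp_{2n}$-action on $\IG$, but $Y_d$ is \emph{not} homogeneous, so one cannot directly conclude transversality of the $Y$'s on $Y_d$. Instead I would use the description $\pi: T_d \to Y_d$ with general fibers $\LG(d,2d)$ together with Lemma~\ref{L:fiberC}: the codimension estimate there shows that $Y_\lambda$ has the expected codimension $|\lambda| - d(d+1)/2$ in $Y_d$ precisely when $\rho_d \subset \lambda$ (and is larger otherwise, which gives part (d) for free). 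To upgrade the codimension count to genuine transversality and finiteness of the intersection, I would pull back to $T_d$ and use that $T_\lambda(E_\bull)$ ranges over $\Sp_{2n}$-translates as $E_\bull$ varies; a Kleiman-style argument on the (homogeneous) space $\IG$ controls how the $X_\lambda(E_\bull)$ meet, and this transfers to $T_d$ because $\pi$ is generically one-to-one on each $T_\lambda$ when $\rho_d\subset\lambda$. The identity $A = B\cap B^\perp$ at each intersection point then follows from a dimension count: $B\cap B^\perp$ is an isotropic subspace containing $A$, of dimension at least $(m+d) - 2d = m-d = \dim A$ by nondegeneracy considerations, and at a transverse (hence generic-type) point equality must hold. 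This last point is also what ensures $B/A$ carries a nondegenerate form, closing the loop with part (b).

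With (a) and (b) in hand, part (c) is formal: the Gromov-Witten invariant $\gw{\sigma_\lambda,\sigma_\mu,\sigma_\nu}{d}$ counts, with multiplicity one (by the transversality in (a) and the rigidity in Proposition~\ref{Gd2d}), exactly the rational maps parametrized in (b), hence equals $\#(Y_\lambda\cap Y_\mu\cap Y_\nu) = \int_{Y_d}[Y_\lambda]\cdot[Y_\mu]\cdot[Y_\nu]$, the last equality again by the transversality of (a). Finally, part (d): if $\rho_d\not\subset\lambda$, then by Lemma~\ref{L:fiberC} the projection $\pi: T_\lambda \to Y_\lambda$ has positive-dimensional fibers, so $\dim Y_\lambda < \dim T_\lambda = \dim Y_d - |\lambda| + (\text{contribution})$; more precisely the codimension of $Y_\lambda$ in $Y_d$ strictly exceeds $|\lambda| - d(d+1)/2$. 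Then by dimension reasons $Y_\lambda\cap Y_\mu\cap Y_\nu$ is empty for general translates (the sum of codimensions exceeds $\dim Y_d$), so the invariant vanishes by part (c). I expect the transversality argument in (a) — bridging the non-homogeneity of $Y_d$ via the auxiliary space $T_d$ and Kleiman's theorem on $\IG$ — to require the most care.
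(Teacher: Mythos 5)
Your overall blueprint matches the paper's, but you defer to vague language at exactly the two points where the real work happens, and in both places the argument you sketch would not close. For part (a), you correctly note that $Y_d$ is not homogeneous, but your proposed fix --- transferring transversality from $\IG$ through $T_d$ via the generic injectivity of $\pi$ on $T_\lambda$ --- is not a valid mechanism: images and preimages under the projections $\pi_1,\pi_2$ do not preserve transversality, and generic injectivity of $\pi$ says nothing about how the images meet. The actual mechanism is to decompose $Y_d$ into the $\Sp_{2n}$-orbits $Y^r_{d,d}$ indexed by $r=\dim(B\cap B^\perp)-\dim A$, apply Kleiman--Bertini on each orbit separately, and run an explicit dimension count (computing $\dim Y^r_{e_1,e_2}$, the fiber dimensions $\dim Q_s$, and the defect $\Delta(d,d,r,s)=(r-3s)(r-3s-1)/2$, combined with the constraint $2s\gequ r$) to show the triple intersection misses every orbit with $r>0$. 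This computation is also the \emph{only} source of the identity $A=B\cap B^\perp$; your claim that ``at a transverse point equality must hold'' is circular, since nothing forces a transverse intersection point of the $Y_\lambda$ inside $Y_d$ to lie in the open stratum unless you have separately bounded the intersection with each closed stratum.

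For part (b), the assertion that ``part (a) will guarantee'' the three points $P\in X_\lambda\cap\ov{\G}(d,W)$, $Q\in X_\mu\cap\ov{\G}(d,W)$, $R\in X_\nu\cap\ov{\G}(d,W)$ are pairwise in general position is a genuine gap: part (a) gives $A_0=B_0\cap B_0^\perp$ and finiteness, but says nothing about whether $P\cap Q\supsetneq A_0$. The paper rules this out by introducing a second auxiliary homogeneous space $Y'$ of triples $(A,A',B)$ with $A\subset A'\subset B$ and $\dim A'=m-d+1$, defining $Y'_\lambda$ there, and running another codimension count to show $Y'_\lambda\cap Y'_\mu\cap\pi^{-1}(Y_\nu)=\emptyset$. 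The same argument is what shows $X_\lambda\cap\ov{\G}(d,W)$ is zero-dimensional, which via Lemma~\ref{L:fiberC} forces $\rho_d\subset\lambda$ (giving (d)) and that the intersection is a single point; Lemma~\ref{L:fiberC} alone does not ``produce three points'' as you assert. Parts (c) and (d) are then fine as you describe, but only once these two missing dimension-counting arguments are supplied.
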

\begin{proof}
  For integers $0 \lequ e_1,e_2 \lequ d$ and $r \gequ 0$ we let
  $Y^r_{e_1,e_2}$ be the variety of pairs $(A,B)$ such that $A \subset
  B \subset A^\perp \subset V$, $\dim(A) = m-e_1$, $\dim(B) =
  m+e_2$, and $\dim(B \cap B^\perp) = m-e_1+r$.  These varieties
  have a transitive action of $\Sp_{2n}$, and $Y_d$ is the union of
  the varieties $Y^r_{d,d}$ for even numbers $r \lequ \min(2d,2k)$.  In
  general, $Y^r_{e_1,e_2}$ is empty unless $r \lequ
  \min(e_1+e_2,2k+e_1-e_2)$ and $e_1+e_2-r$ is even.  We also set
\[ Y^r_\lambda = \{ (A,B) \in Y^r_{e_1,e_2} \mid \exists\, \Sigma \in 
   X_\lambda : A \subset \Sigma \subset B \} \,.
\]
Our first goal is to prove that $Y^r_\lambda \cap Y^r_\mu \cap Y^r_\nu$ is
empty unless $e_1=e_2=d$ and $r=0$, in which case the intersection is
a finite set of points.

The smooth map $Y^r_{e_1,e_2} \to \IF(m-e_1,m-e_1+r;2n)$ given by
$(A,B) \mapsto (A,B\cap B^\perp)$ has fibers isomorphic to open
subsets of the type A Grassmannian $\G(e_1+e_2-r,2k+2e_1-2r)$, so we
obtain
\[ \dim(Y^r_{e_1,e_2}) = \frac{1}{2}
   (n^2 + 2nk -3k^2 + m + 2 m e_1 - e_1^2+4e_2k-2e_2^2-e_1+r-r^2) \,.
\]

Define the variety $T^r = \{ (A,\Sigma,B) \mid (A,B) \in
Y^r_{e_1,e_2}, \Sigma \in \IG, A \subset \Sigma \subset B \}$, and
let $\pi_1 : T^r \to \IG$ and $\pi_2 : T^r \to Y^r_{e_1,e_2}$ be the
projections.  Then $Y^r_\lambda = \pi_2(\pi_1^{-1}(X_\lambda))$.
Given a point $(A,B) \in Y^r_{e_1,e_2}$, the fiber $\pi_2^{-1}((A,B))$
is the union of the varieties
\[ Q_s = \{ \Sigma \in \IG \mid A \subset \Sigma \subset B,\ 
   \dim(\Sigma\cap B \cap B^\perp) = m-e_1+s \}
\]
for all integers $s$.  Notice that the dimension of an isotropic
subspace of $B$ is at most equal to $m+(r-e_1+e_2)/2$, and if
$\Sigma \in Q_s$ then $\Sigma + (B\cap B^\perp)$ is such a subspace of
dimension $m+r-s$.  This implies that $Q_s$ is empty unless $2s \gequ
r+e_1-e_2$.  Since the map $Q_s \to \IG(e_1-s, B/(B\cap B^\perp))$
given by $\Sigma \mapsto (\Sigma + (B\cap B^\perp))/(B\cap B^\perp)$
has fibers isomorphic to open subsets of $\G(e_1,e_1+r-s)$, it follows
that when $Q_s$ is not empty, we have
\[ \dim Q_s = \frac{1}{2}
   (2e_1 e_2 -e_1^2 + e_1 + 2e_1 s - 2e_2 s  + 2rs - 3s^2 - s) \,.
\]

Choose $s$ such that $Q_s$ has the same dimension as the fibers of
$\pi_2$.  Then the codimension of $Y^r_\lambda$ in $Y^r_{e_1,e_2}$ is
at least $|\lambda| - \dim Q_s$.  It follows that
$\codim(Y^r_\lambda)+\codim(Y^r_\mu)+\codim(Y^r_\nu) -
\dim(Y^r_{e_1,e_2})$ is greater than or equal to the number
$\Delta(e_1,e_2,r,s) := \dim \IG + d(n+1+k) - \dim(Y^r_{e_1,e_2}) -
3\dim Q_s$.  A computation shows that $\Delta(e_1,e_2,r,s)$ is equal
to
\[ (r-3s)(r-3s-1)/2 + (d-e_1) + (n+k)(d-e_2) + 
   (m+e_2-2e_1+3s)(e_2-e_1) \,. 
\]

Suppose $Y^r_\lambda \cap Y^r_\mu \cap Y^r_\nu \neq \emptyset$.  Since the
action of $\Sp_{2n}$ on $Y^r_{e_1,e_2}$ is transitive, it follows from
the Kleiman-Bertini theorem that $\Delta(e_1,e_2,r,s) \lequ 0$.  If
$e_2 \gequ e_1$ then this immediately implies that $e_1=e_2=d$, so we
may assume that $e_2 < e_1$.  If $(A,B) \in Y^r_\lambda \cap Y^r_\mu
\cap Y^r_\nu$ then we can find a subspace $B' \subset A^\perp$
containing $B$, such that $\dim B' = m+e_1$.  Since $(A,B')$ must be
a point in the intersection of modified Schubert varieties in some
variety $Y^{r'}_{e_1,e_1}$, it follows that $\Delta(e_1,e_1,r',s') \leq
0$, so $e_1=d$.  Still assuming $e_2 < e_1$, we can take a subspace
$B'' \subset A^\perp$ containing $B$, such that $\dim B'' = m+d-1$.
Since $(A,B'')$ lies in the intersection of modified Schubert
varieties in some space $Y^{r''}_{d,d-1}$, it follows that
\[ \Delta(d,d-1,r'',s'') = (r''-3s'')(r''-3s''-1)/2 + 2k + 1 - 3s'' + d \]
is non-positive, and since $r'' \lequ 2k+1$, this number is also
greater than or equal to $(r''-3s'')(r''-3s''+1)/2 + d$.  But this number
is positive, which gives a contradiction.  We conclude that
$e_1=e_2=d$, so $\Delta(d,d,r,s) = (r-3s)(r-3s-1)/2 \lequ 0$.  This is
possible only if $r=3s$ or $r=3s+1$.  Since $2s \gequ r+e_1-e_2 = r$, we
finally obtain $r=s=0$ as required.

It is now clear that part (a) is true.  In fact, the intersection
$Y_\lambda \cap Y_\mu \cap Y_\nu$ is contained in the open
$\Sp_{2n}$-orbit $Y^0_{d,d}$ of $Y_d$, so this intersection is
transverse by Kleiman and Bertini's theorem.  Furthermore, since
$\codim(Y_\lambda) \gequ |\lambda| - \dim Q_0 = |\lambda| - d(d+1)/2$,
we deduce that the intersection is finite.

Let $f : \bP^1 \to \IG$ be a rational map of degree $d$ such that
$f(0) \in X_\lambda$, $f(1) \in X_\mu$, and $f(\infty) \in X_\nu$.  Then
$(\Ker(f),\Span(f))$ must be a point of an intersection $Y^r_\lambda
\cap Y^r_\mu \cap Y^r_\nu$ in some space $Y^r_{e_1,e_2}$, necessarily with
$e_1=e_2=d$, so we must have $\dim \Ker(f) = m-d$ and $\dim \Span(f)
= m+d$.  This shows that the map $f \mapsto (\Ker(f),\Span(f))$ of
part (b) is well defined.  In particular we have $d \lequ m$.  On the
other hand, let $(A_0,B_0) \in Y_\lambda \cap Y_\mu \cap Y_\nu \subset
Y_d$ be any point.  To establish (b), we must show that there is a
unique rational map $f : \bP^1 \to \IG$ of degree $d$ such that
$f(0)\in X_\lambda$, $f(1) \in X_\mu$, $f(\infty) \in X_\nu$, and
$(\Ker(f),\Span(f)) = (A_0,B_0)$.

Setting $W=B_0/A_0$ we can identify $\ov{\G}(d,W)$ with $\{ \Sigma \in \IG \mid
A_0 \subset \Sigma \subset B_0 \}$. Since $B_0 \cap B_0^\perp = A_0$ by
part (a), we have $\ov{\G}(d,W)\cong \LG(d,2d)$. Let $P\in X_\lambda \cap
\ov{\G}(d,W)$, $Q\in X_\mu \cap \ov{\G}(d,W)$, and $R\in X_\nu \cap \ov{\G}(d,W)$.
We claim that $P \cap Q = A_0$.
Otherwise there exists $A_0 \subset A'_0 \subset P\cap Q$ with $\dim
A'_0 = m-d+1$.  Now define
\[ Y' = \{ (A,A',B) \mid (A,B) \in Y^0_{d,d},\ A \subset A' \subset B,\ 
   \dim A' = m-d+1 \} \,.
\]
Notice that the action of $\Sp_{2n}$ on this variety is transitive,
and that $\dim Y' = \dim Y_d + 2d-1$.  We also set $Y'_\lambda = \{
(A,A',B) \in Y' \mid \exists\, \Sigma \in X_\lambda: A' \subset \Sigma
\subset B \}$.  Our assumptions show that $(A_0,A'_0,B_0) \in
Y'_\lambda \cap Y'_\mu \cap \pi^{-1}(Y_\nu)$ where $\pi : Y' \to Y_d$
is the projection.  Set $T' = \{(A,A',\Sigma,B) \mid (A,A',B) \in Y',
\Sigma\in \IG, A' \subset \Sigma \subset B \}$ and let $\pi'_1 : T' \to
\IG$ and $\pi'_2 : T' \to Y'$ be the projections.  The fibers of
$\pi'_2$ are isomorphic to $\LG(d-1,2d-2)$, so they have dimension
$d(d-1)/2$.  Since $Y'_\lambda = \pi'_2({\pi'_1}^{-1}(X_\lambda))$, this
implies that $\codim(Y'_\lambda) \gequ |\lambda| - d(d-1)/2$.  It
follows that
\begin{multline*}
  \codim(Y'_\lambda) + \codim(Y'_\mu) + \codim(\pi^{-1}(Y_\nu)) \gequ \\
  |\lambda| + |\mu| + |\nu| - 2d(d-1)/2 - d(d+1)/2 
  = \dim(Y_d) + 2d > \dim(Y') \,.
\end{multline*}
Since this implies that $Y'_\lambda \cap Y'_\mu \cap \pi^{-1}(Y_\nu) =
\emptyset$, we conclude that $P \cap Q = A_0$ as claimed.

Notice that the intersection $X_\lambda \cap \ov{\G}(d,W)$ must have
dimension zero, since otherwise one could choose the point $P \in
X_\lambda \cap \ov{\G}(d,W)$ such that $P \cap Q \varsupsetneq A_0$.
By Lemma~\ref{L:fiberC} this implies that $\rho_d \subset \lambda$
(which proves (d)), and $X_\lambda \cap \ov{\G}(d,W)$ must furthermore
be a single point.  We conclude that $X_\lambda \cap \ov{\G}(d,W) =
\{P\}$, $X_\mu \cap \ov{\G}(d,W) = \{Q\}$, $X_\nu \cap \ov{\G}(d,W) =
\{R\}$, and $P\cap Q = Q\cap R= R\cap P = A_0$.  Now by Proposition
\ref{Gd2d} there is a unique rational map $f : \bP^1 \to Z$ of degree
$d$ such that $f(0) \in X_\lambda$, $f(1) \in X_\mu$, and $f(\infty)
\in X_\nu$.  This proves (b), and (a) and (b) together imply (c).
\end{proof}

Set $X^+ = \IG(m+1, 2n+2)$.  The following proposition was proved in
\cite[Prop.~4]{KTlg} for Lagrangian Grassmannians.  As in loc.\ cit.,
the proof is based on an explicit correspondence between lines in $X =
\IG(m,2n)$ and points in $X^+$, but the underlying geometry is more
challenging in the general case.

\begin{prop} \label{P:gwdegone}
  Let $\lambda,\mu,\nu$ be $k$-strict partitions such that
  $|\lambda|+|\mu|+|\nu| = \dim X + (n+k+1)$ and
  $\ell(\lambda)+\ell(\mu)+\ell(\nu) \lequ 2m+1$.  Then
\[ \gw{\sigma_\lambda, \sigma_\mu, \sigma_\nu}{1} = 
   \frac{1}{2} \int_{\IG(m+1,2n+2)} 
   [X^+_\lambda] \cdot [X^+_\mu] \cdot [X^+_\nu] \,.
\]
\end{prop}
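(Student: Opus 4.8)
The plan is to exhibit an explicit correspondence between lines on $X = \IG(m,2n)$ and points of $X^+ = \IG(m+1,2n+2)$, and then to track how the three Schubert conditions transform under this correspondence, accounting for the factor of $\tfrac12$. First I would recall that the space of lines on $\IG(m,2n)$, when nonempty, is parametrized by pairs $(A,B)$ with $A\subset B\subset A^\perp$, $\dim A = m-1$, $\dim B = m+1$; this is the variety $Y_1$ of the previous subsection. By Theorem~\ref{T:qclasC}, for Schubert varieties in general position the degree-one Gromov--Witten invariant equals $\int_{Y_1}[Y_\lambda]\cdot[Y_\mu]\cdot[Y_\nu]$, and every point of the triple intersection lies in the open orbit where $B\cap B^\perp = A$. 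So the task reduces to computing this triple intersection number on $Y_1$ and comparing it with a triple intersection on $X^+$.

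The key geometric observation is the following. Fix a two-dimensional space $N\subset\C^{2n+2}$ on which the (now symmetric-or-skew, here skew) form restricts nondegenerately, and identify $V\cong\C^{2n}$ with $N^\perp/(N\cap N^\perp)$ — but in the symplectic case $N\cap N^\perp = 0$, so $V = N^\perp\ominus N$ sits inside $\C^{2n+2}$. Given a point $(A,B)\in Y_1$ with $A = B\cap B^\perp$, the subspace $\Sigma^+ := B$ itself, viewed inside $\C^{2n+2}=V\oplus N$ via a suitable embedding, is $(m+1)$-dimensional and isotropic, hence a point of $X^+$. Conversely, an $(m+1)$-dimensional isotropic $\Sigma^+\subset\C^{2n+2}$ in general position with $N$ produces $B := $ (projection-related subspace) of dimension $m+1$ and $A := B\cap B^\perp$. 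The subtlety flagged in the remark before the proposition is that this correspondence is not quite one-to-one: a choice is involved (essentially a choice of how $\Sigma^+$ meets $N$, or equivalently the two rulings/square-root ambiguity already visible in the Lagrangian case of \cite[Prop.~4]{KTlg}), and this is precisely the source of the factor $\tfrac12$. I would make this correspondence a generically $2{:}1$ (or $1{:}2$) map between $Y_1$ and $X^+$, or between appropriate incidence varieties, and verify it is dominant with the stated degree.

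Next I would identify, under this correspondence, the subvariety $Y_\lambda\subset Y_1$ with (the image of) $X^+_\lambda\subset X^+$. Here the $k$-strict partition indexing is designed to be compatible: a $k$-strict partition $\lambda\in\cP(k,n)$ for $\IG(m,2n)$ is also a $k$-strict partition for $\IG(m+1,2n+2)$ (same $k = n-m = (n+1)-(m+1)$), and the flag conditions defining $X_\lambda$, when pulled through $B\subset\Sigma\subset$ to the level of $B$ and then to $\Sigma^+$, match the conditions defining $X^+_\lambda$ relative to a correspondingly enlarged isotropic flag — this is where the hypothesis $\ell(\lambda)+\ell(\mu)+\ell(\nu)\le 2m+1$ enters, guaranteeing that no Schubert condition "uses up" the extra line $N$ in a way that breaks the dictionary, so that the general translated flags are still in general position and the intersection stays in the locus where the correspondence is well-behaved. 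Granting all of this, the projection formula / degree computation gives
\[
\gw{\sigma_\lambda,\sigma_\mu,\sigma_\nu}{1}
= \int_{Y_1}[Y_\lambda][Y_\mu][Y_\nu]
= \frac{1}{\deg}\int_{X^+}[X^+_\lambda][X^+_\mu][X^+_\nu],
\]
with $\deg = 2$, which is the claim.

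The main obstacle will be setting up the line-to-point correspondence rigorously in the symplectic case and proving it has degree exactly $2$ generically — the paragraph before the proposition explicitly warns that, unlike the orthogonal case where the parameter space of lines is literally a two-step flag variety, here "additional geometric arguments are needed." Concretely, the hard part is: (i) showing that a general $(m+1)$-dimensional isotropic subspace of $\C^{2n+2}$, intersected with the various translated flags, yields exactly two compatible $(A,B)$ data on $Y_1$ (the square-root ambiguity), and (ii) checking transversality is preserved, so that the factor $\tfrac12$ is not corrupted by excess intersection. I would handle (i) by an explicit coordinate computation with a standard symplectic basis, reducing the ambiguity to the two roots of a single quadratic (exactly as in Proposition~\ref{Gd2d}'s quadratic-form argument), and (ii) by invoking Kleiman--Bertini on the homogeneous space $X^+$ together with the fact, from Theorem~\ref{T:qclasC}(a), that the $Y_1$-side intersection already lives in a single $\Sp_{2n}$-orbit and is transverse there.
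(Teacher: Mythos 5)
Your overall strategy is the same as the paper's (a correspondence between lines on $X$ and points of $X^+=\IG(m+1,2n+2)$, with the factor $\tfrac12$ coming from a ``two solutions of a quadratic'' phenomenon in a small symplectic space), but the correspondence as you describe it cannot work, and the hardest part of the argument is missing. First, a dimension count rules out your ``generically $2{:}1$ map between $Y_1$ and $X^+$'': one has $\dim X^+ = \dim X + (n+k+1) = \dim Y_1 + 3$, so no generically finite map between them exists. Relatedly, your assertion that $\Sigma^+:=B$ is isotropic is false: for $(A,B)\in Y_1$ with $A=B\cap B^\perp$ the form is nondegenerate on $B/A$, so $B$ is not isotropic in $V$, and embedding $V$ into $V\oplus N$ does not change this. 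The correct correspondence (the one the paper uses, with $V^+=V\oplus H$, $H\cong\C^2$ symplectic) sends $\Sigma^+\in X^+$ with $\dim(\Sigma^+\cap V)=m-1$ to $(A,B)=(\Sigma^+\cap V,\ (\Sigma^++H)\cap V)$; the fiber over a fixed $(A,B)$ is (an open subset of) $\LG(2,W)$ with $W=B/A\oplus H$, which is $3$-dimensional. The number $2$ is not the degree of a map $X^+\to Y_1$ but the number of points in which the three Schubert conditions cut this $\LG(2,4)$-fiber: exactly two isotropic planes are incident to the three lines $\Sigma_1/A\oplus\C e$, $\Sigma_2/A\oplus\C f$, $\Sigma_3/A\oplus\C g$ determined by the unique lifts $\Sigma_i$ of $(A,B)$ to the three Schubert varieties (equivalently, $\int_{\LG(2,4)}\sigma_1^3=2$).

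Second, the technical core of the proof is showing that $X^+_\lambda\cap X^+_\mu\cap X^+_\nu$ is entirely contained in the open orbit of $G=\Sp(V)\times\SL(H)$ on $X^+$, i.e.\ the locus where $\dim(\Sigma^+\cap V)=m-1$; this requires separate dimension-counting arguments to exclude each of the three smaller orbits ($\Sigma^+\subset V$; $\Sigma^+=\Sigma\oplus\C h$; and $\dim(\Sigma^+\cap V)=m$ with $\Sigma^+\cap H=0$). Your proposal does not address this, and your account of where the length hypothesis enters is off: it is used precisely to exclude the orbit $\IG(m+1,V)\subset X^+$ (where $\Sigma^+$ does not use $H$ at all), via $|\ov\lambda|+|\ov\mu|+|\ov\nu| = |\lambda|+|\mu|+|\nu|-\ell(\lambda)-\ell(\mu)-\ell(\nu) > \dim\IG(m+1,V)$, where $\ov\lambda$ is obtained by deleting the first column. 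Finally, one still has to verify that both of the two isotropic planes found in each fiber actually satisfy the augmented Schubert conditions defining $X^+_\lambda$, $X^+_\mu$, $X^+_\nu$, and conversely that every point of the triple intersection over $(A,B)$ arises this way; this surjectivity/injectivity check is also absent from your outline.
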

\begin{proof}
  Let $E_\bull, F_\bull, G_\bull \subset V = \C^{2n}$ be isotropic
  flags in general position.  Let $H = \C^2$ be a two-dimensional
  symplectic vector space, and choose generic elements $e,f,g \in H$.
  Set $V^+ = V \oplus H$ and let $E^+_\bull, F^+_\bull, G^+_\bull
  \subset V^+$ be the augmented flags given by $E^+_p = E_{p-1} \oplus
  \C e$, $F^+_p = F_{p-1} \oplus \C f$, and $G^+_p = G_{p-1} \oplus \C
  g$.  We will prove that the assignment $\Sigma^+ \mapsto (\Sigma^+
  \cap V, (\Sigma^+ + H)\cap V)$ gives a well defined 2-1 map from
  $X^+_\lambda(E^+_\bull) \cap X^+_\mu(F^+_\bull) \cap
  X^+_\nu(G^+_\bull)$ onto the intersection $Y_\lambda(E_\bull) \cap
  Y_\mu(F_\bull) \cap Y_\nu(G_\bull) \subset Y_1$.  Notice that
  $X^+_\lambda(E^+_\bull) = \{ \Sigma^+ \in X^+ \mid \dim(\Sigma^+
  \cap (E_{p_j(\lambda)}+\C e)) \gequ j, \ \forall \, 1 \lequ j \leq
  \ell(\lambda) \}$.
  
  The group $G = \Sp(V) \times \SL(H)$ acts on $X^+$ and our choices
  imply that the varieties $X^+_\lambda, X^+_\mu, X^+_\nu$ are in
  general position for this action.  It is easily seen that the
  $G$-action on $X^+$ has the following four orbits.
  
$O_1 = \IG(m+1,V)$

$O_2 = \IG(m,V) \times \IG(1,H)$

$O_3 = \{ \Sigma^+ \in X^+ \mid \dim(\Sigma^+ \cap V) = m, \Sigma^+ \cap H =
0 \}$

$O_4 = \{ \Sigma^+ \in X^+ \mid \dim(\Sigma^+ \cap V) = m-1 \}$

We start by showing that the intersection $X^+_\lambda \cap X^+_\mu
\cap X^+_\nu$ is contained in the open orbit $O_4$.  Notice at first
that $X^+_\lambda(E_\bull) \cap \IG(m+1,V)$ is the Schubert variety in
$\IG(m+1,V)$ given by the partition $\overline \lambda =
(\lambda_1-1,\dots,\lambda_{\ell(\lambda)}-1)$ obtained by deleting
the first column of $\lambda$.  Since $|\overline\lambda| +
|\overline\mu| + |\overline\nu| > |\lambda|+|\mu|+|\nu| - 2m-2 = \dim
\IG(m+1,V)$, it follows that $X^+_\lambda \cap X^+_\mu \cap X^+_\nu
\cap \IG(m+1,V) = \emptyset$.

Now assume $\Sigma^+ \in X^+_\lambda \cap X^+_\mu \cap X^+_\nu \cap
O_2$.  Then $\Sigma^+ = \Sigma \oplus \C h$ where $\Sigma \in X$ and
$0 \neq h \in H$.  We may assume that $h \not \in \C f$ and $h \not\in
\C g$, which implies that $\Sigma \in X_\mu(F_\bull) \cap
X_\nu(G_\bull)$.  Define the $k$-strict partition $\eta$ by $\eta_j =
\lambda_{j+1}$ for $j < \lambda_1-2k$; $\eta_j = \lambda_{j+1}-1$ for
$\lambda_1-2k \leq j < \ell(\lambda)$; and $\eta_j = 0$ for $j \geq
\ell(\lambda)$.  Since $\lambda_1+\lambda_j \lequ 2k+j-1$ implies that
$j > \lambda_1-2k$, it follows that $p_{j+1}(\lambda) \lequ p_j(\eta)$
for each $j < \ell(\lambda)$.  Since $\dim(\Sigma \cap
E_{p_{j+1}(\lambda)}) \gequ j$ for all $j$, this implies that $\Sigma
\in X_\eta(E_\bull)$.  But $|\eta|+|\mu|+|\nu| \gequ
|\lambda|+|\mu|+|\nu|-n-k > \dim X$, so $X_\eta \cap X_\mu \cap X_\nu
= \emptyset$, a contradiction.

We finally check that no point of $X^+_\lambda \cap X^+_\mu \cap
X^+_\nu$ is contained in $O_3$.  We have a smooth map $\pi : O_3 \to
X$ given by $\pi(\Sigma^+) = \Sigma^+ \cap V$.  The fiber over a point
$\Sigma \in X$ is an open subset of $\bP(H \oplus
\Sigma^\perp/\Sigma)$, so $\dim O_3 = \dim X + 2k+1 = \dim X^+ - m$.
Now suppose $\Sigma^+ \in X^+_\lambda \cap X^+_\mu \cap X^+_\nu \cap
O_3$.  If $\Sigma^+ \not\subset V\oplus \C e$, then the dimension of
$\Sigma^+ \cap (V \oplus \C e)$ is at most $m$.  But $\Sigma^+ \cap V$
has dimension $m$, so we must have $\Sigma^+ \cap (V \oplus \C e) =
\Sigma^+ \cap V$.  In particular we obtain $\Sigma^+ \cap (E_i \oplus
\C e) = (\Sigma^+ \cap V) \cap E_i$ for each $i$, so $\Sigma^+ \cap V
\in X_\lambda$ and $\Sigma^+ \in \pi^{-1}(X_\lambda)$.  Now notice
that $\Sigma^+$ cannot be contained in both $V \oplus \C e$ and $V
\oplus \C f$, so by permuting $\lambda, \mu, \nu$, we may assume that
$\Sigma^+ \not\subset V \oplus \C f$ and $\Sigma^+ \not\subset V
\oplus \C g$.  This implies that $\Sigma^+ \in \pi^{-1}(X_\mu) \cap
\pi^{-1}(X_\nu)$.  Since $\pi^{-1}(X_\lambda \cap X_\mu \cap X_\nu) =
\emptyset$, we then deduce that $\Sigma^+ \subset V \oplus \C e$.
However, all components of the variety $Z = \{ \Sigma^+ \in
X^+_\lambda \mid \Sigma^+ \subset V \oplus \C e \}$ have dimension
strictly smaller than the dimension of $X^+_\lambda$, so the
codimension in $O_3$ of each of the components of $Z \cap O_3$ is at
least $|\lambda|-m+1$.  Since $|\lambda|-m+1 + |\mu|+|\nu| = \dim X +
2k+2 > \dim O_3$ we conclude that $\pi^{-1}(X_\mu \cap X_\nu) \cap Z
\cap O_3 = \emptyset$, a contradiction.  This verifies that
$X^+_\lambda \cap X^+_\mu \cap X^+_\nu \subset O_4$.

Notice that for $\Sigma^+ \in O_4$ we must have $\Sigma^+ \cap H = 0$,
since otherwise $\Sigma^+ \cap H = \C h$ for some $0 \neq h \in H$,
and $\Sigma^+ \subset (\C h)^\perp = V \oplus \C h$.  This implies
that $\dim(\Sigma^+ \cap V) \gequ m$, a contradiction.  Since
$\dim(\Sigma^+ + H) = m+3$ and $(\Sigma^+ + H) + V = V\oplus H$, it
follows that $\dim((\Sigma^+ + H)\cap V) = m+1$.  Thus we have a well
defined map $\phi : O_4 \to Y_1$ given by $\Sigma^+ \mapsto (\Sigma^+
\cap V, (\Sigma^+ + H)\cap V)$.

If $\Sigma^+ \in X^+_\lambda \cap O_4$ then $\dim(\Sigma^+ \cap
(E_{p_j(\lambda)} \oplus \C e)) \gequ j$ implies that $\dim((\Sigma^+
\cap V) \cap E_{p_j(\lambda)}) \gequ j-1$ for each $j$.  Since $e
\not\in \Sigma^+$ we also have $\dim((\Sigma^+ + H) \cap
(E_{p_j(\lambda)} \oplus \C e)) \gequ j+1$, which implies that
$\dim((\Sigma^+ + H) \cap E_{p_j(\lambda)}) \gequ j$.  It follows from
this that $\phi(X^+_\lambda \cap O_4) \subset Y_\lambda$, and we
conclude that $\phi$ gives a well defined map $X^+_\lambda \cap
X^+_\mu \cap X^+_\nu \to Y_\lambda \cap Y_\mu \cap Y_\nu \subset Y_1$.

Now let $(A,B) \in Y_\lambda(E_\bull) \cap Y_\mu(F_\bull) \cap
Y_\nu(G_\bull)$ be given.  We must prove that $\phi^{-1}((A,B)) \cap
X^+_\lambda \cap X^+_\mu \cap X^+_\nu$ contains exactly two points.
We know from Theorem~\ref{T:qclasC} (a) that $A = B \cap B^\perp$, so
$W = B/A \oplus H$ is a symplectic space of dimension 4, and
$\LG(2,W)$ is identified with the subset of $\Sigma^+ \in X^+$ such
that $A \subset \Sigma^+ \subset B\oplus H$.  Since $(A,B) \in
Y_\lambda(E_\bull)$ we have $\dim(A \cap E_{p_j(\lambda)}) \gequ j-1$
and $\dim(B \cap E_{p_j(\lambda)}) \gequ j$ for all $j$.  By the proof
of Theorem~\ref{T:qclasC}, there exist unique points $\Sigma_1 \in
X_\lambda(E_\bull)$, $\Sigma_2 \in X_\mu(F_\bull)$, $\Sigma_3 \in
X_\nu(G_\bull)$ such that $A \subset \Sigma_i \subset B$ for each $i$,
and these points are pairwise distinct.  Set $\wt E = \Sigma_1 \oplus
\C e$, $\wt F = \Sigma_2 \oplus \C f$, and $\wt G = \Sigma_3 \oplus \C
g$.  Then there are exactly two points $\Sigma^+ \in X^+$, such that
$A \subset \Sigma^+ \subset B \oplus H$ and $\dim(\Sigma^+ \cap \wt E)
\gequ m$, $\dim(\Sigma^+ \cap \wt F) \gequ m$, $\dim(\Sigma^+ \cap \wt
G) \gequ m$.  This is true because exactly two isotropic planes of
$\LG(2,W)$ are incident to all of the subspaces $\Sigma_1/A \oplus \C
e$, $\Sigma_2/A \oplus \C f$, and $\Sigma_3/A \oplus \C g$.  Since
$\dim(\wt E \cap (E_{p_j(\lambda)}\oplus \C e)) \gequ j+1$ and
$\dim(\Sigma^+ \cap \wt E) \gequ m$, we obtain $\dim(\Sigma^+ \cap
(E_{p_j(\lambda)} + \C e)) \gequ j$, so $\Sigma^+ \in X^+_\lambda$.
Similarly we have $\Sigma^+ \in X^+_\mu \cap X^+_\nu$.

On the other hand, if $\Sigma^+ \in X^+_\lambda \cap X^+_\mu \cap
X^+_\nu$ is such that $A \subset \Sigma^+ \subset B \oplus H$, then $A
= \Sigma^+ \cap V$ and $B = (\Sigma^+ + H) \cap V$.  We claim that
$\Sigma_1 = (\Sigma^+ + \C e) \cap V$.  In fact, the isotropic
$m$-plane $\Sigma'_1 = (\Sigma^+ + \C e) \cap V$ satisfies $A \subset
\Sigma'_1 \subset B$.  Since $\dim((\Sigma^+ + \C e) \cap
(E_{p_j(\lambda)} \cap \C e)) \gequ j+1$ we get $\dim(\Sigma'_1 \cap
E_{p_j(\lambda)}) \gequ j$ for each $j$, so $\Sigma'_1 = \Sigma_1$ must
be the unique isotropic $m$-plane from $X_\lambda(E_\bull)$ which lies
between $A$ and $B$.  It follows from the claim that $\Sigma^+ + \wt E
\subset \Sigma^+ + \C e$, so $\dim(\Sigma^+ \cap \wt E) \gequ m$.
Similarly we see that $\dim(\Sigma^+ \cap \wt F) \gequ m$ and
$\dim(\Sigma^+ \cap \wt G) \gequ m$, which finally establishes the
required 2 to 1 map.
\end{proof}

\begin{example} \label{E:lengthcond}
  The length condition in Proposition~\ref{P:gwdegone} is essential.  For
  example, for $X = \IG(2,8)$ and $X^+ = \IG(3,10)$ we have
  $\gw{\sigma_{1,1}, \sigma_{4,1}, \sigma_{6,5}}{1} = 0$ and
\[
\int_{X^+} [X^+_{1,1}] \cdot [X^+_{4,1}] \cdot [X^+_{6,5}] = 1.
\]
\end{example}

\subsection{Quantum cohomology}
\label{qcIG}

The (small) quantum cohomology ring $\QH^*(\IG)$ is a $\Z[q]$-algebra
which is isomorphic to $\HH^*(\IG,\Z)\otimes_{\Z}\Z[q]$ as a module over
$\Z[q]$. The degree of the formal variable $q$ here is given by
\[ \deg(q)=\int_{\IG} c_1(T_{\IG})\cdot \s_{(1)^\vee} = n+k+1 \,. \]
The ring structure on $\QH^*(\IG)$ is determined by the relation
\[
  \s_\lambda \cdot \s_\mu = 
  \sum \gw{\s_\lambda, \s_\mu, \s_{\nu^\vee}}{d} \, \s_\nu \, q^d,
\]
the sum over $d\gequ 0$ and $k$-strict partitions $\nu$ with
$|\nu| = |\lambda| + |\mu| - (n+k+1)d$. For any partition $\nu$,
define $\nu^*$ by removing the first row of $\nu$, that is, 
$\nu^*=(\nu_2,\nu_3,\ldots)$.

\begin{thm}[Quantum Pieri rule for $\IG$]
\label{igqupieri}
For any $k$-strict partition $\lambda\in\cP(k,n)$
and integer $p \in [1,n+k]$, we have
\[
  \s_p \cdot \s_\lambda = \sum_{\lambda \to \mu} 2^{N(\lambda,\mu)}\,\s_\mu +
  \sum_{\lambda \to \nu} 2^{N(\lambda,\nu)-1} \,\s_{\nu^*}\, q
\]
in the quantum cohomology ring of $\IG(n-k,2n)$.  The first sum is over
partitions $\mu\in\cP(k,n)$
such that $|\mu|=|\lambda|+p$, and the second sum is over 
partitions $\nu\in\cP(k,n+1)$ with $|\nu|=|\lambda|+p$ and $\nu_1 = n+k+1$.
\end{thm}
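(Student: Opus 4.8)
The plan is to evaluate the quantum structure-constant formula $\sigma_p\cdot\sigma_\lambda=\sum_{d\geq 0}\sum_{\bar\nu}\gw{\sigma_p,\sigma_\lambda,\sigma_{\bar\nu^\vee}}{d}\,\sigma_{\bar\nu}\,q^d$ one degree at a time. For $d\geq 2$ every invariant vanishes: Gromov--Witten invariants are symmetric in their three arguments, so by Theorem~\ref{T:qclasC}(d) it is enough that the one-row partition $(p)$ fail to contain the staircase $\rho_d=(d,d-1,\dots,1)$, which has $d$ rows --- automatic once $d\geq 2$. The $d=0$ term is, by definition of the quantum product, the classical product $\sigma_p\cdot\sigma_\lambda$ in $\HH^*(\IG)$, which by Theorem~\ref{T:pieriC} equals $\sum_{\lambda\to\mu,\ |\mu|=|\lambda|+p}2^{N(\lambda,\mu)}\sigma_\mu$; this is the first sum in the statement.

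The remaining work is the $d=1$ term, where the summation index $\bar\nu\in\cP(k,n)$ satisfies $|\bar\nu|=|\lambda|+p-(n+k+1)$. Substituting $|\bar\nu^\vee|=\dim\IG-|\bar\nu|$ shows this is exactly the codimension hypothesis of Proposition~\ref{P:gwdegone} for the triple $(\lambda,(p),\bar\nu^\vee)$, and the length hypothesis $\ell(\lambda)+\ell((p))+\ell(\bar\nu^\vee)\leq m+1+m=2m+1$ holds unconditionally. Hence $\gw{\sigma_p,\sigma_\lambda,\sigma_{\bar\nu^\vee}}{1}=\tfrac12\int_{\IG(m+1,2n+2)}[X^+_p]\cdot[X^+_\lambda]\cdot[X^+_{\bar\nu^\vee}]$. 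Since $X^+=\IG(m+1,2n+2)=\IG((n+1)-k,2(n+1))$ has the same parameter $k$, its classical Pieri rule (Theorem~\ref{T:pieriC} again) is governed by the same relation $\lambda\to\cdot$ and the same statistic $N$ --- now with the columns in Definition~\ref{D:pieriarrow} running out to $k+n+1$. By that rule the triple intersection number equals the coefficient of $\sigma^+_\eta$ in $\sigma^+_p\cdot\sigma^+_\lambda$, where $\eta\in\cP(k,n+1)$ is the partition Poincar\'e-dual to $\bar\nu^\vee$ inside $\cP(k,n+1)$; this coefficient is $2^{N(\lambda,\eta)}$ if $\lambda\to\eta$ and $0$ otherwise, so after dividing by $2$ the degree-one contribution to $\sigma_p\cdot\sigma_\lambda$ is $\sum_{\bar\nu}2^{N(\lambda,\eta)-1}\,\sigma_{\bar\nu}\,q$.

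The last step --- and the only genuinely combinatorial point --- is to identify $\eta$ with the index appearing in the statement. I would prove the lemma that for $\bar\nu\in\cP(k,n)$ the partition Poincar\'e-dual to $\bar\nu^\vee$ in $\cP(k,n+1)$ is $\eta=(n+k+1,\bar\nu_1,\bar\nu_2,\dots)$; equivalently, prepending a full first row intertwines the duality involutions of $\cP(k,n)$ and $\cP(k,n+1)$. This is a direct computation with the index functions $p_j$, $p^+_j$: one checks $p^+_1\big((n+k+1,\bar\nu)\big)=1$ and $p^+_j\big((n+k+1,\bar\nu)\big)=p_{j-1}(\bar\nu)+1$ for $2\leq j\leq m+1$, and compares with $p^+_j(\eta)=2n+3-p^+_{m+2-j}(\bar\nu^\vee)$, using $p^+_\ell(\bar\nu^\vee)=p_\ell(\bar\nu^\vee)+1$ for $1\leq\ell\leq m$ together with $p^+_{m+1}(\bar\nu^\vee)=2n+2$; only this last equality needs an argument, and it follows from $k$-strictness of $\bar\nu^\vee$ and its containment in the $m\times(n+k)$ rectangle, which force $(\bar\nu^\vee)_i\leq n+k+1-i$ for every $i$. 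Granting the lemma, set $\nu:=\eta=(n+k+1,\bar\nu)$; then $\bar\nu=\nu^*$, the constraint $|\bar\nu|=|\lambda|+p-(n+k+1)$ becomes $|\nu|=|\lambda|+p$ with $\nu_1=n+k+1$, and $\bar\nu\mapsto\nu$ is a bijection onto the set indexing the second sum, turning the degree-one contribution into $\sum_{\lambda\to\nu}2^{N(\lambda,\nu)-1}\,\sigma_{\nu^*}\,q$. Thus the main obstacle is bookkeeping --- nailing down this duality lemma and checking that the column conventions in the definition of $N$, the staircase condition, and the length condition all line up --- not any new geometry, which is supplied entirely by Theorem~\ref{T:qclasC}(d) and Proposition~\ref{P:gwdegone}.
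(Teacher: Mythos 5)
Your proposal is correct and follows essentially the same route as the paper's proof: classical Pieri for $d=0$, Theorem~\ref{T:qclasC}(d) plus symmetry of the invariants for $d\geq 2$, and Proposition~\ref{P:gwdegone} combined with the classical Pieri rule on $\IG(m+1,2n+2)$ for $d=1$. The only step the paper leaves as "one checks" — that dualizing $\nu^\vee$ in $\cP(k,n+1)$ yields $(n+k+1,\nu_1,\dots,\nu_m)$ — is exactly the index-set computation you carry out, and your verification of it (including $p^+_{m+1}(\nu^\vee)=2n+2$) is correct.
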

\begin{proof}
  The first sum is dictated by the classical Pieri rule
  (Theorem~\ref{T:pieriC}).  Theorem~\ref{T:qclasC} (d) implies that
  the coefficient of $q^d$ in the product $\s_p \, \s_\la$ vanishes
  for all $d \gequ 2$.  By Proposition~\ref{P:gwdegone}, the
  coefficient of $\sigma_{\nu}\, q$ in the product is equal to
\[ \gw{\sigma_p, \sigma_\lambda, \sigma_{\nu^\vee}}{1} =
   \frac{1}{2} \int_{X^+} [X^+_p]\cdot [X^+_\lambda]\cdot 
     [X^+_{\nu^\vee}] \,,
\]
where $\nu^\vee$ denotes the dual partition of $\nu$ with respect to
the Grassmannian $\IG = \IG(m,2n)$.  One checks that the result of
dualizing $\nu^\vee$ with respect to $X^+ = \IG(m+1,2n+2)$ is $\nu^+ =
(n+k+1,\nu_1,\dots,\nu_m)$ (information on dual partitions is
given in Section \ref{dualpar}).  Finally, it follows from
Theorem~\ref{T:pieriC} that the coefficient of $\sigma_\nu\, q$ is
equal to $2^{N(\lambda,\nu^+)-1}$ if $\lambda \to \nu^+$, and
otherwise this coefficient is zero.
\end{proof}

\begin{example} \label{E:qpieriC}
  In the quantum ring of $\IG(4,12)$ we have $\sigma_4 \cdot
  \sigma_{(5,3,2,2)} = 4 \sigma_{(8,4,2,2)} + 2 \sigma_{(7,5,2,2)} + 2
  \sigma_{(7,4,3,2)} + \sigma_{(6,5,3,2)} + \sigma_{(8,4,3,1)} + 2
  \sigma_{(4,2,1)}\, q + 2 \sigma_{(3,2,2)}\, q + \sigma_{(3,2,1,1)}\, q$.
  The $q$-terms can be found by expanding the product $[X^+_4] \cdot
  [X^+_{(5,3,2,2)}]$ in $\HH^*(\IG(5,14))$.
\end{example}

\begin{thm}[Ring presentation] 
\label{igqpres}
The quantum cohomology ring $\QH^*(\IG)$ is presented as a quotient of the
polynomial ring $\Z[\sigma_1,\ldots,\sigma_{n+k},q]$ modulo the relations
\begin{equation}
\label{QR1}
\det(\sigma_{1+j-i})_{1\lequ i,j \lequ r} = 0, \ \ \ \ 
n-k+1\lequ r \lequ n+k
\end{equation}
and
\begin{equation}
\label{QR2}
\sigma_r^2 + 2\sum_{i=1}^{n+k-r}(-1)^i \sigma_{r+i}\sigma_{r-i}=
 (-1)^{n+k-r}\s_{2r-n-k-1}\, q,
 \ \  \ \ k+1\lequ r \lequ n.
\end{equation}
\end{thm}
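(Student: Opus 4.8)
The argument will parallel the proof of Theorem~\ref{presIG}, replacing each classical input by its quantum analogue; we may assume $m\ge 1$, the case $m=0$ being trivial. Let $R=\Z[\sigma_1,\dots,\sigma_{n+k}]$ with $\deg\sigma_i=i$, let $\phi_q:R[q]\to\QH^*(\IG)$ be the $\Z[q]$-algebra homomorphism with $\phi_q(\sigma_i)=\sigma_i$, and keep the notation $d_r=\det(\sigma_{1+j-i})_{1\le i,j\le r}$ and $b_r=\sigma_r^2+2\sum_{i\ge 1}(-1)^i\sigma_{r+i}\sigma_{r-i}$ from the classical proof, with $\sigma_0=1$ and $\sigma_p=0$ for $p\notin[0,n+k]$. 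The $q=0$ specialization of the quantum Pieri rule (Theorem~\ref{igqupieri}) is the classical Pieri rule, so the triangularity argument in the proof of Theorem~\ref{presIG}(b) carries over (with an induction on cohomological degree to handle the lower-degree $q$-terms) and shows that $\sigma_1,\dots,\sigma_{n+k}$ generate $\QH^*(\IG)$ as a $\Z[q]$-algebra; hence $\phi_q$ is surjective. It then suffices (i) to check that the relations (\ref{QR1})--(\ref{QR2}) hold, i.e.\ that $\phi_q(d_r)=0$ for $n-k+1\le r\le n+k$ and $\phi_q(b_r)=(-1)^{n+k-r}\sigma_{2r-n-k-1}\,q$ for $k+1\le r\le n$, and (ii) to upgrade the resulting surjection $\bar\phi_q:R[q]/J\twoheadrightarrow\QH^*(\IG)$ to an isomorphism, where $J$ is the ideal generated by the relations.

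The relations (\ref{QR1}) need no quantum correction for degree reasons: $d_r$ has weight $r\le n+k<\deg q$, so $\phi_q(d_r)$ equals the classical value $\phi(d_r)=c_r(\cS^*)$, which vanishes for $r>m$, in particular for $r\ge n-k+1$. For (\ref{QR2}) I would argue as follows. Since the one-row partition $(a)$ contains no staircase $\rho_d$ with $d\ge 2$, Theorem~\ref{T:qclasC}(d) shows that a quantum product of two special classes has no $q^{\ge 2}$-term, and as in the proof of Theorem~\ref{presIG} one has $\phi(b_r)=(-1)^r c_{2r}(\cQ\oplus\cQ^*)=0$ for $r>k$. Hence $\phi_q(b_r)$ equals the coefficient of $q$ in the quantum product $\sigma_r\cdot\sigma_r+2\sum_{i\ge 1}(-1)^i\sigma_{r+i}\cdot\sigma_{r-i}$. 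The coefficient of $\sigma_\nu q$ in a quantum product $\sigma_a\cdot\sigma_b$ of special classes is $\gw{\sigma_{(a)},\sigma_{(b)},\sigma_{\nu^\vee}}{1}$; the length condition $\ell((a))+\ell((b))+\ell(\nu^\vee)\le m+2\le 2m+1$ holds, so Proposition~\ref{P:gwdegone} rewrites it as $\tfrac12\int_{X^+}\sigma^+_a\cdot\sigma^+_b\cdot[X^+_{\nu^\vee}]$ on $X^+=\IG(m+1,2n+2)$, and since the $X^+$-dual of $\nu^\vee$ is $\nu^+=(n+k+1,\nu_1,\dots,\nu_m)$ (computed in the proof of Theorem~\ref{igqupieri}), this is $\tfrac12$ times the coefficient of $[X^+_{\nu^+}]$ in the classical product $\sigma^+_a\cdot\sigma^+_b$.

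Collecting these contributions with the signs and multiplicities of $b_r$, the coefficient of $q$ in $\phi_q(b_r)$ becomes $\tfrac12\sum_\nu\bigl(\text{coefficient of }[X^+_{\nu^+}]\text{ in }E_r\bigr)\sigma_\nu$, where $E_r:=(\sigma^+_r)^2+2\sum_{i=1}^{n+k-r}(-1)^i\sigma^+_{r+i}\sigma^+_{r-i}\in\HH^*(X^+)$ — the indices still capped at $n+k$, the extreme terms with an index $\le 0$ being harmless since $n+k+1$ cannot then occur as a first part. The relation element of $X^+$ furnished by (\ref{R2}) (Theorem~\ref{presIG} applied to $\IG(m+1,2n+2)$, valid for $k+1\le r\le n+1$, hence for our $r$) differs from $E_r$ by the single summand $2(-1)^{n+k+1-r}\sigma^+_{n+k+1}\sigma^+_{2r-n-k-1}$, so $E_r=2(-1)^{n+k-r}\sigma^+_{n+k+1}\cdot\sigma^+_{2r-n-k-1}$ in $\HH^*(X^+)$. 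Finally, the classical Pieri rule on $X^+$ (Theorem~\ref{T:pieriC}) gives $\sigma^+_{n+k+1}\cdot\sigma^+_p=[X^+_{(n+k+1,p)}]$ for every $p\le n+k$: to reach weight $p+(n+k+1)$ from the one-row shape $(p)$ by deleting a vertical strip from its first $k$ columns and adjoining a horizontal strip, the outcome must be the two-row shape $(n+k+1,p)$, with $N\bigl((p),(n+k+1,p)\bigr)=0$; the cases $p\le 0$ reduce to $\sigma^+_{n+k+1}$ or $0$ by the conventions. As $(n+k+1,p)^*=(p)$, extracting the coefficient of $[X^+_{\nu^+}]$ yields $\phi_q(b_r)=(-1)^{n+k-r}\sigma_{2r-n-k-1}\,q$, as required.

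It remains to promote $\bar\phi_q:R[q]/J\twoheadrightarrow\QH^*(\IG)$ to an isomorphism. Modulo $q$, $(R[q]/J)\otimes_{\Z[q]}\Z$ is identified with $R/I$ for $I$ the classical relation ideal, and $\bar\phi_q\bmod q$ with the isomorphism $R/I\xrightarrow{\ \sim\ }\HH^*(\IG)$ of Theorem~\ref{presIG}. Put $M=\ker\bar\phi_q$. Since $\QH^*(\IG)$ is a free $\Z[q]$-module, $\mathrm{Tor}^{\Z[q]}_1(\QH^*(\IG),\Z[q]/(q))=0$, so tensoring $0\to M\to R[q]/J\to\QH^*(\IG)\to 0$ with $\Z[q]/(q)$ remains exact and forces $M/qM=0$; as $M$ is a graded submodule of the positively graded, degreewise finitely generated ring $R[q]/J$ and $\deg q=n+k+1>0$, graded Nakayama gives $M=0$. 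Hence $\bar\phi_q$ is an isomorphism. The main obstacle is the content of the two middle paragraphs: transporting the degree-one Gromov--Witten correction of $b_r$ through Proposition~\ref{P:gwdegone} and the $X^+$-duality dictionary, matching the result against the relation (\ref{R2}) on $X^+$, and verifying the classical Pieri identity $\sigma^+_{n+k+1}\cdot\sigma^+_p=[X^+_{(n+k+1,p)}]$ together with all the boundary cases where an index is $\le 0$.
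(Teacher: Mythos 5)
Your proof is correct and follows the paper's route: the relations (\ref{QR1}) hold for degree reasons, and the $q$-term of (\ref{QR2}) is extracted exactly as in the paper, by combining the classical relation (\ref{R2}) on $X^+=\IG(m+1,2n+2)$ with the Pieri product $[X^+_{n+k+1}]\cdot[X^+_{2r-n-k-1}]=[X^+_{(n+k+1,2r-n-k-1)}]$ and the degree-one comparison of Proposition~\ref{P:gwdegone} (the paper routes this last step through the already-proved quantum Pieri rule, which amounts to the same thing). The only real difference is that where the paper invokes the criterion of \cite{ST} to reduce the presentation to verifying that the deformed relations hold, you prove that reduction from scratch --- surjectivity via quantum Pieri triangularity plus freeness over $\Z[q]$ and graded Nakayama --- which is a correct, self-contained substitute for the citation.
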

\begin{proof}
  According to \cite{ST} (see also \cite[Sec.~10]{FPa}), we only need
  to check how the classical relations in Theorem \ref{presIG} deform
  under the quantum product.  The relations (\ref{QR1}) are true in
  $\QH^*(\IG)$ because the degree of $q$ is greater than $n+k$.  Using
  (\ref{R2}) for $X^+ = \IG(m+1,2n+2)$ we obtain the identity
  $[X^+_r]^2 + \sum_{i=1}^{n+k-r} (-1)^i [X^+_{r+i}] [X^+_{r-i}] =
  (-1)^{n+k-r} [X^+_{n+k+1}] [X^+_{2r-n-k-1}] = (-1)^{n+k-r}
  [X^+_{(n+k+1,2r-n-k-1)}]$ in $\HH^*(X^+)$.  If $2r < n+k+1$ then the
  right hand side is understood to be zero.  The quantum Pieri rule
  for $\IG$ now implies that the $q$-terms of (\ref{QR2}) agree.
\end{proof}

\subsection{Computing Gromov-Witten invariants}
\label{S:computing}

For any partition $\la = (\la_1,\ldots,\la_\ell)$ in $\cP(k,n)$, iterating
the quantum Pieri rule as in the proof of Theorem \ref{presIG}(b) gives
\[
  \s_{\la_1} \cdots \s_{\la_{\ell}} = \s_\la + \sum_{d,\mu} c_{d,\mu}
  \s_\mu  q^d
\]
in $\QH^* (\IG)$, where $c_{d,\mu}\in\Z$ and the partitions $\mu$ in the
sum satisfy $\mu \succ \la$ or $|\mu| < |\la|$. Therefore our quantum
Pieri formula can be used recursively to identify a given Schubert
class with a polynomial in the special Schubert classes and $q$.
These expressions together with the quantum Pieri rule can then be
used to evaluate any quantum product $\s_\la \cdot \s_\mu$, and hence
any Gromov-Witten invariant $\langle \s_\la, \s_\mu, \s_\nu \rangle_d$
on $\IG$, as in the following example.

\begin{example}
  In $\QH^* (\IG(3,10))$ we have
  \[ \s_4 \s_2^2 =
     \s_{(4,2,2)} + \s_{(4,3,1)} + 3\s_{(5,2,1)} + 4\s_{(6,1,1)}
         + 3\s_{(5,3)} + 5\s_{(6,2)} + 8\s_{(7,1)} + 2q.          \]
  This, combined with other quantum Pieri products, leads to the
  expression
  \[ \s_{(4,2,2)} = \s_4 \s_2^2 - \s_4 \s_3 \s_1
                     - \s_5 \s_2 \s_1 + \s_5 \s_3
                     + 2 \s_7 \s_1 - q.               \]
  Using this identity we can evaluate the product
  \begin{gather*}
  \sigma_{(4,2,2)} \cdot \sigma_{(5,3,1)} = 
  (\s_4 \s_2^2 - \s_4 \s_3 \s_1- \s_5 \s_2 \s_1 + \s_5 \s_3
  + 2 \s_7 \s_1 - q) \cdot \sigma_{(5,3,1)} \\
  = \sigma_{(7,6,4)}  + 4 \sigma_{(7,2)}\, q +  \sigma_{(5,3,1)}\, q +
  \sigma_{(7,1,1)}\, q + 2 \sigma_{(6,2,1)}\, q + 3 \sigma_{(6,3)}\, q +
  \sigma_{(5,4)} \, q +  \sigma_1\, q^2.
  \end{gather*}
  Since the Poincar\'e dual of $\sigma_1$ is $\sigma_{(7,6,4)}$, we
  obtain $\gw{\sigma_{(4,2,2)}, \sigma_{(5,3,1)}, \sigma_{(7,6,4)}}{2} =
  1$.
\end{example}


\section{The Grassmannian $\OG(n-k,2n+1)$}
\label{QCOGodd}

\subsection{Schubert classes}
\label{ogoddscs}

Consider a vector space $V\cong \C^{2n+1}$ and a nondegenerate
symmetric bilinear form on $V$.  For each $m=n-k < n$, the odd 
orthogonal Grassmannian
$\OG=\OG(m,2n+1)$ parametrizes the $m$-dimensional isotropic
subspaces in $V$.  The algebraic variety $\OG$ has dimension
$2m(n-m)+m(m+1)/2$, the same as the dimension of $\IG(m,2n)$.
Moreover, the Schubert varieties in $\OG$ are indexed by the 
same set of $k$-strict partitions $\cP(k,n)$ as for $\IG$.

An isotropic flag $F_\bull$ of $V$ is a complete flag $0=F_0\subsetneq
F_1 \subsetneq \dots \subsetneq F_{2n+1} = V$ such that $F_{n+i} =
F_{n+1-i}^\perp$ for all $1 \lequ i \lequ n+1$.  For each such flag
and $\la\in\cP(k,n)$, define the Schubert variety
\[ 
   X_\lambda(F_\bull) = \{ \Sigma \in \OG \mid \dim(\Sigma \cap
   F_{\ov{p}_j(\lambda)}) \gequ j \ \ \forall\, 1 \lequ j \lequ 
   \ell(\lambda) \} \,,
\]
where 
\[
\ov{p}_j(\lambda) = n+k-\lambda_j + \#\{i<j : \lambda_i+\lambda_j
\lequ 2k+j-i \} + \begin{cases}
             2 & \text{if $\la_j \lequ k$}, \\
             1 & \text{if $\la_j > k$}.
\end{cases}
\]
The codimension of this variety is equal to $|\lambda|$.  We define
$\ta_\lambda \in \HH^{2|\lambda|}(\OG) = \HH^{2|\lambda|}(\OG,\Z)$ to
be the cohomology class dual to the cycle given by
$X_\lambda(F_\bull)$.  These Schubert classes form a $\Z$-basis for
the cohomology ring of $\OG$.

\subsection{Classical Pieri rule}
\label{ogoddclpieri}

The following comparison of structure constants between $\IG$ and 
$\OG$ is well known; in fact this extends to the complete flag 
varieties in types B and C; see \cite[Sec.\ 3.1]{BS}. For each 
$k$-strict partition $\la$, let $\ell_k(\la)$ denote the number
of parts $\la_i$ which are strictly greater than $k$. 
Consider $\la$ and $\mu$
in $\cP(k,n)$ and suppose that
\[
\s_{\la}\s_{\mu}=\sum_{\nu}e_{\la,\mu}^{\nu}\,\s_{\nu}
\]
in $\HH^*(\IG(n-k,2n))$.
Then for the corresponding formula
\[
\ta_\lambda\ta_\mu=\sum_{\nu}f_{\la,\mu}^\nu\,\ta_\nu.
\]
in $\HH^*(\OG(n-k,2n+1))$, we have
\begin{equation}
\label{igogrel}
f_{\lambda,\mu}^{\nu}=2^{\ell_k(\nu)-\ell_k(\la)-\ell_k(\mu)}
\,e_{\lambda,\mu}^\nu.
\end{equation}
It follows that the map $\HH^*(\OG,\Z)\to \HH^*(\IG,\Q)$ sending
$\tau_{\lambda}$ to $2^{-\ell_k(\lambda)}\sigma_{\lambda}$ is an
isomorphism onto its image.

The {\em special} Schubert varieties for $\OG$
are the varieties $X_p=X_{(p)}(F_{\bull})$, for $1\lequ p\lequ n+k$.
These are defined by a single Schubert condition as follows: let
\begin{equation}
\label{vare}
\varepsilon(p)=n+k-p+
\begin{cases}
2&\text{if $p \lequ k$},\\
1&\text{if $p > k$}.
\end{cases}
\end{equation}
Then $X_p(F_\bull)= \{\, \Sigma\in \OG\,|\,\Sigma\cap
F_{\varepsilon(p)}\ne 0\,\}.$ We let $\ta_p$ denote the cohomology
class of $X_p$.  The classical Pieri rule for $\OG$ will involve the
same relation $\la\to\mu$ and set $\A\subset\mu\ssm\lambda$ that
appeared in Definition \ref{D:pieriarrow}. For each $\la$ and $\mu$
with $\la\to\mu$, we let $N'(\lambda,\mu)$ equal the number
(respectively, one less than the number) of components of $\A$, if
$p\lequ k$ (respectively, if $p>k$).

\begin{thm}[Pieri rule for $\OG(m,2n+1)$]
\label{thm.ogoddpieri}
For any $k$-strict partition $\la$ and integer $p \in [1,n+k]$,
we have
\begin{equation}
\label{ogclass}
\ta_p\cdot\ta_{\lambda} = 
\sum_{\lambda \to \mu, \,
  |\mu|=|\lambda|+p} 2^{N'(\lambda,\mu)} \, \ta_\mu.
\end{equation}
\end{thm}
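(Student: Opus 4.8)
The plan is to deduce the odd orthogonal Pieri rule (Theorem~\ref{thm.ogoddpieri}) from the already-established Pieri rule for $\IG$ (Theorem~\ref{T:pieriC}) via the comparison of structure constants in equation~(\ref{igogrel}). First I would record the specialization of (\ref{igogrel}) to the case $\mu = (p)$: since $\ell_k((p)) = 1$ if $p > k$ and $\ell_k((p)) = 0$ if $p \lequ k$, we get
\[
 f^{\nu}_{\lambda,(p)} = 2^{\,\ell_k(\nu) - \ell_k(\lambda) - [p>k]}\, e^{\nu}_{\lambda,(p)},
\]
where $[p>k]$ is $1$ if $p>k$ and $0$ otherwise. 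Combining this with Theorem~\ref{T:pieriC}, which says $e^{\mu}_{\lambda,(p)} = 2^{N(\lambda,\mu)}$ when $\lambda \to \mu$ and $|\mu| = |\lambda|+p$ (and zero otherwise), the coefficient of $\ta_\mu$ in $\ta_p\cdot\ta_\lambda$ equals $2^{\,N(\lambda,\mu) + \ell_k(\mu) - \ell_k(\lambda) - [p>k]}$. So the theorem reduces to the purely combinatorial identity
\[
 N'(\lambda,\mu) = N(\lambda,\mu) + \ell_k(\mu) - \ell_k(\lambda) - [p>k]
\]
for all pairs with $\lambda \to \mu$ and $|\mu| = |\lambda| + p$.

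The core of the argument is therefore this combinatorial identity, and I expect it to be the main obstacle. I would prove it by bookkeeping the difference $\ell_k(\mu) - \ell_k(\lambda)$, i.e.\ the net change in the number of rows of length $> k$, in terms of the geometry of the skew shape $\mu \ssm \lambda$ together with the boxes removed from the first $k$ columns. Recall from Definition~\ref{D:pieriarrow} that $\A$ is the set of boxes of $\mu \ssm \lambda$ lying in columns $k+1$ through $k+n$ that are \emph{not} accounted for in conditions (1) or (2), that $N(\lambda,\mu)$ counts the connected components of $\A$ with no box in column $k+1$, and that $N'(\lambda,\mu)$ counts \emph{all} components of $\A$ when $p \lequ k$ and one less than that when $p > k$. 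Thus $N'(\lambda,\mu) - N(\lambda,\mu)$ equals the number of components of $\A$ that \emph{do} meet column $k+1$, minus $[p>k]$. The claim then amounts to: the number of components of $\A$ meeting column $k+1$ equals $\ell_k(\mu) - \ell_k(\lambda) + 1$ if at least one box is added in column $k+1$ (which forces $p>k$, as $\mu$ has a box in column $k+1$ but there is a row in $\mu \ssm \lambda$ reaching that far only if $p$ is large), and equals $\ell_k(\mu) - \ell_k(\lambda)$ otherwise. I would verify this by analyzing which rows gain or lose the property of having length $> k$: a row of $\lambda$ with $\lambda_i \lequ k$ becomes a row of $\mu$ with $\mu_i > k$ precisely when that row acquires a box in some column $> k$, and by the horizontal-strip condition such a row contributes exactly one connected "run" of new boxes starting at or after column $k+1$; conversely the vertical-strip removal from the first $k$ columns and the $k$-related matching in condition~(2) can cause a previously long row of $\lambda$ to shrink, which must be tracked against the components of $\A$ not beginning in column $k+1$. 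Careful case analysis using the definition of $k$-related boxes and the structure of conditions (1) and (2) should pin down the correspondence exactly.

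An alternative, and perhaps cleaner, route is to bypass the combinatorial identity entirely and instead re-derive the $\OG$ Pieri rule directly by the same geometric method used for $\IG$ — that is, via triple intersections of Schubert varieties in an auxiliary parameter space, mirroring the proof of Theorem~\ref{T:pieriC} that is deferred to Section~\ref{Pierirule}. Since the Schubert varieties of $\OG(m,2n+1)$ are indexed by the same set $\cP(k,n)$ and the special conditions $\Sigma \cap F_{\varepsilon(p)} \ne 0$ are formally parallel to those for $\IG$, the same dimension-counting and kernel–span arguments apply with the symplectic form replaced by the symmetric one on an odd-dimensional space; the factor-of-two discrepancies are exactly the source of the $2^{N'}$ versus $2^{N}$ distinction. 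However, given that (\ref{igogrel}) is already available and is quoted as well known, the shortest path is the reduction above, so the write-up should present the deduction from Theorem~\ref{T:pieriC} and relegate the combinatorial verification of the $N'$-vs-$N$ identity to a lemma, proved by the row-by-row analysis sketched here. The hard part, to reiterate, will be making that exponent comparison rigorous across all the cases permitted by conditions (1) and (2), especially handling columns of $\mu$ with strictly fewer boxes than $\lambda$ where the $k$-related matching intertwines the vertical and horizontal strips.
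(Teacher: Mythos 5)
Your overall strategy coincides with the paper's: Theorem~\ref{thm.ogoddpieri} is deduced from Theorem~\ref{T:pieriC} together with the comparison formula (\ref{igogrel}), and the exponent identity you isolate, $N'(\la,\mu) = N(\la,\mu) + \ell_k(\mu) - \ell_k(\la) - [p>k]$, is exactly what makes that deduction work (the paper treats this step as immediate). So the reduction is set up correctly, and the identity you need is the right one.

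Your verification of that identity, however, goes wrong at the restatement. Since $N'(\la,\mu)-N(\la,\mu)$ equals the number of components of $\A$ meeting column $k+1$ minus $[p>k]$, what you must show is that this number of components equals $\ell_k(\mu)-\ell_k(\la)$ in \emph{all} cases --- not your dichotomy with the extra ``$+1$'' when a box is added in column $k+1$. That version would require two components of $\A$ in column $k+1$, which is impossible because the added boxes in columns $>k$ form a horizontal strip, so column $k+1$ contains at most one box of $\mu\ssm\la$. The parenthetical claim that a box added in column $k+1$ forces $p>k$ is also false: for $k=1$, $\la=(3,1)$, $\mu=(3,2)$, $p=1$ is a valid Pieri step adding the box $(2,2)$ in column $k+1$. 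The correct argument is short. First, since the removed vertical strip lies in the first $k$ columns, any row with $\la_i>k$ still satisfies $\mu_i\gequ\la_i>k$; no long row shrinks, so $\ell_k(\mu)-\ell_k(\la)=\#\{i:\la_i\lequ k<\mu_i\}$, which is $0$ or $1$ and equals the indicator that $\mu\ssm\la$ has a box in column $k+1$. Second, that box, when present, always lies in $\A$: if $(i,k+1)$ were $k$-related to a box $(r,c)$ with $c\lequ k$ of the kind appearing in conditions (1) or (2), then $i=r+k+1-c>r$, while $\mu_i\gequ k+1>c$ and either $\mu_r<c$ (removed box) or $\mu_{r+1}<c$ (bottom box of a column of $\mu$), contradicting that $\mu$ is weakly decreasing. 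With these two observations the identity, and hence the theorem, follows; your worry about tracking long rows of $\la$ that shrink is moot, since none do.
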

\begin{proof}
The result follows immediately from Theorem \ref{T:pieriC} and 
(\ref{igogrel}).
\end{proof}

\begin{example} \label{E:pieriB}
  For $\OG(4,13)$ we have $k=2$ and $n=6$ as in
  Example~\ref{E:pieriC}.  Using the diagrams displayed there, we
  obtain $$\tau_4 \cdot \tau_{(5,3,2,2)} = 2 \tau_{(8,4,2,2)} +
  \tau_{(7,5,2,2)} + 2 \tau_{(7,4,3,2)} + \tau_{(6,5,3,2)} +
  \tau_{(8,4,3,1)}.$$
\end{example}

\subsection{Presentation of $\HH^*(\OG,\Z)$}
\label{ogoddpresentation}

If $\cS$ (respectively $\cQ$) denotes the tautological 
subbundle (respectively, quotient bundle) over $\OG(n-k,2n+1)$,
then one has that
\begin{equation}
\label{c-to-t}
c_p(\cQ)=
\begin{cases}
\ta_p & \text{if $p\lequ k$},\\
2\ta_p & \text{if $p> k$}.
\end{cases}
\end{equation}
Let $\delta_p=1$, if $p\lequ k$, and $\delta_p=2$, otherwise.

\begin{thm}
\label{presOGodd}
{\em a)} The cohomology ring $\HH^*(\OG(n-k,2n+1),\Z)$ is presented as a
quotient of the polynomial ring $\Z[\ta_1,\ldots,\ta_{n+k}]$ modulo
the relations
\begin{gather}
\label{ogoddR1}
\det(\delta_{1+j-i}\ta_{1+j-i})_{1\lequ i,j \lequ r} = 0, \ \ \ \ 
n-k+1\lequ r \lequ n, \\
\label{ogoddR1'}
\sum_{p=k+1}^r(-1)^p\tau_p
\det(\delta_{1+j-i}\ta_{1+j-i})_{1\lequ i,j \lequ r-p}=0, \ \ \ \
n+1\lequ r \lequ n+k,
\end{gather}
and
\begin{equation}
\label{ogoddR2}
\ta_r^2 + \sum_{i=1}^r(-1)^i \delta_{r-i}\ta_{r+i}\ta_{r-i}= 0,
 \ \  \ \ k+1\lequ r \lequ n.
\end{equation}

\medskip
\noin {\em b)} The monomials $\ta^{\la}=\prod_i\ta_{\la_i}$ with
$\la\in\cP(k,n)$ form a $\Z$-basis for $\HH^*(\OG,\Z)$.
\end{thm}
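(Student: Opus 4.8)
The plan is to mimic the proof of Theorem~\ref{presIG} as closely as possible, using Reid's Lemma~\ref{preslemma} to bootstrap a presentation with $\Z$-coefficients from the analogous rational statement together with a rank count. First I would observe that part~(b) is immediate from the Pieri rule (Theorem~\ref{thm.ogoddpieri}) exactly as in the proof of Theorem~\ref{presIG}: iterating the rule shows $\ta^\la = \ta_\la + \sum_{\mu\succ\la} c_\mu\,\ta_\mu$ with $c_\mu\in\Z$, and since the $\ta_\la$ form a $\Z$-basis this is a unitriangular change of basis; in particular $\ta_1,\dots,\ta_{n+k}$ generate $\HH^*(\OG,\Z)$ as a ring.

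Next I would set $R = \Z[a_1,\dots,a_{n+k}]$ with $\deg a_i = i$ and let $\phi : R \to \HH^*(\OG,\Z)$ send $a_i \mapsto \ta_i$; this is surjective by part~(b). Write $c_i = \delta_i a_i$, so that $\phi(c_i) = c_i(\cQ)$ by~\eqref{c-to-t}, and define $D_r = \det(c_{1+j-i})_{1\le i,j\le r}$ and $E_r = \sum_{p=k+1}^r (-1)^p a_p\, D_{r-p}$ and $B_r = a_r^2 + \sum_{i\ge 1}(-1)^i \delta_{r-i} a_{r+i}a_{r-i}$, which are the left-hand sides of~\eqref{ogoddR1}, \eqref{ogoddR1'}, \eqref{ogoddR2}. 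The key identities are the generating-function relations: $D_r = \phi^{-1}$-image of $c_r(\cS^*)$ via the Whitney formula $c_t(\cS)c_t(\cQ)=1$, so $\phi(D_r) = c_r(\cS^*) = 0$ for $r > m$; the combination defining $E_r$ captures the vanishing $c_r(\cS^*) = 0$ in the range $n+1 \le r \le n+k$ after dividing out by the $\delta$-factors that distinguish $\ta_p$ from $c_p(\cQ)$ (note $\delta_p = 2$ precisely when $p > k$, which is why the extra factor $(-1)^p\ta_p$ appears rather than $(-1)^p c_p(\cQ)$); and $B_r$ relates to $c_{2r}$ of a suitable bundle. Here the orthogonal analogue of the symplectic argument in Theorem~\ref{presIG} is: the symmetric form gives $\cS \hookrightarrow \cQ^*$ with $\cQ^*/\cS$ of rank $2k+1$, and $\phi(B_r)$ is, up to sign and the $\delta$-renormalization, $c_{2r}(\cQ\oplus\cQ^*)/(\text{correction}) = c_{2r}(\cQ^*/\cS)$, which vanishes for $2r > 2k+1$, i.e.\ $r > k$. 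This establishes $\phi(I) = 0$ for the ideal $I$ generated by~\eqref{ogoddR1}--\eqref{ogoddR2}, so $\phi$ factors through $R/I$.

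It then remains to verify hypotheses (i) and (ii) of Lemma~\ref{preslemma} for the $d = n+k$ generators $a_i$ and $d$ relators: for (i), the degrees of the relators are $r$ for $n-k+1\le r\le n$ (there are $2k$ of these, indexed also by $n+1 \le r\le n+k$, so altogether $\{r : n-k+1\le r\le n+k\}$ has $2k$ elements giving degrees $n-k+1,\dots,n+k$) and $2r$ for $k+1\le r\le n$ (that is $n-k$ relators of degrees $2k+2,\dots,2n$), so $\prod_i(\deg b_i/\deg a_i)$ should be compared against $\rank\HH^*(\OG) = \#\cP(k,n) = 2^m\binom{n}{k}$ — I would check this product of ratios equals $2^m\binom{n}{k}$ by the same Hilbert-series/complete-intersection bookkeeping used in Lemma~\ref{preslemma}, combining a type-A factor $\binom{n+k}{k}$ (from Lemma~\ref{typeApres}, accounting for the degree-$r$ relations) with a factor $2^m\binom{n}{k}/\binom{n+k}{k}$ coming from the quadratic relations. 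For (ii), I would argue as in Theorem~\ref{presIG} that over any field $K$ the images of $D_r$ for $n+k < r \le 2n$ lie in the ideal generated by $D_{m+1},\dots,D_{n+k}$, $E_{n+1},\dots,E_{n+k}$, $B_{k+1},\dots,B_n$, by manipulating the generating-function congruences modulo $I$ and equating degrees; this shows $R/I$ is a quotient of the finite-rank ring $R/(D_{m+1},\dots,D_{2n})$ from Lemma~\ref{typeApres}, hence finite-dimensional, and then Lemma~\ref{preslemma} forces $\phi$ to be an isomorphism.

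The main obstacle I expect is bookkeeping the $\delta$-factors correctly — because $\ta_p$ and $c_p(\cQ)$ differ by a factor of $2$ exactly in the range $p > k$, the three families of relations interact in a subtler way than in the symplectic case, and in particular the appearance of a \emph{separate} family~\eqref{ogoddR1'} (rather than just more instances of~\eqref{ogoddR1}) is forced by the fact that $c_r(\cS^*) = 0$ for $n < r \le n+k$ cannot be written as a determinant purely in the $\delta_i\ta_i$ once some indices cross the threshold $k$. Getting the rank product in (i) to come out to exactly $2^m\binom{n}{k}$, and verifying that the regular-sequence/complete-intersection hypothesis of Reid's lemma genuinely holds over every field $K$ (which is what (ii) delivers), will require careful degree counting but no new ideas beyond those already deployed for $\IG$.
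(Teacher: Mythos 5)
Your overall strategy (part (b) via the Pieri rule, then Reid's Lemma~\ref{preslemma} with the rank count and the two conditions (i), (ii)) is exactly the paper's, and your treatment of part (b), of $\phi(I)=0$, and of condition (i) is fine. The gap is in condition (ii), specifically over fields of characteristic $2$, and it is not the routine bookkeeping you predict.

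Your plan is to show that $D_r=\det(c_{1+j-i})_{r}$ for $n+k<r\le 2n$ lies in $I$ and then invoke Lemma~\ref{typeApres} to conclude that $R/I\otimes_\Z K$ is finite dimensional for every field $K$. This works when $\operatorname{char}(K)\ne 2$, because there the substitution $c_i=\delta_i a_i$ is invertible and $K[c_1,\dots,c_{n+k}]=K[a_1,\dots,a_{n+k}]$. But when $\operatorname{char}(K)=2$ one has $c_i=\delta_i a_i=0$ for all $i>k$, so each $D_r$ is a polynomial in $a_1,\dots,a_k$ only, and $R/(D_{m+1},\dots,D_{2n})\otimes_\Z K$ surjects onto $K[a_{k+1},\dots,a_{n+k}]$ — it is infinite dimensional, and Lemma~\ref{typeApres} gives you nothing about the variables $a_{k+1},\dots,a_{n+k}$. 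Since failure of (ii) for even one field $K$ breaks the flatness argument in Reid's lemma, this case must be handled. The paper does so with a separate nilpotence argument: in characteristic $2$ the relations~\eqref{ogoddR1} become Schur determinants in $\ta_1,\dots,\ta_k$ alone, and Lemma~\ref{typeApres} (applied to these $k$ variables with relations in degrees $n-k+1,\dots,n$) shows $\ta_1,\dots,\ta_k$ are nilpotent; then~\eqref{ogoddR1'} is solved for $\ta_r$ ($n+1\le r\le n+k$) as a polynomial in nilpotents, and finally in~\eqref{ogoddR2} every surviving term of $\sum_i(-1)^i\delta_{r-i}\ta_{r+i}\ta_{r-i}$ has $r-i\le k$ and hence a nilpotent factor, forcing $\ta_r$ nilpotent for $k+1\le r\le n$. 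You would need to supply this (or an equivalent) argument; your closing remark that no new ideas beyond the $\IG$ case are required is precisely where the proof breaks.

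One smaller point: for condition (i) the paper's shortcut is the observation that $\det(\delta_{1+j-i}\ta_{1+j-i})_{1\le i,j\le r}=\ta_{(1^r)}$ in $\HH^*(\OG)$, which both verifies~\eqref{ogoddR1} and~\eqref{ogoddR1'} directly from the Pieri rule and makes the degree bookkeeping identical to the $\IG$ case (same relator degrees, same rank $2^m\binom{n}{k}$); your Chern-class route to $\phi(I)=0$ is acceptable but the factor-of-$2$ accounting you worry about is cleanest through this identity.
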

\begin{proof} 
  The statement (b) is proved exactly as the corresponding part of
  Theorem \ref{presIG}, in particular the special Schubert classes
  $\ta_p$ generate $\HH^*(\OG)$.  We will use Lemma \ref{preslemma}, and
  begin by noting that the rank of $\HH^*(\OG)$ as a $\Z$-module is the
  same as that of $\HH^*(\IG(n-k,2n))$. The Whitney sum formula shows
  that the relations (\ref{ogoddR1}) hold in the cohomology ring of
  $\OG$. The remaining relations follow easily from the Pieri rule.
  For (\ref{ogoddR1'}), one observes that for each $r$,
\[
\det(\delta_{1+j-i}\ta_{1+j-i})_{1\lequ i,j \lequ r} = \ta_{(1^r)}.
\]
This proves that condition (i) of Lemma \ref{preslemma} holds.

To prove (ii), let $K$ be any field of characteristic different than
$2$.  For each $r$, set $h_r = \det(c_{1+j-i})_{1\lequ i,j \lequ
r}$. Using the change of variables in (\ref{c-to-t}), we see that it
suffices to show that the quotient of the ring $K[c_1,\ldots,c_{n+k}]$
modulo the relations
\[
h_r = 0, \ \ \ n-k+1\lequ r \lequ n, \ \ \ 
\sum_{p=k+1}^r(-1)^p c_p h_{r-p}=0,  \ \ \
n+1\lequ r \lequ n+k,
\]
and
\[
c_r^2 + 2\sum_{i=1}^r(-1)^i c_{r+i}c_{r-i}= 0,
 \ \  \ \ k+1\lequ r \lequ n
\]
is finite dimensional. By Lemma \ref{typeApres}, it will suffice to
show that the relations $h_r=0$ for $n+1\lequ r \lequ n+k$ are also
true in this ring. The formal identity
\[
h_{n+1} = \left(\sum_{p=1}^k(-1)^{p+1}c_p h_{n+1-p}\right)+
\left(\sum_{p=k+1}^{n+1}(-1)^{p+1} c_p h_{n+1-p}\right)
\]
and the first line of relations prove that $h_{n+1}=0$; the vanishing
of $h_r$ for $r>n+1$ is established similarly.

Next, suppose that $\text{char}(K)=2$. We will show that each generator 
$\ta_p$ is a nilpotent element of the quotient of $K[\ta_1,\ldots,
\ta_{n+k}]$ modulo relations (\ref{ogoddR1}), (\ref{ogoddR1'}), and
(\ref{ogoddR2}). In characteristic 2, the equations (\ref{ogoddR1})
 assert the vanishing of Schur determinants in $\ta_1,\ldots,\ta_k$
for $r=n-k+1,\ldots,n$, and we know by Lemma \ref{typeApres} that 
this implies $\ta_1,\ldots,\ta_k$ are nilpotent. Now rewrite 
(\ref{ogoddR1'}) as 
\[
\ta_r=\sum_{p=k+1}^{r-1}(-1)^{p+1}\tau_p
\det(\delta_{1+j-i}\ta_{1+j-i})_{1\lequ i,j \lequ r-p}, \ \ \ \
n+1\lequ r \lequ n+k.
\]
As all determinants on the right hand side are nilpotent, so is each
$\ta_r$ for $r=n+1,\ldots,n+k$. Finally, the relations (\ref{ogoddR2})
are used to complete the proof of (a). 
\end{proof}

\subsection{Gromov-Witten invariants}
\label{ogoddgwis}

In this and the following section we assume that $k>0$, or
equivalently $m<n$. Given a degree $d\gequ 0$ and partitions
$\lambda,\mu,\nu\in \cP(k,n)$ such that $|\lambda| + |\mu| + |\nu| =
\dim(\OG)+d(n+k)$, we define the Gromov-Witten invariant $\langle
\ta_\lambda, \ta_\mu, \ta_\nu \rangle_d$ to be the number of rational
maps $f\colon \bP^1\to \OG$ of degree $d$ such that $f(0)\in
X_\lambda(E_\bull)$, $f(1)\in X_\mu(F_\bull)$, and $f(\infty)\in
X_\nu(G_\bull)$, for given isotropic flags $E_\bull$, $F_\bull$, and
$G_\bull$ in general position. Here a rational map of degree $d$ to
$\OG$ is a morphism $f : \bP^1 \to \OG$ such that $\int f_*[\bP^1]
\cdot \ta_1 = d$.  We define the kernel and span of such a map as in
Section \ref{iggwis}; again we have $\dim \Ker(f) \gequ m-d$ and
$\dim\Span(f) \lequ m+d$ for any map $f : \bP^1\to\OG$ of degree $d$.

In the orthogonal Lie types, the basic model for degree $d$ maps to
$\OG$ will have target $\OG(d,2d)$, when $d$ is even, but a different
space $\ov{\OG}=\ov{\OG}(d,2d)$ when $d$ is odd. The variety
$\ov{\OG}$ parametrizes isotropic subspaces of dimension $d$ in a
vector space $W\cong \C^{2d}$ equipped with a degenerate symmetric
bilinear form such that $\Ker(W\to W^*)$ has dimension $2$.

For any integer $d\lequ m$ the variety $Y_d$ parametrizes pairs
$(A,B)$ of subspaces of $V$ with $A\subset B \subset A^{\perp}$, $\dim
A = m-d$, and $\dim B = m+d$, as in Section \ref{iggwis}. Moreover,
for any Schubert variety $X_\la$ in $\OG$, the subvariety $Y_\la$ of
$Y_d$ consists of the set of pairs $(A,B)\in Y_d$ such that there
exists a $\Sigma\in X_{\la}$ with $ A\subset \Sigma \subset B$.  Here
we compute that $\dim(Y_d) = \dim(\OG)+ d(n+k)-3d(d-1)/2$, and we'll
show that the codimension of $Y_\la$ in $Y_d$ is at least equal to
$|\la|-d(d-1)/2$. When $d=m+1$, the role of $Y_d$ is played by the
Grassmannian $Y_{m+1}:=\G(2m+1, V)$ of all subspaces $B\subset V$ with
$\dim B = 2m+1$, and the varieties $Y_{\lambda}\subset Y_{m+1}$ are
defined in the same way.

Let $T_d$ be the variety of triples $(A,\Sigma,B)$
such that $(A,B) \in Y_d$, $\Sigma \in \OG$, and $A\subset \Sigma\subset B$.
Let $\pi : T_d \to Y_d$ be the projection.
For $\la \in \cP(k,n)$ we define
$T_\lambda(E_\bull) = \{(A,\Sigma,B) \in T_d \mid \Sigma \in
X_\lambda(E_\bull) \}$.
We have $\pi(T_\lambda(E_\bull)) = Y_\lambda(E_\bull)$.

When $d\le m$, we
let $Y'_d$ denote the variety parametrizing triples $(A,A',B)$ of
subspaces of $V$ with $(A,B)\in Y_d$ and $A'$ isotropic with $A
\subset A'\subset B$ and $\dim A' = \dim A+1$. There is a
projection map $\varphi : Y'_d\to Y_d$. Moreover, to each Schubert
variety $X_\la$ in $\OG$ there corresponds subvariety $Y'_\la$ of
$Y'_d$, defined as the locus of $(A,A',B)$ such that there exists a
$\Sigma\in X_\la$ with $A'\subset \Sigma\subset B$.
Let $T'_d$ denote the variety of nested spaces $(A,A',\Sigma,B)$
with $\Sigma\in \OG$ and $B\subset A^\perp$.
There is a projection map $\pi'\colon T'_d\to Y'_d$.
We define $T'_\la$ to consist of $(A,A',\Sigma,B)$ in $T'_d$ with
$\Sigma\in X_\la$.
Then $\pi'(T'_\la)=Y'_\la$.

Attached to each $(A,B)\in Y_d$ there is an invariant
$r=\dim(B\cap B^\perp)-\dim(A)$.
The quantity $r$ measures the degeneracy of the induced bilinear form on $B/A$.
The general point of $Y_d$ has $r=0$, while the $r=2$ locus of $Y_d$ is a
locally closed subvariety of $Y_d$.
For $\la\in \cP(k,n)$ we define $N_d=N_d(\la)=\#\{j\le m\,|\,\la_j=d-j\le k\}$.

\begin{lemma} \label{L:fiberB}
{\rm (a)} The restricted projection
$\pi : T_\la(E_\bull) \to Y_\la(E_\bull)$ is generically
$2^{N_d}$-to-$1$ when $\rho_{d-1}\subset\la$ and has fibers of positive
dimension when $\rho_{d-1}\not\subset\la$.
\smallskip \\
\noin When $d\le m$, we furthermore have:\\
\noin {\rm (b)} The restriction of $\pi$ over the
$r=2$ locus of $Y_\la(E_\bull)$ is
generically unramified $2^{N_d}$-to-$1$ when $(\rho_{d-1},1)\subset\la$
and has fibers of positive dimension when
$(\rho_{d-1},1)\not\subset\la$.
\smallskip \\
\noin {\rm (c)} The map $\pi'\colon T'_\la(E_\bull)\to
Y'_\la(E_\bull)$ is generically $1$-to-$1$ when $\rho_{d-1}\subset\la$.
\smallskip \\
\noin {\rm (d)} The restriction of $\pi'$ to the
$r=2$ locus is generically $1$-to-$1$ when
$(\rho_{d-1},1)\subset\la$.
\end{lemma}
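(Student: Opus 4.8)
The plan is to mirror the proof of Lemma~\ref{L:fiberC}, the new ingredient being the branching factor $2^{N_d}$, which records that, for the \emph{symmetric} form, an isotropic line need not be the unique isotropic extension of a given isotropic subspace. First I would fix a basis $\{e_1,\dots,e_{2n+1}\}$ of $V$ with $(e_i,e_j)=0$ unless $i+j=2n+2$, so that $e_{n+1}$ is anisotropic and $E_r^\perp=E_{2n+1-r}$ for $E_r=\Span\{e_1,\dots,e_r\}$, and suppress $E_\bull$ from the notation. Writing $\overline p_j=\overline p_j(\lambda)$, letting $\#_j=\#\{i<j\mid \overline p_i+\overline p_j>2n+2\}$, and letting $s_j$ be the evident analogue of the integers used in Lemma~\ref{L:fiberC} (adjusted for the odd dimension and the $1$/$2$ shift in $\overline p_j$), I would, for $(A,B)\in Y_\lambda$, put $A_j=A\cap E_{\overline p_j}$, $B_j=B\cap E_{\overline p_j}$, record dimension bounds of the same shape as \eqref{Cbd1}--\eqref{Cbd4} together with one further inequality tracking the position of $E_{\overline p_j}$ relative to the anisotropic vector $e_{n+1}$, and let $U_\lambda\subset Y_\lambda$ be the open locus on which all of these bounds are equalities. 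The case $d=m+1$ is handled by the same argument with $Y_{m+1}=\G(2m+1,V)$ and no space $A$.

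For part (a), I would first exhibit an explicit point of $U_\lambda$ when $\rho_{d-1}\subset\lambda$, choosing a permutation $\pi\in{\mathfrak S}_d$ as in Lemma~\ref{L:fiberC} and taking $A$ and $B$ to be spanned by coordinate vectors together with a controlled collection of diagonal vectors $u_i$; this gives $U_\lambda\neq\emptyset$. Next, for $(A,B)\in U_\lambda$ I would build isotropic subspaces $\Sigma_0\subset\dots\subset\Sigma_d$ with $\dim\Sigma_j=j$, $\Sigma_j\subset B_j$, and $\dim(\Sigma_j\cap B_i)\gequ i$ for $i\lequ j$, by induction: as in the symplectic case one reduces each step to extending $\Sigma_{j-1}$ inside $B_j$, and such extensions correspond to isotropic lines in $(\Sigma_{j-1}^\perp\cap B_j)/\Sigma_{j-1}$ with its induced symmetric form. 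The equalities defining $U_\lambda$ force this quotient to be one-dimensional with vanishing induced form — hence a unique extension — except exactly at the steps $j$ with $\lambda_j=d-j\lequ k$, where it is two-dimensional with nondegenerate induced form and so has exactly two isotropic lines. Multiplying over the $N_d$ branch steps gives $2^{N_d}$ flags $\Sigma_d$, and $\Sigma=\Sigma_d+A$ runs over all $\Sigma\in X_\lambda$ with $A\subset\Sigma\subset B$; this proves the generic $2^{N_d}$-to-$1$ statement. For the second assertion, when $\rho_{d-1}\not\subset\lambda$ I would run the degeneration argument of Lemma~\ref{L:fiberC} with $\rho_d$ replaced by $\rho_{d-1}$: for $(A,B)\in Y_\lambda$ with $A_d=0$, pick $j\lequ d-1$ with $\lambda_j\lequ d-1-j$, produce an isotropic extension of $\Sigma_{j-1}$ inside $B\cap E_{\overline p_j-1}$, and continue to obtain $\Sigma'\in X_\lambda\smallsetminus X_\lambda^\circ$ with $A\subset\Sigma'\subset B$, which forces $\dim Y_\lambda<\dim T_\lambda$ and hence fibers of positive dimension.

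For parts (b)--(d) I would run the same machinery over the appropriate strata. For (b): on the $r=2$ locus the form on $B/A$ has a two-dimensional radical, so $\{\Sigma\mid A\subset\Sigma\subset B\}\cap\OG$ is modelled on $\ov{\OG}(d,2d)$ and its expected dimension drops by one relative to (a); the inductive extension flag then exists, with the same $N_d$ branch steps, precisely when $(\rho_{d-1},1)\subset\lambda$, and one gets positive-dimensional fibers otherwise. Genericity and unramifiedness hold because at each branch step the two isotropic lines are the reduced points of a binary quadric. For (c) and (d): adjoining to the parameter space the one-dimensional isotropic extension $A'$ of $A$ kills the branching — the generic fiber of $Y'_\lambda\to Y_\lambda$ over $(A,B)$ is a union of $2^{N_d}$ projective spaces $\bP(\Sigma/A)$, and a generic $A'$ lies in the interior of only one of them, determining $\Sigma$ uniquely — so that, combined with a codimension count analogous to (a) using the $\OG$-bundle structures of $Y'_d$ and $T'_d$, the map $\pi'$ is generically one to one, both in general and on the $r=2$ locus.

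The main obstacle will be the branch-point analysis in part (a): identifying exactly which steps $j$ produce a two-dimensional quotient $(\Sigma_{j-1}^\perp\cap B_j)/\Sigma_{j-1}$, and verifying that on $U_\lambda$ the induced symmetric form on that quotient is nondegenerate — so the count is two rather than one or infinitely many — precisely at the indices with $\lambda_j=d-j\lequ k$. This requires bookkeeping that interlocks the combinatorics of $\overline p_j$, $\#_j$, and $s_j$ with the position of the flag relative to the anisotropic vector $e_{n+1}$, and it is here that the explicit model constructed for the nonemptiness of $U_\lambda$ is essential, since on it one can read off the rank of the relevant induced form directly. Parts (b)--(d) are then variations on the same theme, the extra subtlety in (b) being to track how the two-dimensional radical interacts with the branch steps.
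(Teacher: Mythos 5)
Your plan follows the paper's own proof essentially step for step: the same setup with $\bar p_j$, $\#_j$, $s_j$ and an open equality locus $U_\lambda$, the same explicit construction of a point of $U_\lambda$ via a permutation in ${\mathfrak S}_d$, the same inductive extension of isotropic flags with the quotient $(\Sigma_{j-1}^\perp\cap B_j)/\Sigma_{j-1}$ becoming a nondegenerate two-dimensional quadratic space exactly at the indices $j$ with $\lambda_j=d-j\lequ k$ (giving the $2^{N_d}$ branching), the same degeneration argument for positive-dimensional fibers, and the same observation that a generic $A'$ selects a unique $\Sigma$ for parts (c) and (d). The obstacle you flag — verifying nondegeneracy of the induced form at the branch steps via the explicit model — is indeed where the paper spends most of its effort (the strengthened inequality \eqref{Bbd4b} and the maximality condition on $\pi^{-1}(d)$ in the $r=2$ construction), so your assessment of the difficulty is accurate.
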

\begin{proof}
We can assume that $E_r=\Span\{e_1,\ldots,e_r\}$ where
$\{e_1,\ldots,e_{2n+1}\}$ is a standard orthogonal basis for $V$,
i.e.\ $(e_i,e_j)=0$ when $i+j\ne 2n+2$.
We write
$X_\la$ for $X_\la(E_\bull)$,
$Y_\la$ for $Y_\la(E_\bull)$, and $T_\la$ for $T_\la(E_\bull)$.
Set $p_j=\bar p_j(\la)$ for $1\le j\le m$,
$p_0=0$, $p_{m+1}=2n+2$, $\#_j=\#\{i<j\,|\,p_i+p_j>2n+2\}$,
and $s_j=\max(m+d+p_j-2n-1-j,0)$.
We first prove statements (a)--(d) in the case $d\le m$, then we state
the modifications necessary to obtain (a) in case $d=m+1$.

For $(A,B)$ in $Y_\la$, we set $A_j=A\cap E_{p_j}$ and
$B_j=B\cap E_{p_j}$.
By definition there exists $\Sigma$ in $X_\la$ with
$A\subset\Sigma\subset B$.
We set $\Sigma_j=\Sigma\cap E_{p_j}$, so that
$\dim(\Sigma_j)\ge j$.
Now,
\begin{gather}
\dim(A_j)\ge j-d,
\label{Bbd1} \\
\dim(B_j)\ge j+s_j,
\label{Bbd2}
\end{gather}
since $A$ is of codimension $d$ in $\Sigma$, and
$B_j$ is the intersection of spaces of dimension $m+d$ and $p_j$,
while also $\Sigma_j\subset B_j$.
For $j$ such that equality holds in \eqref{Bbd2} we have
$j\le \dim(\Sigma_{j-\varepsilon}^\perp\cap B_j)=
j+s_j-\dim(\Sigma_{j-\varepsilon})
+\dim(\Sigma_{j-\varepsilon}\cap B_j^\perp)$ for $\varepsilon\in\{0,1\}$,
since $(\Sigma_{j-\varepsilon}^\perp\cap B_j)^\perp=\Sigma_{j-\varepsilon}
+B_j^\perp$.
So, equality in \eqref{Bbd2} implies
\begin{gather}
\dim(B_j\cap B_j^\perp) \ge j-s_j,
\label{Bbd3} \\
\dim(B_{j-1}\cap B_j^\perp) \ge j-1-s_j.
\label{Bbd4a}
\end{gather}
When $\la_j=d-j\le k$ we have
$s_j=1+\#_j$, and hence
$\dim(\Sigma_{j-1}\cap B_j^\perp)\ge
j-1-\#_j=j-s_j$, which gives us the following strengthening of \eqref{Bbd4a}:
\begin{equation}
\label{Bbd4b}
\dim(B_{j-1}\cap B_j^\perp)\ge
\left\{
\begin{array}{ll}
j-s_j,&\text{when $\la_j=d-j\le k$},\\
j-1-s_j,&\text{otherwise}.
\end{array}\right.
\end{equation}

Define $U_\la$ to be the open subset of $Y_\la$ of $(A,B)$ satisfying
equality in \eqref{Bbd1} for $j\ge d$ and in \eqref{Bbd2},
\eqref{Bbd3}, \eqref{Bbd4b} for $1\le j\le d$.
We show: $U_\la$ contains a point in the $r=0$ locus
when $\rho_{d-1}\subset\la$ and contains a point in the
$r=2$ locus when $(\rho_{d-1},1)\subset\la$.
Then we show for $(A,B)$ in $U_\la$ that there are precisely
$2^{N_d}$ possible $\Sigma\in X_\la$ satisfying $A\subset\Sigma \subset B$.

The condition $\la_j\ge d-j$ implies
$s_j=1+\#_j$ when $\la_j=d-j\le k$, and
$s_j\le\#_j$ otherwise.
For $j>d$ we also have
$s_j\le \#\{i\le d\,|\,p_i+p_j>2n+2\}$.
We can therefore choose $\pi\in {\mathfrak S}_d$ such that
for all $1\le j\le m+1$ and $i\le s_j$ we have
\begin{gather}
\pi(i)\le\left\{
\begin{array}{ll}
j,& \text{when $\la_j=d-j\le k$}, \\
j-1, & \text{otherwise},
\end{array}\right.
\label{Bpiineq1} \\
p_{\pi(i)}+p_j\ge 2n+3+s_j-i.
\label{Bpiineq2}
\end{gather}
Further, we dictate that among all permutations satisfying
\eqref{Bpiineq1}, \eqref{Bpiineq2}, $\pi$ is chosen so that
$\pi^{-1}(d)$ is as large as possible.

We exhibit a point in the $r=0$ locus of $U_\la$.
Set
$A=\Span(e_{p_{d+1}},\ldots,e_{p_m})$
and
$B=\Span(e_{p_1},\ldots,e_{p_m},u_1,\ldots,u_d)$
where
$u_i=e_{2n+2-p_{\pi(i)}}+e_{p_j-1-s_j+i}$
for $s_{j-1}<i\le s_j$.
Equality holds in \eqref{Bbd1} for $j\ge d$ and, by \eqref{Bpiineq2}
equality holds in \eqref{Bbd2} for all $j$.
The pairings
$(e_{p_{\pi(i)}},u_i)=1$ and
$(e_{p_{\pi(i)}},u_{i'})=0$ for $i'>i$ imply the nondegeneracy of the
restriction of $(\ ,\,)$ to
$\Span(e_{p_{\pi(1)}},\ldots,e_{p_{\pi(s_j)}},u_1,\ldots,u_{s_j})$ for
any $j$.
So $(A,B)$ lies in the $r=0$ locus of $Y_\la$, and from \eqref{Bpiineq1}
we have equality for all $j$ of \eqref{Bbd3} and \eqref{Bbd4b}, hence
$(A,B)\in U_\la$.

Under the further assumption that $\ell(\la)\ge d$,
we exhibit a point in the $r=2$ locus of $U_\la$.
We set $i_0=\pi^{-1}(d)$,
define $j_0$ to satisfy $s_{j_0-1}<i_0\le s_{j_0}$,
and set $t_0=p_{j_0}-1-s_{j_0}+i_0$
By \eqref{Bpiineq2}, we have
$t_0\ge 2n+2-p_d$. Now we distinguish two cases.

First is the case $t_0>2n+2-p_d$.
Then we define $A$ and $B$ as above, but with
$u_{i_0}=e_{t_0}$.
By \eqref{Bpiineq1}, $j_0>d$, and
we have $t_0>p_{j_0-1}\ge p_d$.
The computation of $\dim(B_j)$ goes through as before.
We have the pairing $(e_{p_{\pi(i)}},u_i)=1$ for $i\ne i_0$,
and $(e_{p_{\pi(i)}},u_{i'})=0$ for all $i$ and $i'$ with $i'>i$.
Since $s_d<i_0$, the argument for equality in \eqref{Bbd3}, \eqref{Bbd4b}
for $j\le d$ goes through as before.
Now we claim $(e_{p_j},u_{i_0})=0$ for all $j\le d$.
So we have $e_{p_d}$, $u_{i_0}\in B^\perp$, while
a direct argument shows that the bilinear form restricted to
$\Span(e_{p_1},\ldots,e_{p_{d-1}},u_1,\ldots,u_{i_0-1},u_{i_0+1},\ldots,u_d)$
is nondegenerate.
Hence $(A,B)$ lies in the $r=2$ locus of $U_\la$.

Suppose, to the contrary, there exists $c\le d$
with $(e_{p_c},u_{i_0})\ne 0$.
Then $p_c=2n+2-t_0$, and $c<d$.
We set $i_1=\pi^{-1}(c)$ and define $j_1$ so
$s_{j_1-1}<i_1\le s_{j_1}$.
Then
\begin{equation}
\label{Bi0j0i1j1}
p_{j_1}-1-s_{j_1}+i_1\ge 2n+2-p_c=p_{j_0}-1-s_{j_0}+i_0,
\end{equation}
hence the inequality is strict, and $i_1>i_0$.
Now, if we define $\tilde\pi\in {\mathfrak S}_d$ by
$\tilde\pi(i)=\pi(i)$ for $i\notin\{i_0,i_1\}$, with
$\tilde\pi(i_0)=c$ and $\tilde\pi(i_1)=d$, then
$\tilde\pi$ satisfies \eqref{Bpiineq1}, \eqref{Bpiineq2}.
The only inequality needed to be checked is
$p_c+p_{j_0}\ge 2n+3+s_{j_0}-i_0$, and this holds by \eqref{Bi0j0i1j1}.
Since $\tilde\pi$ satisfies
$\tilde\pi^{-1}(d)>\pi^{-1}(d)$, we have reached a contradiction.

In case $t_0=2n+2-p_d$ we define $A$ and $B$ as above,
but with
$u_{i_0}=e_c$,
where $c$ is defined to be the smallest
positive integer not equal to $p_j$ or to $2n+2-p_j$ for any $j$.
A general remark is that if equality in \eqref{Bpiineq2} holds for some
$i\le s_j$, then $\pi(i')>\pi(i)$ for all $i'<i$.
Indeed, if $i'<i$ then
\[
0\le p_{\pi(i')}+p_j-2n-3-s_j+i'<
p_{\pi(i')}+p_j-2n-3-s_j+i=p_{\pi(i')}-p_{\pi(i)}.
\]
In this case we have such equality for $i=i_0$.
Hence $i_0=1$.
The $n+d+1-k-j_0$ integers
$$p_1,\ldots,p_d, \,\, 2n+2-p_{j_0},\ldots,2n+2-p_m$$
are distinct and less than or equal to $2n+2-t_0$.
But $n+d+1-k-j_0=2n+2-p_{j_0}+s_{j_0}=2n+2-t_0$, so
$p_1$, $\ldots$, $p_d$ must be the first $d$ integers not equal to
$2n+2-p_j$ for any $j>d$.
An inductive argument shows that $\pi(i)=d+1-i$ for $i=1$, $\ldots$, $d$,
with equality in \eqref{Bpiineq2} for each $i$.
As before, we have $e_{p_d}$, $u_{i_0}\in B^\perp$,
and $(A,B)$ lies in the $r=2$ locus of $U_\la$.

Given $(A,B)\in U_\la$, we claim that for $j\le d$ are precisely
$2^{\#\{i\le j|\la_i=d-i\le k\}}$ isotropic spaces $\Sigma_j\subset
B_j$ such that $\dim(\Sigma_j\cap B_i)\ge i$ for $i\le j$.  We prove
this by induction on $j$, the base case $j=0$ being clear.  The
equalities \eqref{Bbd2} and \eqref{Bbd3} imply that a maximal
isotropic subspace of $B_j$ has dimension $j$.  So, using the
induction hypothesis, we have that $\dim(\Sigma_j\cap B_i)\ge i$ for
$i\le j$ implies $\Sigma_j\cap B_{j-1}=\Sigma_{j-1}$.  So, it is
enough to show that $\Sigma_{j-1}$ extends in precisely two ways to an
isotropic subspace of dimension $j$ in $B_j$ when $\la_j=d-j\le k$,
and extends uniquely to such a subspace otherwise.  If $\la_j=d-j\le
k$, then we have $\dim(\Sigma_{j-1}^\perp\cap
B_j)=s_j+1+\dim(\Sigma_{j-1}\cap B_j^\perp) =j+1$, and
$\Sigma_{j-1}^\perp\cap B_j\cap (\Sigma_{j-1}^\perp\cap B_j)^\perp=
\Sigma_{j-1}+(B_j\cap B_j^\perp)=\Sigma_{j-1}$ (since $B_j\cap
B_j^\perp=\Sigma_{j-1}\cap B_j^\perp$), so $(\Sigma_{j-1}^\perp\cap
B_j)/\Sigma_{j-1}$ is two-dimensional with induced nondegenerate
symmetric bilinear form.  Otherwise, we have
$\dim(\Sigma_{j-1}^\perp\cap B_j)\le j$, so the extension is unique.
By the equality \eqref{Bbd1}, $\Sigma_d\cap A=0$, so if we set
$\Sigma=\Sigma_d+A$ then $\Sigma\in X_\la$.  For any $\Sigma\in X_\la$
with $A\subset \Sigma\subset B$ we have $\dim(\Sigma\cap B_i)\ge i$
for all $i$, hence $\Sigma\cap B_d=\Sigma_d$ and $\Sigma=\Sigma_d+A$.

By the same argument, with families
of vector spaces over local Artinian $\C$-algebras, we see that
the morphism $T_\la\to Y_\la$ is unramified (hence \'etale) over $U_\la$.

Now suppose $\rho_{d-1}\not\subset\la$.
We want to show that $\dim(Y_\la)<\dim(T_\la)$.
Let $X_\la^\circ\subset X_\la$ be the Schubert cell.
It suffices to show that if $(A,B)\in Y_\la$ with
$A_d=0$, there exists
$\Sigma'\in X_\la\smallsetminus X_\la^\circ$ with
$A\subset \Sigma'\subset B$.
If this is false, then we can choose $\Sigma\in X^\circ_\la$ with
$A\subset \Sigma\subset B$. Set
$\Sigma_i=\Sigma\cap E_{p_i}$ for $1\le i\le m$.
We have $\la_j<d-j$ for some $j$ with $\la_j\le k$.
Then $s_j>1+\#_j$, hence
$\dim(\Sigma_{j-1}^\perp\cap E_{p_j})\ge p_j-\#_j>p_j-s_j+1$.
Using \eqref{Bbd2} we obtain
$\dim(\Sigma_{j-1}^\perp\cap B_j)>j+1$,
so there exists a $j$-dimensional isotropic extension $\Sigma'_j$
of $\Sigma_{j-1}$ contained in $B\cap E_{p_j-1}$.
For $i=j+1$, $\ldots$, $d$ we now choose an $i$-dimensional isotropic
extension $\Sigma'_i$ of $\Sigma'_{i-1}$ contained in $B_i$.
The subspace $\Sigma'=\Sigma'_d+A$ then satisfies
$\Sigma'\in X_\la\smallsetminus X^\circ_\la$ and
$A\subset \Sigma'\subset B$, and statement (a) is proved.

To complete the proof of (b) it suffices by semi-continuity of
fiber dimensions to treat the case that $\rho_{d-1}\subset \la$ and
$\ell(\la)=d-1$.
Generically on the $r=2$ locus of $Y_\la$, the intersection of
$B\cap B^\perp$ with $E_{p_{d-1}}$ is trivial, since
$\la_{d-1}\ge 1$.
So it suffices to show that if $(A,B)$ lies in the
$r=2$ locus of $Y_\la$ and
satisfies $A_d=0$ and $B^\perp\cap B_{d-1}=0$ then there exists
$\Sigma'\in X_\la\smallsetminus X_\la^\circ$ with
$A\subset \Sigma'\subset B$.
If this is false, then we can choose $\Sigma\in X^\circ_\la$ with
$A\subset \Sigma\subset B$.
Since $\la_d=0$,
the assumptions imply
$\dim(B^\perp\cap B_d)=2$.
We set $\Sigma_{d-1}=\Sigma\cap E_{p_{d-1}}$.
Since $B^\perp\cap B_d$ meets $\Sigma_{d-1}$ trivially, 
there exists an isotropic extension $\Sigma'_d$ of $\Sigma_{d-1}$
of dimension $d$ contained in $B\cap E_{p_d-1}$.
The space $\Sigma'=\Sigma'_d+A$ then satisfies
$\Sigma'\in X_\la\smallsetminus X_\la^\circ$.

Statements (c) and (d) follow from (a) and (b).
If $(A,B)\in U_\la$ then the generic $A'$
with $(A,A',B)\in Y'_\la$ is contained in a unique $\Sigma$
with $A\subset \Sigma\subset B$ and $\Sigma\in X_\la$.

In case $d=m+1$, then exactly as above we have \eqref{Bbd2}, and equality
in \eqref{Bbd2} implies 
\eqref{Bbd3} and \eqref{Bbd4b}.
We define $U_\la\subset Y_\la$ by equality in
\eqref{Bbd2}, \eqref{Bbd3}, \eqref{Bbd4b} for $1\le j\le m$.
When $\rho_{d-1}\subset\la$,
the construction of a point in the $r=0$ locus of $U_\la$
is as above, except that we take
$u_{i_0}=e_{d+1-i_0} + e_{2n+1-d+i_0}$ where $i_0=\pi^{-1}(d)$,
and this satisfies $(e_{p_j},u_{i_0})=0$ for $1\le j\le m$
(if for some $c$ we have $(e_{p_c}, u_{i_0})\ne 0$, then $p_c=d+1-i_0$,
and this contradicts the maximality of $\pi^{-1}(d)$, as argued above).
Then $B=\Span(e_{p_1},\ldots,e_{p_m},u_1,\ldots,u_d)$ lies in the
$r=0$ locus of $U_\la$, since $(u_{i_0},u_{i_0})\ne 0$.
The argument that exhibits precisely $2^{\#\{i\le j|\la_i=d-i\le k\}}$
isotropic $\Sigma_j\subset B_j$ such that
$\dim(\Sigma_j\cap B_i)\ge i$ for $i\le j$ goes through for $j\le m$,
leading to $\Sigma=\Sigma_m\in X_\la$.
The argument when $\rho_{d-1}\not\subset\la$ is unchanged except
that $i$ ranges from $j+1$ to $m$ in the construction of $\Sigma'_i$,
and then $\Sigma'=\Sigma'_m$ lies in
$X_\la\smallsetminus X^\circ_\la$.
\end{proof}

To state our main theorem relating Gromov--Witten invariants to
classical intersection numbers, we need some further notation.  We let
$Z^\circ_d$ be the $r=2$ locus of $Y_d$.  Define $Z_d$ to be the
variety parametrizing triples $(A,N,B)$ of subspaces such that $A
\subset N \subset B \subset N^\perp$, $\dim A = m-d$, $\dim N =
m-d+2$, and $\dim B = m+d$.  For $\lambda\in \cP(k,n)$, define a
subvariety $Z_\lambda \subset Z_d$ by setting
\[
 Z_\lambda(E_\bull) = \{ (A,N,B) \in Z_d \mid \exists\, \Sigma \in
   X_\lambda(E_\bull) : A \subset \Sigma \subset B \} \,. 
\]
Notice that there is a dense open subset of $Z_d$
(where $\dim(B\cap B^\perp)=\dim(A)+2$) that is
isomorphic to $Z^\circ_d$.

Let $Z'_d$ denote the variety parametrizing $4$-tuples $(A,A',N,B)$ of
subspaces of $V$ with $(A,N,B)\in Z_d$ and $A'$ isotropic with $A
\subset A'\subset B$ and $\dim A' = \dim A+1$. There is a
projection map $\psi:Z'_d\to Z_d$. For $\lambda\in \cP(k,n)$,
define a subvariety $Z'_\lambda \subset Z'_d$ as the locus of
$(A,A',N,B)$ such that there exists a $\Sigma\in X_\la$ with
$A'\subset \Sigma\subset B$.

In contrast to the symplectic case, the map $T_\la\to Y_\la$ can be
finite-to-one for orthogonal Grassmannians, so it is important to
distinguish between the image $Y_\la=\pi(T_\la)$ and the image class
$\pi_*[T_\la]$.  For this purpose we set $\upsilon_\la=\pi_*[T_\la]$
and $\upsilon'_\la=\pi'_*[T'_\la]$.  We analogously define $\zeta_\la$
and $\zeta'_\la$ to be the image classes, taking values in the
homology (or Chow groups) of $Z_d$ and $Z'_d$ respectively, of the
variety of $(A,N,\Sigma,B)$ and $(A,A',N,\Sigma,B)$ respectively, with
$\Sigma\in X_\la$.  The function sending a rational map $f:\bP^1\to
\OG$ to the pair $(\Ker(f), \Span(f))$ in general will be
$2^{M_d}$-to-$1$, where $M_d=M_d(\la,\mu,\nu)$ is a multiplicity
defined by
\begin{equation}
\label{eq:defnMd}
M_d(\la^1,\la^2,\la^3)=\sum_{i=1}^3 N_d(\la^i)-\left\{
\begin{array}{ll}
\min\bigl(\#\{\,i\,|\,N_d(\la^i)\ge 1\,\}, 2\bigr) & \text{if $d\le m$,}\\
0 & \text{if $d=m+1$.}
\end{array}\right.
\end{equation}

For each pair $(A,B)\in Y_d$, we identify the Grassmannian
$\ov{\G}(d,B/A)$ with the locus of isotropic $\Sigma$ of dimension $m$
such that $A\subset \Sigma \subset B$.
Let $r(d)$ be $0$ when $d$ is even and $2$ when $d$ is odd.  When
$d\leq m$, let $S_d=S_d(\la,\mu,\nu)$ be the set of pairs $(A,B)$ in the
$r=r(d)$ locus of $Y_\la\cap Y_\mu\cap Y_\nu$ such that if two or
three among the sets $X_\la\cap \overline{\G}(d,B/A)$, $X_\mu\cap
\overline{\G}(d,B/A)$, $X_\nu\cap \overline{\G}(d,B/A)$ have
cardinality one, the corresponding two or three subspaces of $B/A$ are
in pairwise general position.  When $d=m+1$ is even, let
$S_d=Y_\la\cap Y_\mu\cap Y_\nu$, and when $d=m+1$ is odd,
let $S_d=\emptyset$.

\begin{thm} \label{T:qclasB}
  Let $d\gequ 0$ and choose $\lambda,\mu,\nu \in \cP(k,n)$ such that
  $|\lambda|+|\mu|+|\nu| = \dim(\OG) + d(n+k)$.  Let $X_\lambda$,
  $X_\mu$, and $X_\nu$ be Schubert varieties of $\OG(m,2n+1)$ in
  general position, with associated subvarieties and classes
  in $Y_d$, $Y'_d$, $Z_d$, and $Z'_d$.

  \smallskip \noin {\rm (a)} The subvarieties $Y_\la$, $Y_\mu$, and
   $Y_\nu$ intersect transversally in $Y_d$, and their intersection is
   finite. For each point $(A,B)\in Y_\la \cap Y_\mu \cap Y_\nu$ we 
   have $A = B \cap B^\perp$ or $\dim(B\cap B^\perp) = \dim A + 2$.

  \smallskip \noin {\rm (b)} The assignment $f \mapsto
   (\Ker(f),\Span(f))$ gives a $2^{M_d}$-to-$1$ association between rational
   maps $f:\bP^1\to\OG$ of degree $d$ such that $f(0)\in X_\lambda$,
   $f(1)\in X_\mu$, $f(\infty)\in X_\nu$ and the subset $S_d$ of 
   $Y_\lambda \cap Y_\mu \cap Y_\nu$.

   \smallskip \noin {\rm (c)} When $d \lequ m$ is even, the 
   Gromov--Witten invariant
   $\gw{\ta_\lambda,\ta_\mu,\ta_\nu}{d}$ is equal to
\begin{align*}
   & \int_{Y_d}\upsilon_\la\cdot \upsilon_\mu\cdot \upsilon_\nu
    -\frac{1}{2} 
   \int_{Y'_d} (\varphi^*\upsilon_\la\cdot \upsilon'_\mu\cdot \upsilon'_\nu +
   \upsilon'_\la\cdot \varphi^*\upsilon_\mu\cdot \upsilon'_\nu +
   \upsilon'_\la\cdot \upsilon'_\mu\cdot \varphi^*\upsilon_\nu) \\
   & {} - \int_{Z_d}\zeta_\la\cdot \zeta_\mu\cdot\zeta_\nu
   + \frac{1}{2} \int_{Z'_d} (\psi^*\zeta_\la\cdot \zeta'_\mu\cdot \zeta'_\nu +
   \zeta'_\la\cdot \psi^*\zeta_\mu\cdot \zeta'_\nu +
   \zeta'_\la\cdot \zeta'_\mu\cdot \psi^*\zeta_\nu).
\end{align*}
When $d \lequ m$ is odd, the Gromov--Witten invariant
   $\gw{\ta_\lambda,\ta_\mu,\ta_\nu}{d}$ is equal to
\begin{align*}
   &  \int_{Z_d}\zeta_\la\cdot \zeta_\mu\cdot\zeta_\nu
   - \frac{1}{2} \int_{Z'_d} (\psi^*\zeta_\la\cdot \zeta'_\mu\cdot \zeta'_\nu +
   \zeta'_\la\cdot \psi^*\zeta_\mu\cdot \zeta'_\nu +
   \zeta'_\la\cdot \zeta'_\mu\cdot \psi^*\zeta_\nu).
\end{align*}
When $d=m+1$ is even, we have 
\[
\gw{\ta_\lambda,\ta_\mu,\ta_\nu}{d} = \int_{\G(2m+1,2n+1)}
  \upsilon_\lambda \cdot \upsilon_\mu \cdot \upsilon_\nu .
\]

  \smallskip \noin {\rm (d)} We have
   $\gw{\ta_\lambda,\ta_\mu,\ta_\nu}{d}=0$ if $\la$ does not contain
   $\rho_{d-1}$ when $d$ is even, or does not contain
   $(\rho_{d-1},1)$ when $d$ is odd.
\end{thm}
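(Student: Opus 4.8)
The plan is to follow the architecture of the proof of Theorem~\ref{T:qclasC}, with Lemma~\ref{L:fiberB} in the role that Lemma~\ref{L:fiberC} played there; the genuinely new work will be the bookkeeping of the $2^{N_d}$-fold multiplicities and the passage between the $r=0$ and $r=2$ strata. For parts (a) and (d) I would first stratify the ambient parameter space by $\SO_{2n+1}$-orbits, writing $Y^r_{e_1,e_2}$ for the variety of pairs $(A,B)$ with $A\subset B\subset A^\perp$, $\dim A=m-e_1$, $\dim B=m+e_2$ and $\dim(B\cap B^\perp)=m-e_1+r$, so that $Y_d$ is the union of the $Y^r_{d,d}$ with $r$ even, and defining $Y^r_\lambda$, $T^r$, and the fibre-loci $Q_s$ verbatim as in the proof of Theorem~\ref{T:qclasC}. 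Using Lemma~\ref{L:fiberB}(a),(b) to bound the fibres of $\pi$, so that $\codim_{Y^r_{e_1,e_2}}Y^r_\lambda\gequ|\lambda|-\dim Q_s$, a dimension computation of the type that produced $\Delta(e_1,e_2,r,s)$ should show that $Y^r_\lambda\cap Y^r_\mu\cap Y^r_\nu$ is empty unless $e_1=e_2=d$ and $r\in\{0,2\}$, and is finite in those cases. Transversality on each of the two relevant orbits then follows from Kleiman--Bertini, which gives (a), and Lemma~\ref{L:fiberB}(a),(b) show that non-emptiness of the $r=0$ (resp.\ $r=2$) part of $Y_\lambda\cap Y_\mu\cap Y_\nu$ forces $\rho_{d-1}\subset\lambda$ (resp.\ $(\rho_{d-1},1)\subset\lambda$), which is (d).

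For part (b), fix $(A,B)$ in the $r=r(d)$ locus of $Y_\lambda\cap Y_\mu\cap Y_\nu$; by (a) this is the $r=0$ locus when $d$ is even and the $r=2$ locus when $d$ is odd, so $W=B/A$ carries a symmetric form with $\dim\Ker(W\to W^*)=r(d)$, and $\ov{\G}(d,W)$ is $\OG(d,2d)$ or $\ov{\OG}(d,2d)$ accordingly. By Lemma~\ref{L:fiberB} the sets $X_\lambda\cap\ov{\G}(d,W)$, $X_\mu\cap\ov{\G}(d,W)$, $X_\nu\cap\ov{\G}(d,W)$ are finite, of cardinalities $2^{N_d(\lambda)}$, $2^{N_d(\mu)}$, $2^{N_d(\nu)}$ on a dense open part of the intersection; by a codimension estimate on the auxiliary varieties $Y'_d$ and $Z'_d$ analogous to the one for the variety $Y'$ in the proof of Theorem~\ref{T:qclasC}, two distinct points drawn from these three sets meet only in $A$ --- that is, they are in general position in $W$ --- except when one of the sets is a singleton failing to be in general position with another, which is precisely what is excluded in passing to the subset $S_d$. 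Proposition~\ref{Gd2d} then produces, for each triple $(P,Q,R)$ of pairwise general points, the unique degree $d$ map with $f(0)=P$, $f(1)=Q$, $f(\infty)=R$; its kernel and span are $A$ and $B$, and every degree $d$ map in the fibre over $(A,B)$ arises this way. Counting the admissible triples and invoking the elementary identity built into \eqref{eq:defnMd} --- once a first point has been chosen freely, a second is subject to one general-position condition and a third to two, but nothing more --- yields the $2^{M_d}$-to-$1$ statement; the cases $d=m+1$ are degenerate and are checked directly as indicated in the statement.

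For part (c), note that since $\pi|_{T_\lambda}$ is generically $2^{N_d(\lambda)}$-to-$1$ onto $Y_\lambda$ when $\rho_{d-1}\subset\lambda$ and contributes nothing otherwise, one has $\upsilon_\lambda=2^{N_d(\lambda)}[Y_\lambda]$ in the relevant degree, and likewise for $\upsilon'_\lambda,\zeta_\lambda,\zeta'_\lambda$ with their own multiplicities. Hence the leading term --- $\int_{Y_d}\upsilon_\lambda\upsilon_\mu\upsilon_\nu$ when $d$ is even, $\int_{Z_d}\zeta_\lambda\zeta_\mu\zeta_\nu$ when $d$ is odd --- counts the points of $Y_\lambda\cap Y_\mu\cap Y_\nu$ on the relevant stratum, each with weight $2^{N_d(\lambda)+N_d(\mu)+N_d(\nu)}$. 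Comparing with $\gw{\ta_\lambda,\ta_\mu,\ta_\nu}{d}=\sum_{(A,B)\in S_d}2^{M_d}$ from (b), I would then carry out the resulting inclusion--exclusion: the weight must be lowered from $2^{\sum N_d}$ to $2^{M_d}$ and the sum restricted to $S_d$, both corrections being supported on the loci where two of the three isotropic subspaces share more than $A$, which are the images of $Y'_\lambda\cap Y'_\mu\cap\varphi^{-1}(Y_\nu)$ (resp.\ $Z'_\lambda\cap Z'_\mu\cap\psi^{-1}(Z_\nu)$) and their permutations, and account for the $\int_{Y'_d}$ (resp.\ $\int_{Z'_d}$) terms, with the signs and the coefficient $\tfrac12$ forced by the fibre dimensions of $\varphi$ and $\psi$; and, when $d$ is even, the $r=2$ part of $Y_\lambda\cap Y_\mu\cap Y_\nu$ must in addition be subtracted off, which is the role of the $Z_d$, $Z'_d$ terms. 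Parts (a) and (b) then yield (c). The hard part throughout is exactly this accounting: proving that the over-intersection loci are cut out, with precisely the right multiplicity, by $Y'_\lambda$, $Z_\lambda$, $Z'_\lambda$, and that the signed combination reproduces $\sum_{(A,B)\in S_d}2^{M_d}$ on the nose, factors of $\tfrac12$ included.
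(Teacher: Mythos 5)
Your architecture matches the paper's: the same orbit stratification $Y^r_{e_1,e_2}$, the same fibre-loci $Q_s$ and dimension count, Lemma~\ref{L:fiberB} in place of Lemma~\ref{L:fiberC}, Proposition~\ref{Gd2d} for reconstructing the maps, and an inclusion--exclusion over the $Y'_d$, $Z_d$, $Z'_d$ terms for (c). But there is a genuine gap in your treatment of part (a). Kleiman--Bertini, applied to the transitive $\SO_{2n+1}$-action on each stratum, gives you transversality of $Y^r_\lambda\cap Y^r_\mu\cap Y^r_\nu$ \emph{inside the stratum} $Y^r_{d,d}$. For $r=0$ that stratum is open in $Y_d$, so you are done there; but the $r=2$ locus $Z^\circ_d$ has positive codimension (three) in $Y_d$, and transversality of the $Y_\lambda$ as subvarieties of $Y_d$ at a point of $Z^\circ_d$ is a strictly stronger statement than transversality of their intersections with $Z^\circ_d$ inside $Z^\circ_d$ --- it requires control of the conormal directions to $Z^\circ_d$. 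The paper explicitly flags this (``The transversality assertion (a) is, however, stronger'') and spends the better part of two pages on it: a generic-unramifiedness criterion for finite flat morphisms (Lemma~\ref{l:genunram}), the identification $Y_{(\rho_{d-1},1)}=Y_{(1^d)}$, the open set $W(E_\bull)\subset Z^\circ_d$ where the induced map of conormal bundles is nondegenerate, an explicit construction of points in it, and an $\SL_2$-representation argument showing that the conormal vectors cut out by three translated copies of $Y_{(1^d)}$ generically span $\mathcal{N}_{Z^\circ_d/Y_d,(A,B)}$. Without this, the intersection numbers in (c) could pick up unaccounted local multiplicities at the $r=2$ points, and the formula would not follow from the point count in (b).

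A secondary inaccuracy: your heuristic for $M_d$ (``a second point is subject to one general-position condition and a third to two'') is not the actual mechanism. The $\min(\#\{i\,|\,N_d(\lambda^i)\ge 1\},2)$ correction in \eqref{eq:defnMd} comes from the fact that the $2^{N_d}$ points of $X_\lambda\cap\ov{\G}(d,B/A)$ are distributed \emph{equally between the two connected components} of (valid points of) $\ov{\G}(d,B/A)$, and a triple in pairwise general position must sit in compatible components; counting component choices gives exactly $2^{M_d}$, and the same two-component bookkeeping is what drives the derivation of (c) from (b). You should make the component structure (and, in the $r=2$ case, the notion of valid $\Sigma$ with $\dim(\Sigma\cap B\cap B^\perp)=1$) explicit rather than appealing to a chain of constraints.
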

\begin{proof}
Given integers $0\lequ e_1\lequ m$, $0 \lequ e_2 \lequ d \lequ m+1$,
and $r\gequ 0$, we let $Y^r_{e_1,e_2}$ be the variety of pairs $(A,B)$
such that $A \subset B \subset A^\perp \subset V$, $\dim(A) = m-e_1$,
$\dim(B) = m+e_2$, and $\dim(B \cap B^\perp) = m-e_1+r$. In general,
$Y^r_{e_1,e_2}$ is empty unless $r \lequ \min(e_1+e_2,2k+e_1-e_2)$.
These varieties have a transitive action of $\SO_{2n+1}$, and, when
$d\lequ m$, $Y_d$ is the union of the varieties $Y^r_{d,d}$ for all
numbers $r$ with $0 \lequ r \lequ \min(2d,2k)$. Set
\[ Y^r_\lambda = \{ (A,B) \in Y^r_{e_1,e_2} \mid \exists\, \Sigma \in 
   X_\lambda : A \subset \Sigma \subset B \} \,.
\]
We claim that $Y^r_\lambda \cap Y^r_\mu \cap Y^r_\nu$ is empty unless
(i) $e_1=e_2=d$ and $r\in\{0,2\}$, or (ii) $e_1=m$, $e_2=d=m+1$, and
$r=0$. In both of these cases, $Y^r_\lambda \cap Y^r_\mu \cap Y^r_\nu$
will be a finite set of points.

Following the proof of Theorem \ref{T:qclasC} we compute that the dimension
of $Y^r_{e_1,e_2}$ is equal to  
\[ \frac{1}{2}
   (n^2 + 2nk -3k^2 + n-k + 2ne_1-2ke_1-e_1^2+4e_2k-2e_2^2+e_1+2e_2-r-r^2) \,.
\]
Define the variety $T^r = \{ (A,\Sigma,B) \mid (A,B) \in
Y^r_{e_1,e_2}, \Sigma \in \OG, A \subset \Sigma \subset B \}$, and
let $\pi_1 : T^r \to \OG$ and $\pi_2 : T^r \to Y^r_{e_1,e_2}$ be the
projections.  Then $Y^r_\lambda = \pi_2(\pi_1^{-1}(X_\lambda))$.
Given a point $(A,B) \in Y^r_{e_1,e_2}$, the fiber $\pi_2^{-1}((A,B))$
is the union of the varieties
\begin{equation}
\label{Qs}
 Q_s = \{ \Sigma \in \OG \mid A \subset \Sigma \subset B,\ 
   \dim(\Sigma\cap B \cap B^\perp) = m-e_1+s \}
\end{equation}
for all integers $s$. We have that $Q_s$ is empty unless $2s \gequ
r+e_1-e_2$.  The map $Q_s \to \OG(e_1-s, B/(B\cap B^\perp))$
given by $\Sigma \mapsto (\Sigma + (B\cap B^\perp))/(B\cap B^\perp)$
has fibers isomorphic to open subsets of $\G(e_1,e_1+r-s)$. It follows 
that 
\[ \dim Q_s = \frac{1}{2}
   (2e_1 e_2 -e_1^2 - e_1 + 2e_1 s -2 e_2 s +2rs-3s^2+ s) \,,
\]
whenever $Q_s$ is not empty.

If  $s$ is such that $Q_s$ has the same dimension as the fibers of
$\pi_2$, then the codimension of $Y^r_\lambda$ in $Y^r_{e_1,e_2}$ is
at least $|\lambda| - \dim Q_s$.  It follows that
$\codim(Y^r_\lambda)+\codim(Y^r_\mu)+\codim(Y^r_\nu) -
\dim(Y^r_{e_1,e_2})$ is greater than or equal to the number
$\Delta'(e_1,e_2,r,s) := \dim \OG + d(n+k) - \dim(Y^r_{e_1,e_2}) -
3\dim Q_s$.  A computation shows that $\Delta'(e_1,e_2,r,s)$ is equal
to
\[ (r-3s)(r-3s+1)/2 + (n+k)(d-e_2) + 
   (n-k-1+e_2-2e_1+3s)(e_2-e_1) \,. 
\]

Suppose $Y^r_\lambda \cap Y^r_\mu \cap Y^r_\nu \neq \emptyset$; then 
the Kleiman--Bertini theorem implies that $\Delta'(e_1,e_2,r,s) \lequ 0$. If
$e_2 > e_1$ then we must have $e_2=e_1+1=d=m+1$ and $r=s=0$. 
When $e_2\lequ e_1$
an argument as in the proof of Theorem \ref{T:qclasC} shows that 
$e_1=e_2=d$. But then $2\Delta'(d,d,r,s) = (r-3s)(r-3s+1) \lequ 0$, 
which implies that $r=3s$ or $r=3s-1$. Combining this with $2s\gequ r$, we 
get $(r,s)=(0,0)$ or $(r,s)=(2,1)$.
So $Y_\la\cap Y_\mu\cap Y_\nu$ is finite and
contained in the union of the $r=0$ and $r=2$ loci, and
$Y^0_\la\cap Y^0_\mu\cap Y^0_\nu$ and
$Y_\la\cap Y_\mu\cap Y_\nu\cap Z^\circ_d$ are smooth.
The transversality assertion (a) is, however, stronger.
Its proof uses the following result.

\begin{lemma}
\label{l:genunram}
Let $f\colon Y\to X$ be a finite flat morphism of schemes of finite
type over an algebraically closed field $k$.  Suppose that for any 
closed points $y\in Y$ and $x\in X$ with $f(y)=x$ there exists a
(smooth, connected, quasi-projective) pointed curve $(C,p)$ and a map
$C\to X$ sending $p$ to $x$ with the property that the projection
$\pi\colon C\times_XY\to C$ admits a section $s$ such that the
restriction of $\pi$ to $s(C\smallsetminus\{p\})$ is unramified.  Then
$f$ is unramified over a nonempty open subset of $X$.
\end{lemma}
\begin{proof}
Denote by $d$ the degree of $f$, and for $x\in X$ let
$n_f(x)$ be the number of geometric points of $f^{-1}(x)$.
By \cite[IV.15.5.1]{EGA} the function $n_f$ is lower semi-continuous.
The morphism $f$ is unramified over a nonempty open subset of $X$
if and only if $n_f(x)=d$ for some $x\in X$.
So, suppose that the largest value of $n_f$ is smaller than $d$,
and let $x\in X$ be a closed point at which $n_f(x)$ achieves this value.
Since $n_f(x)<d$ the fiber $f^{-1}(x)$ contains a non-reduced point $y$.
Let $C\to X$, with $\pi$ and $s$, be as in the statement of the lemma.
Since $C$ is smooth and $\pi$ is flat, the irreducible
component $s(C)$ of $C\times_XY$ must intersect some other irreducible
component.
Letting $D$ denote the scheme-theoretic closure of
$C\times_XY\smallsetminus s(C)$ in $C\times_XY$, we have a morphism
$C\amalg D\to C\times_XY$ of schemes flat over $C$, that is an
isomorphism over $C\smallsetminus\{p\}$.
Denoting by $\psi$ the morphism $D\to C$ we have $n_\psi(p)=n_\pi(p)$,
hence for general $p'\in C$ we have
$n_\pi(p')=1+n_\psi(p')\ge 1+n_\psi(p)=1+n_\pi(p)$.
So there exists $x'\in X$ with $n_f(x')>n_f(x)$, and we have reached
a contradiction.
\end{proof}

We observe that $Y_{(\rho_{d-1},1)}(E_\bull)=Y_{(1^d)}(E_\bull)$.
One containment is obvious.
The equality now follows from the fact that
$Y_{(\rho_{d-1},1)}(E_\bull)$ is a variety of
codimension $1$ in $Y_d$, while
$Y_{(1^d)}(E_\bull)$ is a variety but is not the whole of $Y_d$,
since for general $(A,B)$ we have $\dim(B\cap E_{n+k+d})=2d-1$ with
$(\ ,\,)$ restricted to $B\cap E_{n+k+d}$ nondegenerate,
so there can be no isotropic subspace of $B\cap E_{n+k+d}$ of dimension $d$.
We have containments
$$Z^\circ_d\subset Y_{(1^d)}(E_\bull)\subset Y_d$$
and hence an induced morphism of conormal bundles
$$\mathcal{N}_{Y_{(1^d)}/Y_d}|_{Z^\circ_d}\to
\mathcal{N}_{Z^\circ_d/Y_d}.$$
Let us define $W(E_\bull)$ to be the open subset of $Z^\circ_d$
consisting of $(A,B)$ where $A\cap E_{n+k+d+1}=0$ and
the morphism of conormal bundles is nondegenerate.

Fix $\la^0=(\rho_{d-1},1)$, and set $p^0_j=\bar p_j(\la^0)$ for $1\le j\le m$.
Fix some permutation in
${\mathfrak S}_d$ satisfying the conditions stated in
the proof of Lemma \ref{L:fiberB}.
Then the proof explicitly constructs a point
$(A,B)$ in the $r=2$ locus of $U_{\la^0}$, such that
$\Sigma:=\Span(e_{p_1},\ldots,e_{p_m})$ is contained in $B$, and
$A\cap E_{n+k+d+1}=0$.
Since $\Sigma\in X^\circ_{\la^0}$ and $T_{\la^0}\to Y_{\la^0}$ is \'etale
over $U_{\la^0}$, the point $(A,B)$ is a nonsingular point of
$Y_{\la^0}=Y_{(1^d)}$.
Hence we have $(A,B)\in W(E_\bull)$.
Given an arbitrary orthogonal basis $e'_1$, $\ldots$, $e'_{2n+1}$
of $V$, with corresponding isotropic flag $E'_\bull$,
the same construction yields a point
\begin{equation}
\label{ABinWE}
(A',B')\in W(E'_\bull).
\end{equation}

Now suppose $\la\supset\la^0$, and set
$p_j=\bar p_j(\la)$ for $1\le j\le m$.
Since $\ell(\la)\ge d$, there exists an orthogonal basis
$e'_1$, $\ldots$, $e'_{2n+1}$ consisting of basis vectors of $V$, such that
\begin{alignat*}{2}
e'_{p^0_j}&=e_{p_j},&&\quad\text{for $1\le j\le d$}, \\
e'_{n+k+d+1}&=e_c,&&\quad\text{for some $c>p_d$}, \\
e'_{n+k+d+1+j}&=e_{p_{d+j}},&&\quad\text{for $1\le j\le m-d$}.
\end{alignat*}
Given such a basis, with corresponding isotropic flag $E'_\bull=E'_\bull(\la)$,
we have
\begin{align}
(A',B') &\in Y_\la(E_\bull),
\label{etildebas1} \\
Y_\la(E_\bull) &\subset Y_{(1^d)}(E'_\bull),
\label{etildebas2}
\end{align}
the latter a consequence of the fact that $E_{p_d}\subset E'_{n+k+d}$.
Notice that $Y_\la(E_\bull)\cap W(E'_\bull)$ is an open subset
of the $r=2$ locus on $Y_\la(E_\bull)$, and is nonempty by
\eqref{ABinWE} and \eqref{etildebas1}.

In the situation where
$Y^2_\la\cap Y^2_\mu\cap Y^2_\nu\ne\emptyset$
we must have $\la\supset\la^0$ by Lemma \ref{L:fiberB} (b).
Letting $Z^\circ_\la(E_\bull,E'_\bull(\la))=
Y_\la(E_\bull)\cap W(E'_\bull(\la))$, we may assume by
Kleiman--Bertini that
$Y^2_\la\cap Y^2_\mu\cap Y^2_\nu$ is contained in the intersection
of the translates of
$Z^\circ_\la(E_\bull,E'_\bull(\la))$,
$Z^\circ_\mu(E_\bull,E'_\bull(\mu))$, and
$Z^\circ_\nu(E_\bull,E'_\bull(\nu))$;
the assertions thus far concerning
$Y_\la\cap Y_\mu\cap Y_\nu$ are valid on the translates coming from
a nonempty open subset $U\subset \SO_{2n+1}\times \SO_{2n+1}\times \SO_{2n+1}$.
We now verify the condition in Lemma \ref{l:genunram} for the map
$Y\to U$ where $Y$ is the family of intersections of translates of
$Y_\la$, $Y_\mu$, and $Y_\nu$.
Suppose $y\in Y$ maps to $x\in U$, where $x$ and $y$ are closed points.
If $y$ is a reduced point in the fiber over $x$ then we can take
$C\to U$ to be a constant map.
So we suppose $y$ to be a non-reduced point in the fiber over $x$,
which implies that $y$ lies in the $r=2$ locus,
and we proceed to construct $(C,p)$ and $C\to U$ as required.

Without loss of generality the first component of $x$ is the identity
element of $\SO_{2n+1}$.  So we have $(A,B)\in
Z^\circ_\la(E_\bull,E'_\bull(\la))$.  Set $N=B\cap B^\perp\cap
E'_{n+k+d+1}(\la)$.  Since $(A,B)\in W(E'_\bull(\la))$ we have
$\dim(N)=2$.  Let $M$ be a vector space of dimension $4$ containing
$N$ and contained in $A^\perp$, such that the restriction of $(\,,\,)$
to $M$ is nondegenerate.  Then $\C^{2n+1}$ is the direct sum of $M$
and $M^\perp$.

Every element of $\GL_N$ extends uniquely to an element of $\SO_M$,
which we regard as an element of $\SO_{2n+1}$ by letting it act trivially
on $M^\perp$.
This way we may regard $\SL_N$ as a subgroup of $\SO_{2n+1}$.
Now $\SL_N$ acts on $Y_d$ fixing $(A,B)$.
The subvariety $Z^\circ_d$ of $Y_d$ is $\SL_N$-invariant.
So, $\SL_N$ acts on the conormal space to $Z^\circ_d$ in $Y_d$ at
$(A,B)$.
The component of the fixed point locus for the $\SL_N$-action
containing $(A,B)$ is contained in $Z^\circ_d$,
hence the the conormal space to $Z^\circ_d$ at $(A,B)$, when decomposed into
irreducible $\SL_N$-representations,
has no trivial factors.
A three-dimensional representation of $\SL_N$ without
trivial factors is irreducible. From the orbit structure
we know that the conormal vector given by a defining equation
for $Y_{(1^d)}(E'_\bull)$ inside $Y_d$,
which is well-defined up to scale and nonzero since
$(A,B)\in W(E'_\bull(\la))$,
has $\SL_N$-orbit not contained in any proper linear subspace
of $\mathcal{N}_{Z^\circ_d/Y_d,(A,B)}$.

We make a similar analysis of copies of $\SL_2$ sitting inside the
second and third factors of $\SO_{2n+1}$, as above but with $E_\bull$
and $E'_\bull(\la)$ replaced by corresponding translates of $E_\bull$
and $E'_\bull(\mu)$ or $E'_\bull(\nu)$.  So we have $u$, $v$, $w\in
\mathcal{N}_{Z^\circ_d/Y_d,(A,B)}$ and a triple of actions of $\SL_2$
on $\mathcal{N}_{Z^\circ_d/Y_d,(A,B)}$ such that images of $u$, $v$,
and $w$ under a generic triple of group elements span
$\mathcal{N}_{Z^\circ_d/Y_d,(A,B)}$.  We have, then, a map
$$\SL_2\times \SL_2\times \SL_2\to \SO_{2n+1}\times \SO_{2n+1}\times
\SO_{2n+1}$$ sending the identity element to $x$ and a dense open
subset into $U$, such that $(A,B)$ is a point in every fiber of the
image and an unramified point in the fiber over the generic point of
the image (the latter assertion follows from \eqref{etildebas2} and
the fact that the images of $u$, $v$, and $w$ generically span
$\mathcal{N}_{Z^\circ_d/Y_d,(A,B)}$).  Taking $C$ to be a general
curve in $\SL_2\times \SL_2\times \SL_2$ containing the identity, with
$p$ equal to identity, the condition of Lemma \ref{l:genunram} is
satisfied.  So, part (a) is established.

For $d\le m$, we consider a pair
$(A,B)\in Y^r_\lambda \cap Y^r_\mu \cap Y^r_\nu$.
When $r=0$, $\ov{\G}(d,B/A)$ has two connected components. When $r=2$,
let $N=B \cap B^{\perp}$ and call an isotropic $\Sigma$ with
$A\subset\Sigma\subset B$ valid if $\dim(\Sigma\cap N)=1$. The locus
of all valid $\Sigma$ also splits into two connected components,
depending on the family of the isotropic $(d-1)$-dimensional space
$(\Sigma+N)/N$ in $B/N$.  We also call any point of $\ov{\G}(d,B/A)$
valid when $r=0$.  Let $P \in X_\lambda \cap \ov{\G}(d,B/A)$, $Q \in
X_\mu \cap \ov{\G}(d,B/A)$, and $R \in X_\nu \cap \ov{\G}(d,B/A)$. It
is easy to construct examples where $P\cap Q \neq A$; however a
dimension counting argument as in the proof of Theorem \ref{T:qclasC}
shows that $0\leq \dim(P\cap Q) - \dim(A) \leq 1$.

Consider a morphism $f : \bP^1 \to \OG$ of degree $d$ such that $f(0)
\in X_\lambda$, $f(1) \in X_\mu$, and $f(\infty) \in X_\nu$. Then
$(\Ker(f),\Span(f))$ must be a point of an intersection $Y^r_\lambda
\cap Y^r_\mu \cap Y^r_\nu$ in some space $Y^r_{e_1,e_2}$; thus $d\lequ
m+1$. Moreover, we must have $(e_1,e_2)=(d,d)$ and
$(r,s)\in\{(0,0),(2,1)\}$, if $d\lequ m$, or $(e_1,e_2)=(m,m+1)$ and
$r=s=0$, if $d=m+1$. The next result can be justified by an argument similar 
to the proof of \cite[Lemma 1]{Buch}. 
\begin{lemma}
\label{genpos}
Let $f : \bP^1 \to \G(c,c+d)$ be a morphism of degree $d$.  Assume
that $c \lequ d$, $\Ker(f) = 0$, and $\Span(f) = \C^{c+d}$.  
Then any two distinct $c$-dimensional subspaces in the
image of $f$ are in general position.
\end{lemma}

When $d=m+1$ is odd, then $\ov{\G}(d-1,B/A)$ has no rational curves of
degree $d$, and statements (b) and (d) in this case reduce to
trivialities.  When $d\le m$ and $r\ne r(d)$ (that is, $d$ is odd and
$r=0$ or $d$ is even and $r=2$) then there exists no triple of
distinct valid points of $\ov{\G}(d,B/A)$ in general position.  By
Lemma \ref{genpos}, now, the assignment in (b) maps any $f$ to a point
$(\Ker(f),\Span(f))$ in $S_d(\la,\mu,\nu)$.

Suppose $d\le m$.
We claim that the set $S_d$ in the statement of (b) is precisely the
set of points $(A,B)$ in $Y_\la\cap Y_\mu\cap Y_\nu$ for which
there exists a triple of points in $\ov{\G}(d,B/A)$ in
general position,
one in each Schubert variety $X_\la$, $X_\mu$, $X_\nu$, and that
the number of such triples is precisely $2^{M_d}$.
This claim follows easily, once we observe that when $N_d(\la)\ge 1$
the $2^{N_d}$ points of $X_\la\cap \ov{\G}(d,B/A)$ are distributed
equally amongst the two components of valid $\Sigma\in \ov{\G}(d,B/A)$.
Now, part (b) is established by invoking Proposition \ref{Gd2d}.

When $d=m+1$ is even, then $\ov{\G}(d-1,B/A)\cong \OG(d,2d)$
and all triples are in general position.
The number of triples is $2^{M_d}$ by Lemma \ref{L:fiberB} (a).
So (b) is true in this case.
Statement (c) in this case follows immediately.

In case $d\le m$,
statement (c) follows from (b) by keeping track of the possible distributions
of triples among the two connected components of valid
points of $\ov{\G}(d,B/A)$, and by noting that the degrees in
Lemma \ref{L:fiberB} parts (a) and (b) are equal, and the degrees in
Lemma \ref{L:fiberB} parts (c) and (d) are equal.
Statement (d) follows from Lemma \ref{L:fiberB}.
\end{proof}

\begin{example}
On $\OG(4,11)$ we have $\langle \ta_{643}, \ta_{6431},
\ta_{6432}\rangle_4 = 1$.  For general isotropic flags $F_\bull$,
$G_\bull$, the intersection $Y_{6431}(F.)\cap Y_{6432}(G.)$ has two
irreducible components, one with the (generically) unique lifts to
$X_{6431}(F_\bull)$ and to $X_{6432}(G_\bull)$ in general position,
and one with the lifts both containing $F_7\cap G_5$.  A general
translate of $Y_{643}$ intersects each of these components in a single
point, hence $\#(Y_\la\cap Y_\mu\cap Y_\nu)=2$ and (since
$N_4(643)=1$) we have 
\[
\int_{Y_4} \upsilon_{643}\cdot
\upsilon_{6431}\cdot \upsilon_{6432}=4.  
\]
The set $S_d(643,6431,6432)$ is just a single point, and the formula in
Theorem \ref{T:qclasB}(c) gives $\langle \ta_{643}, \ta_{6431},
\ta_{6432}\rangle_4=4-(1/2)\cdot 6$.  (Notice, since $\ell(643)<4$,
the intersection $Y_\la\cap Y_\mu\cap Y_\nu$ is contained in the $r=0$
locus, and there is no contribution from the integrals over $Z_4$ and
$Z'_4$.)
\end{example}

We have seen that the parameter space of lines on $\OG$ is 
the orthogonal two-step flag variety 
$Z_1=\OF(m-1,m+1;2n+1)$. It follows that 
\begin{equation}
\label{oglinenums}
\langle \ta_{\lambda},\ta_\mu, \ta_{\nu}\rangle_1 = 
\int_{Z_1} [Z_{\lambda}]\cdot [Z_\mu] \cdot [Z_\nu]
\end{equation}
where $Z_{\lambda}$, $Z_\mu$, and $Z_\nu$ are the 
associated Schubert varieties in $Z_1$. 

For any Young diagram $\lambda$, let $\ov{\la}$ denote the diagram
obtained by deleting the leftmost column of $\lambda$; hence
$\ov{\la}_i=\max\{\la_i-1,0\}$.  Given any Schubert variety
$X_{\la}$ in $\OG(m,V)$, we will consider an associated
Schubert variety $X_{\ov{\la}}$ in $\OG(m+1,V)$, with
cohomology class $\ta_{\ov{\la}}$. The following result characterizes
the degree $1$ quantum correction terms in the quantum Pieri rule, and
is an exact analogue of what happens for type A Grassmannians.  What
is different from type A is that there will also be degree $2$ quantum
correction terms in the Pieri rule for $\OG$.

\begin{prop}
\label{bprop0}
For any integer $p\in [1,n+k]$ and 
$\lambda$, $\mu\in \mathcal{P}(k,n)$
with $|\lambda|+|\mu|+p=\dim \OG + n + k$, we have
\begin{equation}
\label{beq1}
\langle \ta_\lambda, \ta_{\mu}, \ta_p\rangle_1 =
\int_{\OG(n-k+1,2n+1)} \ta_{\ov{\lambda}} \cdot \ta_{\ov{\mu}} \cdot
\ta_{p-1}.
\end{equation}
\end{prop}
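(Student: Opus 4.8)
The plan is to combine equation~\eqref{oglinenums}, which identifies $\langle\ta_\la,\ta_\mu,\ta_p\rangle_1$ with the classical number $\int_{Z_1}[Z_\la]\cdot[Z_\mu]\cdot[Z_p]$ on the line space $Z_1=\OF(m-1,m+1;2n+1)$ (recall $m=n-k$), with the projection $\varpi\colon Z_1\to\OG^+$ sending $(A,B)$ to $B$, where $\OG^+:=\OG(m+1,2n+1)=\OG(n-k+1,2n+1)$. The map $\varpi$ is an $\SO_{2n+1}$-equivariant Grassmann bundle with fibre $\G(m-1,m+1)$. I will write $X^+_\eta$, $\ta^+_\eta$ for the Schubert varieties and classes of $\OG^+$, whose strictness parameter is $k-1$; a direct comparison of the index formulas shows that $\ov p_j$ computed for $\ov\eta$ on $\OG^+$ agrees with $\ov p_j(\eta)$, and that the index of~\eqref{vare} satisfies $\varepsilon^+(p-1)=\varepsilon(p)$.

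The first step is to dispose of the special class. For $(A,B)\in Z_1$ the subspace $B$ is $(m+1)$-dimensional, so by an elementary incidence argument there is an isotropic $\Sigma$ with $A\subset\Sigma\subset B$ and $\Sigma\cap G_{\varepsilon(p)}\ne 0$ if and only if $B\cap G_{\varepsilon(p)}\ne 0$: when $A\cap G_{\varepsilon(p)}\ne 0$ or $\dim(B\cap G_{\varepsilon(p)})\gequ 2$ a general such $\Sigma$ works, and when $\dim(B\cap G_{\varepsilon(p)})=1$ with $A\cap G_{\varepsilon(p)}=0$ the space $\Sigma=A+(B\cap G_{\varepsilon(p)})$ is forced. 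Together with $\varepsilon(p)=\varepsilon^+(p-1)$ this says $Z_p(G_\bull)=\varpi^{-1}(X^+_{p-1}(G_\bull))$, hence $[Z_p]=\varpi^*\ta^+_{p-1}$, and the projection formula gives
\[
\langle\ta_\la,\ta_\mu,\ta_p\rangle_1=\int_{\OG^+}\varpi_*\bigl([Z_\la]\cdot[Z_\mu]\bigr)\cdot\ta^+_{p-1}.
\]
So it remains to compute $\varpi_*([Z_\la]\cdot[Z_\mu])$.

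Next, choose the flags $E_\bull$, $F_\bull$ defining $Z_\la$ and $Z_\mu$ generic. Since $Z_1$ is $\SO_{2n+1}$-homogeneous, Kleiman--Bertini gives $[Z_\la]\cdot[Z_\mu]=[Z_\la\cap Z_\mu]$ with $Z_\la\cap Z_\mu$ a proper, generically reduced intersection. Using the incidence identity $\{B\in\OG^+\mid\exists\,\Sigma\in X_\eta(E_\bull),\ \Sigma\subset B\}=X^+_{\ov\eta}(E_\bull)$ (the analogue of the type~A ``cone over a Schubert variety'', which follows from the index matching together with a hyperplane--extension argument) and the fact that two $m$-planes in an $(m+1)$-plane meet in dimension $\gequ m-1$, one checks that $\varpi$ carries $Z_\la\cap Z_\mu$ onto $X^+_{\ov\la}(E_\bull)\cap X^+_{\ov\mu}(F_\bull)$, and a dimension count gives $\dim(Z_\la\cap Z_\mu)-\dim\bigl(X^+_{\ov\la}\cap X^+_{\ov\mu}\bigr)=2m-\ell(\la)-\ell(\mu)\gequ 0$. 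If this difference is positive, then $\varpi|_{Z_\la\cap Z_\mu}$ is not generically finite, so $\varpi_*([Z_\la]\cdot[Z_\mu])=0$; moreover in that case $|\ov\la|+|\ov\mu|+(p-1)=\dim\OG^++(2m-\ell(\la)-\ell(\mu))>\dim\OG^+$, so $\int_{\OG^+}\ta^+_{\ov\la}\ta^+_{\ov\mu}\ta^+_{p-1}=0$ as well, and the identity holds with both sides zero. In the remaining case $\ell(\la)=\ell(\mu)=m$ I would show $\varpi|_{Z_\la\cap Z_\mu}$ is birational onto $X^+_{\ov\la}(E_\bull)\cap X^+_{\ov\mu}(F_\bull)$: over a point $B$ of the intersection of the two Schubert cells one has $\dim(B\cap E_{\ov p_j(\la)})=j$ for all $j\lequ m$, and the Schubert condition $\dim(\Sigma\cap E_{\ov p_m(\la)})\gequ m$ on an $m$-plane $\Sigma\subset B$ forces $\Sigma=B\cap E_{\ov p_m(\la)}$; thus there is a unique $\Sigma_1\in X_\la(E_\bull)$ with $A\subset\Sigma_1\subset B$ for some $A$, a unique such $\Sigma_2\in X_\mu(F_\bull)$, and then $A=\Sigma_1\cap\Sigma_2$ is uniquely determined. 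Hence $\varpi_*([Z_\la]\cdot[Z_\mu])=[X^+_{\ov\la}(E_\bull)\cap X^+_{\ov\mu}(F_\bull)]=\ta^+_{\ov\la}\cdot\ta^+_{\ov\mu}$, the last equality by Kleiman--Bertini on $\OG^+$ with the same generic $E_\bull$, $F_\bull$; substituting into the displayed formula completes the proof.

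I expect the main obstacle to be the package of combinatorial bookkeeping --- the identities $\ov p_j(\ov\eta)=\ov p_j(\eta)$ and $\varepsilon^+(p-1)=\varepsilon(p)$ and the ``cone over $X_\eta$'' incidence identity --- all of which must be checked from the explicit index formulas, together with the simultaneous genericity statement: a single choice of $E_\bull,F_\bull,G_\bull$ must make $Z_\la\cap Z_\mu\cap Z_p$ transverse on $Z_1$, make $X^+_{\ov\la}\cap X^+_{\ov\mu}\cap X^+_{p-1}$ transverse on $\OG^+$, and force the lifts $\Sigma_1,\Sigma_2$ above to be unique. This is an intersection of finitely many dense open conditions, but it has to be spelled out with care.
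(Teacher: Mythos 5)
Your proposal is correct and shares its skeleton with the paper's proof --- both start from \eqref{oglinenums} and both exploit the projection from $Z_1=\OF(m-1,m+1;2n+1)$ to $\OG(m+1,2n+1)$, pulling back $\ta_{p-1}$ to absorb the special class --- but the central step is executed quite differently. The paper runs a purely formal push--pull computation through the three-step flag variety $U=\OF(m-1,m,m+1;2n+1)$ and the two-step flag $\OF(m,m+1;2n+1)$: it writes $[Z_\la]=\pi_{2*}\pi_1^*\ta_\la$, applies the projection formula repeatedly, exchanges $\varphi_{2*}\pi_2^*\pi_{2*}\pi_1^*$ for $\pi^*\pi_*\varphi_1^*$, and finishes with the single geometric input $\pi_*\varphi_1^*\ta_\la=\ta_{\ov\la}$. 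You instead compute $\varpi_*([Z_\la]\cdot[Z_\mu])$ at the level of cycles, by identifying the image of $Z_\la\cap Z_\mu$ as $X^+_{\ov\la}\cap X^+_{\ov\mu}$, matching dimensions (the excess $2m-\ell(\la)-\ell(\mu)$ indeed equals the degree excess of $|\ov\la|+|\ov\mu|+p-1$ over $\dim\OG(m+1,2n+1)$, so both sides vanish together), and proving generic injectivity when $\ell(\la)=\ell(\mu)=m$. What the paper's route buys is that no transversality or birationality analysis of $Z_\la\cap Z_\mu$ is needed; the price is the push--pull exchange identity (note that $U$ is not the fibre product of $Z_1$ and $\OF(m,m+1;2n+1)$ over $\OG(m+1,2n+1)$, so that identity requires an argument of the same flavour as your fibre analysis, as does $[Z_\la]=\pi_{2*}\pi_1^*\ta_\la$). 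Your route needs only the set-theoretic ``cone'' identity $\varpi(Z_\eta)=X^+_{\ov\eta}$ plus Kleiman--Bertini, at the cost of the genericity bookkeeping you list (the index identities $\ov p_j(\ov\eta)=\ov p_j(\eta)$ and $\varepsilon^+(p-1)=\varepsilon(p)$ do check out against the formulas in Section \ref{svB}). One point to make explicit: in the birational case the fibre of $\varpi$ over $B$ is a single point only when the two lifts satisfy $\Sigma_1\ne\Sigma_2$; if $\Sigma_1=\Sigma_2$ the fibre is a $\bP^{m-1}$ of choices of $A$. That locus has positive codimension in $X^+_{\ov\la}\cap X^+_{\ov\mu}$ --- alternatively, your own dimension count already rules out a positive-dimensional generic fibre, since $\varpi$ is surjective onto the intersection --- so the conclusion stands, but the sentence ``then $A=\Sigma_1\cap\Sigma_2$ is uniquely determined'' should be prefaced by the observation that $\Sigma_1\ne\Sigma_2$ for generic $B$.
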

\begin{proof}
Consider the three-step flag variety $U= \OF(m-1,m, m+1; 2n+1)$, with
its natural projections $\pi_1:U\to \OG$ and $\pi_2:U\to Z_1$.  Note
that for every $\lambda\in \cP(k,n)$ and isotropic flag $F_\bull$, we
have $Z_\la= \pi_2(\pi_1^{-1}(X_\la))$. We have the following
commutative diagram
\[
\xymatrix@M=6pt{
  U\ar[d]^{\pi_2} \ar[r]^(0.3){\varphi_2}  &  \OF(m,m+1;2n+1) 
       \ar[d]^{\pi}  \ar[r]^(0.55){\varphi_1} & \OG(m,2n+1) \\
  Z_1 \ar[r]^(0.3){\xi}  &  \OG(m+1,2n+1)  &  }
\]
where every arrow is a natural smooth projection and 
$\pi_1=\varphi_1\varphi_2$. Observe that for each $\la$ we have 
$\pi(\varphi_1^{-1}(X_{\la}(F_\bull))) = X_{\ov{\la}}(F_\bull)$.
We now apply (\ref{oglinenums}) 
and use the projection formula repeatedly to obtain
\begin{align*}
\langle \ta_{\la},\ta_{\mu}, \ta_{p}\rangle_1 & =
\int_{Z_1} [Z_{\lambda}]\cdot [Z_{\mu}]\cdot [Z_{(p)}] \\
& =  \int_{Z_1} \pi_{2*}\pi_1^*\ta_\la \cdot \pi_{2*}\pi_1^*\ta_\mu
\cdot \xi^*\ta_{p-1} \\
& =  \int_U\pi_2^*\pi_{2*}\pi_1^*\ta_\la \cdot \pi_1^*\ta_\mu \cdot
\varphi_2^*\pi^*\ta_{p-1} \\
& =  \int_{\OF(m,m+1;2n+1)} \varphi_{2*}\pi_2^*\pi_{2*}\pi_1^*\ta_\la \cdot
\varphi_1^*\ta_\mu \cdot \pi^*\ta_{p-1} \\
& =  \int_{\OF(m,m+1;2n+1)} \pi^*\pi_*\varphi_1^*\ta_\la\cdot
\varphi_1^*\ta_\mu \cdot \pi^*\ta_{p-1} \\
& =  \int_{\OG(m+1;2n+1)} \pi_*\varphi_1^*\ta_\la\cdot \pi_*
\varphi_1^*\ta_\mu \cdot \ta_{p-1}  \\
& =  \int_{\OG(m+1,2n+1)}
\ta_{\ov{\la}} \cdot \ta_{\ov{\mu}}\cdot \ta_{\ov{p}}.  \qedhere
\end{align*}
\end{proof}

\subsection{Quantum cohomology}
\label{qcOGodd}

The quantum cohomology ring $\QH^*(\OG)$ is defined similarly to that
for $\IG$, but the degree of $q$ here is $n+k$. The quantum Pieri
formula for non-maximal orthogonal Grassmannians involves both linear
and quadratic $q$ terms. This is surprising since the corresponding
Pieri rule for the maximal isotropic Grassmannian $\OG(n,2n+1)$ (which
is isomorphic to $\OG(n+1,2n+2)$) has only linear $q$ terms. Note that
the degree of $q$ on $\OG(n,2n+1)$ is $2n$, which is twice the
expected value, since its degree on $\OG(n-k,2n+1)$ equals $n+k$ when
$k>0$.  In fact, when $k>0$, the $q^2$ terms in the quantum Pieri
formula for $\OG(n-k,2n+1)$ behave like the $q$ terms in the analogous
formula for $\OG(n,2n+1)$, while the $q$ terms on $\OG(n-k,2n+1)$
behave (and may be computed) like the $q$ terms in the quantum Pieri
rule for the usual (type A) Grassmannian.

Geometrically, this jump in $q$-degree is explained by the degree
doubling phenomenon on the maximal orthogonal Grassmannian, whereby
lines are mapped to conics in projective space under the Pl\"ucker
embedding (since the first Chern class of the universal quotient
bundle $\cQ$ is twice a Schubert class).  Algebraically, it is
simplest to understand the transition between the maximal and
non-maximal isotropic cases by considering the variety
$\OG(n+1-k,2n+2)$ for $k\gequ 0$, as explained in the 
Introduction.
Formally, the single quantum parameter $q$ on $\OG(n+1,2n+2)$ is
replaced with two square roots $q_1$ and $q_2$ on $\OG(n,2n+2)$, which
in turn are identified on $\OG(n-1,2n+2)$. Compare Theorems
\ref{ogevenqupieri} and \ref{ogevenqupierin} for further details.

To formulate the quantum Pieri rule for $\OG$, we require some more
notation.  Let $\cP'(k,n+1)$ be the set of $\nu\in\cP(k,n+1)$ for
which $\ell(\nu)=n+1-k$, $2k \lequ \nu_1 \lequ n+k$, and the number of
boxes in the second column of $\nu$ is at most $\nu_1-2k+1$.  For any
$\nu\in\cP'(k,n+1)$, we let $\wt\nu \in \cP(k,n)$ be the partition
obtained by removing the first row of $\nu$ as well as $n+k-\nu_1$
boxes from the first column.  That is,
\[
\wt{\nu}=(\nu_2,\nu_3,\ldots,\nu_r), \
\text{where $r=\nu_1-2k+1$.}
\]
Recall also from Section \ref{qcIG} that for any partition $\la$, 
we set $\la^*=(\la_2,\la_3,\ldots)$.

\begin{thm}[Quantum Pieri rule for $\OG$] 
\label{ogoddqupieri}
For any $k$-strict partition 
$\lambda\in\cP(k,n)$ and integer $p\in [1,n+k]$, we have 
\begin{equation}
\label{ogqp}
\ta_p \cdot \ta_\la = 
\sum_{\la\to\mu} 2^{N'(\lambda,\mu)}\,\tau_\mu +\sum_{\la\to\nu} 
2^{N'(\lambda,\nu)} \,\tau_{\wt{\nu}}\, q \, +\, 
\sum_{\la^*\to\rho} 
2^{N'(\lambda^*,\rho)} \,\ta_{\rho^*}\, 
q^2
\end{equation}
in the quantum cohomology ring $\QH^*(\OG(n-k,2n+1))$. Here (i) the
first sum is classical, as in (\ref{ogclass}), (ii) the second is over
$\nu\in \cP'(k,n+1)$ with $\lambda\to\nu$ and $|\nu|=|\la|+p$, and
(iii) the third sum is empty unless $\la_1=n+k$, and over
$\rho\in\cP(k,n)$ such that $\rho_1=n+k$, $\lambda^*\to\rho$, and
$|\rho|=|\la|-n-k+p$.
\end{thm}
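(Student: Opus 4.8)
The plan is to extract the coefficient of each power $q^d$ in the quantum product $\ta_p\cdot\ta_\la$ separately; by definition of $\QH^*(\OG)$ and the fact that $\deg q=n+k$, this coefficient is $\sum_\nu\gw{\ta_p,\ta_\la,\ta_{\nu^\vee}}{d}\,\ta_\nu$, summed over $\nu\in\cP(k,n)$ with $|\nu|=|\la|+p-d(n+k)$. The first observation is that only $d\in\{0,1,2\}$ can contribute: by Theorem \ref{T:qclasB}(d) the invariant $\gw{\ta_p,\ta_\la,\ta_{\nu^\vee}}{d}$ vanishes unless each of the partitions $(p)$, $\la$, $\nu^\vee$ contains $\rho_{d-1}$ (if $d$ is even) or $(\rho_{d-1},1)$ (if $d$ is odd), and since $(p)$ has only one row it contains $\rho_{d-1}$ only for $d\le 2$ and contains $(\rho_{d-1},1)$ only for $d\le 1$. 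For $d=0$ the coefficient is given by the classical Pieri rule of Theorem \ref{thm.ogoddpieri}, which is the first sum.

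For $d=1$, the coefficient of $\ta_\eta\,q$ (with $\eta\in\cP(k,n)$, $|\eta|=|\la|+p-n-k$) is $\gw{\ta_p,\ta_\la,\ta_{\eta^\vee}}{1}$, and since $|\la|+|\eta^\vee|+p=\dim\OG+n+k$, Proposition \ref{bprop0} rewrites it as $\int_{\OG(n-k+1,2n+1)}\ta_{\ov\la}\cdot\ta_{\ov{\eta^\vee}}\cdot\ta_{p-1}$. By the classical Pieri rule of Theorem \ref{thm.ogoddpieri} applied on $\OG(n-k+1,2n+1)$ (whose $k$-parameter is $k-1$), this integral equals $2^{N'(\ov\la,\xi)}$ if $\ov\la\to\xi$ and $0$ otherwise, where $\xi$ is the partition dual to $\ov{\eta^\vee}$ in $\OG(n-k+1,2n+1)$. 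It then remains to carry out a purely combinatorial translation, using the formulas for dual partitions recorded in Section \ref{dualpar}: I would show that the $\eta$ produced in this way are exactly the $\wt\nu$ for $\nu\in\cP'(k,n+1)$ with $\la\to\nu$ and $|\nu|=|\la|+p$, and that $N'(\ov\la,\xi)=N'(\la,\nu)$ under this correspondence. This is the direct analogue of the identification ``$\nu^\vee$ dualized in $X^+$ equals $(n+k+1,\nu_1,\dots,\nu_m)$'' used in the proof of Theorem \ref{igqupieri}, and it yields the second sum.

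For $d=2$, the coefficient of $\ta_\kappa\,q^2$ (with $\kappa\in\cP(k,n)$, $|\kappa|=|\la|+p-2(n+k)$) is $\gw{\ta_p,\ta_\la,\ta_{\kappa^\vee}}{2}$. A dimension count refining the kernel--span analysis of Theorem \ref{T:qclasB} for degree two shows this vanishes unless $\la_1=n+k$; moreover, setting $\rho=(n+k,\kappa_1,\kappa_2,\dots)$, one checks from the formulas of Section \ref{dualpar} that $\kappa^\vee$ also has full first row $\kappa^\vee_1=n+k$. When $\la_1=\kappa^\vee_1=n+k$ the Schubert varieties $X_\la(E_\bull)$ and $X_{\kappa^\vee}(G_\bull)$ consist precisely of the isotropic $m$-subspaces containing the fixed isotropic lines $E_1$ and $G_1$ respectively, and quotienting by these lines identifies them with Schubert varieties $X_{\la^*}$ and $X_{(\kappa^\vee)^*}$ on the smaller odd orthogonal Grassmannian $\OG(m-1,2n-1)$ (which has the same $k$-parameter). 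Exploiting this --- either by applying Theorem \ref{T:qclasB}(c) for $d=2$, or by a direct geometric argument on the degree-two maps, reflecting the ``$q^2$ behaves like $q$ on $\OG(n,2n+1)$'' phenomenon of the Introduction --- I would reduce $\gw{\ta_p,\ta_\la,\ta_{\kappa^\vee}}{2}$ to the classical triple intersection $\int_{\OG}\ta_p\cdot\ta_{\la^*}\cdot\ta_{\rho^\vee}$ on $\OG(n-k,2n+1)$ itself, which by Theorem \ref{thm.ogoddpieri} equals $2^{N'(\la^*,\rho)}$ if $\la^*\to\rho$ and $0$ otherwise. Re-expressing in terms of $\rho^*=\kappa$ produces the third sum, the constraint $\rho_1=n+k$ being built into $\rho=(n+k,\kappa_1,\dots)$.

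The main obstacle I anticipate is the $d=2$ step: establishing the degree-two analogue of Proposition \ref{bprop0}, i.e.\ identifying $\gw{\ta_p,\ta_\la,\ta_{\kappa^\vee}}{2}$ with a classical intersection number on $\OG$, requires a careful treatment of the degree-two maps whose kernel and span are constrained by the full-first-row conditions (and in particular sorting out the $r=2$ contributions in Theorem \ref{T:qclasB}(c)), together with proving the vanishing when $\la_1<n+k$. Beyond that, the recurring difficulty in all three cases is the bookkeeping of powers of $2$: matching $N'(\la^*,\rho)$, and before that $N'(\ov\la,\xi)$ with $N'(\la,\nu)$, against the combinatorial data, including the exact constraints that cut out $\cP'(k,n+1)$. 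By contrast the degree bound and the $d=0,1$ reductions are routine given the results already established.
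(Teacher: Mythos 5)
Your treatment of the $d=0$ and $d=1$ coefficients is essentially the paper's: the degree bound from Theorem \ref{T:qclasB}(d) applied to the one-row partition $(p)$, the classical Pieri rule for $d=0$, and Proposition \ref{bprop0} plus a dualization and combinatorial translation for $d=1$ (the paper packages the latter as Proposition \ref{bcor1} together with the identification of Pieri moves $\ov\la\to(\ell(\mu)+2k-1,\ov\mu)$ in $\cP(k-1,n)$ with moves $\la\to(\ell(\mu)+2k,\mu,1^{n-k-\ell(\mu)})$ in $\cP(k,n+1)$). That part of your plan is sound.

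The gap is in the $d=2$ step, and it is twofold. First, the vanishing of the $q^2$ coefficient when $\la_1<n+k$ is not a dimension count: dimension reasoning together with Theorem \ref{T:qclasB} only yields that the relevant conics, if any, are finite in number, miss the pairwise intersections of the Schubert varieties, and have kernel and span of the expected dimensions (conditions (i)--(iv) in the proof of Proposition \ref{bprop2}); one must then still rule out their existence by a genuine geometric argument --- the paper reduces to $p=n+k$, $\la\in\{\la^0,\la^1\}$, $\mu=(n+k,\dots,2k+1)$ and in each case produces an \emph{infinite} family of incident conics, contradicting finiteness. Second, your identification of $\gw{\ta_p,\ta_\la,\ta_{\kappa^\vee}}{2}$ with $\int_{\OG}\ta_p\cdot\ta_{\la^*}\cdot\ta_{\rho^\vee}$ is the right formula but comes with no workable mechanism: Theorem \ref{T:qclasB}(c) for $d=2$ expresses the invariant as an alternating sum of integrals over $Y_2$, $Y'_2$, $Z_2$, $Z'_2$, not as a classical number on $\OG$, and the ``quotient by the lines $E_1$, $G_1$'' picture does not interact cleanly with the third variety $X_p(F_\bull)$, which contains no fixed line. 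The paper sidesteps all of this by an algebraic argument you do not mention: it first establishes $\ta_{n+k}^2=q^2$ (the classical and linear-$q$ parts vanish, and the coefficient of $q^2$ is pinned down either by counting conics or by associativity applied to $\ta_{n+k}^2\,\ta_{(1^{n-k})}$), and then, writing $\ta_\la=\ta_{n+k}\,\ta_{\la^*}$, uses associativity together with Proposition \ref{bprop2} to reduce every $q^2$ coefficient to the already-solved $d\lequ 1$ computations. To complete your proof you would need either to carry out your direct geometric argument in full or to adopt this associativity route; as written, the $d=2$ case is asserted rather than proved.
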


Our proof of Theorem \ref{ogoddqupieri} will require several auxiliary
results. Let $\mu^{\vee}$ be the dual partition to $\mu$ in
$\cP(k,n)$. From the the picture of $\mu^\vee$ given in Section
\ref{dualpar} it is straightforward to prove that
$\ell(\mu^{\vee})=n-k$ if and only if $\mu_1 < \ell(\mu)+2k$.
Proposition \ref{bprop0} shows that for $\la,\mu \in\cP(k,n)$, the
coefficient of $\ta_{\mu}\, q$ in the Pieri product $\ta_p\,
\ta_{\la}$ in $\QH^*(\OG(n-k,2n+1))$ is equal to the coefficient of
$\ta_{(\ov{\mu^{\vee}})^{\vee}}$ in the product $\ta_{p-1}\,
\ta_{\ov{\la}}$ in $\HH^*(\OG(n+1-k,2n+1))$, when
$\ell(\la)=\ell(\mu^{\vee})=n-k$, and equals $0$, otherwise.  By
computing the dual partition in $\cP(k-1,n)$ to $\ov{\mu^{\vee}}$, we
deduce the following result.

\begin{prop}
\label{bcor1}
Consider $\lambda$, $\mu\in \cP(k,n)$ with $|\mu|+n+k=|\lambda|+p$,
for $1\lequ p\lequ n+k$. If $\ell(\la)=n-k$ and $\mu_1<\ell(\mu)+2k$
then, in $\QH^*(\OG)$, the coefficient of $\ta_\mu\, q$ in
$\ta_p\,\ta_\lambda$ is equal to the coefficient of
$\ta_{(\ell(\mu)+2k-1,\, \ov{\mu})}$
in $\ta_{p-1}\,\ta_{\ov{\lambda}}\in \HH^*(\OG(n+1-k,2n+1))$.
Otherwise, the coefficient vanishes.
\end{prop}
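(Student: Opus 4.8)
The plan is to take the reduction recorded just before the statement and carry out the single computation it leaves open: the determination of the dual, in $\cP(k-1,n)$, of the partition $\ov{\mu^\vee}$. Recall the chain. The coefficient of $\ta_\mu\,q$ in $\ta_p\cdot\ta_\lambda$ in $\QH^*(\OG)$ equals the degree-one invariant $\gw{\ta_p,\ta_\lambda,\ta_{\mu^\vee}}{1}$, where $\mu^\vee\in\cP(k,n)$ is the dual of $\mu$ for $\OG=\OG(n-k,2n+1)$ (this uses the pairing $\int_{\OG}\ta_\mu\,\ta_{\mu^\vee}=1$, and one checks $\lambda,\mu^\vee,p$ satisfy the dimension constraint of Proposition~\ref{bprop0} precisely because $|\mu|+n+k=|\lambda|+p$). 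By Proposition~\ref{bprop0} this invariant equals $\int_{\OG(n+1-k,2n+1)}\ta_{\ov\lambda}\cdot\ta_{\ov{\mu^\vee}}\cdot\ta_{p-1}$, a classical triple intersection on $\OG(n-(k-1),2n+1)$. Comparing the degree $|\ov\lambda|+|\ov{\mu^\vee}|+p-1$ of the integrand with $\dim\OG(n+1-k,2n+1)$ shows it vanishes unless $\ell(\lambda)=\ell(\mu^\vee)=n-k$, and otherwise it equals the coefficient of $\ta_{(\ov{\mu^\vee})^\vee}$ in $\ta_{p-1}\cdot\ta_{\ov\lambda}$, the last $\vee$ now taken in $\cP(k-1,n)$. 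I would begin by recording, from the picture of dual partitions in Section~\ref{dualpar}, that $\ell(\mu^\vee)=n-k$ is equivalent to $\mu_1<\ell(\mu)+2k$; this matches the hypothesis of the proposition, and the complementary case gives the ``otherwise'' clause at once.

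The heart of the proof is then the combinatorial identity
\[
(\ov{\mu^\vee})^\vee=\bigl(\ell(\mu)+2k-1,\ \ov\mu\bigr)\in\cP(k-1,n),
\]
valid whenever $\mu_1<\ell(\mu)+2k$. To establish it I would work with the explicit description of $\lambda^\vee$ from Section~\ref{dualpar}, in the form of the relation $\ov{p}_j(\lambda^\vee)=2n+2-\ov{p}_{m'+1-j}(\lambda)$ between the index data of a partition and its dual in $\OG(m',2n+1)$, applied once with $m'=n-k$ and once with $m'=n+1-k$. The steps are: first, check that $\ov{\mu^\vee}$ (obtained from $\mu^\vee$ by deleting one box from each of its $n-k$ rows) is $(k-1)$-strict and lies inside the ambient rectangle for $\cP(k-1,n)$; second, express $\ov{p}_j(\ov{\mu^\vee})$, computed for $\OG(n+1-k,2n+1)$, in terms of $\ov{p}_j(\mu^\vee)$, computed for $\OG(n-k,2n+1)$, keeping careful track of the shift $k\mapsto k-1$ in the defining formula and of the extra index $j=n+1-k$; third, apply the dual relation on $\OG(n+1-k,2n+1)$ and then the one defining $\mu^\vee$ from $\mu$, and match the resulting sequence against the $\ov{p}_j$-sequence of $(\ell(\mu)+2k-1,\ov\mu)$. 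The largest part comes out to be $\ell(\mu)+2k-1$ because the bottom nonzero row of $\mu$ yields an index that, after reflecting through both dualizations and the column deletion, lands at exactly that column.

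I expect the bookkeeping in the last two steps to be the main obstacle, since the formula for $\ov{p}_j$ carries both a case split ($\lambda_j\le k$ versus $\lambda_j>k$) and the counting term $\#\{i<j:\lambda_i+\lambda_j\le 2k+j-i\}$, and column deletion can push parts of $\mu^\vee$ across the value $k$ at the same moment that the ambient parameter drops from $k$ to $k-1$. A robust way to organize the verification is to prove the identity by comparing $\ov{p}_j$-sequences row by row, separating the rows $i$ of $\mu$ with $\mu_i>k$, those with $0<\mu_i\le k$, and the trailing zeros; the equivalence $\ell(\mu^\vee)=n-k\iff\mu_1<\ell(\mu)+2k$, together with the finer features of the picture of $\mu^\vee$ in Section~\ref{dualpar}, supplies exactly the inequalities needed to resolve the case distinctions. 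Once the identity is in hand, substituting it into the reduction above proves the proposition, with the vanishing statement inherited directly from that reduction.
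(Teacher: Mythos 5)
Your proposal follows the paper's own argument exactly: the paper derives this proposition from Proposition~\ref{bprop0} via the identity $\langle\ta_p,\ta_\lambda,\ta_{\mu^\vee}\rangle_1=\int\ta_{\ov\lambda}\,\ta_{\ov{\mu^\vee}}\,\ta_{p-1}$, the equivalence $\ell(\mu^\vee)=n-k\iff\mu_1<\ell(\mu)+2k$ read off from Section~\ref{dualpar}, the dimension count forcing $\ell(\lambda)=\ell(\mu^\vee)=n-k$, and the combinatorial identity $(\ov{\mu^\vee})^\vee=(\ell(\mu)+2k-1,\ov\mu)$ in $\cP(k-1,n)$. The only difference is that you spell out the index-set bookkeeping for that last identity, which the paper leaves to the reader.
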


To characterize the $q^2$ terms that occur in Pieri products we follow
the model of the maximal orthogonal Grassmannian $\OG(n,2n+1)$, and
idea in the proof of \cite[Thm.~6]{BKT}. We will need the next
proposition.

\begin{prop}
\label{bprop2}
Suppose $\lambda\in \cP(k,n)$ with $\lambda_1<n+k$.  Then, for any
$p$, the coefficient of $q^2$ in the quantum Pieri product
$\ta_p\,\ta_\lambda$ vanishes.
\end{prop}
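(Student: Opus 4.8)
The coefficient of $q^2\,\ta_\nu$ in $\ta_p\cdot\ta_\lambda$ is the Gromov--Witten invariant $\gw{\ta_p,\ta_\lambda,\ta_{\nu^\vee}}{2}$, summed over $\nu\in\cP(k,n)$ with $|\nu|=|\lambda|+p-2(n+k)$. Such a $\nu$ can exist only if $|\lambda|+p\gequ 2(n+k)$, which together with $\lambda_1<n+k$ forces $m\gequ 2$; thus the assertion is vacuous unless $m\gequ 2$, and in that range we are in the case $d=2\lequ m$ of Theorem~\ref{T:qclasB}. So it suffices to prove that $\gw{\ta_p,\ta_\lambda,\ta_{\nu^\vee}}{2}=0$ whenever $\lambda_1<n+k$.

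By Theorem~\ref{T:qclasB}(b) with $d=2$ (an even degree, so $r(d)=0$), every rational map $f:\bP^1\to\OG$ of degree $2$ with $f(0)\in X_p(E_\bull)$, $f(1)\in X_\lambda(F_\bull)$, $f(\infty)\in X_{\nu^\vee}(G_\bull)$ produces a pair $(A,B)=(\Ker f,\Span f)$ in the set $S_2\subset Y_p(E_\bull)\cap Y_\lambda(F_\bull)\cap Y_{\nu^\vee}(G_\bull)$, necessarily in the $r=0$ locus of $Y_2$; hence $\dim A=m-2$, $\dim B=m+2$, $A=B\cap B^\perp$, and $f$ is the unique degree-$2$ map (Proposition~\ref{Gd2d}) through the three planes $f(0)/A,f(1)/A,f(\infty)/A$, which lie in a common ruling of $\ov{\G}(2,B/A)=\OG(2,B/A)$ and are pairwise in general position in the $4$-dimensional nondegenerate orthogonal space $B/A$. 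Consequently $\gw{\ta_p,\ta_\lambda,\ta_{\nu^\vee}}{2}=2^{M_2}\,|S_2|$, and it is enough to show that for general flags and $\lambda_1<n+k$ the set $S_2$ is empty, i.e.\ that the $r=0$ locus of $Y_p\cap Y_\lambda\cap Y_{\nu^\vee}$ carries no point $(A,B)$ whose lifts in $X_p\cap\ov{\G}(2,B/A)$, $X_\lambda\cap\ov{\G}(2,B/A)$, $X_{\nu^\vee}\cap\ov{\G}(2,B/A)$ contain a pairwise-general triple. Points in the $r=2$ locus are irrelevant since $r(2)=0$, so no separate analysis is needed there.

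To establish this emptiness the plan is to follow the model of the maximal orthogonal Grassmannian $\OG(n,2n+1)$, where the linear $q$-term of the Pieri rule is present only when the source partition has a full first row, and to mimic the span argument of \cite[Thm.~6]{BKT} (compare \cite[Prop.~4]{KTog} and Proposition~\ref{bprop0}); the doubling of $q$-degree reflected in this passage from ``degree one on $\OG(n,2n+1)$'' to ``degree two on $\OG(n-k,2n+1)$'' is exactly the square-root phenomenon described in the introduction. Concretely, when $\lambda_1<n+k$ we have $\ov{p}_1(\lambda)\gequ 2$, so the first Schubert condition defining $X_\lambda(F_\bull)$ is strictly weaker than $F_1\subset\Sigma$; feeding this, together with the single condition $\Sigma\cap E_{\varepsilon(p)}\neq 0$ defining $X_p(E_\bull)$ and the relation $A=B\cap B^\perp$, into the fiber estimates of Lemma~\ref{L:fiberB} for $\pi:T_\lambda\to Y_\lambda$ and a dimension count inside $B/A$, one should deduce that no $(A,B)$ in the $r=0$ locus of $Y_p\cap Y_\lambda\cap Y_{\nu^\vee}$ admits a pairwise-general triple of lifts, so $S_2=\emptyset$. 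The main obstacle is precisely this last step: unlike the case $\lambda_1=n+k$, where the condition $F_1\subset\Sigma$ pins the line $F_1$ into the span $B$ and permits a general-position triple in $B/A$ to persist, when $\lambda_1<n+k$ this rigidity is lost, and one must show by a careful stratified count --- of the same flavor as the conormal-direction bookkeeping and the $r=0$ versus $r=2$ analysis in the proof of Theorem~\ref{T:qclasB} --- that the $(m+2)$-dimensional span can no longer meet the three general flags in the pattern needed to support a degree-$2$ curve.
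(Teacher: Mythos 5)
Your reduction of the proposition to the single claim ``$S_2(\,p,\lambda,\nu^\vee)=\emptyset$ for all relevant $\nu$'' is correct and consistent with Theorem~\ref{T:qclasB}(b), and you have correctly located where the hypothesis $\lambda_1<n+k$ must enter (the first Schubert condition on $X_\lambda$ does not pin down the line $F_1$). But the proof stops exactly where the proposition's content begins: the emptiness of $S_2$ is only announced as a ``plan,'' with the admission that it is ``the main obstacle.'' That emptiness is the entire theorem, and the dimension-counting strategy you sketch for it cannot work as stated. The intersection $Y_p\cap Y_\lambda\cap Y_{\nu^\vee}$ in $Y_2$ is in general a \emph{nonempty} finite set (this is why the integrals appearing in Theorem~\ref{T:qclasB}(c) are individually nonzero and only their combination vanishes), so no count of expected dimensions can show that there are no points $(A,B)$; what must be shown is that at every such point the three lifts to $\ov{\G}(2,B/A)$ fail to contain a pairwise-general triple, and Lemma~\ref{L:fiberB} by itself gives no handle on that failure.

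The paper's argument for this step has a different logical shape, which you would need to reproduce or replace. One assumes a conic $C$ (equivalently a point of $S_2$) exists, and then \emph{shrinks} each incidence condition through the actual incidence points of $C$: the special condition $X_p(E_\bull)$ is replaced by the translate of $X_{n+k}$ of subspaces containing a fixed vector $x\in\Pi\cap E_{\varepsilon(p)}$; the condition $X_\lambda(F_\bull)$ is replaced by $X_{\lambda^0}(\widetilde F_\bull)$ or $X_{\lambda^1}(\widetilde F_\bull)$ for a flag $\widetilde F_\bull$ built from $F_\bull$ and the point $\Sigma\in X_\lambda\cap C$ (this replacement is exactly where $\lambda_1<n+k$ is needed); and $X_\mu(G_\bull)$ is replaced by the point $G_m$. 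This reduces everything to $p=n+k$, $\lambda\in\{\lambda^0,\lambda^1\}$, $\mu=(n+k,\dots,2k+1)$, and in each of these two cases one \emph{explicitly constructs an infinite family} of conics incident to the three reduced conditions, contradicting the finiteness statement (i) (which does hold for general flags by Theorem~\ref{T:qclasB}). Without this degeneration-plus-construction step, or a genuine substitute for it, the proposal does not prove the proposition.
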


\begin{proof}
For degree reasons we may assume $k\lequ n-2$.
Degree $2$ maps $f:\bP^1\to\OG$ are studied by considering
their image conics $C\subset \OG$. Let $X_p(E_\bull)$,
$X_\lambda(F_\bull)$, and $X_\mu(G_\bull)$ be given, for some
$\mu\in \cP(k,n)$, and consider the following conditions.
\begin{itemize}
\item[(i)] There are only finitely many conics $C$ in $\OG$ incident to
$X_p(E_\bull)$, $X_\lambda(F_\bull)$, and $X_\mu(G_\bull)$;
\item[(ii)] No such conic is incident
to the intersection of any two of
these varieties;
\item[(iii)] Each conic $C$ has
$\dim\Ker(C)=m-2$ and $\dim\Span(C)=m+2$;
\item[(iv)] The induced bilinear form on
$\Span(C)/\Ker(C)$ is always nondegenerate.
\end{itemize}
We claim that conditions (i)--(iv) imply that the set of conics incident to
$X_p(E_\bull)$, $X_\lambda(F_\bull)$, and $X_\mu(G_\bull)$ is empty.
Conditions (i)--(iv) hold in
particular when $|\lambda|+|\mu|+p=\dim \OG + 2(n+k)$ and $E_\bull$,
$F_\bull$, and $G_\bull$ are general (by dimension reasoning,
plus Theorem \ref{T:qclasB}).

The first step in the proof of the claim is a reduction to the case
\[
p=n+k,\qquad \lambda\in\{\lambda^0,\lambda^1\}, \qquad
\mu=(n+k,\ldots,2k+1),
\]
where $\lambda^0=(n+k-1,\ldots,2k)$ and
$\lambda^1=(n+k-1,\ldots,2k+1)$.  Assume (i)--(iv) and let $C$ denote
a conic in $\OG$ satisfying the incidence conditions.  Recall that
$X_p(E_\bull)=\{\,\Sigma\,|\,\Sigma\cap E_{\varepsilon(p)}\ne
0\,\}$.  If $\Pi\in X_p(E_\bull)\cap C$ and $0\ne x\in
\Pi\cap E_{\varepsilon(p)}$, then
$\{\,\Sigma\,|\,x\in\Sigma\,\}$ is a translate of $X_{n+k}$
contained in $X_p(E_\bull)$.  Let $\Sigma \in X_\lambda(F_\bull)\cap C$.
If $\ell(\la)=m$, we define $\widetilde{F}_{m+1}=(\Sigma^\perp\cap
F_i)+\Sigma$ with $i$ chosen so this space has dimension $m+1$, and
then $X_{\lambda^0}(\widetilde{F}_\bull)\subset X_\lambda(F_\bull)$.
If $\ell(\la)<m$, then put $R=\Sigma\cap F_j$ with $j$
chosen so that $\dim R=m-1$, and define
$\widetilde{F}_m=(R^\perp\cap F_i)+R$ for $i$ such that this has
dimension $m$.  Then $X_{\lambda^1}(\widetilde{F}_\bull)\subset
X_\lambda(F_\bull)$.
Set $A=\Ker(C)$ and $B=\Span(C)$.

\emph{Case 1.} Suppose $\lambda=\lambda^0$.
The three Schubert varieties are the point $G_m$,
the locus of $\Sigma$ containing $x$, and
the locus of $\Sigma\subset F_{m+1}$.
The space $B$ contains $x$, $G_m$, and an
$m$-dimensional subspace of $F_{m+1}$ containing $A$.
By conditions (ii) and (iii), the space $(G_m+\langle x\rangle )\cap F_{m+1}$
must contain some vector $t$ not in $A$, and
applying condition (iv) also, we see that
for general $y\in F_{m+1}$, the space
$B':=G_m+\langle x, y\rangle$
has dimension $n-k+2$ with the symmetric form nondegenerate on $B'/A$.
Let $C'$ be the component containing $G_m$
of the space of maximal isotropic spaces containing $A$ and contained in $B'$.
Set $\Sigma'=A+\langle t, y\rangle$; then
$\Sigma'\subset F_{m+1}$ and
$\Sigma'\in C'$, so $C'$ is a conic
incident to the three Schubert varieties.
There are infinitely many such conics $C'$, so we have a contradiction to (i).

\emph{Case 2.} Suppose $\lambda=\lambda^1$.
The three Schubert varieties are the point $G_m$,
the locus of $\Sigma$ containing $x$,
and the locus of $\Sigma$ with $\dim(\Sigma\cap F_m)\gequ m-1$.
Let $\Sigma_1$ be a point of $C$ in the last of these varieties.
We may suppose $\Sigma_1\ne F_m$, for otherwise we can
reduce to Case 1.
So $M:=\Sigma_1\cap F_m$ has dimension $m-1$.
If $A\subset F_m$ then
there exists isotropic $\Sigma'$ of dimension $m$
containing $A+\langle x\rangle$ and contained in
$(x^\perp\cap F_m)+\langle x\rangle$,
and we quickly get a contradiction to (ii).
So $A\not\subset F_m$, and hence $\dim(A\cap M)=m-3$.
If, now, $F_m\subset B$, i.e., $\dim(F_m\cap G_m)=m-2$, then
the component containing $G_m$ of the space of maximal isotropic spaces
containing $F_m\cap G_m$ and contained in $B$ is a conic, and as before
we reduce to Case 1.
So $A_0:=F_m\cap G_m$ has dimension $m-3$.
With $B_0:=F_m+G_m$ there are two possibilities for the bilinear form
on $B_0/A_0$: it could be nondegenerate or it could have 2-dimensional kernel.
In the former case, the infinite family of spaces $A'$ of dimension $m-2$
with $A_0\subset A'\subset G_m\cap x^\perp$ gives rise to an
infinite family of conics $C'$ with $\Ker(C')=A'$ and
$\Span(C')=(A')^\perp\cap B_0$, contradicting (i).
In the latter case there is a family of conics with kernel $A$
and varying span contradicting (i).
\end{proof}

\medskip
\noin
\begin{proof}[Proof of Theorem \ref{ogoddqupieri}]
Theorem \ref{T:qclasB} (d) implies that the quantum Pieri product
$\ta_p\,\ta_{\la}$ contains at most quadratic $q$ terms. We begin by
studying the $q$-linear terms in this product. For dimension reasons,
the right hand side of \eqref{beq1} vanishes when either $\lambda$ or
$\mu$ has length less than $n-k$. 

The classical Pieri rule for $\OG$ (Theorem \ref{thm.ogoddpieri})
implies that for $\lambda$, $\mu\in \cP(k,n)$ with $\ell(\la)=n-k$ and
$\mu_1<\ell(\mu)+2k$, we have $\ov{\la} \to (\ell(\mu)+2k-1,\,
\ov{\mu})$ in $\cP(k-1,n)$ if and only if $\la \to (\ell(\mu)+2k,\mu,
1^{n-k-\ell(\mu)})$ in $\cP(k,n+1)$, with the same coefficients
$N'$. It follows by Proposition \ref{bcor1} that for $\la$,
$\mu\in\cP(k,n)$, the coefficient of $\ta_\mu\, q$ in the quantum
product $\ta_p\,\ta_\lambda$ in $\QH^*(\OG)$ is equal to the
coefficient of $\ta_{(\ell(\mu)+2k,\mu, 1^{n-k-\ell(\mu)})}$ in the
cup product $\ta_p\,\ta_\lambda$ in $\HH^*(\OG(n+1-k,2n+3))$ when
$\ell(\la)=n-k$ and $\mu_1 < \ell(\mu)+2k$, and is 0
otherwise. Observe that the condition $\ell(\la)=n-k$ may be omitted,
since when $\ell(\la)<n-k$, the product $\ta_p\,\ta_\lambda$ in
$\HH^*(\OG(n+1-k,2n+3))$ involves no terms indexed by partitions of
length $n+1-k$. Notice that $\nu \mapsto \wt{\nu}$ induces a 1-1 map
$\cP'(k,n+1)\to\cP(k,n)$ with image $\{\mu\in \cP(k,n)\ :\
\mu_1<\ell(\mu)+2k\}$, and the inverse of this map is given by
$\mu\mapsto (\ell(\mu)+2k,\mu,1^{n-k-\ell(\mu)})$.  Combining these
facts, we see that the coefficient of $\ta_{\wt{\nu}}\, q$ on the
right hand side of \eqref{ogqp} is equal to the coefficient of
$\ta_{\nu}$ in the product $\ta_p\,\ta_{\la}$ in
$\HH^*(\OG(n+1-k,2n+3))$, for $\nu\in \cP'(k,n+1)$, and these are all
the linear $q$ terms.

We next show that the basic relation $\ta_{n+k}^2=q$ holds in
$\QH^*(\OG)$. Note that $\ta_{n+k}^2$ vanishes in cohomology, and the
coefficient of $q$ vanishes by Proposition \ref{bcor1}.  So
$\ta_{n+k}^2=c\,q^2$ for some $c\in \Z$.  That $c=1$ can be shown
geometrically by exhibiting a unique conic on $\OG$ passing through a
point and two general translates of $X_{n+k}$.  An alternative
argument uses associativity of the quantum product.  We have
$\ta_{n+k}\,\ta_{(1^{n-k})}= \ta_{(1^{n-k})}\, q$ by Proposition
\ref{bcor1}.  Hence $\ta_{n+k}^2\,
\ta_{(1^{n-k})}=\ta_{(1^{n-k})}\, q^2$, and $c=1$.

According to Proposition \ref{bcor1} the term $\ta_\mu\, q$ appears in a
Pieri product only when $\mu_1<\ell(\mu)+2k$, and in particular,
$\mu_1<n+k$. Proposition \ref{bprop2} asserts that whenever
$\la_1<n+k$, the product $\ta_p\,\ta_\la$ carries only such degree
one quantum correction terms $\ta_{\mu}\, q$.  One now completes the
proof of the Theorem as follows.  It suffices to consider products
$\ta_p \, \ta_{\la}$ when $\la_1=n+k$. In this case we have an
equation $\ta_\la=\ta_{n+k}\, \ta_{\la^*}$ in $\QH^*(\OG)$. If
$p=n+k$, then $\ta_{n+k}\, \ta_\la= \ta_{n+k}^2\, \ta_{\la^*} =
\ta_{\la^*}\, q^2$, and the quantum Pieri formula is verified.  If $p
< n+k$, then we write $\ta_p\,\ta_{\la} = \ta_{n+k}\, (\ta_p
\, \ta_{\la^*})$. By Proposition \ref{bprop2}, the product in
parentheses receives only linear quantum correction terms, and hence
is known by Proposition \ref{bcor1}. As the quantum Pieri rule for
multiplication by $\ta_{n+k}$ has already been proved, it remains only
to show that the result agrees with the formula in the theorem, and
this is easily checked.
\end{proof}

\begin{example} \label{E:qpieriB}
  In the quantum cohomology ring of $\OG(4,13)$ we have 
 \begin{gather*}
  \tau_4 \cdot
  \tau_{(5,3,2,2)} = 2 \tau_{(8,4,2,2)} + \tau_{(7,5,2,2)} + 2
  \tau_{(7,4,3,2)} + \tau_{(6,5,3,2)} + \tau_{(8,4,3,1)} \\ + 2
  \tau_{(4, 2, 2)}\, q + \tau_{(4, 3, 1)}\, q + 2 \tau_{(3, 2, 2, 1)}\, q 
  + 2 \tau_{(4, 2, 1, 1)}\, q.
  \end{gather*}
On the same Grassmannian we also have
\begin{gather*}
\tau_5 \cdot \tau_{(8,4,1,1)} = \tau_{(8,6,4,1)} + 2
  \tau_{(8,7,3,1)} + \tau_{(8,7,4)} + \tau_{(7,2,1,1)}\, q + 2 
  \tau_{(6,3,1,1)}\, q \\ + \tau_{(5,4,1,1)}\, q + \tau_{(1,1,1)}\, q^2 
  + 2 \tau_{(2,1)}\, q^2 + 4 \tau_{(3)}\, q^2.
\end{gather*}
\end{example}

\begin{thm}[Ring presentation] 
\label{ogoddqpres}
The quantum cohomology ring $\QH^*(\OG)$ is presented as a quotient of the
polynomial ring $\Z[\ta_1,\ldots,\ta_{n+k},q]$ modulo the relations
\begin{gather}
\label{ogoddQR1}
\det(\delta_{1+j-i}\ta_{1+j-i})_{1\lequ i,j \lequ r} = 0, \ \ \ \ 
n-k+1\lequ r \lequ n, \\
\label{ogoddQR1'}
\sum_{p=k+1}^r(-1)^p\tau_p
\det(\delta_{1+j-i}\ta_{1+j-i})_{1\lequ i,j \lequ r-p}=0, \ \ \ \
n+1\lequ r < n+k, \\
\label{ogoddQR1''}
\sum_{p=k+1}^{n+k}(-1)^p\tau_p
\det(\delta_{1+j-i}\ta_{1+j-i})_{1\lequ i,j \lequ n+k-p}=q
\end{gather}
and
\begin{equation}
\label{ogoddQR2}
\ta_r^2 + \sum_{i=1}^r(-1)^i \delta_{r-i}\ta_{r+i}\ta_{r-i}= 0,
 \ \  \ \ k+1\lequ r \lequ n.
\end{equation}
\end{thm}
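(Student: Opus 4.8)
The plan is to follow the proof of Theorem~\ref{igqpres}. By the theorem of Siebert--Tian \cite{ST} (see also \cite[Sec.~10]{FPa}), and because the special classes $\ta_1,\dots,\ta_{n+k}$ generate $\QH^*(\OG)$ over $\Z[q]$, it suffices to determine how the classical relations of Theorem~\ref{presOGodd} deform under the quantum product. Relation~(\ref{ogoddR1}) has degree $r\lequ n$, the relation~(\ref{ogoddR1'}) at index $r$ has degree $r\lequ n+k$, and relation~(\ref{ogoddR2}) has degree $2r\lequ 2n$. Since $\deg q=n+k$ and $\deg q^2=2(n+k)>2n$, no relation can acquire a $q^2$-term, and every relation of degree strictly less than $n+k$ is unchanged in $\QH^*(\OG)$; this disposes of (\ref{ogoddQR1}) and of (\ref{ogoddQR1'}). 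What remains is to compute the possible $q$-linear corrections to relation~(\ref{ogoddR1'}) at $r=n+k$ and to relation~(\ref{ogoddR2}).

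For both I would pass to the larger Grassmannian $\OG^+:=\OG(n+1-k,2n+3)$, whose special indices run through $n+k+1$, and use the mechanism extracted in the proof of Theorem~\ref{ogoddqupieri}: the coefficient of $\ta_{\wt\nu}\,q$ in a quantum Pieri product $\ta_p\cdot\ta_\lambda$ in $\QH^*(\OG)$ equals the coefficient of $\ta_\nu$ in the \emph{classical} product $\ta_p\cdot\ta_\lambda$ in $\HH^*(\OG^+)$, for $\nu\in\cP'(k,n+1)$, and every $q$-linear term arises this way. Relation~(\ref{ogoddR2}) is a sum $b_r=\ta_r^2+\sum_i(-1)^i\delta_{r-i}\ta_{r+i}\ta_{r-i}$ of Pieri products, so its $q$-linear part in $\QH^*(\OG)$ is read off from the class $b_r$ computed in $\HH^*(\OG^+)$, but with the original ($\OG$) range of summation indices. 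By relation~(\ref{ogoddR2}) for $\OG^+$ (valid at index $r$ since $r\lequ n<n+1$), that class is the single left-over term $\pm\delta_{2r-n-k-1}\,\ta^+_{n+k+1}\,\ta^+_{2r-n-k-1}$ corresponding to the index $n+k+1$; every term of the Pieri product $\ta^+_{n+k+1}\cdot\ta^+_{(2r-n-k-1)}$ is indexed by a partition whose first part is $n+k+1>n+k$, hence lies outside $\cP'(k,n+1)$ and contributes no $q$-term. Thus relation~(\ref{ogoddR2}) survives unchanged as (\ref{ogoddQR2}).

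For relation~(\ref{ogoddR1'}) at $r=n+k$, each determinant $\det(\delta_{1+j-i}\ta_{1+j-i})_{1\lequ i,j\lequ n+k-p}$ has degree $n+k-p<n+k$, hence equals its classical value $\ta_{(1^{n+k-p})}$ in $\QH^*(\OG)$; so the left side of the relation is $\sum_{p=2k}^{n+k}(-1)^p\,\ta_p\cdot\ta_{(1^{n+k-p})}$. Its classical part vanishes by Theorem~\ref{presOGodd} and its $q^2$-part vanishes for degree reasons, so only the $q$-coefficient $c\in\Z$ is in question. The partition $\wt\nu$ is empty precisely for $\nu=(2k,1^{n-k})\in\cP'(k,n+1)$, so by the correspondence above $c$ equals the coefficient of $\ta^+_{(2k,1^{n-k})}$ in $\sum_{p=2k}^{n+k}(-1)^p\,\ta^+_p\,\ta^+_{(1^{n+k-p})}$ in $\HH^*(\OG^+)$. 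Using the generating-function identity $C^+(-t)H^+(t)=1$, with $C^+(t)=\sum\delta^+_i\ta^+_it^i$ and $H^+(t)=\sum h^+_st^s$, $h^+_s=\det(\delta^+_{1+j-i}\ta^+_{1+j-i})_{1\lequ i,j\lequ s}=\ta^+_{(1^s)}$, together with the vanishing $h^+_s=0$ for $s>m+1$ (the rank of $\cS^{+*}$), this sum reduces to a power of two times $\ta^+_{2k-1}\cdot\ta^+_{(1^{m+1})}$, and one classical Pieri computation (Theorem~\ref{thm.ogoddpieri}) shows the coefficient of $\ta^+_{(2k,1^{n-k})}$ there is $1$. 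So $c=1$, which gives (\ref{ogoddQR1''}); the classical part of every relation matches Theorem~\ref{presOGodd} by inspection.

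The crux is this last computation — proving $c=1$ and not merely that $c$ is a unit. It combines the translation of $q$-linear terms on $\OG$ into honest structure constants on $\OG(n+1-k,2n+3)$ with the evaluation of a single Pieri coefficient, and the small cases $k=1$ (where $2k-1\lequ k$) and $k=2$ (where some $\delta$-normalizations degenerate) should be checked by hand to see that all powers of two cancel correctly. The deformation of relation~(\ref{ogoddR2}) is comparatively easy, since there the only possibly-surviving term is forced outside the index set $\cP'(k,n+1)$ for free.
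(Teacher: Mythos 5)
Your proposal is correct and follows the same overall strategy as the paper: invoke Siebert--Tian and determine how the classical relations of Theorem~\ref{presOGodd} deform, using the quantum Pieri rule; the degree argument killing corrections to (\ref{ogoddQR1}) and (\ref{ogoddQR1'}) is identical. The two nontrivial steps are executed differently. For (\ref{ogoddQR2}) the paper simply notes that when $n-k>1$ the quantum Pieri rule applied to $\ta_{r+i}\cdot\ta_{(r-i)}$ produces no correction at all (the $q$-linear terms in Theorem~\ref{ogoddqupieri} require $\ell(\la)\gequ n-k$ and the $q^2$ terms require $\la_1=n+k$), and then treats the quadric $\OG(1,2n+1)$ by the explicit cancellation $c=1-2+2-\cdots\mp 1=0$; your argument --- push the truncated relation into $\HH^*(\OG(n+1-k,2n+3))$, reduce it there to $\pm\delta_{2r-n-k-1}\,\ta^+_{n+k+1}\ta^+_{2r-n-k-1}$, and observe that every Pieri term of that product has first part $n+k+1$ and hence lies outside $\cP'(k,n+1)$ --- is valid and handles the quadric uniformly, a small gain. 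For (\ref{ogoddQR1''}) the paper is more direct: $h_{n+k-p}=0$ for $p<2k$, while for $p>2k$ the class $\ta_{(1^{n+k-p})}$ has length less than $n-k$ so $\ta_p\cdot\ta_{(1^{n+k-p})}$ acquires no $q$-term, leaving only $\ta_{2k}\cdot\ta_{(1^{n-k})}$, whose $q$-coefficient is $2^{N'}=1$. Your detour through $\OG^+$ also works, but be aware that for $k=1$ the identity $\sum_{p=k+1}^{n+k}(-1)^p\ta^+_p h^+_{n+k-p}=0$ you need on $\OG^+$ is not an instance of (\ref{ogoddR1'}) there (that range begins at $n+2$) and must instead be extracted from $h^+_{n+1}=0$, an instance of (\ref{ogoddR1}); carrying this out, the sum equals $(\delta_{2k-1}/2)\,\ta^+_{2k-1}\ta^+_{(1^{n+1-k})}$, and the Pieri coefficient of $\ta^+_{(2k,1^{n-k})}$ in $\ta^+_{2k-1}\ta^+_{(1^{n+1-k})}$ is $2$ for $k=1$ and $1$ for $k\gequ 2$, so the product is $1$ in all cases, confirming $c=1$ as you predicted.
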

\begin{proof}
Set $h_r= \det(\delta_{1+j-i}\ta_{1+j-i})_{1\lequ i,j \lequ r}$.  We
have that $h_r=\ta_{(1^r)}$ for $r \lequ n-k$, while $h_r=0$ for
$n-k+1\lequ r < n+k$, because these relations hold classically and the
degree of $q$ is $n+k$. This proves (\ref{ogoddQR1}) and
(\ref{ogoddQR1'}). The quantum Pieri rule implies that for $p>k$,
$\ta_p h_{n+k-p}=0$ unless $p=2k$, when $\ta_{2k}h_{n-k}=q$. We deduce
that (\ref{ogoddQR1''}) also holds in $\QH^*(\OG)$.

We are left with proving that there are no quantum correction terms in
relation (\ref{ogoddQR2}); the result then follows from \cite{ST}.
When $n-k>1$, this is immediate since
the quantum Pieri rule does not give rise to any quantum correction terms.
In the case of the quadric $\OG(1,2n+1)$ the quantum Pieri rule gives
\[
\ta_n^2-\ta_{n+1}\ta_{n-1}+\cdots + (-1)^{n-1} \ta_{2n-1}\ta_1 = c\,\ta_1\,q
\]
with the coefficient $c=1-2+2-\cdots\pm 2\mp 1=0$.
\end{proof}


\section{The Grassmannian $\OG(n+1-k,2n+2)$}
\label{QCOGeven}

\subsection{Schubert classes}
\label{ogevenscs}

In this section, we consider the even orthogonal
Grassmannian $\OG'=\OG(m,2n+2)$, which parametrizes the
$m$-dimensional isotropic subspaces in a 
vector space $V\cong \C^{2n+2}$ with a nondegenerate symmetric
bilinear form. The variety $\OG'$ has dimension $2m(n+1-m)+m(m-1)/2$.

Two subspaces $E$ and $F$ of $V$ are said to be {\em in the same
family} if $\dim(E\cap F)\equiv (n+1) (\text{mod}\, 2)$.  Fix once and
for all an isotropic subspace $L$ of $V$ with $\dim(L)=n+1$. An
isotropic flag is a complete flag $F_\bull$ of subspaces of $V$ such
that $F_{n+1+i} = F_{n+1-i}^\perp$ for all $0 \lequ i \lequ n$, and
$F_{n+1}$ and $L$ are in the same family. As the orthogonal space
$F_n^{\perp}/F_n$ contains only two isotropic lines, to each such flag
$F_\bull$ there corresponds an alternate isotropic flag $\wt{F}_\bull$
such that $\wt{F}_i=F_i$ for all $i\lequ n$ but with $\wt{F}_{n+1}$ in
the opposite family from $F_{n+1}$.

Set $k=n+1-m>0$. The Schubert varieties in $\OG'$ are indexed by a 
set $\wt{\cP}(k,n)$ which differs from that used in previous sections.
To any $k$-strict partition $\la$ we associate a number in $\{0,1,2\}$
called the {\em type} of $\lambda$, denoted $\type(\la)$. If $\lambda$
has no part equal to $k$, then we set $\type(\la)=0$; otherwise we 
have $\type(\la)=1$ or $\type(\la)=2$ (thus `type' is a multi-valued
function). The elements of $\wt{\cP}(k,n)$ are the $k$-strict partitions
contained in an $m\times (n+k)$ rectangle of all three possible types. 
For every $\la\in\wt{\cP}(k,n)$, we define index set 
$P'=\{p'_1<\cdots < p'_m\}\subset [1,2n+2]$ with
\begin{multline*} p'_j(\lambda) = n+k-\lambda_j + 
   \#\{\,i<j\,|\,\lambda_i+\lambda_j \lequ 2k-1+j-i\,\} \\
   {} + \begin{cases} 
      1 & \text{if $\lambda_j > k$, or $\lambda_j=k < \lambda_{j-1}$ and
        $n+j+\type(\lambda)$ is even}, \\
      2 & \text{otherwise}.
   \end{cases}
\end{multline*}

Given any isotropic flag $F_\bull$, each $\la\in\wt{\cP}(k,n)$ 
indexes a codimension $|\la|$ Schubert variety $X_{\la}(F_\bull)$ in 
$\OG'$, defined as the locus of $\Sigma \in \OG'$ such that 
\[
\dim(\Sigma \cap F_{p'_j}) \gequ j, \  \text{if $p'_j\neq n+2$, \ and } 
\dim(\Sigma \cap \wt{F}_{n+1}) \gequ j, \  \text{if $p'_j = n+2$,}
\]
for all $j$ with $1 \lequ j \lequ \ell(\la)$.
For each $\la\in\wt{\cP}(k,n)$, we let $\ta_\la$
denote the cohomology class in $\HH^{2|\la|}(\OG',\Z)$ dual to the cycle
determined by the Schubert variety indexed by $\la$.  Each such
Schubert class has a type which is the same as the type of $\la$; this
serves to distinguish two separate classes for each partition $\la$
with some part $\la_i=k$. 

\subsection{Classical Pieri rule}
\label{ogevenclpieri}

The {\em special} Schubert varieties for $\OG(n+1-k,2n+2)$ are the 
varieties $X_1, \ldots, X_{k-1}, X_k, X'_k, X_{k+1},\ldots, X_{n+k}$.
These are defined by a single Schubert condition as follows. For $p\neq k$,
we have
$$X_p(F_\bull)=
\{\,\Sigma\in \OG'\,|\,\Sigma\cap F_{\varepsilon(p)}\ne 0\,\}$$
where $\varepsilon(p)$ is given by (\ref{vare}). If $n$ is even, then 
\[
X_k(F_\bull)= \{\,\Sigma\in \OG'\,|\,\Sigma\cap F_{n+1}\ne
0\,\}
\]
and
\[
X'_k(F_\bull)= \{\,\Sigma\in \OG'\,|\,\Sigma\cap \wt{F}_{n+1}\ne
0\,\},
\]
while the roles of $F_{n+1}$ and $\wt{F}_{n+1}$ are switched if $n$ is
odd.  We let $\ta_p$ denote the cohomology class of $X_p(F_\bull)$ for
$1\lequ p \lequ n+k$ and $\ta'_k$ denote the cohomology class of
$X'_k(F_\bull)$; note that $\type(\tau_k)=1$ and $\type(\tau'_k)=2$.

The Pieri rule for $\OG'$ requires a slightly different shifting
convention than that used for $\IG$ and $\OG$. Given a $k$-strict
partition $\la$, we say that the box in row $r$ and column $c$ of
$\la$ is {\em $k'$-related} to the box in row $r'$ and column $c'$ if
$|c-(2k+1)/2|+r=|c'-(2k+1)/2|+r'$. Using this convention, the relation
$\lambda\to \mu$ is defined as in Definition~\ref{D:pieriarrow}, with
the added condition that $\type(\la)+ \type(\mu)\neq 3$. Moreover, the
multiplicity $N'(\la,\mu)$ is equal to the number (respectively, one
less than the number) of components of $\A$, if $p\lequ k$
(respectively, if $p>k$).

Let $g(\la,\mu)$ be the number of columns of $\mu$ among 
the first $k$ which do not have more boxes than the corresponding column
of $\la$, and 
\[
h(\la,\mu) = g(\la,\mu)+\max(\type(\la),\type(\mu)).
\]
If $p\neq k$, then set $\delta_{\la\mu}=1$. If $p=k$ and 
$N'(\la, \mu)>0$, then set
\[
\delta_{\la\mu}=\delta'_{\la \mu}=1/2,
\]
while if $N'(\la, \mu)=0$, define
\[
\delta_{\la \mu} = \begin{cases} 
1 & \text{if $h(\la,\mu)$ is odd}, \\
0 & \text{otherwise}
\end{cases}
\qquad \mathrm{and}
\qquad 
\delta'_{\la\mu} = \begin{cases}
1 & \text{if $h(\la,\mu)$ is even}, \\
0 & \text{otherwise.}
\end{cases}
\]

\begin{thm}[Pieri rule for $\OG(m,2n+2)$]
\label{thm.ogevenpieri}
For any element $\la\in\wt{\cP}(k,n)$ and integer $p\in [1,n+k]$, we
have
\begin{equation}
\label{ogevenclass}
\ta_p \cdot\ta_{\lambda} = \sum_\mu
\delta_{\la\mu}\,2^{N'(\la,\mu)}\,\ta_\mu,
\end{equation}
where the sum is over all $\mu\in\wt{\cP}(k,n)$ with $\lambda\to\mu$
and $|\mu|=|\la|+p$. Furthermore, the product
$\ta'_k\cdot\ta_{\lambda}$ is obtained by replacing $\delta_{\lambda
\mu}$ with $\delta'_{\lambda \mu}$ throughout.
\end{thm}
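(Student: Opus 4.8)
The plan is to follow Hodge's geometric proof of the Pieri rule via triple intersections \cite{H}, carried out in the index-set language set up in Section~\ref{schvars} (this is the content of Section~\ref{Pierirule}). Since the classes $\ta_\mu$ form a $\Z$-basis of $\HH^*(\OG',\Z)$ and $\int_{\OG'}\ta_\mu\cdot\ta_\nu=\delta_{\nu,\mu^\vee}$, the coefficient of $\ta_\mu$ in $\ta_p\cdot\ta_\lambda$ equals $\int_{\OG'}\ta_p\cdot\ta_\lambda\cdot\ta_{\mu^\vee}$, and when $|\mu|=|\lambda|+p$ this number is, by Kleiman--Bertini applied to the $\SO_{2n+2}$-action, the transverse (hence set-theoretic) count of points of $X_p(E_\bull)\cap X_\lambda(F_\bull)\cap X_{\mu^\vee}(G_\bull)$ for isotropic flags $E_\bull,F_\bull,G_\bull$ in general position; the same applies with $\ta_p$ replaced by $\ta'_k$. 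First I would restate the relation $\lambda\to\mu$, the integer $N'(\lambda,\mu)$, and the type constraint $\type(\lambda)+\type(\mu)\ne 3$ purely in terms of the index sets $p'_\bull(\lambda)$, $p'_\bull(\mu)$, so that the Pieri rule becomes the assertion that each of these point counts equals $\delta_{\lambda\mu}2^{N'(\lambda,\mu)}$.

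The core is the geometry of the $p$-dimensional variety $X_\lambda(F_\bull)\cap X_{\mu^\vee}(G_\bull)$ intersected with the special Schubert variety $X_p(E_\bull)$. For $p\ne k$ the latter is the single incidence condition $\Sigma\cap E_{\varepsilon(p)}\ne 0$, and one argues as in the symplectic and odd orthogonal cases: describe the generic $\Sigma$ in the $p$-dimensional family by its flag of intersections with $F_\bull$ and $G_\bull$, observe that requiring $\Sigma$ to meet the general subspace $E_{\varepsilon(p)}$ forces it to pick up one extra unit of intersection and hence to fill in a horizontal strip on top of $\lambda$, and count the resulting solutions. Each connected component of the set $\A$ of free boxes of $\mu\ssm\lambda$ corresponds to an independent choice of an isotropic line inside a rank-two orthogonal space, producing the factor $2^{N'(\lambda,\mu)}$, while the $k'$-relation pinning conditions of Definition~\ref{D:pieriarrow} and the staircase constraints pin down the remaining boxes; the constraint $\type(\lambda)+\type(\mu)\ne 3$ appears as the requirement that the family labels can be assigned consistently along the way. (Alternatively, the $p\ne k$ part of the formula can be matched against the odd orthogonal rule of Theorem~\ref{thm.ogoddpieri} by a structure-constant comparison in the spirit of (\ref{igogrel}), once one tracks how the type labels propagate under $\to$.)

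The genuinely new part, and the main obstacle, is the case $p=k$, where $X_k(E_\bull)$ and $X'_k(E_\bull)$ impose $\Sigma\cap E_{n+1}\ne 0$ and $\Sigma\cap\wt E_{n+1}\ne 0$ respectively, with $E_{n+1}$ and $\wt E_{n+1}$ the two maximal isotropic subspaces of the flag, in opposite families and with $E_{n+1}\cap\wt E_{n+1}=E_n$. A generic member of the $k$-dimensional family $X_\lambda(F_\bull)\cap X_{\mu^\vee}(G_\bull)$ cannot meet both, so the two Pieri coefficients of $\ta_\mu$ partition the $2^{N'(\lambda,\mu)}$ solutions of the $p\ne k$ analysis; the content of the $\delta$-factors is that this partition is $2^{N'-1}+2^{N'-1}$ when $N'(\lambda,\mu)>0$ and is $1+0$ or $0+1$, according to the parity of $h(\lambda,\mu)$, when $N'(\lambda,\mu)=0$. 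Proving this amounts to tracking, along the chain of incidences that produces a solution $\Sigma$, how the family of a maximal isotropic subspace containing the relevant intermediate space changes: it flips a number of times whose parity agrees with that of $g(\lambda,\mu)$, shifted by $\max(\type(\lambda),\type(\mu))$, so that $\ta_\mu$ occurs in $\ta_k\cdot\ta_\lambda$ exactly when $h(\lambda,\mu)$ is odd and in $\ta'_k\cdot\ta_\lambda$ exactly when it is even, and when $N'(\lambda,\mu)>0$ the independent isotropic-line choices distribute the solutions evenly between the two families. This $\Z/2$ bookkeeping in the index-set language is the delicate step; the outer automorphism of $\OG'$ exchanging the two families — hence $\ta_k\leftrightarrow\ta'_k$, exchanging types $1$ and $2$, and $\delta_{\lambda\mu}\leftrightarrow\delta'_{\lambda\mu}$ — provides a consistency check and roughly halves the casework.
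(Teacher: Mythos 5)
Your outline has the right skeleton --- reduce to triple intersection numbers by Kleiman--Bertini, work in the index-set language, treat $p\ne k$ first and then split the $p=k$ count between the two families --- and the final translation step to $k$-strict partitions (the analogue of Proposition~\ref{pqlmD}) is correctly identified. But the computational engine that actually produces the multiplicities is missing. The paper does not count points of $X_p(E_\bull)\cap X_\lambda(F_\bull)\cap X_{\mu^\vee}(G_\bull)$ directly inside $\OG'$ by analyzing flags of intersections; it passes through the incidence variety $\bP(\cS)$ with its projections $\pi:\bP(\cS)\to\OG'$ and $\psi:\bP(\cS)\to\bP(V)$, writes the coefficient as $\int_{\bP(V)}c_1(\cO(1))^{m-1+r}\cdot\psi_*\pi^*[Y_{P,Q}]$, and then identifies $\psi(\pi^{-1}(Y_{P,Q}))$ with an \emph{explicit} complete intersection $Z_{P,Q}\subset\bP(V)$ of $N(P,Q)$ quadrics and some hyperplanes. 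The whole content of the rule is then that $\psi$ is birational onto $Z_{P,Q}$ exactly when $P\to Q$ (and has positive-dimensional fibers otherwise), so the coefficient is $\deg Z_{P,Q}=2^{N(P,Q)}$. Your substitute mechanism --- ``each connected component of $\A$ corresponds to an independent choice of an isotropic line in a rank-two orthogonal space'' --- is a heuristic with no construction behind it; in particular it gives you no way to prove the vanishing when $P\not\to Q$ (the degenerate-fiber direction), which in the paper is a delicate induction on $|Q|-|P|$. The step ``count the resulting solutions'' is precisely where the proof lives, and nothing in the proposal supplies it.

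The same gap propagates to $p=k$. You correctly guess that $\ta_k+\ta'_k=c_k(\cQ)$ forces the two coefficients to sum to $2^{N(P,Q)}$ and that the issue is how they split, but your ``family flips a number of times whose parity agrees with $g(\lambda,\mu)$'' bookkeeping is not an argument. The paper's splitting is read off from the class of $Z_{P,Q}$ in the middle cohomology of the quadric $W\subset\bP(V)$: when $\deg Z_{P,Q}>1$ the class is pulled back from $\bP(V)$ and hence has equal coefficients on the two rulings $e,f$ (giving $2^{N-1}$ each), and when $Z_{P,Q}$ is a linear space one computes $Z_{P,Q}\cap\langle e_1,\dots,e_{n+1}\rangle$ explicitly and determines its ruling from the parity of $h(P,Q)$; this is where $|S|$, $|S'|$ and hence $h$ actually enter. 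Without the variety $Z_{P,Q}$ in hand there is nothing whose ruling class you can compute. Finally, the parenthetical fallback of deducing the $p\ne k$ case from Theorem~\ref{thm.ogoddpieri} by a comparison in the spirit of (\ref{igogrel}) is not available: that comparison rests on types B and C sharing a Weyl group, and no such structure-constant identity relates $\OG(m,2n+2)$ to an odd orthogonal or symplectic Grassmannian; indeed the paper emphasizes that in type D the Chern classes of $\cS$ and $\cQ$ do not even generate $\HH^*(X,\Q)$, which is why the direct cell-intersection analysis is unavoidable there.
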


This theorem will be proved in Section \ref{Pierirule}.

\begin{example}
  For the Grassmannian $\OG(5,14)$ we have $k=2$ and $n=6$.  For the
  partition $\lambda = (8,7,2,1,1)$ with $\type(\lambda)=1$, we get
  the following partitions $\mu \in \wt{\cP}(2,6)$, all of type 0 or 1,
  such that $\lambda \to \mu$ and $|\mu| = |\lambda|+2$:
\[ \pic{.40}{exd1}\hmm{5}\pic{.40}{exd3}\hmm{5}\pic{.40}{exd2} \]
We obtain $\tau_2 \cdot \tau_\lambda = \tau_{(8,7,4,1,1)} +
\tau_{(8,7,3,2,1)} + \tau_{(8,7,6)}$ and $\tau'_2 \cdot \tau_\lambda =
\tau_{(8,7,4,1,1)} + \tau_{(8,7,3,2,1)}$.  Notice that the product
$(\tau_2+\tau'_2) \cdot \tau_\lambda$ is obtained from
(\ref{ogevenclass}) by ignoring $\delta_{\lambda \mu}$.
\end{example}

\subsection{Presentation of $\HH^*(\OG',\Z)$}
\label{ogevenpresentation}

If $\cS$ (respectively $\cQ$) denotes the tautological 
subbundle (respectively, quotient bundle) over $\OG'$,
then one has that
\begin{equation}
\label{ctotau}
c_p(\cQ)=
\begin{cases}
\ta_p &\text{if $p< k$},\\
\ta_k+\ta_k' &\text{if $p=k$},\\
2\ta_p &\text{if $p> k$}.
\end{cases}
\end{equation}
For each $r>0$, let $\Delta_r$ denote the $r\times r$ Schur determinant
\[
\Delta_r=\det(c_{1+j-i})_{1\lequ i,j \lequ r}, 
\]
where each variable $c_p$ represents the Chern class $c_p(\cQ)$. 
For each $\la\in\wt{\cP}(k,n)$ we define a monomial 
$\ta^{\la}$ in the special Schubert classes as follows. If $\la$
is not of type 2, then set $\ta^{\la}=\prod_i\ta_{\la_i}$. If 
$\type(\la)=2$ then $\ta^{\la}$ is defined by the same product formula,
but replacing each occurrence of $\ta_k$ with $\ta'_k$.

\begin{thm}
\label{presOGeven}
{\em a)}
Define polynomials $c_p$ using the equations (\ref{ctotau}). Then
the cohomology ring
$\HH^*(\OG(n+1-k,2n+2),\Z)$ is presented as a quotient of the polynomial
ring $\Z[\ta_1,\ldots,\ta_k,\ta_k',\ta_{k+1},\ldots,\ta_{n+k}]$ 
modulo the relations 
\begin{gather}
\label{ogevenR1}
\Delta_r = 0, \ \ \ \ 
n-k+1 <  r \lequ n, \\
\label{ogevenR1'}
\ta_k\Delta_{n+1-k}=
\ta'_k \Delta_{n+1-k}=
\sum_{p=k+1}^{n+1}(-1)^{p+k+1}\tau_p
\Delta_{n+1-p}, \\
\label{ogevenR1''}
\sum_{p=k+1}^r(-1)^p\tau_p
\Delta_{r-p}=0, \ \ \ \
n+1 < r \lequ n+k,
\end{gather}
and
\begin{gather}
\label{ogevenR2}
\ta_r^2 + \sum_{i=1}^r(-1)^i \ta_{r+i}c_{r-i}= 0,
 \ \  \ \ k+1 \lequ r \lequ n, \\
\label{ogevenR2'}
\ta_k\ta'_k+\sum_{i=1}^k(-1)^i \ta_{k+i}\ta_{k-i}=0.
\end{gather}

\medskip
\noin
{\em b)} The monomials $\ta^{\la}$ with $\la\in\wt{\cP}(k,n)$ 
form a $\Z$-basis for $\HH^*(\OG',\Z)$.
\end{thm}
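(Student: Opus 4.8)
The plan is to mirror the proofs of Theorems \ref{presIG} and \ref{presOGodd}, using Reid's Lemma \ref{preslemma}; the only genuinely new bookkeeping concerns the \emph{type} of a partition and the two degree-$k$ generators $\ta_k$, $\ta'_k$. First I would prove part (b) by the triangularity argument of Theorem \ref{presIG}(b): iterating the classical Pieri rule (Theorem \ref{thm.ogevenpieri}) yields $\ta^\lambda = \ta_\lambda + \sum_{\mu\succ\lambda} c_\mu\,\ta_\mu$ with $c_\mu\in\Z$, where for partitions of equal weight the sum runs over $\mu$ strictly dominating $\lambda$. The point specific to $\OG'$ is that the condition $\type(\lambda)+\type(\mu)\neq 3$ built into the relation $\lambda\to\mu$ forces every $\mu$ occurring in the expansion of $\ta^\lambda$ to have a type compatible with that of $\lambda$, so that ordering $\wt\cP(k,n)$ by weight and then by dominance (with type used to split the otherwise coincident indices) makes this change of basis unitriangular. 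This gives that the $\ta^\lambda$, $\lambda\in\wt\cP(k,n)$, are a $\Z$-basis of $\HH^*(\OG',\Z)$, and in particular that $\ta_1,\dots,\ta_{k-1},\ta_k,\ta'_k,\ta_{k+1},\dots,\ta_{n+k}$ generate this ring.

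For part (a) I would set $R=\Z[\ta_1,\dots,\ta_k,\ta'_k,\ta_{k+1},\dots,\ta_{n+k}]$ with the evident grading, define the $c_p$ by (\ref{ctotau}), let $\phi:R\to\HH^*(\OG',\Z)$ send each generator to the Schubert class of the same name, and let $I$ be the ideal generated by the left-hand sides of (\ref{ogevenR1})--(\ref{ogevenR2'}), where (\ref{ogevenR1'}) is read as the two relations $\ta_k\Delta_{n+1-k}-\ta'_k\Delta_{n+1-k}$ and $\ta_k\Delta_{n+1-k}-\sum_p(-1)^{p+k+1}\ta_p\Delta_{n+1-p}$; a count then gives exactly $n+k+1$ generators and $n+k+1$ relations. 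The vanishing $\phi(I)=0$ is checked by Chern-class identities with the tautological sequence $0\to\cS\to\cV\to\cQ\to 0$ exactly as in Theorem \ref{presIG}: the Whitney formula gives $\phi(\Delta_r)=c_r(\cS^*)$, which is zero for $r>m=n+1-k$ and produces (\ref{ogevenR1}), (\ref{ogevenR1''}) and the second equality of (\ref{ogevenR1'}); the symmetric form gives $\cS\hookrightarrow\cQ^*$, whence the $\IG$-type identities $[t^{2r}]\,c_t(\cQ)c_t(\cQ^*)=0$ for $r>k$ yield (\ref{ogevenR2}); and the first equality of (\ref{ogevenR1'}) together with (\ref{ogevenR2'}) encode the way the Euler class of the rank-$2k$ orthogonal bundle $\cS^\perp/\cS$ splits off as $\ta_k-\ta'_k$, verified by restriction to the two families. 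It then remains to check hypotheses (i) and (ii) of Lemma \ref{preslemma}.

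Property (i) amounts to the identity $\rank\HH^*(\OG',\Z)=\prod_i(\deg b_i/\deg a_i)$, and the right-hand side works out to $2^{\,n+1-k}\binom{n+1}{k}$; this is the number $\#\wt\cP(k,n)$ produced by part (b), equivalently the Euler characteristic (Schubert-cell count) of $\OG(n+1-k,2n+2)$, so (i) holds. For (ii) I would split on $\mathrm{char}\,K$, as in Theorem \ref{presOGodd}. If $\mathrm{char}\,K\neq 2$, put $y=\ta_k-\ta'_k=2\ta_k-c_k$, so that $R\otimes K=K[c_1,\dots,c_{n+k},y]$; relation (\ref{ogevenR2'}) makes $y^2$ a polynomial in the $c_p$, the relations (\ref{ogevenR1}), (\ref{ogevenR1''}), (\ref{ogevenR2}) give the Schur determinants $\Delta_r=0$ for $r$ in a range above $m$, and a formal power-series manipulation as in Theorem \ref{presIG} propagates $\Delta_r\equiv 0$ for all $m<r\le 2n+2$; thus $(R/I)\otimes K$ is a quotient of (the type A Grassmannian ring of Lemma \ref{typeApres}) $\otimes_K K[y]/(y^2-P(c))$, which is finite dimensional. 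If $\mathrm{char}\,K=2$, relations (\ref{ogevenR1}) force $\ta_1,\dots,\ta_{k-1}$ and $c_k=\ta_k+\ta'_k$ to be nilpotent by the characteristic-$2$ form of Lemma \ref{typeApres}; then (\ref{ogevenR2'}) shows $\ta_k\ta'_k$ is nilpotent, so $\ta_k^2=\ta_k(\ta_k+\ta'_k)-\ta_k\ta'_k$ is nilpotent and hence so are $\ta_k$ and $\ta'_k$; finally (\ref{ogevenR1''}) and (\ref{ogevenR1'}) express $\ta_{n+1},\dots,\ta_{n+k}$ as nilpotents and (\ref{ogevenR2}) does the same for $\ta_{k+1},\dots,\ta_n$, so $(R/I)\otimes K$ is a finite-dimensional local ring. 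With (i) and (ii) established, Lemma \ref{preslemma} gives that $\phi$ is an isomorphism, proving (a).

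The main obstacle I anticipate is not the Reid-lemma machinery, which is routine at this point, but the two ``split'' relations: verifying directly in integral cohomology that $\ta_k\Delta_{n+1-k}=\ta'_k\Delta_{n+1-k}$ and $\ta_k\ta'_k+\sum_{i\ge 1}(-1)^i\ta_{k+i}\ta_{k-i}=0$, which requires a careful analysis of how the individual classes $\ta_k$ and $\ta'_k$ (only whose sum is $c_k(\cQ)$) see the two families via the orthogonal bundle $\cS^\perp/\cS$ and its Euler class. The type-sensitive triangularity underlying part (b) is the other place where one must be attentive rather than merely invoke the $\IG$ argument verbatim.
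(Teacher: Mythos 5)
Your overall architecture (Reid's Lemma \ref{preslemma}, the same rank count $2^{n+1-k}\binom{n+1}{k}$ for hypothesis (i), verification of the relations via the tautological sequence, and a characteristic split for hypothesis (ii)) matches the paper, and your characteristic-$\neq 2$ argument is a sound, somewhat more direct variant: instead of the paper's Nullstellensatz/point-by-point analysis you change variables to $c_1,\dots,c_{n+k},y$ with $y=\ta_k-\ta'_k$, use $y^2=b_k(c)$ from (\ref{ogevenR2'}), and propagate $\Delta_r\equiv 0$ by the power-series identity. (One step you should make explicit there: the degree-$(n+k+1)$ term of the congruence produces $(-1)^k b_k\Delta_{n+1-k}=(-1)^k y^2\Delta_{n+1-k}$, which is killed only because $y\Delta_{n+1-k}$ is in your ideal --- i.e.\ the first equality of (\ref{ogevenR1'}) is needed already at this stage, unlike in type C.)

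The characteristic-$2$ case, however, has a genuine gap. In $\mathrm{char}\,K=2$ one has $c_p=0$ for $p>k$, so the relations (\ref{ogevenR1}) are $k-1$ Schur determinants, in degrees $n-k+2,\dots,n$, in the $k$ quantities $c_1=\ta_1,\dots,c_{k-1}=\ta_{k-1},c_k=\ta_k+\ta'_k$. That is one relation short of the regular sequence required by Lemma \ref{typeApres}: the ring $K[c_1,\dots,c_k]/(\Delta_{n-k+2},\dots,\Delta_n)$ has Krull dimension at least one, so your claim that (\ref{ogevenR1}) alone forces $\ta_1,\dots,\ta_{k-1}$ and $c_k$ to be nilpotent is false (already for $k=1$ the set of relations (\ref{ogevenR1}) is empty). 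The missing input is precisely the relation $(\ta_k-\ta'_k)\Delta_{n+1-k}=0$, which you never use in this case. The paper exploits it via the dichotomy ``$\ta_k=\ta'_k$ or $\Delta_{n+1-k}=0$'' at each point of the affine variety: in the first branch $c_k=0$ in characteristic $2$, leaving $k-1$ determinantal relations in the $k-1$ unknowns $\ta_1,\dots,\ta_{k-1}$, and in the second branch one gains the missing degree-$(n+1-k)$ determinant; in either branch Lemma \ref{typeApres} then applies with the correct count. A secondary problem is the order of your deductions: you invoke (\ref{ogevenR2'}) to conclude that $\ta_k\ta'_k$ is nilpotent before establishing nilpotence of $\ta_{k+1},\dots,\ta_{2k}$, but the $i=k$ term of (\ref{ogevenR2'}) is $\pm\ta_{2k}$, so this is circular; the top-degree generators $\ta_{n+1},\dots,\ta_{n+k}$ must be handled first (via (\ref{ogevenR1'}), (\ref{ogevenR1''})) and then (\ref{ogevenR2}), (\ref{ogevenR2'}) descend from high degree to low.
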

\begin{proof}
We prove only part (a), as the proof of (b) is similar to that of
Theorem \ref{presIG}.  We first note that $\HH^*(\OG')$ is a free
abelian group of rank $2^{n+1-k}\binom{n+1}{k}$, and argue using the
Pieri rule for $\OG'$ that the special Schubert classes generate the
cohomology ring of $\OG'$ over $\Z$. The Whitney sum formula and the
Pieri rule may be used to show that the displayed relations hold in
$\HH^*(\OG')$, as in the proof of Theorem \ref{presOGodd}.
The first equality in (\ref{ogevenR1'}) also follows from the relation
$(\ta_k-\ta_k')c_{n+1-k}(\cS)=0$; a proof of this is given in
\cite[Sec.\ 6.1]{T}.

We proceed to apply Lemma \ref{preslemma} once again.  To prove that
the displayed relations form a regular sequence in
$A=K[\ta_1,\ldots,\ta_k,\ta_k',\ldots, \ta_{n+k}]$ for any field $K$,
we may assume that $K$ is algebraically closed. We claim that the
affine variety determined by all the relations is a single point (the
origin). This suffices, since Hilbert's Nullstellensatz then asserts
that the ideal $I$ of relations is contained in some power of the
maximal ideal at the origin, and thus $A/I$ is a finite dimensional
$K$-vector space.  To prove the claim, we again separate cases
according the the characteristic of $K$. In addition, one of the
relations is $(\ta_k-\ta_k')\Delta_{n+1-k}=0$, which implies
$\ta_k=\ta_k'$ or $\Delta_{n+1-k}=0$.

If $\text{char}(K)\neq 2$ and $\ta_k=\ta_k'$, then the relations 
(\ref{ogevenR1})--(\ref{ogevenR2'})
and the power
series argument in the proofs of Theorems \ref{presIG} and \ref{presOGodd}
show that
$\Delta_{n-k+2}=\cdots = \Delta_{2n+1}=0$. Lemma \ref{typeApres} then
implies that all the $\ta_i$ must vanish. If 
$\text{char}(K)\neq 2$ and $\Delta_{n+1-k}=0$, then the relations
(\ref{ogevenR1})--(\ref{ogevenR1''}), (\ref{ogevenR2}) and the same 
argument may be used to show that 
\[
\ta_1=\cdots = \ta_{k-1} = \ta_k + \ta_k' = \ta_{k+1} = \cdots = \ta_{n+k}=0.
\]
It follows that $\ta_k=-\ta'_k$, and now (\ref{ogevenR2'}) gives 
$\ta_k=\ta_k'=0$.

If $\text{char}(K)=2$ and $\ta_k=\ta_k'$.,
then the Schur determinants involving
$\ta_1,\ldots,\ta_{k-1}$ in degrees $n-k+2,\ldots,n$ all vanish, so by 
Lemma \ref{typeApres} we get $\ta_1=\cdots= \ta_{k-1}=0$. Now
(\ref{ogevenR1'}) implies that $\ta_{n+1}=0$, and then relations
(\ref{ogevenR1''}) give $\ta_{n+2}=\cdots = \ta_{n+k}=0$. The
remaining relations now show that all the $\ta_i$ vanish. Finally,
if $\text{char}(K)= 2$ and $\Delta_{n+1-k}=0$, then we have 
determinantal relations involving $\ta_1,\ldots,\ta_{k-1},
\ta_k+\ta_k'$ in degrees $n-k+1,\ldots,n$. Therefore all these 
elements are zero, in particular $\ta_k=\ta_k'$, and we are reduced
to the previous case. 
\end{proof}

\subsection{Gromov-Witten invariants}
\label{ogevengwis}

The theory here is quite similar to the case of the odd orthogonal
Grassmannian of Section \ref{ogoddgwis}. However since the Picard
group of the Grassmannian $\OG(n,2n+2)$ has rank two, we will assume
that $m < n$ in this and the following section, and discuss the
quantum cohomology of $\OG(n,2n+2)$ in the Appendix. When $m < n$ (or
$k > 1$), the Gromov-Witten invariants $\langle \ta_\lambda, \ta_\mu,
\ta_\nu \rangle_d$ are defined as in Section \ref{ogoddgwis}, for
elements $\la$, $\mu$, and $\nu$ of $\wt{\cP}(k,n)$ 
such that $|\lambda| + |\mu| +
|\nu| = \dim(\OG')+d(n+k)$.

For any integer $d\lequ m$ and Schubert variety $X_\la$ in $\OG'$, the
varieties $Y_d$, $Y'_d$, $Z_d$, $Z'_d$ and the associated subvarieties
$Y_\la$, $Y'_\la$, $Z_\la$, $Z'_\la$,
classes $\upsilon_\la$, $\upsilon'_\la$,
$\zeta_\la$, $\zeta'_\la$, and quantity $r$
are defined exactly as
in Section \ref{ogoddgwis}, together with the projection maps
$\varphi : Y'_d \to Y_d$ and $\psi: Z'_d \to Z_d$. 
When $d=m+1$, we have $Y_{m+1}=\G(2m+1,V)$
with subvarieties $Y_\la\subset Y_{m+1}$ as before.
For $\la\in \wt{\cP}(k,n)$ we define
$N'_d=N'_d(\la)=\#\{j\le m\,|\,\la_j=d-j<k\}$,
and we define the quantity $M'_d$ as in \eqref{eq:defnMd} but with
$N'_d$ in place of $N_d$.
The subset $S_d(\la,\mu,\nu)$ of the $r=r(d)$ locus of $Y_\la\cap
Y_\mu\cap Y_\nu$ is also defined as before.

\begin{lemma} \label{L:fiberD}
{\rm (a)} The restricted projection
$\pi : T_\la(E_\bull) \to Y_\la(E_\bull)$ is generically
$2^{N'_d}$-to-$1$ when $\rho_{d-1}\subset\la$ and has fibers of positive
dimension when $\rho_{d-1}\not\subset\la$.
\smallskip \\
\noin When $d\le m$, we furthermore have:\\
\noin {\rm (b)} The restriction of $\pi$ over the
$r=2$ locus of $Y_\la(E_\bull)$ is
generically unramified $2^{N'_d}$-to-$1$ when $(\rho_{d-1},1)\subset\la$
and has fibers of positive dimension when
$(\rho_{d-1},1)\not\subset\la$.
\smallskip \\
\noin {\rm (c)} The map $\pi'\colon T'_\la(E_\bull)\to
Y'_\la(E_\bull)$ is generically $1$-to-$1$ when $\rho_{d-1}\subset\la$.
\smallskip \\
\noin {\rm (d)} The restriction of $\pi'$ to the
$r=2$ locus is generically $1$-to-$1$ when
$(\rho_{d-1},1)\subset\la$.
\end{lemma}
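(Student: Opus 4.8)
The plan is to follow the proof of Lemma~\ref{L:fiberB} almost verbatim, making only the adjustments forced by the even orthogonal setting. Fix a standard orthogonal basis $\{e_1,\dots,e_{2n+2}\}$ of $V$ with $(e_i,e_j)=0$ unless $i+j=2n+3$, and take $E_r=\Span\{e_1,\dots,e_r\}$. Write $X_\lambda$, $Y_\lambda$, $T_\lambda$ for $X_\lambda(E_\bull)$, etc., set $p_j=p'_j(\lambda)$ (with the convention that the Schubert condition at an index $j$ with $p'_j=n+2$ reads $\dim(\Sigma\cap\wt F_{n+1})\ge j$), and put $\#_j=\#\{i<j : p_i+p_j>2n+3\}$ and $s_j=\max(m+d+p_j-2n-2-j,0)$. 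For $(A,B)\in Y_\lambda$ with a witness $\Sigma\in X_\lambda$, $A\subset\Sigma\subset B$, define $A_j=A\cap E_{p_j}$, $B_j=B\cap E_{p_j}$, $\Sigma_j=\Sigma\cap E_{p_j}$. Exactly as in the odd case one gets $\dim A_j\ge j-d$, $\dim B_j\ge j+s_j$, and, when equality holds in the latter, $\dim(B_j\cap B_j^\perp)\ge j-s_j$ and $\dim(B_{j-1}\cap B_j^\perp)\ge j-1-s_j$, with the strengthening $\dim(B_{j-1}\cap B_j^\perp)\ge j-s_j$ holding precisely when $\lambda_j=d-j<k$. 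This strict inequality (replacing $\lambda_j=d-j\le k$ of the odd case) is exactly the combinatorial fingerprint of $N'_d$, and the difference is accounted for by the parity-dependent clause in the definition of $p'_j$ at parts equal to $k$.

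First I would define $U_\lambda\subset Y_\lambda$ by requiring equality in all of these bounds for $1\le j\le d$ and in $\dim A_j=j-d$ for $j\ge d$, and then verify $U_\lambda\ne\emptyset$: one produces a permutation $\pi\in\mathfrak S_d$ satisfying the same combinatorial inequalities as in Lemma~\ref{L:fiberB} (with the $<k$ versus $\le k$ replacement), chosen so that $\pi^{-1}(d)$ is as large as possible, and writes down $A=\Span(e_{p_{d+1}},\dots,e_{p_m})$ and $B=\Span(e_{p_1},\dots,e_{p_m},u_1,\dots,u_d)$ with $u_i=e_{2n+3-p_{\pi(i)}}+e_{p_j-1-s_j+i}$ for $s_{j-1}<i\le s_j$; a careful choice of the last vector $u_{i_0}$ as a single basis vector (of the form $e_{d+1-i_0}+e_{2n+2-d+i_0}$ or $e_c$ for a suitable index $c$ not among the $p_j$ or $2n+3-p_j$) places a point in the $r=0$ locus when $\rho_{d-1}\subset\lambda$ and in the $r=2$ locus when $(\rho_{d-1},1)\subset\lambda$. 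The heart of the lemma is then the claim that for $(A,B)\in U_\lambda$ and each $j\le d$ there are exactly $2^{\#\{i\le j : \lambda_i=d-i<k\}}$ isotropic $\Sigma_j\subset B_j$ with $\dim(\Sigma_j\cap B_i)\ge i$ for $i\le j$, proved by induction on $j$: the bounds force a maximal isotropic subspace of $B_j$ to have dimension $j$, the inductive step reduces to extending $\Sigma_{j-1}$ inside $B_j$, and the quotient $(\Sigma_{j-1}^\perp\cap B_j)/\Sigma_{j-1}$ is a $2$-dimensional nondegenerate orthogonal space yielding two extensions precisely when $\lambda_j=d-j<k$ and a unique one otherwise. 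Setting $\Sigma=\Sigma_d+A$ recovers all the lifts, $2^{N'_d}$ of them; running the same argument over local Artinian $\C$-algebras shows $T_\lambda\to Y_\lambda$ is unramified (hence étale) over $U_\lambda$, which gives the generic multiplicities in (a) and in the $r=2$ part of (b).

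For the positive-dimensional fibers half of (a), I would copy the odd-case argument: if $\rho_{d-1}\not\subset\lambda$ then some $j$ has $\lambda_j<d-j$ with $\lambda_j<k$, so $s_j$ jumps, and for any $(A,B)\in Y_\lambda$ with $A_d=0$ one can extend $\Sigma_{j-1}$ to a $j$-dimensional isotropic subspace of $B\cap E_{p_j-1}$ and then fill up to dimension $d$ inside $B$, landing in $X_\lambda\smallsetminus X_\lambda^\circ$; hence $\dim Y_\lambda<\dim T_\lambda$. Part (b)'s positive-dimensional case reduces by semicontinuity to $\ell(\lambda)=d-1$ and uses $\dim(B^\perp\cap B_d)=2$ generically on the $r=2$ locus to produce the bad extension. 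Parts (c) and (d) then follow formally from (a) and (b), since for $(A,B)\in U_\lambda$ the generic $A'$ with $(A,A',B)\in Y'_\lambda$ lies in a unique $\Sigma\in X_\lambda$ between $A$ and $B$. Finally the $d=m+1$ case, where $Y_{m+1}=\G(2m+1,V)$, is handled exactly as the corresponding paragraph of Lemma~\ref{L:fiberB}: equality in $\dim B_j\ge j+s_j$ still forces the auxiliary bounds, the explicit $B=\Span(e_{p_1},\dots,e_{p_m},u_1,\dots,u_d)$ with $u_{i_0}=e_{d+1-i_0}+e_{2n+2-d+i_0}$ lies in the $r=0$ locus (using $(u_{i_0},u_{i_0})\ne 0$ and, as before, the maximality of $\pi^{-1}(d)$ to rule out bad pairings), and the counting of the $\Sigma_j$ runs for $j\le m$, producing $\Sigma=\Sigma_m\in X_\lambda$.

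The main obstacle I expect is bookkeeping rather than conceptual: keeping the family/type data coherent throughout the explicit constructions and inductions. In particular one must treat the index $p'_j=n+2$ correctly — reading $E_{n+2}$ as $\wt F_{n+1}$, and checking that the \emph{other} isotropic line in $F_n^\perp/F_n$ does not accidentally satisfy the Schubert condition — and one must verify that the dimension count for $Y_d$ (the even analogue of the odd-case formula, with $\dim\OG'$ in place of $\dim\OG$) together with all the equalities defining $U_\lambda$ is compatible with the type-dependent shift in the definition of $p'_j$, and that the strict inequality $\lambda_j=d-j<k$ (not $\le k$) is indeed what makes the orthogonal extension step at stage $j$ binary, the difference arising entirely from the parity clause governing $p'_j$ when $\lambda_j=k$. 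None of these presents a genuine difficulty, but each interacts with the parity of $n+j$ and must be carried out with care.
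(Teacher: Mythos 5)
Your overall strategy --- transplanting the proof of Lemma \ref{L:fiberB} --- is exactly the paper's, but two of the ``bookkeeping'' issues you defer to the end are in fact genuine gaps, both concentrated at indices $j$ with $\la_j=k<\la_{j-1}$, i.e.\ $p'_j\in\{n+1,n+2\}$. First, your uniform definition $s_j=\max(m+d+p_j-2n-2-j,0)$ is wrong at such $j$: the space being intersected is the maximal isotropic $F_{n+1}$ or $\wt F_{n+1}$, of dimension $n+1$ in either case, so the correct value is $s_j=\max(d-j-k,0)$. With $p'_j=n+2$ your formula is too large by one, so the asserted bound $\dim(B_j)\gequ j+s_j$ is simply false; more damagingly, at $j=d-k$ with $\la_j=d-j=k$ and $p'_j=n+2$ one has $\#_j=0$ and (with your $s_j$) $s_j=1=\#_j+1$, which triggers the ``two extensions'' branch of the induction and yields the odd-case multiplicity $2^{N_d}$ instead of $2^{N'_d}$. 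The strictness in $N'_d$ is enforced precisely by the corrected $s_j$ at these indices, not by the parity clause in the definition of $p'_j$ alone.

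Second, in the positive-dimensional-fibers half of (a) you assert that $\rho_{d-1}\not\subset\la$ yields some $j$ with $\la_j<d-j$ \emph{and} $\la_j<k$. That is false: for $\la=(5,2,2,2,2)\in\wt{\cP}(2,6)$ and $d=5$ the only index with $\la_j<d-j$ is $j=2$, where $\la_2=2=k$. The case $\la_j=k<\min(\la_{j-1},d-j)$ requires a separate argument, because the odd-case construction places the degenerate extension $\Sigma'_j$ inside $B\cap E_{p_j-1}$, and when $p'_j=n+2$ that space is the \emph{other} maximal isotropic $F_{n+1}$, so the resulting $\Sigma'$ need not lie in $X_\la$ at all (wrong family); moreover the inequality $s_j>1+\#_j$ used there can fail (e.g.\ $s_j=1$, $\#_j=0$ when $\la_j=k=d-j-1$). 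The repair is to note that $s_j>0$ forces $\dim(B_j)\gequ j+1$, hence $\dim(B\cap E_n)\gequ j$, and to take the $j$-dimensional isotropic extension of $\Sigma_{j-1}$ inside $B\cap E_n$, which lies in both families and therefore gives a point of $X_\la\ssm X_\la^\circ$. With these two corrections your argument coincides with the paper's.
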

\begin{proof}
The proof of Lemma \ref{L:fiberB} can be copied, with the following
modifications.
We take $p_j=p'_j(\la)$,
$p_{m+1}=2n+3$, $\#_j=\{i<j\,|\,p_i+p_j>2n+3\}$, and
$$s_j=\left\{
\begin{array}{ll}
\max(d-j-k,0),&\text{when $\la_j=k<\la_{j-1}$},\\
\max(m+d+p_j-2n-2-j,0),&\text{otherwise}.
\end{array}
\right.
$$
Then $\rho_{d-1}\subset\la$
implies $s_j=\#_j+1$ when $\la_j=d-j<k$ and $s_j\le \#_j$ otherwise.
Throughout, $\la_j=d-j\le k$ should be replaced by
$\la_j=d-j<k$, $N_d$ by $N'_d$, $2n$ by $2n+1$, and
$A_j$, $B_j$, $\Sigma_j$ defined by intersecting with
$\wt{E}_{n+1}$ when $p_j=n+2$.

In the argument with $\rho_{d-1}\not\subset\la$, an additional case
$\la_j=k<\min(\la_{j-1},d-j)$ must be considered.
In this case we have $s_j>0$, so $\dim(B_j)\ge j+1$.
Hence $\dim(B\cap E_n)\ge j$, so there exists a $j$-dimensional
isotropic extension $\Sigma'_j$ of $\Sigma_{j-1}$ contained in
$B\cap E_n$.
\end{proof}

\begin{thm} \label{T:qclasD}
  Let $d\gequ 0$ and choose $\lambda,\mu,\nu \in \wt{\cP}(k,n)$ such that
  $|\lambda|+|\mu|+|\nu| = \dim(\OG') + d(n+k)$.  Let $X_\lambda$,
  $X_\mu$, and $X_\nu$ be Schubert varieties of $\OG(m,2n+2)$ in
  general position, with associated subvarieties and classes in $Y_d$, $Y'_d$,
  $Z_d$, and $Z'_d$.

  \smallskip \noin {\rm (a)} The subvarieties $Y_\la$, $Y_\mu$, and $Y_\nu$
  intersect transversally in $Y_d$, and their intersection is finite.
  For each point in $(A,B) \in Y_\lambda
  \cap Y_\mu \cap Y_\nu$ we have $A=B\cap B^\perp$ or 
  $\dim(B \cap B^\perp)=\dim A+2$.

  \smallskip \noin {\rm (b)} The assignment $f \mapsto
   (\Ker(f),\Span(f))$ gives a $2^{M'_d}$-to-$1$ association between
   rational maps
   $f:\bP^1\to\OG'$ of degree $d$ such that $f(0)\in X_\lambda$,
   $f(1)\in X_\mu$, $f(\infty)\in X_\nu$ and the subset $S_d$ of 
   $Y_\lambda \cap Y_\mu \cap Y_\nu$.

   \smallskip \noin {\rm (c)} When $d\lequ m$ is even, the
   Gromov-Witten invariant $\gw{\ta_\lambda,\ta_\mu,\ta_\nu}{d}$ is
   equal to
\begin{align*}
   & \int_{Y_d}\upsilon_\la\cdot \upsilon_\mu\cdot \upsilon_\nu
    - \frac{1}{2}
   \int_{Y'_d} (\varphi^*\upsilon_\la\cdot \upsilon'_\mu\cdot \upsilon'_\nu +
   \upsilon'_\la\cdot \varphi^*\upsilon_\mu\cdot \upsilon'_\nu +
   \upsilon'_\la\cdot \upsilon'_\mu\cdot \varphi^*\upsilon_\nu) \\
   & {} - \int_{Z_d}\zeta_\la\cdot \zeta_\mu\cdot\zeta_\nu
   + \frac{1}{2} \int_{Z'_d} (\psi^*\zeta_\la\cdot \zeta'_\mu\cdot \zeta'_\nu +
   \zeta'_\la\cdot \psi^*\zeta_\mu\cdot \zeta'_\nu +
   \zeta'_\la\cdot \zeta'_\mu\cdot \psi^*\zeta_\nu).
\end{align*}
When $d\lequ m$ is odd, the
   Gromov-Witten invariant $\gw{\ta_\lambda,\ta_\mu,\ta_\nu}{d}$ is
   equal to
\begin{align*}
   & \int_{Z_d}\zeta_\la\cdot \zeta_\mu\cdot\zeta_\nu
   - \frac{1}{2} \int_{Z'_d} (\psi^*\zeta_\la\cdot \zeta'_\mu\cdot \zeta'_\nu +
   \zeta'_\la\cdot \psi^*\zeta_\mu\cdot \zeta'_\nu +
   \zeta'_\la\cdot \zeta'_\mu\cdot \psi^*\zeta_\nu).
\end{align*}
When $d=m+1$ is even, we have
\[
\gw{\ta_\lambda,\ta_\mu,\ta_\nu}{d} = \int_{\G(2m+1,2n+2)}
  \upsilon_\lambda \cdot \upsilon_\mu \cdot \upsilon_\nu .
\]

  \smallskip \noin {\rm (d)} We have
   $\gw{\ta_\lambda,\ta_\mu,\ta_\nu}{d}=0$ if $\la$ does not contain
   $\rho_{d-1}$ when $d$ is even, or does not contain
   $(\rho_{d-1},1)$ when $d$ is odd.
\end{thm}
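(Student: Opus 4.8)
The plan is to mirror the proof of Theorem~\ref{T:qclasB} for the odd orthogonal case, substituting the group $\SO_{2n+2}$ and the ambient dimension $2n+2$ throughout, replacing the multiplicities $N_d$ and $M_d$ by their primed counterparts $N'_d$ and $M'_d$, and invoking Lemma~\ref{L:fiberD} wherever the odd-case proof cites Lemma~\ref{L:fiberB}. The only genuinely new bookkeeping is the index-set convention for $\OG'$: when $p'_j(\la)=n+2$ the $j$-th Schubert condition is imposed using the alternate flag $\wt F_{n+1}$, and the auxiliary spaces $A_j,B_j,\Sigma_j$ must be intersected with $\wt E_{n+1}$ in that case; but this has already been absorbed into Lemma~\ref{L:fiberD}, so it does not reappear here.

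First I would introduce, for $0\lequ e_1\lequ m$, $0\lequ e_2\lequ d\lequ m+1$ and $r\gequ 0$, the variety $Y^r_{e_1,e_2}$ of pairs $(A,B)$ with $A\subset B\subset A^\perp\subset V$, $\dim A=m-e_1$, $\dim B=m+e_2$, $\dim(B\cap B^\perp)=m-e_1+r$; these carry a transitive $\SO_{2n+2}$-action, $Y^r_{e_1,e_2}$ is empty unless $r\lequ\min(e_1+e_2,2k+e_1-e_2)$, and $Y_d$ is the union of the $Y^r_{d,d}$ over $0\lequ r\lequ\min(2d,2k)$ (with $Y_{m+1}=\G(2m+1,V)$). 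One then recomputes $\dim Y^r_{e_1,e_2}$, the fibre dimensions $\dim Q_s$ of the projection $\pi_2\colon T^r\to Y^r_{e_1,e_2}$ with $Q_s$ as in \eqref{Qs}, and the resulting discrepancy $\Delta'(e_1,e_2,r,s)$. Exactly as in Theorem~\ref{T:qclasB}, a Kleiman--Bertini argument together with the auxiliary estimates $\Delta'(e_1,e_1,r',s')\lequ 0$ and $\Delta'(d,d-1,r'',s'')\lequ 0$ forces $Y^r_\la\cap Y^r_\mu\cap Y^r_\nu=\emptyset$ unless either $e_1=e_2=d$ with $(r,s)\in\{(0,0),(2,1)\}$, or $e_1=m$, $e_2=d=m+1$, $r=s=0$; in the surviving cases the intersection is finite, and it is reduced (hence transverse) on the $r=0$ stratum, while on the $r=2$ stratum a finer argument is needed. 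The point to check here is that the numerical coincidence of the odd case survives in type D, namely that $2\Delta'(d,d,r,s)$ again factors as $(r-3s)(r-3s\pm 1)$, so that combined with $2s\gequ r$ the only solutions are $(0,0)$ and $(2,1)$; granting Lemma~\ref{L:fiberD} this is a routine polynomial identity.

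For the full transversality assertion in (a) on the $r=2$ locus I would reuse verbatim Lemma~\ref{l:genunram} and the argument following it: the map $T_{\la^0}\to Y_{\la^0}$ is \'etale over the open set $U_{\la^0}$ of Lemma~\ref{L:fiberD} for $\la^0=(\rho_{d-1},1)$, one identifies $Y_{(\rho_{d-1},1)}(E_\bull)=Y_{(1^d)}(E_\bull)$, constructs the open subset $W(E_\bull)\subset Z^\circ_d$ on which the conormal map $\mathcal N_{Y_{(1^d)}/Y_d}\to\mathcal N_{Z^\circ_d/Y_d}$ is nondegenerate, and produces copies of $\SL_2$ inside the three factors of $\SO_{2n+2}\times\SO_{2n+2}\times\SO_{2n+2}$ whose irreducible actions on the three-dimensional conormal space $\mathcal N_{Z^\circ_d/Y_d}$ spread three chosen conormal vectors to a spanning set under a generic triple of group elements; verifying the hypothesis of Lemma~\ref{l:genunram} then gives (a). For (b)--(d) I would run the kernel--span analysis as in the odd case: any degree-$d$ map $f$ has $(\Ker f,\Span f)$ in one of the surviving strata, so $\dim\Ker f=m-d$ and $\dim\Span f=m+d$; identifying $\ov{\G}(d,\Span f/\Ker f)$ with $\OG(d,2d)$ when $d$ is even and $\ov{\OG}(d,2d)$ when $d$ is odd, the two-family structure of the valid isotropic subspaces together with Lemma~\ref{genpos} and Proposition~\ref{Gd2d} shows that each point of $S_d$ carries exactly $2^{M'_d}$ triples of incident isotropic subspaces in pairwise general position, hence exactly $2^{M'_d}$ maps; this gives (b), and (c) and (d) follow by the same inclusion--exclusion over the $r=0$ and $r=2$ loci and over the lifting multiplicities recorded in Lemma~\ref{L:fiberD}(a)--(d). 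The main obstacle is the transversality statement of (a); the rest is numerical bookkeeping once Lemma~\ref{L:fiberD} is available.
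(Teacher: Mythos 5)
Your proposal is correct and follows essentially the same route as the paper, which itself proves Theorem~\ref{T:qclasD} by transporting the argument of Theorem~\ref{T:qclasB} to $\SO_{2n+2}$, recomputing $\dim Y^r_{e_1,e_2}$ and the discrepancy function, and substituting Lemma~\ref{L:fiberD} and the primed multiplicities throughout. The only cosmetic point you omit is that in type D some $Y^r_{e_1,e_2}$ are disjoint unions of two homogeneous components, but this does not affect the Kleiman--Bertini argument.
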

\begin{proof} 
The argument is very similar to that in Theorem \ref{T:qclasB}.
Given integers $0\lequ e_1\lequ m$, $0 \lequ e_2 \lequ d \lequ m+1$, 
and $r\gequ 0$ we let $Y^r_{e_1,e_2}$ be the variety (or in some cases,
disjoint union of two varieties) of pairs $(A,B)$ such
that $A \subset B \subset A^\perp \subset V$, $\dim(A) = m-e_1$,
$\dim(B) = m+e_2$, and $\dim(B \cap B^\perp) = m-e_1+r$. These
varieties have a transitive action of $\SO_{2n+2}$. Let
\[ Y^r_\lambda = \{ (A,B) \in Y^r_{e_1,e_2} \mid \exists\, \Sigma \in 
   X_\lambda : A \subset \Sigma \subset B \} \,.
\]
We will show as before that $Y^r_\lambda \cap Y^r_\mu \cap Y^r_\nu$ is
empty unless (i) $e_1=e_2=d$ and $r\in\{0,2\}$, or (ii) $e_1=m$,
$e_2=d=m+1$, and $r=0$.

Following the proof of Theorem \ref{T:qclasC} we compute that the dimension
of (any component of) $Y^r_{e_1,e_2}$ is equal to  
\[ \frac{1}{2}
   (n^2 + 2nk -3k^2 + n + 3k + 2ne_1-2ke_1-e_1^2+4e_2k-2e_2^2+ 3e_1-r-r^2) \,.
\]
The varieties $Q_s$ defined by equation (\ref{Qs}) have the same dimension
as in the proof of Theorem \ref{T:qclasB}. We deduce that 
$\codim(Y^r_\lambda)+\codim(Y^r_\mu)+\codim(Y^r_\nu) -
\dim(Y^r_{e_1,e_2})$ is greater than or equal to the number
$\Delta''(e_1,e_2,r,s) := \dim \OG' + d(n+k) - \dim(Y^r_{e_1,e_2}) -
3\dim Q_s$, which is equal to
\[ (r-3s)(r-3s+1)/2 + (n+k)(d-e_2) + 
   (n-k+e_2-2e_1+3s)(e_2-e_1) \,. 
\]
The remainder of the argument is exactly the same as in the case of $\OG$.
The analysis of $Y_{\lambda^0}$ can be carried out with
$\lambda^0=(\rho_{d-1},1)$ of arbitrary type (the analysis yields
$Y_{\lambda^0}=Y_{(1^d)}$, so the type is irrelevant).
\end{proof}

For $\lambda\in\wt{\cP}(k,n)$, let $\ov{\la}$ denote the diagram
obtained by deleting the leftmost column of $\lambda$, with the
convention that $\type(\ov{\la})=\type(\la)$. 

\begin{prop}
\label{dprop0}
For  $p\in [1,n+k]$ and $\lambda$, $\mu\in \wt{\cP}(k,n)$
with $|\lambda|+|\mu|+p=\dim \OG' + n + k$, we have
\begin{equation}
\label{deq1}
\langle \ta_\lambda, \ta_{\mu}, \ta_p\rangle_1 =
\int_{\OG(n+2-k,2n+2)} \ta_{\ov{\lambda}} \cdot \ta_{\ov{\mu}} \cdot
\ta_{p-1}.
\end{equation}
Moreover, when $p=k$, we have
\begin{equation}
\label{deq1'}
\langle \ta_\lambda, \ta_{\mu}, \ta'_k\rangle_1 =
\int_{\OG(n+2-k,2n+2)} \ta_{\ov{\lambda}} \cdot \ta_{\ov{\mu}} \cdot
\ta'_{k-1}.
\end{equation}
\end{prop}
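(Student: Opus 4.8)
The plan is to follow the proof of Proposition~\ref{bprop0} almost verbatim; the one genuinely new point is the bookkeeping of the two families (equivalently, the values of $\type$) in the critical degree $p=k$. Just as for the odd orthogonal Grassmannian (compare \eqref{oglinenums} and the discussion preceding it), the parameter space of lines on $\OG'$ is the two-step orthogonal flag variety $Z_1 = \OF(m-1,m+1;2n+2)$, which is the $r=2$ locus of $Y_1$ in Theorem~\ref{T:qclasD}; since $m<n$ this locus consists of genuinely non-maximal isotropic subspaces $B$ of dimension $m+1<n+1$. Consequently, for $\lambda,\mu,\nu\in\wt{\cP}(k,n)$ with $|\lambda|+|\mu|+|\nu| = \dim\OG' + (n+k)$,
\[
\langle \ta_\lambda, \ta_\mu, \ta_\nu \rangle_1 = \int_{Z_1} [Z_\lambda] \cdot [Z_\mu] \cdot [Z_\nu],
\]
where $Z_\lambda = \pi_2(\pi_1^{-1}(X_\lambda(F_\bull)))$ for the two smooth projections $\pi_1\colon U \to \OG'$ and $\pi_2\colon U \to Z_1$ out of the three-step flag variety $U = \OF(m-1,m,m+1;2n+2)$. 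As in the proof of Proposition~\ref{bprop0}, both sides of \eqref{deq1} vanish for dimension reasons unless $\ell(\lambda) = \ell(\mu) = m$, so one may assume this, in which case the pushforwards appearing below are all birational.

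Next I would reproduce the commutative square of smooth projections used for $\OG$: the factorization $\pi_1 = \varphi_1\varphi_2$ through $\varphi_2\colon U \to \OF(m,m+1;2n+2)$ and $\varphi_1\colon \OF(m,m+1;2n+2) \to \OG'$, together with $\pi\colon \OF(m,m+1;2n+2) \to \OG(m+1,2n+2)$ and $\xi\colon Z_1 \to \OG(m+1,2n+2)$, with $\pi\varphi_2 = \xi\pi_2$. Two geometric facts feed the computation. First, $\pi(\varphi_1^{-1}(X_\lambda(F_\bull))) = X_{\ov\lambda}(F_\bull)$ in $\OG(m+1,2n+2) = \OG(n+2-k,2n+2)$, where $\ov\lambda$ is obtained by deleting the leftmost column of $\lambda$ with $\type(\ov\lambda) = \type(\lambda)$, and $\pi$ restricts to a birational morphism, so that $\pi_*\varphi_1^*\ta_\lambda = \ta_{\ov\lambda}$. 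Second, the special classes satisfy $[Z_{(p)}] = \xi^*\ta_{p-1}$ for $p \neq k$, while in the critical degree $[Z_{(k)}] = \xi^*\ta_{k-1}$ and $[Z'_{(k)}] = \xi^*\ta'_{k-1}$; here the strictness parameter of $\OG(n+2-k,2n+2)$ equals $k-1$, so degree $k-1$ is exactly the critical degree on that Grassmannian whose special class splits into the two classes $\ta_{k-1}$ (type $1$) and $\ta'_{k-1}$ (type $2$). Granting both facts, the same chain of applications of the projection formula as in Proposition~\ref{bprop0} yields
\[
\langle \ta_\lambda, \ta_\mu, \ta_p \rangle_1 = \int_{\OG(n+2-k,2n+2)} \ta_{\ov\lambda} \cdot \ta_{\ov\mu} \cdot \ta_{p-1},
\]
which is \eqref{deq1}, and running the identical computation with $\xi^*\ta'_{k-1}$ in place of $\xi^*\ta_{k-1}$ when $p = k$ gives \eqref{deq1'}.

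The verification of the first fact, and of $[Z_{(p)}] = \xi^*\ta_{p-1}$ for $p \neq k$, is routine and goes exactly as in the odd orthogonal case, using the index-set description of Schubert varieties from Section~\ref{schvars} together with the fact that the shift parameter $\varepsilon(p)$ used on $\OG'$ coincides with the one governing degree $p-1$ on $\OG(n+2-k,2n+2)$. I expect the main obstacle to be the critical-degree case of the second fact: one must check that deleting the leftmost column and passing from the special index $(k)$ to $(k-1)$ is compatible with the parity conventions built into the formula for $p'_j(\lambda)$ and into $\type$, so that the type-$1$ condition $\Sigma \cap F_{n+1} \neq 0$ defining $X_k$ on $\OG'$ translates, under $\xi$, into the type-$1$ special class $\ta_{k-1}$ on $\OG(n+2-k,2n+2)$ rather than getting the two families interchanged. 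Since $m < n$, the subspace $B$ recorded by $\xi$ is non-maximal isotropic and carries no family ambiguity of its own, so the two families enter only through which of $F_{n+1}$, $\wt F_{n+1}$ appears in the incidence condition, and tracing this through the parity offsets in $p'_j$ should settle the matter.
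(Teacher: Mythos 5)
Your argument is essentially identical to the paper's, which proves this proposition simply by citing the projection-formula computation of Proposition~\ref{bprop0} through the three-step flag variety $\OF(m-1,m,m+1;2n+2)$; your extra care with the two families in the critical degree $p=k$ (tracking $\type$ through $\ov{\lambda}$ and matching $[Z_{(k)}]$, $[Z'_{(k)}]$ with $\xi^*\ta_{k-1}$, $\xi^*\ta'_{k-1}$) is exactly the bookkeeping the paper leaves implicit. No issues.
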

\begin{proof}
The argument is the same as the one in the proof of Proposition 
\ref{bprop0}.
\end{proof}

\subsection{Quantum cohomology}
\label{qcOGeven}
As in Section \ref{qcOGodd}, the degree of the variable $q$ in 
$\QH^*(\OG')$ is $n+k$, and the quantum Pieri rule will involve
both $q$ and $q^2$ terms. 
Let $\wt{\cP}'(k,n+1)$ be the set of $\nu\in\wt{\cP}(k,n+1)$ such that
$\ell(\nu)=n+2-k$, $2k-1\lequ \nu_1\lequ n+k$, and the number of boxes
in the second column of $\nu$ is at most $\nu_1-2k+2$.  For any
$\nu\in\wt{\cP}'(k,n+1)$, we let $\wt{\nu}\in \wt{\cP}(k,n)$ be the
element of $\wt{\cP}'(k,n)$ obtained by removing the first row of
$\nu$ as well as $n+k+1-\nu_1$ boxes from the first column.  That is,
\[
\wt{\nu}=(\nu_2,\nu_3,\ldots,\nu_r), \
\text{where $r=\nu_1-2k+1$.}
\]
Moreover, we have $\type(\wt{\nu})=\type(\nu)$, if $\type(\nu)=0$, 
and otherwise $\type(\wt{\nu})= 3 - \type(\nu)$. Finally, for any
$\la\in \wt{\cP}(k,n)$, we define $\la^*$ with $\type(\la^*)=\type(\la)$
by $\la^*=(\la_2,\la_3,\ldots)$.

\begin{thm}[Quantum Pieri rule for $\OG'$] 
\label{ogevenqupieri}
For any $k$-strict partition $\la\in\wt{\cP}(k,n)$ and integer 
$p\in [1,n+k]$, we have 
\[
\ta_p \cdot \ta_\la =
\sum_{\la\to\mu} \delta_{\la\mu} \, 2^{N'(\la,\mu)}\,\tau_\mu +
\sum_{\la\to\nu} 
\delta_{\la\nu} \, 2^{N'(\la,\nu)}\,\tau_{\wt{\nu}}\, q \, +\, 
\sum_{\la^*\to\rho} \delta_{\la^*\rho} \,
2^{N'(\la^*,\rho)} \,\ta_{\rho^*}\, q^2
\]
in the quantum cohomology ring $\QH^*(\OG(n+1-k,2n+2))$. Here (i) the
first sum is classical, as in (\ref{ogevenclass}), (ii) the second sum
is over $\nu\in \wt{\cP}'(k,n+1)$ with $\la\to\nu$ and
$|\nu|=|\la|+p$, and (iii) the third sum is empty unless $\la_1=n+k$,
and over $\rho\in\wt{\cP}(k,n)$ such that $\rho_1=n+k$,
$\la^*\to\rho$, and $|\rho|=|\la|-n-k+p$.  Furthermore, the product
$\tau'_k\cdot\ta_{\la}$ is obtained by replacing 
$\delta$ with $\delta'$ throughout.
\end{thm}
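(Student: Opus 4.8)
The plan is to follow the argument that proves Theorem~\ref{ogoddqupieri}, adapting every ingredient to the even orthogonal setting and carrying along the extra data of the types $0,1,2$ together with the coefficients $\delta_{\la\mu},\delta'_{\la\mu}$. The first sum in the formula is exactly the classical Pieri rule for $\OG'$ (Theorem~\ref{thm.ogevenpieri}), and Theorem~\ref{T:qclasD}(d) shows that the quantum product $\ta_p\cdot\ta_\la$ contains at most quadratic $q$ terms. So what remains is to identify the coefficients of $q$ and of $q^2$ in both $\ta_p\cdot\ta_\la$ and $\ta'_k\cdot\ta_\la$.

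\textbf{Linear terms.} First I would establish the even analogues of Propositions~\ref{bprop0} and~\ref{bcor1}. By Proposition~\ref{dprop0}, the degree-one invariant $\gw{\ta_\la,\ta_\mu,\ta_p}{1}$ equals the classical triple intersection $\int \ta_{\ov\la}\cdot\ta_{\ov\mu}\cdot\ta_{p-1}$ on $\OG(n+2-k,2n+2)$, column deletion respecting types by our convention (for $p=k$ one uses \eqref{deq1'} instead). As in the odd case this vanishes unless $\la$ and $\mu^\vee$ have full length $n+1-k$, i.e.\ unless $\mu_1$ is suitably small relative to $\ell(\mu)$; reading off the shape of $\mu^\vee$ in $\wt\cP(k,n)$ (Section~\ref{dualpar}) gives the precise bound $\mu_1<\ell(\mu)+2k-1$. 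Dualizing $\ov{\mu^\vee}$ inside $\OG(n+2-k,2n+2)$ and unwinding the definitions turns the coefficient of $\ta_\mu\,q$ into a single classical Pieri coefficient $\delta\cdot2^{N'}$ on a larger even orthogonal Grassmannian; here one checks that $\la\to\mu$ in $\wt\cP(k,n)$ translates into $\la\to\nu$ in $\wt\cP(k,n+1)$ with $\nu_2=\mu_1$, and that the restriction $\type(\la)+\type(\mu)\neq3$ matches $\type(\la)+\type(\nu)\neq3$ via the rule $\type(\wt\nu)=3-\type(\nu)$ when $\type(\nu)\neq0$. Since $\nu\mapsto\wt\nu$ is the bijection $\wt{\cP}'(k,n+1)\to\{\mu\in\wt\cP(k,n):\mu_1<\ell(\mu)+2k-1\}$ recorded in Section~\ref{qcOGeven}, this produces exactly the middle sum; the $\ta'_k$ version comes out with $\delta'$ replacing $\delta$ by the same computation carried out with the opposite family of maximal isotropic subspaces.

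\textbf{Quadratic terms.} Here I would mimic the three steps used for $\OG$. (1) Establish $\ta_{n+k}^2=q^2$: the coefficient of $q$ vanishes by the linear analysis, so $\ta_{n+k}^2=c\,q^2$ for some $c\in\Z$, and $c=1$ follows either by directly counting conics through a point and two general translates of $X_{n+k}$, or from associativity together with $\ta_{n+k}\cdot\ta_{(1^m)}=\ta_{(1^m)}\,q$ (the even analogue of Proposition~\ref{bcor1}), which gives $\ta_{n+k}^2\cdot\ta_{(1^m)}=\ta_{(1^m)}\,q^2$. (2) Prove the even analogue of Proposition~\ref{bprop2}: if $\la_1<n+k$ then the $q^2$-coefficient of $\ta_p\cdot\ta_\la$ vanishes. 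This is the conic-incidence argument of that proof; degree-two maps $\bP^1\to\OG'$ are studied through their image conics $C$, conditions (i)--(iv) are reduced by the same moves to the cases $p=n+k$, $\mu=(n+k,\dots,2k+1)$ and $\la\in\{(n+k-1,\dots,2k),(n+k-1,\dots,2k+1)\}$, and one derives a contradiction (an infinite family of incident conics $C'$, or a family with fixed kernel and varying span). The only change is that maximal isotropic subspaces of the four-dimensional nondegenerate quotient $\Span(C)/\Ker(C)$ now split into two families, which is harmless since at each stage one only needs a single incident conic in the appropriate component. (3) Reduce the general $\la_1=n+k$ case: the relation $\ta_\la=\ta_{n+k}\cdot\ta_{\la^*}$ holds in $\QH^*(\OG')$ because it holds classically and, by step (2), carries no quantum corrections, so $\ta_p\cdot\ta_\la=\ta_{n+k}\cdot(\ta_p\cdot\ta_{\la^*})$; the inner product has only linear corrections (step (2) again) and is therefore known, and multiplying by $\ta_{n+k}$ — using $\ta_{n+k}^2=q^2$ and the already-established Pieri rule for $\ta_{n+k}$ — yields precisely the stated $q$- and $q^2$-terms. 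The $\ta'_k$ statement follows by the same chain of arguments with $\delta$ replaced by $\delta'$ throughout.

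\textbf{Main obstacle.} The delicate part is the linear-term step: one must propagate the three-valued $\type$ through two successive ``delete a column, insert a row'' translations while simultaneously matching the parity conditions hidden in $\delta_{\la\mu}$ and in $h(\la,\mu)$, and verify that the dual-partition bookkeeping (which family $\ov{\mu^\vee}$ belongs to) stays consistent. The conic argument in the quadratic step is geometrically fussy but structurally identical to the odd case, and the associativity reduction is purely formal; it is the combinatorial matching of types and $\delta$-factors that demands the most care.
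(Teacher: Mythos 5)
Your proposal is correct and follows essentially the same route as the paper: the linear $q$-terms via Proposition~\ref{dprop0} and the even analogue of Proposition~\ref{bcor1} (which the paper records as Proposition~\ref{dcor1}, including the dual-partition and type bookkeeping you flag as the delicate point), and the quadratic terms via $\ta_{n+k}^2=q^2$, the even analogue of Proposition~\ref{bprop2}, and the factorization $\ta_\la=\ta_{n+k}\cdot\ta_{\la^*}$. No substantive differences from the paper's argument.
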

\begin{proof} 
The argument is similar to the proof of Theorem
\ref{ogoddqupieri}.
Theorem \ref{T:qclasD}(d) implies that the quantum Pieri products
$\ta_p\,\ta_{\la}$ and $\ta'_k\,\ta_{\la}$ contain at most quadratic
$q$ terms. A dimension count shows that the right hand side of
equations \eqref{deq1} and \eqref{deq1'} vanishes when either
$\lambda$ or $\mu$ has length less than $n+1-k$. 

If $\mu^{\vee}$ is the dual partition to $\mu$ in $\wt{\cP}(k,n)$
(see Sections \ref{svD} and \ref{dualpar}), then $\ell(\mu^{\vee})=n+1-k$
if and only if $\mu_1 < \ell(\mu)+2k -1$. Furthermore, if $k > 1$ and
$\mu\in\wt{\cP}(k,n)$ satisfies $\mu_1< \ell(\mu)+2k -1$, then
$(\ov{\mu^{\vee}})^{\vee} = (\ell(\mu)+2k-2, \ov{\mu})^{\dagger}$,
where the dagger $\dagger$ means that $\type((\ell(\mu)+2k-2,
\ov{\mu})^{\dagger})= \type(\mu)$, if $\type(\mu)=0$, and otherwise
$\type((\ell(\mu)+2k-2, \ov{\mu})^{\dagger})= 3 - \type(\mu)$.  We
deduce the following result.

\begin{prop}
\label{dcor1}
Consider $\lambda$, $\mu\in \wt{\cP}(k,n)$ with 
$|\mu|+n+k=|\lambda|+p$, for $1\lequ p\lequ n+k$.
If $\ell(\la)=n+1-k$ and $\mu_1<\ell(\mu)+2k-1$ then, in $\QH^*(\OG')$,
the coefficient of $\ta_\mu\, q$ in $\ta_p\,\ta_\lambda$ is
equal to the coefficient of
$\ta_{(\ell(\mu)+2k-2,\, \ov{\mu})^{\dagger}}$
in $\ta_{p-1}\,\ta_{\ov{\lambda}}\in \HH^*(\OG(n+2-k,2n+2))$. 
Otherwise, the coefficient vanishes.
\end{prop}

Theorem \ref{thm.ogevenpieri} implies that for $\lambda$, $\mu\in
\wt{\cP}(k,n)$ with $\ell(\la)=n+1-k$ and $\mu_1<\ell(\mu)+2k-1$, we
have $\ov{\la} \to (\ell(\mu)+2k-2,\, \ov{\mu})^{\dagger}$ in
$\wt{\cP}(k-1,n)$ if and only if $\la \to (\ell(\mu)+2k-1,\mu,
1^{n+1-k-\ell(\mu)})^{\dagger}$ in $\wt{\cP}(k,n+1)$, with the same
coefficients ($\delta$ and $N'$ numbers). It follows that for $\la$,
$\mu\in\wt{\cP}(k,n)$, the coefficient of $\ta_\mu\, q$ in the quantum
product $\ta_p\,\ta_\lambda$ in $\QH^*(\OG')$ is equal to the
coefficient of $\ta_{(\ell(\mu)+2k-1,\mu,
1^{n+1-k-\ell(\mu)})^{\dagger}}$ in the cup product
$\ta_p\,\ta_\lambda$ in $\HH^*(\OG(n+2-k,2n+4))$ when $\mu_1 <
\ell(\mu)+2k-1$ (and $\ell(\la)=n+1-k$), and is 0 otherwise. In
addition, $\nu \mapsto \wt{\nu}$ induces a 1-1 map
$\wt{\cP}'(k,n+1)\to\wt{\cP}(k,n)$ with image $\{\mu\in \wt{\cP}(k,n)\
:\ \mu_1<\ell(\mu)+2k-1\}$, and the inverse of this map is given by
$\mu\mapsto (\ell(\mu)+2k-1,\mu,1^{n+1-k-\ell(\mu)})^{\dagger}$.  We
deduce that the coefficient of $\ta_{\wt{\nu}}\, q$ in Theorem
\ref{ogevenqupieri} is equal to the coefficient of $\ta_{\nu}$ in the
product $\ta_p\,\ta_{\la}$ in $\HH^*(\OG(n+2-k,2n+4))$, for $\nu\in
\wt{\cP}'(k,n+1)$, and these are all the linear $q$ terms. The linear 
$q$ terms in $\ta'_k\,\ta_{\la}$ are handled similarly.

The rest of the argument is the same as in the proof of 
Theorem \ref{ogoddqupieri}, using the basic relation 
$\ta_{n+k}^2=q^2$ in $\QH^*(\OG')$.
\end{proof}

\begin{example}
  In the quantum cohomology ring of $\OG(5,14)$ we have $\tau_2 \cdot
  \tau_{(8,7,2,1,1)} = \tau_{(8,7,4,1,1)} + \tau_{(8,7,3,2,1)} +
  \tau_{(8,7,6)} + \tau'_{(7,2,2,1,1)}\, q +  \tau_{(2,1,1,1)}\, q^2 + 
  \tau_{(2,2,1)}\, q^2 +  \tau_{(3,1,1)}\, q^2$ and $\tau'_2 \cdot
  \tau_{(8,7,2,1,1)} = \tau_{(8,7,4,1,1)} + \tau_{(8,7,3,2,1)} + 
  \tau_{(7,3,1,1,1)}\, q + \tau_{(2,1,1,1)}\, q^2 + \tau_{(2,2,1)}\, q^2 + 
  \tau_{(3,1,1)}\, q^2$.  The Schubert class $\tau'_{(7,2,2,1,1)}$ has type
  2 while all remaining classes in these expansions have types 0 or 1.
\end{example}

\begin{thm}[Ring presentation] 
\label{ogevenqpres}
The quantum cohomology ring
$\QH^*(\OG')$ is presented as a quotient of the polynomial
ring $\Z[\ta_1,\ldots,\ta_k,\ta_k',\ta_{k+1},\ldots,\ta_{n+k},
q]$ modulo the relations 
\begin{gather}
\label{ogevenQR1}
\Delta_r = 0, \ \ \ \ 
n-k+1 < r \lequ n, \\
\label{ogevenQR1'}
\ta_k\Delta_{n+1-k}=
\ta'_k \Delta_{n+1-k}=
\sum_{p=k+1}^{n+1}(-1)^{p+k+1}\tau_p
\Delta_{n+1-p}, \\
\label{ogevenQR1''}
\sum_{p=k+1}^r(-1)^p\tau_p
\Delta_{r-p}=0, \ \ \ \
n+1 < r < n+k, \\
\label{ogevenQR1q}
\sum_{p=k+1}^{n+k}(-1)^p\tau_p
\Delta_{n+k-p}=-q,
\end{gather}
and
\begin{gather}
\label{ogevenQR2}
\ta_r^2 + \sum_{i=1}^r(-1)^i \ta_{r+i}c_{r-i}= 0,
 \ \  \ \ k+1 \lequ r \lequ n, \\
\label{ogevenQR2'}
\ta_k\ta'_k+\sum_{i=1}^k(-1)^i \ta_{k+i}\ta_{k-i}=0,
\end{gather}
where the variables $c_p$ are defined by (\ref{ctotau}).
\end{thm}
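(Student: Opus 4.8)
The plan is to follow the proof of Theorem~\ref{ogoddqpres} essentially verbatim, replacing the odd-orthogonal input by its even-orthogonal counterparts. By the reconstruction result of Siebert and Tian \cite{ST} (see also \cite[Sec.~10]{FPa}), combined with the fact that the iterated quantum Pieri rule of Theorem~\ref{ogevenqupieri} writes every Schubert class as a $\Z[q]$-polynomial in the special classes (exactly as in the proof of Theorem~\ref{presIG}(b)), it suffices to start from the classical presentation of Theorem~\ref{presOGeven} and compute the quantum deformation of each of the relations (\ref{ogevenR1})--(\ref{ogevenR2'}).

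First I would clear away all the relations that cannot acquire a $q$-term for degree reasons, using $\deg q=n+k$ and the standing hypothesis $m<n$ (so $k\ge 2$): writing $\Delta_s=\ta_{(1^s)}$ for the Schubert class of a height-$s$ column (which vanishes for $s>m$), the relations $\Delta_r=0$ of (\ref{ogevenR1}) have degree $r\le n<n+k$; both relations of (\ref{ogevenR1'}) have degree $n+1<n+k$; and (\ref{ogevenR1''}) at any $r$ with $n+1<r<n+k$ has degree $<n+k$. Hence all of these hold unchanged, giving (\ref{ogevenQR1}), (\ref{ogevenQR1'}), (\ref{ogevenQR1''}).

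The one relation of the first kind that genuinely deforms is (\ref{ogevenR1''}) at $r=n+k$, i.e.\ $\sum_{p=k+1}^{n+k}(-1)^p\ta_p\Delta_{n+k-p}=0$ classically; since this has degree exactly $n+k$ it becomes $\sum_{p=k+1}^{n+k}(-1)^p\ta_p\Delta_{n+k-p}=c\,q$ for some $c\in\Z$, and (\ref{ogevenQR1q}) is the assertion $c=-1$. To pin down $c$ I would observe that $\Delta_{n+k-p}=c_{n+k-p}(\cS^*)=0$ (and stays zero in $\QH^*(\OG')$ for degree reasons) whenever $n+k-p>m$, i.e.\ $p\le 2k-2$, so only the terms with $p\ge 2k-1$ survive; the classical values of these cancel (that is the classical relation at $r=n+k$), and among them only $p=2k-1$, where $\Delta_{n-k+1}=\ta_{(1^m)}$ has maximal column height, produces a $q$-term in the quantum Pieri expansion of $\ta_{2k-1}\cdot\ta_{(1^m)}$ --- all others being killed by the length constraint $\ell(\nu)=n+2-k=m+1$ on $\nu\in\wt{\cP}'(k,n+1)$, which forces $\la\to\nu$ to fail for a column of height $<m$. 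Evaluating that single term (the unique relevant $\nu$ is $(2k-1,1^m)$, with $\wt\nu$ empty, $N'(\la,\nu)=0$, and $\delta_{\la\nu}=1$ since $2k-1>k$) gives coefficient $1$, hence $c=(-1)^{2k-1}=-1$. I expect this step to be the main obstacle, as it is the only point where the precise combinatorics of the even Pieri rule --- in particular the half-integer and type-dependent $\delta$-coefficients --- must be tracked carefully; if desired, $c$ can instead be extracted indirectly from associativity and the basic relation $\ta_{n+k}^2=q^2$ together with Proposition~\ref{dcor1}, exactly as in the odd case.

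Finally I would check that relations (\ref{ogevenR2}) and (\ref{ogevenR2'}) receive no quantum correction. Expanding their left-hand sides by the quantum Pieri rule for the special factors: no $q^2$-term can appear, because the third sum in Theorem~\ref{ogevenqupieri} is empty unless the other factor has first part $n+k$, whereas here it is a single row of length $<n+k$; and for $m\ge 2$ no linear $q$-term can appear either, since the second sum is indexed by $\nu$ with $\ell(\nu)=m+1\ge 3$, impossible with $\la\to\nu$ and $\la$ a single row (which forces $\ell(\nu)\le 2$). This handles all $\OG(m,2n+2)$ with $m\ge 2$. The remaining case $m=1$ --- the even quadric $\OG(1,2n+2)$, where (\ref{ogevenR2}) is vacuous and only (\ref{ogevenR2'}) must be examined --- is treated by a direct computation as for the quadric $\OG(1,2n+1)$ at the end of the proof of Theorem~\ref{ogoddqpres}: expanding $\ta_k\ta'_k+\sum_{i=1}^k(-1)^i\ta_{k+i}\ta_{k-i}$ by the quantum Pieri rule (noting that the $i=k$ term $\ta_{2k}\cdot 1$ contributes no $q$), the $q$-coefficient is an alternating sum of Pieri multiplicities that telescopes to $0$. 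With every deformation identified, \cite{ST} yields the presentation.
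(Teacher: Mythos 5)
Your proposal is correct and follows essentially the same route as the paper: degree considerations dispose of (\ref{ogevenQR1})--(\ref{ogevenQR1''}), the $p=2k-1$ summand of (\ref{ogevenQR1q}) is identified as the unique source of the $-q$ correction, relations (\ref{ogevenQR2})--(\ref{ogevenQR2'}) are seen to be undeformed when $m\ge 2$, and the quadric $\OG(1,2n+2)$ is checked separately with the quantum Pieri rule. The only point where you are vaguer than the paper is that final quadric verification: the paper records that the $q$-coefficient of $\ta_n\ta'_n$ is $1$ for $n$ even and $0$ for $n$ odd, while each term $\ta_{n+i}\ta_{n-i}$ with $1\le i\le n-1$ contributes $(-1)^iq$, so the total vanishes in both parities; your ``telescoping'' claim is correct but should be supported by these explicit values.
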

\begin{proof}
We proceed as in the proof of Theorem \ref{ogoddqpres}.
The relations (\ref{ogevenQR1})--(\ref{ogevenQR1''}) are true
because they hold classically and the degree of $q$ is $n+k$. 
The quantum Pieri rule implies that the only the $p=2k-1$
summand in (\ref{ogevenQR1q}) gives a non-zero quantum correction,
equal to $-q$. The remaining relations are easily checked to 
hold in $\QH^*(\OG')$ when $k<n$, but the case $k=n$ yields the
quadric $\OG(1,2n+2)$, which must be checked separately. 
In this case the degree of $q$ is $2n$,
and we must verify (\ref{ogevenQR2'}) using the quantum Pieri rule.
The coefficient of $q$ in $\tau_n\tau'_n$ is $1$ when $n$ is even,
and $0$ when $n$ is odd, while
the $i$th term in the sum, for $1\le i\le n-1$, contributes $(-1)^iq$.
So, (\ref{ogevenQR2'}) holds in $\QH^*(\OG(1,2n+2))$.
\end{proof}

\begin{remark}
  The method of computing Gromov-Witten invariants
  explained in Section \ref{S:computing} carries over to types B and D,
  using polynomial expressions in the special Schubert classes of the
  orthogonal Grassmannians.
\end{remark}

\section{Schubert varieties in isotropic Grassmannians}
\label{schvars}

We begin this section by giving a uniform description of the 
Schubert varieties in isotropic Grassmannians $X$, parametrizing 
them using {\em index sets}. These sets record the attitude of 
a subspace in $X$ with respect to a fixed isotropic flag, and 
are important ingredients in our proof of the classical 
Pieri formula for $X$.

Let $V \cong \C^N$ be a complex vector space equipped with a
non-degenerate skew-symmetric or symmetric bilinear form $(\ ,\,)$.
Given a non-negative integer $m \lequ N/2$, we let $X$ denote the 
Grassmannian of $m$-dimensional isotropic subspaces of $V$,
\[ X = \{ \Sigma \subset V \mid \dim(\Sigma)=m \text{ and }
   (\ ,\, )\vert_{\Sigma}\equiv 0 \} \,. 
\]
This variety has a transitive action of the group $G =\Sp(V)$
or $G=\SO(V)$ of linear automorphisms preserving the form on $V$.
There is a single exception: when $m=N/2$ and the form is symmetric,
then the space of isotropic subspaces has two isomorphic connected
components, each a single $\SO(V)$ orbit.

Let $F_\bull$ be an isotropic flag of $V$. The Schubert varieties in
$X$ relative to the flag $F_\bull$ are the orbit closures for the
action of the stabilizer $B \subset G$ of $F_\bull$. We proceed to 
give an elementary description of these varieties.

For any subset $H \subset V$ we let $\langle H \rangle \subset V$
denote the linear span of $H$.  We will say that a basis
$\{e_1,\dots,e_N\}$ of $V$ is a {\em standard basis\/} with respect to
the isotropic flag $F_\bull$, if $(e_i,e_j) = 0$ for $i+j \neq N+1$,
$(e_i,e_{N+1-i}) = 1$, for $1\lequ i \lequ \lfloor(N+1)/2\rfloor$, 
and $F_p = \langle e_1,\dots,e_p \rangle$ for each $p$.

\begin{lemma} \label{L:chbas}
  Given any isotropic subspace $\Sigma \subset V$ and an isotropic
  flag $F_\bull \subset V$, there exists a standard basis
  $\{e_1,\dots,e_N\}$ of $V$ with respect to $F_\bull$ such that
  $\Sigma = \langle \{e_1,\dots,e_N \} \cap \Sigma \rangle$.
\end{lemma}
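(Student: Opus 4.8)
The plan is to build the required standard basis one layer at a time, respecting the filtration $F_\bull$, while keeping track of which basis vectors lie in $\Sigma$. The key point is that for each $p$ the dimension $\dim(\Sigma\cap F_p)$ jumps by at most one as $p$ increases by one, so I can choose, at each step $p$ where $\dim(\Sigma\cap F_p)>\dim(\Sigma\cap F_{p-1})$, a vector $e_p\in (\Sigma\cap F_p)\smallsetminus F_{p-1}$; at the other steps I will choose $e_p\in F_p\smallsetminus F_{p-1}$ arbitrarily for now. This already guarantees $F_p=\langle e_1,\dots,e_p\rangle$ for every $p$ and that $\Sigma=\langle\{e_1,\dots,e_N\}\cap\Sigma\rangle$, so the whole content of the lemma is to arrange that the $e_i$ can \emph{simultaneously} be taken to satisfy the orthogonality normalizations $(e_i,e_j)=0$ for $i+j\neq N+1$ and $(e_i,e_{N+1-i})=1$.

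To do this I would process the indices from $p=1$ upward, and at the same time fix the ``partner'' index $N+1-p$. The mechanism is: having the partial flag data up to $F_{p-1}$ normalized, consider the subspace $F_{p-1}^\perp$. Because $F_\bull$ is isotropic, $F_{p-1}^\perp\supset F_{N+1-p}$ has codimension $p-1$, and the already-chosen $e_1,\dots,e_{p-1}$ have a well-defined ``dual partner direction'' determined by the nondegenerate pairing. Concretely, pick a vector $e_p$ spanning $F_p$ over $F_{p-1}$ (chosen inside $\Sigma$ when the dimension jumps, as above). Using that the form is nondegenerate on $V$ and that $e_1,\dots,e_{p-1}$ are already part of a hyperbolic-type system, I can correct $e_p$ by subtracting a suitable combination of $e_{N+2-p},\dots,e_{N}$ (the partners of $e_1,\dots,e_{p-2}$, which are already fixed in the previous rounds) so that $(e_p,e_i)=0$ for all $i<p$ with $i\neq N+1-p$; crucially this correction keeps $e_p\in F_p$ and, since those partner vectors lie in $F_N=V$ but the correction is only by elements not in $F_{p-1}$ — one must check it can be arranged to stay in $F_p$, which is where the isotropy of $F_\bull$ is used — preserves membership in $\Sigma$ when $e_p$ was chosen in $\Sigma$. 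Then I define $e_{N+1-p}$: choose any vector in $F_{N+1-p}\smallsetminus F_{N-p}$, correct it likewise to be orthogonal to $e_1,\dots,e_{p-1},e_{p+?}$ appropriately and to have $(e_p,e_{N+1-p})=1$ after rescaling. Iterating over $p=1,\dots,\lfloor(N+1)/2\rfloor$ produces the full standard basis; when $N$ is odd the middle vector is handled by the same orthogonalization, using nondegeneracy to normalize its self-pairing (in the symmetric case) or noting it pairs trivially with everything (impossible in the skew case since $N$ is even then).

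The main obstacle, and the step I would spend the most care on, is exactly this compatibility: the orthogonalization corrections must be performed so as to preserve both the flag condition $e_i\in F_i$ and the incidence condition $e_i\in\Sigma$ (for those $i$ chosen inside $\Sigma$), and one must verify that the partner vectors $e_{N+1-p}$ can be chosen inside the correct flag step $F_{N+1-p}$ despite the corrections. The clean way to organize this is a downward-and-upward induction: maintain the invariant that after step $p$ the vectors $e_1,\dots,e_p$ and $e_{N+1-p},\dots,e_N$ form a standard system for $V'=\langle e_1,\dots,e_p,e_{N+1-p},\dots,e_N\rangle$, that $e_i\in F_i$ for $i\le p$, that $e_i\in\Sigma$ whenever $i\le p$ was a jump index, and that the restriction of the form to the complement $W=V'^\perp\cap(\text{suitable complement})$ is again nondegenerate so the induction continues. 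The isotropy hypothesis $F_{n+i}=F_{n-i}^\perp$ (in the relevant indexing for each $X$) is what forces $F_{p-1}^\perp$ to contain $F_{N+1-p}$ and makes the partner index $N+1-p$ compatible with the flag, so that is the hinge of the argument. Once the invariant is maintained through all $p$, the resulting basis is standard by construction and satisfies $\Sigma=\langle\{e_1,\dots,e_N\}\cap\Sigma\rangle$ because the jump indices were all placed inside $\Sigma$ and there are exactly $\dim\Sigma$ of them.
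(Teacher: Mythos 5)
Your outline correctly reduces the lemma to a simultaneous normalization problem, but the step you yourself flag as the crux is exactly where the argument is incomplete, and as written the construction can fail. There are two unresolved points. First, the Gram--Schmidt corrections: to make $e_p$ orthogonal to the already-chosen partners $e_{N+2-p},\dots,e_N$ you must subtract from $e_p$ a combination of $e_1,\dots,e_{p-1}$ (not of the partners themselves, which would take you out of $F_p$); this preserves $e_p\in F_p$, but it does \emph{not} preserve $e_p\in\Sigma$, since $e_1,\dots,e_{p-1}$ need not lie in $\Sigma$. Second, and more seriously, the partner vectors at non-jump indices cannot be ``any vector in $F_{N+1-p}\smallsetminus F_{N-p}$, corrected likewise.'' Take the first step: if $e_1\notin\Sigma$ and $\Sigma\subset F_{N-1}$ (so $N$ is not a jump index), a generic choice of $e_N$ has $\Sigma\not\subset e_N^\perp$; then every later jump vector is forced into the codimension-one subspace $\Sigma\cap e_N^\perp$, and one checks that exactly one jump index $q$ of $\Sigma$ fails to be a jump index of $\Sigma\cap e_N^\perp$, so at that $q$ there is no admissible $e_q\in\Sigma\cap e_N^\perp\cap F_q\smallsetminus F_{q-1}$ (and correcting a non-orthogonal choice by a multiple of $e_1$ leaves $\Sigma$). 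Here $e_N$ \emph{must} be taken in $\Sigma^\perp\smallsetminus F_{N-1}$, and that constraint is the actual content of the lemma; your proposal never produces it.

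The paper's proof avoids both difficulties by inducting on $\dim V$: it peels off a single hyperbolic pair $(e_1,e_N)$, choosing $e_N$ by a three-way case analysis ($e_N$ arbitrary outside $F_{N-1}$ if $e_1\in\Sigma$; $e_N\in\Sigma^\perp\smallsetminus F_{N-1}$ if $e_1\notin\Sigma$ and $\Sigma\subset F_{N-1}$; $e_N\in\Sigma\smallsetminus F_{N-1}$ otherwise). This guarantees $\Sigma=(\Sigma\cap\langle e_1,e_N\rangle)\oplus(\Sigma\cap V')$ with $V'=\langle e_1,e_N\rangle^\perp$, and the inductive hypothesis applied to $V'$, $\Sigma\cap V'$, and the flag $F'_i=F_{i+1}\cap V'$ finishes the argument with no orthogonalization corrections at all. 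To salvage your two-pointer construction you would need to build the analogous case analysis into the choice of every partner vector and verify that all corrections can be performed inside $\Sigma$ when required; the inductive reduction is the cleaner route.
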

\begin{proof}
  Choose a vector $0 \neq e_1 \in F_1$.  We choose a second vector
  $e_N \in V \smallsetminus F_{N-1}$ such that
  $(e_1,e_N)=1$ as follows.  If $e_1 \in
  \Sigma$ then choose $e_N \in V \smallsetminus F_{N-1}$.  If $e_1
  \not \in \Sigma$ and $\Sigma \subset F_{N-1}$ then choose $e_N
  \in \Sigma^\perp \smallsetminus F_{N-1}$.  Finally, if $e_1 \not \in
  \Sigma$ and $\Sigma \not \subset F_{N-1}$ then choose $e_N \in
  \Sigma \smallsetminus F_{N-1}$.
  
  Set $V' = \langle e_1,e_N \rangle^\perp$, $\Sigma' = \Sigma \cap
  V'$, and $F'_i = F_{i+1} \cap V'$.  By induction we can find a basis
  $\{e'_1,\dots,e'_{N-2}\}$ for $V'$ satisfying the requirement of
  the lemma with respect to $\Sigma'$ and $F'_\bull$.  We obtain the
  required basis for $V$ by setting $e_i = e'_{i-1}$ for $1< i < N$.
\end{proof}

For any point $\Sigma \in X$ we define a subset $P(\Sigma) \subset
[1,N]$ of cardinality $m$ by
\[ P(\Sigma) = \{ p \in [1,N] \mid \Sigma \cap F_p \supsetneq \Sigma
   \cap F_{p-1} \} \,.
\]
Notice that $P(\Sigma') = P(\Sigma)$ for any point $\Sigma'$ in the
orbit $B.\Sigma \subset X$.  On the other hand, it follows from
Lemma~\ref{L:chbas} that any point $\Sigma' \in X$ such that
$P(\Sigma') = P(\Sigma)$ must be in this orbit.  We call a subset $P
\subset [1,N]$ of cardinality $m$ an {\em index set\/} if for all $i,j
\in P$ we have $i+j \neq N+1$.  Any set $P(\Sigma)$ is an index set,
since no vector in $F_i \smallsetminus F_{i-1}$ is orthogonal to a
vector in $F_{N+1-i} \smallsetminus F_{N-i}$.  On the other hand,
given any index set $P$ we can construct a point $\Sigma \in X$ with
$P(\Sigma) = P$.  In fact, if $\{e_1,\dots,e_N\}$ is a standard basis
for $V$ with respect to \ $F_\bull$, then $\Sigma = \langle e_i : i\in
P \rangle$ has this property.  In other words, the $B$-orbits (or
Schubert cells) in $X$ correspond 1-1 to the index sets $P$. We let
$X^\circ_P(F_\bull)$ denote the Schubert cell given by $P$, that is
\[ X^\circ_P(F_\bull) = \{ \Sigma \in X \mid P(\Sigma) = P \} \,. \]
The Schubert variety $X_P(F_\bull)$ is defined as the closure of the
Schubert cell $X^\circ_P(F_\bull)$; we let $|P|$ be its codimension in
$X$. For each index set $P=\{p_j\}$, let $[X_P]\in H^{2|P|}(X,\Z)$
denote the cohomology class Poincar\'e dual to the cycle defined by
$X_P(F_\bull)$. 

\subsection{Type C}
\label{svC}

Let $V\cong \C^{2n}$ be a symplectic vector space and $X=\IG(m,2n)$.
Given another index set $Q = \{q_1 < \dots < q_m\}$ we write $Q \lequ
P$ if $q_j \lequ p_j$ for each $j$.

\begin{prop}
\label{closureC}
For any index set $P = \{ p_1 < p_2 < \dots < p_m \} \subset [1,2n]$ we
have
\begin{equation}
\label{schvarineq}
   X_P(F_\bull)=\{ \Sigma \in \IG \mid \dim(\Sigma \cap
   F_{p_j}) \gequ j,  \ \ \forall \,  1 \lequ j \lequ m \} \,. 
\end{equation}
\end{prop}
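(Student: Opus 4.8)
The plan is to prove the two inclusions separately, using the standard basis machinery of Lemma \ref{L:chbas}. Write $\Sigma_P(F_\bull)$ for the set on the right-hand side of \eqref{schvarineq}; it is clearly closed. First I would show that $X^\circ_P(F_\bull) \subset \Sigma_P(F_\bull)$: if $P(\Sigma) = P$, then by definition the jump positions of the filtration $\{\Sigma \cap F_p\}$ are exactly the elements of $P$, so $\dim(\Sigma \cap F_{p_j}) = j$ for each $j$, in particular $\gequ j$. Since $\Sigma_P(F_\bull)$ is closed and contains the cell, it contains its closure $X_P(F_\bull)$; this gives the inclusion $\subseteq$ in \eqref{schvarineq}.

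The reverse inclusion is the substantive direction, and I would approach it via a dimension count combined with irreducibility. One route: show directly that $\Sigma_P(F_\bull)$ is irreducible of the expected dimension $\dim\IG - |P|$, so that, being a closed irreducible set containing the dense-in-itself locally closed piece $X^\circ_P(F_\bull)$, it must equal $X_P(F_\bull)$. A cleaner route, which I expect to prefer, is a degeneration/incidence argument: given any $\Sigma \in \Sigma_P(F_\bull)$, I would produce a point $\Sigma' \in X^\circ_Q(F_\bull)$ in the orbit closure of $\Sigma$ with $Q \lequ P$ (indeed $Q = P(\Sigma)$ automatically satisfies $Q \lequ P$ once the inequalities $\dim(\Sigma \cap F_{p_j}) \gequ j$ hold), and then invoke the fact that the Schubert cells $X^\circ_Q(F_\bull)$ with $Q \lequ P$ are exactly the cells in the closure $X_P(F_\bull)$. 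That last fact — the Bruhat-order description of which cells lie in $X_P(F_\bull)$ — is the key input; it should be proved by showing $X^\circ_Q(F_\bull) \subset X_P(F_\bull)$ whenever $Q \lequ P$, which one does by exhibiting an explicit one-parameter family in $\IG$ (using a standard basis adapted to $F_\bull$ via Lemma \ref{L:chbas}) degenerating a point with index set $Q$ to a point with index set $P$, together with the converse that if $X^\circ_Q(F_\bull)$ meets $\overline{X^\circ_P(F_\bull)}$ then necessarily $Q \lequ P$.

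I would organize it so that the combinatorial bookkeeping (the elementary transpositions $q_j \mapsto q_j + 1$, or swaps $q_j \leftrightarrow q_{j+1}$ type moves on index sets, and checking that the isotropy constraint $i + j \neq N+1$ is preserved under the relevant moves) is handled once, cleanly, in the language of index sets; this is where the symplectic structure enters and where one must be a little careful that the moves stay within the set of index sets. The main obstacle I anticipate is precisely this last point: unlike the type A Grassmannian, where every $m$-subset is allowed and the Bruhat order is the full coordinatewise order, here the index sets form a proper subposet, and one must verify that $Q \lequ P$ still characterizes closure-membership — i.e. that the naive coordinatewise degeneration can always be realized by a path through \emph{isotropic} subspaces. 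Concretely, given $\Sigma$ with $\dim(\Sigma \cap F_{p_j}) \gequ j$ for all $j$, I would apply Lemma \ref{L:chbas} to get a standard basis with $\Sigma = \langle e_i : i \in P(\Sigma)\rangle$, check $P(\Sigma) \lequ P$ directly from the dimension inequalities, and then build the explicit torus-type degeneration from $\langle e_i : i \in P(\Sigma)\rangle$ to $\langle e_i : i \in P\rangle$ inside $\IG$, moving one index at a time and verifying at each step that the span remains isotropic because consecutive indices $i, i+1$ never pair to $N+1$ simultaneously with two elements of the current index set. Once this is in place, both inclusions are immediate and the proposition follows.
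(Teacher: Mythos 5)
Your overall strategy coincides with the paper's: prove the easy inclusion from closedness of the right-hand side, observe that the right-hand side is exactly the union of the cells $X^\circ_Q(F_\bull)$ with $Q \lequ P$, and then show each such cell lies in $\overline{X^\circ_P(F_\bull)}$ by explicit one-parameter degenerations in a standard basis. You also correctly isolated the one genuine difficulty, namely that the index sets form a proper subposet of all $m$-subsets. However, your proposed resolution of that difficulty does not work as stated. Taking $j$ minimal with $q_j < p_j$, the natural intermediate set $P' = \{q_1,\dots,q_j,p_{j+1},\dots,p_m\}$ fails to be an index set precisely when $2n+1-q_j \in P$ (and $q_j+p_j \neq 2n+1$), and ``moving one index at a time'' cannot circumvent this: the obstruction is that the partner $2n+1-q_j$ of the \emph{target} value already occurs elsewhere in $P$, not any interaction between consecutive indices $i,i+1$, so every single-coordinate decrement passing through such a forbidden value leaves the set of index sets.

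The paper's fix, which your plan is missing, is to move two indices simultaneously in that case: set $P' = (P \smallsetminus \{p_j,\,2n+1-q_j\}) \cup \{q_j,\,2n+1-p_j\}$ and exhibit the rational curve
\[
[s:t] \mapsto \langle e_p : p \in P\cap P' \rangle \oplus
\langle s e_{q_j} + t e_{p_j},\; s e_{2n+1-p_j} \pm t e_{2n+1-q_j} \rangle,
\]
with the sign chosen so that the two new vectors pair to zero; its value at $[1:0]$ lies in $X^\circ_{P'}$ and its values for $t\neq 0$ lie in $X^\circ_P$, and one checks $Q \lequ P' < P$ so that induction on $|Q|-|P|$ closes the argument. (In the complementary case $2n+1-q_j \notin P$ or $q_j+p_j=2n+1$, your single-vector degeneration $s e_{q_j}+t e_{p_j}$ is exactly what the paper uses.) Without this two-index move your induction stalls, so the case split is not optional bookkeeping but the substantive content of the proof.
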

\begin{proof}
  The set on the right hand side of (\ref{schvarineq}) is closed and
  equals the union of the orbits $X^\circ_Q$ for all index sets $Q$
  such that $Q \lequ P$.  We must show that each of these orbits is
  contained in the closure of $X^\circ_P$.  Assuming $Q < P$, it is
  enough to construct an index set $P'$ such that $Q \lequ P' < P$ and
  $X^\circ_{P'} \subset \overline{X^\circ_P}$.
  
  Choose $j$ minimal such that $q_j < p_j$ and fix a standard basis
  $\{e_1,\dots,e_{2n}\}$ of $V$ with respect to $F_\bull$.  If
  $2n+1-q_j \not \in P$ or $q_j+p_j=2n+1$, then set $P' =
  \{q_1,\dots,q_j,p_{j+1},\dots,p_m\}$, and define a morphism $\bP^1
  \to \IG$ by
\[
[s:t] \mapsto \Sigma_{[s:t]} =
  \langle e_{p_1},\dots,e_{p_{j-1}}, s e_{q_j} + t e_{p_j},
  e_{p_{j+1}}, \dots, e_{p_m} \rangle.
\]
Since $\Sigma_{[1:0]} \in
  X^\circ_{P'}$ and $\Sigma_{[s:t]} \in X^\circ_{P}$ for $t \neq 0$,
  it follows that $X^\circ_{P'} \subset \overline{X^\circ_P}$.
  
  If $2n+1-q_j \in P$ and $q_j+p_j \neq 2n+1$,  
  set $P' = (P \smallsetminus \{p_j,2n+1-q_j\}) \cup
  \{q_j,2n+1-p_j\}$.  We then use the morphism $\bP^1 \to \IG$ given by
\[
[s:t] \mapsto \Sigma_{[s:t]} = \langle e_p : p \in P\cap P' \rangle
  \oplus \langle s e_{q_j} + t e_{p_j}, s e_{2n+1-p_j} \pm t
  e_{2n+1-q_j} \rangle,
\]
where the sign is chosen so that $\Sigma_{[s:t]}$ is isotropic, to
show that $X^\circ_{P'} \subset \overline{X^\circ_P}$, as required.
\end{proof}

Define the dual index set $P^\vee = \{p^\vee_j\}$
by setting $p^\vee_j = 2n+1-p_{m+1-j}$.

\begin{prop}
\label{BCduals} 
For any index sets $P$ and $Q$, we have 
\[ 
\int_{\IG} [X_P] \cdot [X_Q] = \delta_{Q,P^\vee} \,.
\]
\end{prop}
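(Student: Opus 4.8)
The plan is to realize the intersection number geometrically and then reduce everything to a set-theoretic computation. I would fix the standard isotropic flag $F_\bull$ attached to a symplectic basis $\{e_1,\dots,e_{2n}\}$ and take $G_\bull$ to be the opposite isotropic flag, $G_p=\langle e_{2n},e_{2n-1},\dots,e_{2n+1-p}\rangle$. The input used at this stage is the standard fact that Schubert varieties with respect to opposite (transverse) isotropic flags meet generically transversally; hence $[X_P]\cdot[X_Q]$ is represented by the cycle $X_P(F_\bull)\cap X_Q(G_\bull)$, which vanishes in $H^{2\dim\IG}(\IG)$ for degree reasons unless $|P|+|Q|=\dim\IG$, and otherwise is the number of points in a reduced finite intersection. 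So it suffices to understand $X_P(F_\bull)\cap X_Q(G_\bull)$ as a set.

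First I would prove an emptiness criterion: if $\Sigma\in X_P(F_\bull)\cap X_Q(G_\bull)$ then $q_j\gequ 2n+1-p_{m+1-j}$ for all $j$, i.e. $Q\gequ P^\vee$ componentwise. By Proposition~\ref{closureC} such a $\Sigma$ satisfies $\dim(\Sigma\cap F_{p_{m+1-j}})\gequ m+1-j$ and $\dim(\Sigma\cap G_{q_j})\gequ j$; since $F_{p_{m+1-j}}\cap G_{q_j}=\langle e_i : 2n+1-q_j\lequ i\lequ p_{m+1-j}\rangle$, the inequality $p_{m+1-j}<2n+1-q_j$ would exhibit two subspaces of $\Sigma$ whose dimensions sum to at least $m+1$ but which meet only in $0$, impossible as $\dim\Sigma=m$. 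This already gives $[X_P]\cdot[X_Q]=0$ whenever $Q\not\gequ P^\vee$.

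Next I would treat the case $Q=P^\vee$, where $2n+1-q_j=p_{m+1-j}$ exactly, so $F_{p_{m+1-j}}\cap G_{q_j}=\langle e_{p_{m+1-j}}\rangle$ is one-dimensional. For $\Sigma$ in the intersection the modular law gives $m+1=(m+1-j)+j\lequ \dim(\Sigma\cap F_{p_{m+1-j}})+\dim(\Sigma\cap G_{q_j})=\dim\bigl((\Sigma\cap F_{p_{m+1-j}})+(\Sigma\cap G_{q_j})\bigr)+\dim(\Sigma\cap\langle e_{p_{m+1-j}}\rangle)\lequ m+1$, so equality holds throughout and $e_{p_{m+1-j}}\in\Sigma$; letting $j$ range over $1,\dots,m$ forces $\Sigma=\langle e_{p_1},\dots,e_{p_m}\rangle=:\Sigma_0$. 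Conversely $\Sigma_0$ is isotropic (this is precisely the index-set condition on $P$) and lies in both $X_P(F_\bull)$ and $X_{P^\vee}(G_\bull)$, so the intersection is the single reduced point $\Sigma_0$ and $\int_{\IG}[X_P]\cdot[X_{P^\vee}]=1$; in particular $|P|+|P^\vee|=\dim\IG$.

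Finally, for $Q\gequ P^\vee$ with $Q\ne P^\vee$ one has $P^\vee\lequ Q$ but $Q\not\lequ P^\vee$, so by Proposition~\ref{closureC} we get $X_{P^\vee}(G_\bull)\subseteq X_Q(G_\bull)$ with the containment proper (the cell $X^\circ_Q(G_\bull)$ being disjoint from $X_{P^\vee}(G_\bull)$), and since $X_Q(G_\bull)$ is irreducible this yields $|Q|<|P^\vee|$, hence $|P|+|Q|<|P|+|P^\vee|=\dim\IG$ and $[X_P]\cdot[X_Q]=0$. Combining the three cases gives the proposition. I expect the transversality/multiplicity-one assertion used in the first step to be the only point genuinely needing care; if one wants a self-contained argument, it can be supplied by checking directly that $T_{\Sigma_0}X_P(F_\bull)+T_{\Sigma_0}X_{P^\vee}(G_\bull)=T_{\Sigma_0}\IG$ at the coordinate point $\Sigma_0$, using the standard description of $T_\Sigma\IG$ inside $T_\Sigma\G(m,2n)$.
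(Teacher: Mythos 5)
Your proof is correct and follows essentially the same route as the paper's: the same emptiness criterion $Q\gequ P^\vee$ from $F_{p_{m+1-j}}\cap G_{q_j}\neq 0$, the same identification of the unique point $\Sigma_0=\langle e_{p_1},\dots,e_{p_m}\rangle$ when $Q=P^\vee$, and the same closure/dimension argument showing $|Q|<|P^\vee|$ when $Q>P^\vee$. The only differences are cosmetic (explicit opposite flags and the spelled-out modular-law step versus the paper's appeal to flags in general position).
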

\begin{proof}
 Let $F_\bull$ and $G_\bull$ be general isotropic flags in $V$, and
  assume $P=\{p_1 < \dots < p_m\}$ and $Q = \{q_1 < \dots < q_m\}$ are
  index sets such that $X^\circ_P(F_\bull) \cap X^\circ_Q(G_\bull)
  \neq \emptyset$.  For any point $\Sigma$ in this intersection we
  have $\dim(\Sigma\cap F_{p_{m+1-j}}) \gequ m+1-j$ and
  $\dim(\Sigma\cap G_{q_j}) \gequ j$, which implies that
  $F_{p_{m+1-j}} \cap G_{q_j} \neq 0$.  It follows from this that $q_j
  \gequ 2n+1-p_{m+1-j}$ for each $j$.  Notice also that
  $X^\circ_P(F_\bull) \cap X^\circ_{P^\vee}(G_\bull) = \{ \Sigma_0
  \}$, where $\Sigma_0 = \bigoplus_j (F_{p_j} \cap G_{2n+1-p_{m+1-j}})
  \subset V$.  It follows that $\int [X_P] \cdot [X_{P^\vee}] = 1$,
  and that $X^\circ_P(F_\bull)$ and $X^\circ_{P^\vee}(G_\bull)$ have
  complementary dimensions in $\IG$.  If $Q \neq P^\vee$ then $Q >
  P^\vee$, so $X^\circ_{P^\vee}(G_\bull)$ must be a proper closed
  subset of $X^\circ_Q(G_\bull)$ and the intersection
  $X^\circ_P(F_\bull) \cap X^\circ_Q(G_\bull)$ has positive dimension.
\end{proof}

Set $k=n-m$. We establish a 
bijection between the index sets $P$ and the set $\cP(k,n)$ of $k$-strict
partitions $\lambda$ contained in an $m \times (n+k)$ rectangle, i.e.\ 
$\ell(\lambda) \lequ m$ and $\lambda_1 \lequ n+k$. 
Let $P = \{ p_1 < p_2 < \dots < p_m \}$ be any index set.  Choose $0
\lequ s \lequ m$ such that $p_s \lequ n < p_{s+1}$, where $p_0=0$ and
$p_{m+1} = 2n+1$.  Write 
\[
[n+1,2n] \smallsetminus \{ 2n+1-p_1, \dots,
2n+1-p_s \} = \{ r_1 < r_2 < \dots < r_{n-s} \}
\] 
and choose indices $1\lequ t_{s+1} < \dots < t_m \lequ n-s$ 
so that $p_j = r_{t_j}$ for $s+1
\lequ j \lequ m$.  The bijection maps $P$ to the $k$-strict partition
$\lambda = (\lambda_1,\dots,\lambda_m)$ given by
\[
\la_j = \begin{cases}
           n+k+1-p_j & \text{if $1 \lequ j \lequ s$}, \\
           k+j-s-t_j & \text{if $s+1 \lequ j \lequ m$}.
\end{cases}
\]
Since the subsets of $\{r_1,\dots,r_{n-s}\}$ of
cardinality $m-s$ correspond 1-1 to the partitions
$(\lambda_{s+1},\dots,\lambda_m)$ contained in an $(m-s) \times k$
rectangle, it follows that the assignment $P \mapsto \lambda$ gives a
bijection between the index sets $P$ and the $k$-strict partitions
$\lambda$ in $\cP(k,n)$.  We use the
notation $X_\lambda(F_\bull) \equiv X_P(F_\bull)$ for the Schubert
varieties in $\IG$ relative to the isotropic flag $F_\bull$.

\begin{prop} \label{P:trnslC}
Let $P = \{p_1 < \dots < p_m\} \subset [1,2n]$ be an index set and
$\lambda = (\lambda_1, \dots, \lambda_m)$ the corresponding $k$-strict
partition.  For $1 \lequ i \lequ j \lequ m$ we have

\emph{(i)} $\lambda_j \lequ k$ if and only if $p_j > n$;

\emph{(ii)} $\lambda_i+\lambda_j \lequ 2k+j-i$ if and only if $p_i+p_j >
    2n+1$;

\emph{(iii)} $\lambda_j = n+k+1-p_j + \#\{i<j : p_i+p_j > 2n+1\}$; and

\emph{(iv)} $p_j = n+k+1-\lambda_j + \#\{i<j : \lambda_i+\lambda_j \lequ
    2k+j-i\}$.
\end{prop}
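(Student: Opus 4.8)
The plan is to prove the four identities in the order (i), (iii), (ii), (iv), keeping the notation $s$, $r_1<\dots<r_{n-s}$, $t_{s+1}<\dots<t_m$ from the construction of the bijection $P\mapsto\lambda$. Two elementary facts will be used throughout: since $t_{s+1}<\dots<t_m$ is a strictly increasing sequence in $[1,n-s]$ we have $j-s\le t_j\le k+j-s$ for $s<j\le m$; and since $p_1<\dots<p_m$, the quantity $p_i-i$ is nondecreasing in $i$. Statement (i) is then immediate from the definition of $\lambda$: for $1\le j\le s$ we have $p_j\le n$, hence $\lambda_j=n+k+1-p_j\ge k+1$, while for $s<j\le m$ we have $p_j>n$ and $t_j\ge j-s$, hence $\lambda_j=k+j-s-t_j\le k$.

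For (iii), I fix $j$ and split into $j\le s$ and $j>s$. If $j\le s$, every $i<j$ has $p_i\le p_j-1\le n-1$, so $p_i+p_j<2n+1$ and the stated cardinality is $0$, reducing the formula to the definition of $\lambda_j$. If $j>s$, I separate the indices $i<j$ with $p_i+p_j>2n+1$ into those with $i\le s$ and those with $s<i<j$: the latter all qualify (as $p_i,p_j\ge n+1$ give $p_i+p_j\ge 2n+2$), contributing $j-1-s$; while for $i\le s$ I count using $p_j=r_{t_j}$ — the interval $[n+1,p_j]$ holds $p_j-n$ integers, exactly $t_j$ of which are $r_1,\dots,r_{t_j}$, so $p_j-n-t_j$ of them are removed values $2n+1-p_i$ with $i\le s$, and such a removed value lies in $[n+1,p_j]$ exactly when $p_i\ge 2n+1-p_j$, i.e.\ (since $p_i+p_j\ne 2n+1$) when $p_i+p_j>2n+1$. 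Adding $(p_j-n-t_j)+(j-1-s)$ to $n+k+1-p_j$ yields $k+j-s-t_j=\lambda_j$.

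For (ii), fix $i<j$. If $j\le s$, then $p_i+p_j\le 2p_j-(j-i)\le 2n-(j-i)<2n+1$ and, by the same bound, $\lambda_i+\lambda_j=2(n+k+1)-(p_i+p_j)\ge 2k+2+(j-i)>2k+j-i$, so both conditions fail. If $s<i$, then $p_i,p_j\ge n+1$ give $p_i+p_j\ge 2n+2$, while $\lambda_i+\lambda_j=2k+(i+j)-2s-t_i-t_j\le 2k+j-i$ because $t_i\ge i-s$ and $t_j\ge j-s>i-s$, so both conditions hold. The substantive case is $i\le s<j$: here $\lambda_i=n+k+1-p_i$, and a short rearrangement shows $\lambda_i+\lambda_j\le 2k+j-i$ is equivalent to $p_i-i\ge n+1-s-t_j$; since $p_i-i$ is nondecreasing, the valid $i\in[1,s]$ form an up-set, and I claim its threshold is $\theta:=n+1-p_j+s+t_j$. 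Counting the removed values in $[p_j+1,2n]$ — there are $(2n-p_j)-(n-s-t_j)=\theta-1$ of them, namely those $2n+1-p_i$ with $p_i+p_j\le 2n$ — shows that for $i\le s$ one has $p_i+p_j>2n+1$ exactly when $i\ge\theta$. Matching the thresholds then reduces to checking $p_\theta-\theta\ge n+1-s-t_j$, which is exactly $p_\theta+p_j\ge 2n+2$ and holds by the $p$-side equivalence at $i=\theta$, and $p_{\theta-1}-(\theta-1)<n+1-s-t_j$, which amounts to $p_{\theta-1}+p_j\le 2n$ and follows from $p_{\theta-1}+p_j\le 2n+1$ (the $p$-side at $i=\theta-1$) together with the index-set condition $p_{\theta-1}+p_j\ne 2n+1$ (the edge cases $\theta\le 1$ and $\theta>s$ being routine).

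Finally, (iv) is formal: by (ii), together with the implication $p_i+p_j\ne 2n+1\Rightarrow(p_i+p_j>2n+1\iff p_i+p_j\ge 2n+2)$, the counts $\#\{i<j:\lambda_i+\lambda_j\le 2k+j-i\}$ and $\#\{i<j:p_i+p_j>2n+1\}$ agree, and substituting this into (iii) and solving for $p_j$ gives the formula in (iv). The one genuine difficulty is the mixed case $i\le s<j$ of (ii), where a set of indices described via the partition parts must be identified with the same set described via the pairings $p_i+p_j$ against $2n+1$; it is precisely the index-set hypothesis $i+j\ne 2n+1$ that forces the two thresholds to coincide at the boundary.
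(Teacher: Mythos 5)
Your proof is correct, and for (i), (iii) and (iv) it follows the paper's argument exactly: (i) is read off the definitions, (iii) comes from counting how many of the $p_j-n$ integers in $[n+1,p_j]$ are removed values $2n+1-p_i$ (equivalently, computing $t_j=p_j-n-\#\{i\lequ s: p_i+p_j>2n+1\}$), and (iv) is formal from (ii) and (iii). The only genuine divergence is in the substantive case $p_i\lequ n<p_j$ of (ii). The paper substitutes (iii) into $\lambda_i+\lambda_j=2n+2k+2-(p_i+p_j)+\#\{l<j:p_l+p_j>2n+1\}$ and then sandwiches that count between $j-i-1-(2n-p_i-p_j)$ and $j-i+(p_i+p_j-2n-2)$ by elementary interval-counting among the distinct $p_l$, which settles both directions of the equivalence in two lines. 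You instead rewrite the partition-side inequality as $p_i-i\gequ n+1-s-t_j$, observe that both conditions cut out up-sets in $i\in[1,s]$ (one by monotonicity of $p_i-i$, one by monotonicity of $p_i$), and show the thresholds coincide, with the index-set condition $p_i+p_j\neq 2n+1$ doing the work at the boundary index $\theta-1$. Both arguments rest on the same counting identity and the same use of the no-$2n+1$ condition; yours is longer but makes explicit where the cutoff $\theta=n+1-p_j+s+t_j$ sits, while the paper's is quicker. (Minor point: the proposition allows $i=j$ in (ii), but that case is just a restatement of (i), so restricting to $i<j$ is harmless.)
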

\begin{proof}
  The first point (i) is clear from the definitions.  Let $s$, $\{r_1
  < \dots < r_{n-s}\}$, and $\{t_{s+1}<\dots<t_m\}$ be chosen as
  above.  For $j > s$ we have $t_j = \# \{ i\lequ n-s : r_i \lequ p_j \} =
  p_j-n - \#\{i \lequ s : 2n+1-p_i < p_j \}$, so we obtain $\lambda_j =
  n+k-p_j+j-s+ \#\{i \lequ s : p_i+p_j > 2n+1 \}$, which implies (iii).
  Point (ii) is clearly true if $p_j \lequ
n$ or $p_i > n$, so assume that $p_i \lequ n < p_j$.  Then $\lambda_i +
\lambda_j = 2n+2k+2-(p_i+p_j) + \#\{l < j : p_l+p_j > 2n+1\}$, and
  (ii) follows because $\#\{l < j : p_l+p_j > 2n+1\}$ is smaller than
or equal to $j-i+(p_i+p_j-2n-2)$ when $p_i+p_j > 2n+1$, while it is
greater than or equal to $j-i-1-(2n-p_i-p_j)$ when $p_i+p_j < 2n+1$.
Finally (iv) follows from (ii) and (iii).
\end{proof}

We note that if $\lambda_j = 0$ then the Schubert condition
$\dim(\Sigma \cap F_{p_j}) \gequ j$ is redundant.  In fact, we have
$p_j = n+k+1 + \#\{i<j : p_i+p_j > 2n+1\}$.  If $p_1+p_j > 2n+1$ then
$p_j = n+k+j$, so the Schubert condition holds automatically.
Otherwise choose $i \lequ j$ maximal so that $p_i+p_j < 2n+1$ and set
$C = \Sigma \cap F_{p_i}$.  Then $p_j = n+k+j-i$.  We claim that the
above Schubert condition is a consequence of the condition $\dim(C)
\gequ i$.  In fact, since $\Sigma \subset C^\perp$ and $F_{p_j} \subset
F_{2n-p_i} = F_{p_i}^\perp \subset C^\perp$, we obtain $\dim(\Sigma
\cap F_{p_j}) \gequ m + p_j - (2n-i) = j$ as required.

\begin{cor}
For each partition $\la\in\cP(k,n)$, the Schubert variety indexed 
by $\lambda$ is given by
\[ X_\lambda(F_\bull) = \{ \Sigma \in X \mid
   \dim(\Sigma \cap F_{p_j(\lambda)}) \gequ j, \ \  \forall \, 1 \lequ j \lequ
   \ell(\lambda) \}
\]
where $p_j(\lambda) := n+k+1-\lambda_j + \#\{i<j : \lambda_i+\lambda_j
\lequ 2k+j-i\}$.
\end{cor}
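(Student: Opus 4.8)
The plan is to read this off from Proposition~\ref{closureC}, the dictionary of Proposition~\ref{P:trnslC}, and the redundancy observation made in the paragraph just above the statement. First I would note that, by the bijection $P \mapsto \lambda$ constructed in this subsection, $X_\lambda(F_\bull)$ \emph{is} the Schubert variety $X_P(F_\bull)$ attached to the index set $P = \{p_1 < \dots < p_m\}$ corresponding to $\lambda$, and that Proposition~\ref{P:trnslC}(iv) gives $p_j = p_j(\lambda)$. Thus Proposition~\ref{closureC} already yields $X_\lambda(F_\bull) = \{\, \Sigma \in X \mid \dim(\Sigma \cap F_{p_j(\lambda)}) \gequ j,\ 1 \lequ j \lequ m \,\}$, and the only thing left to prove is that the Schubert conditions indexed by $j$ with $\ell(\lambda) < j \lequ m$ --- equivalently, by those $j$ with $\lambda_j = 0$ --- are implied by the conditions with $j \lequ \ell(\lambda)$.

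To do this I would run exactly the computation in the paragraph preceding the statement. Fix $\Sigma$ satisfying $\dim(\Sigma \cap F_{p_j(\lambda)}) \gequ j$ for $j \lequ \ell(\lambda)$, and fix $j$ with $\lambda_j = 0$. Using Proposition~\ref{P:trnslC} one rewrites $p_j(\lambda) = n+k+1 + \#\{i < j : p_i + p_j > 2n+1\}$, so in particular $p_j(\lambda) > n$. If $p_1 + p_j > 2n+1$ then $p_j(\lambda) = n+k+j$ and, since $m = n-k$, the dimension estimate $\dim(\Sigma \cap F_{p_j(\lambda)}) \gequ m + p_j(\lambda) - 2n = j$ holds with no hypothesis on $\Sigma$. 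Otherwise, because $P$ is an index set we have $p_1 + p_j \neq 2n+1$, so we may pick $i < j$ maximal with $p_i + p_j < 2n+1$; setting $C = \Sigma \cap F_{p_i}$, the inclusions $\Sigma \subset C^\perp$ and $F_{p_j(\lambda)} \subset F_{p_i}^\perp \subset C^\perp$ give $\dim(\Sigma \cap F_{p_j(\lambda)}) \gequ \dim C + j - i$, so the $j$-th condition follows from $\dim C \gequ i$.

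The step I would flag as the real (if minor) obstacle is closing the loop: one must check that the index $i$ produced here is one for which we are allowed to assume $\dim C \gequ i$, i.e.\ that $i \lequ \ell(\lambda)$. This is quick: from $p_i + p_j < 2n+1$ and $p_j(\lambda) \gequ n+k+1$ we get $p_i < n-k \lequ n$, whence $p_i \lequ n$, and then Proposition~\ref{P:trnslC}(i) forces $\lambda_i > k \gequ 0$, so $\lambda_i \neq 0$ and $i \lequ \ell(\lambda)$. Feeding $\dim C \gequ i$ into the previous estimate gives $\dim(\Sigma \cap F_{p_j(\lambda)}) \gequ j$, which completes the argument. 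Apart from this bookkeeping point there is nothing new: the Corollary is essentially a restatement of Proposition~\ref{closureC} once the translation to $k$-strict partitions is in place.
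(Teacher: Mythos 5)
Your argument is correct and is essentially the paper's own: Proposition~\ref{closureC} plus Proposition~\ref{P:trnslC}(iv) give the description with all $m$ conditions, and the redundancy of the conditions with $\lambda_j=0$ is shown by exactly the computation in the paragraph preceding the Corollary. The one point you add --- verifying via Proposition~\ref{P:trnslC}(i) that the index $i$ with $p_i+p_j<2n+1$ satisfies $\lambda_i>k$, hence $i\lequ\ell(\lambda)$, so the condition $\dim C\gequ i$ is indeed among the retained ones --- is a detail the paper leaves implicit, and it is checked correctly.
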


One can also check that the Schubert condition $\dim(\Sigma \cap
F_{p_j}) \gequ j$ is redundant if $\lambda_j = \lambda_{j+1}+1 \gequ 
k+2$, or if $\lambda_j = \lambda_{j+1} < 2k+j-\lambda_1$.  However,
the following example shows that a Schubert condition
$\dim(\Sigma \cap F_{p_j}) \gequ j$ may be necessary, even if
$\lambda_j = \lambda_{j+1}$.

\begin{example} \label{E:schubcond}
  Let $X = \IG(3,8)$ and $\lambda = (3,1,1)$.  Then
  $X_\lambda(F_\bull)$ is the variety of points $\Sigma \in X$ such
  that $\dim(\Sigma \cap F_3) \gequ 1$, $\dim(\Sigma \cap F_5) \gequ 2$,
  and $\dim(\Sigma \cap F_7) \gequ 3$.  One may check that $\{ \Sigma
  \in X \mid \Sigma\cap F_3\ne 0\text{ and } \Sigma \subset F_7 \} =
  X_\lambda(F_\bull) \cup X_5(F_\bull)$.
\end{example}

Choose a standard basis $\{e_1,\dots,e_{2n}\}$ for $V$ with respect to
$F_\bull$.  The $B$-orbit given by $P$ is an affine space with
coordinates given using $m \times 2n$ matrices $A = \{a_{jr}\}$ in
which some entries $a_{jr}$ are free parameters and others are
determined by the free entries.  The matrix corresponds to the
subspace $\Sigma \subset V$ spanned by its rows.  Each entry
$a_{j,p_j}$ of $A$ is equal to 1, while $a_{jr} = 0$ for $r > p_j$.
We also set $a_{j,p_i} = 0$ for $i<j$.  If $r < p_j$ and $r \not \in
\{p_i, 2n+1-p_i\}$ for all $i<j$ then $a_{jr}$ is a free variable.
Finally, for each $i<j$ such that $p_i+p_j > 2n+1$, the entry
$a_{j,2n+1-p_i}$ is uniquely determined from the free variables by the
requirement that rows $i$ and $j$ in $A$ are orthogonal.  For
example, if $m=3$, $n=5$, and $P = \{3,5,9\}$ then the matrix $A$ has
the shape
\[ A = \left[ \begin{array}{cccccccccc}
   *&*&1&0&0&0&0&0&0&0 \\
   *&*&0&*&1&0&0&0&0&0 \\
   *&*&0&*&0&=&*&=&1&0
\end{array}\right] \,. \]
Since the number of free variables $*$ in row $j$ of $A$ is equal to
$p_j-j-\#\{i<j : p_i+p_j > 2n+1\} = n+k+1-j-\lambda_j$, it follows
that the dimension of $X_\lambda(F_\bull)$ is $m(n+k+1) - m(m+1)/2 -
|\lambda|$.

\begin{prop}
  The dimension of $\IG(m,2n)$ is $2m(n-m)+m(m+1)/2$, and the
  codimension of $X_\lambda(F_\bull)$ is $|\lambda|$.
\end{prop}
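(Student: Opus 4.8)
The plan is to read off both statements from the parametrization of the Schubert cell $X^\circ_\lambda(F_\bull)$ by an affine space of matrices that was set up just above. Fix a standard basis $\{e_1,\dots,e_{2n}\}$ of $V$ with respect to $F_\bull$, let $P=\{p_1<\dots<p_m\}$ be the index set attached to $\lambda$, and recall that a point of $X^\circ_\lambda(F_\bull)$ is the row span of a unique $m\times 2n$ matrix $A=(a_{jr})$ in which $a_{j,p_j}=1$, $a_{jr}=0$ for $r>p_j$ and for $r=p_i$ with $i<j$, each entry $a_{j,2n+1-p_i}$ with $i<j$ and $p_i+p_j>2n+1$ is forced by the orthogonality of rows $i$ and $j$, and all other entries $a_{jr}$ with $r<p_j$ are free. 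Because $P$ is an index set, the $j-1$ columns $p_i$ ($i<j$) and the $\#\{i<j:p_i+p_j>2n+1\}$ columns $2n+1-p_i$ are pairwise distinct and all lie in $[1,p_j-1]$, so the number of free entries in row $j$ is $p_j-j-\#\{i<j:p_i+p_j>2n+1\}$, which by Proposition~\ref{P:trnslC}(ii),(iv) equals $n+k+1-j-\lambda_j$.

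Summing over $j$, the cell $X^\circ_\lambda(F_\bull)$ is an affine space of dimension $\sum_{j=1}^m(n+k+1-j-\lambda_j)=m(n+k+1)-\tfrac{m(m+1)}{2}-|\lambda|$, and the same holds for its closure $X_\lambda(F_\bull)$. Applying this with $\lambda$ the empty partition, where $X_\lambda(F_\bull)=\IG$ and $|\lambda|=0$, gives $\dim\IG=m(n+k+1)-\tfrac{m(m+1)}{2}$; substituting $k=n-m$ and simplifying turns this into $2m(n-m)+\tfrac{m(m+1)}{2}$. Subtracting the two formulas then yields $\codim_{\IG}X_\lambda(F_\bull)=\dim\IG-\dim X_\lambda(F_\bull)=|\lambda|$.

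Thus the proposition is essentially a corollary of the explicit description of the $B$-orbits in $\IG$. The one point that carries real content — and hence the place to be careful — is the assertion, used above, that the isotropy condition on $\Sigma$ pins down precisely the entries $a_{j,2n+1-p_i}$ with $p_i+p_j>2n+1$ and imposes nothing further: one checks this by treating the rows $j$ in increasing order and, for fixed $j$, solving for $a_{j,2n+1-p_i}$ from the equation $\sum_r a_{ir}a_{j,2n+1-r}=0$, in which $a_{j,2n+1-p_i}$ occurs with coefficient $a_{i,p_i}=1$ while all remaining terms involve only entries already assigned. This was exactly what was verified in identifying the Schubert cells with the $B$-orbits via Lemma~\ref{L:chbas}, so no new work is required.
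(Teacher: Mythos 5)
Your argument is exactly the one the paper uses: the Proposition is stated as a corollary of the preceding matrix parametrization of the Schubert cell, where the count of free entries in row $j$ is $p_j-j-\#\{i<j : p_i+p_j>2n+1\}=n+k+1-j-\lambda_j$, giving $\dim X_\lambda(F_\bull)=m(n+k+1)-m(m+1)/2-|\lambda|$, and the two claims follow by specializing to $\lambda=\emptyset$ and subtracting. Your additional check that the isotropy conditions determine precisely the entries $a_{j,2n+1-p_i}$ with $p_i+p_j>2n+1$ (and that these columns are distinct from the columns $p_i$ because $P$ is an index set) is correct and makes explicit what the paper leaves implicit.
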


For a $k$-strict partition $\lambda$ we define the dual partition
$\lambda^\vee$ to be the unique $k$-strict partition such that
$p_j(\lambda^\vee) = 2n+1-p_{m+1-j}(\lambda)$.
Proposition~\ref{P:trnslC} implies that
\[ \begin{split} \lambda^\vee_{m+1-j} = 
  2k + 1 - \lambda_{j} 
  + \#\{ i<j : \lambda_{i}+\lambda_{j} \lequ 2k+j-i \} \\
  +\, \#\{ i>j : \lambda_{i}+\lambda_{j} > 2k+i-j \}
  \,.
\end{split} \]
The relationship between the diagrams of $\la$ and $\la^\vee$ is
described in Section \ref{dualpar}.
If $[X_\lambda] \in H^{2|\lambda|}(\IG)$ denotes the cohomology
class which corresponds to $X_\lambda(F_\bull)$, 
then Proposition \ref{BCduals} gives
\[
\int_{\IG} [X_\lambda] \cdot [X_\mu] = \delta_{\mu,\lambda^\vee} \,.
\]

\subsection{Type B}
\label{svB}

The situation here is very similar to that in the previous type C
section, so we will point out the main differences and leave the
details to the reader. We equip $V \cong \C^{2n+1}$ with a
non-degenerate symmetric bilinear form $(\ ,\,)$, and let
$X=\OG(m,2n+1)$. This algebraic variety has the same
dimension as $\IG(m,2n)$.

For every $\Sigma\in \OG$, the index set 
\[
P = P(\Sigma) = \{ p_1 < p_2 < \dots < p_m \} \subset [1,2n+1] 
\]
satisfies $n+1 \notin P$, and the closure of the
Schubert cell $X^{\circ}_P(F_\bull)$ in $\OG$ is the Schubert variety
\[
X_P(F_\bull) = \{ \Sigma \in \OG \mid \dim(\Sigma \cap
   F_{p_j}) \gequ j,  \ \ \forall \, 1 \lequ j \lequ m \} \,. 
\]
Note that Proposition \ref{BCduals} holds for $\OG$ as well as $\IG$.

The index sets $\ov{P}\subset [1,2n+1]$ for $\OG$ are in bijection
with the index sets $P \subset [1,2n]$ 
for $\IG$ and the $k$-strict partitions in
$\cP(k,n)$.  For any $\la\in\cP(k,n)$, the corresponding index set
$\ov{P}=\{ \ov{p}_1 < \cdots < \ov{p}_m\}$ satisfies
\[
\ov{p}_j(\la)= \begin{cases}
           p_j(\la)+1 & \text{if $\la_j \lequ k$}, \\
           p_j(\la) & \text{if $\la_j > k$},
\end{cases}
\]
where $p_j(\lambda) = n+k+1-\lambda_j + \#\{i<j : \lambda_i+\lambda_j
\lequ 2k+j-i\}$ are the indices used in the previous subsection. 
The dual index set $\ov{P}^{\vee}$ in type B satisfies
$\ov{p}^\vee_j = 2n+2-p_{m+1-j}$.

Let $\{e_1,\dots,e_{2n+1}\}$ be a standard basis of $V$ with respect
to the isotropic flag $F_\bull$. We may then represent the 
 Schubert cell indexed by $\ov{P}$ by an $m \times
 (2n+1)$ matrix $A=\{a_{jr}\}$ whose rows span the subspaces in the
 cell, as in type C.  Each entry $a_{j,\ov{p}_j}$ of $A$ is equal to
 1, $a_{jr} = 0$ for $r > \ov{p}_j$, while $a_{j,\ov{p}_i} = 0$ for
 $i<j$.  If $r < \ov{p}_j$ and $r \not \in \{\ov{p}_i,
 2n+2-\ov{p}_i\}$ for all $i\lequ j$ then $a_{jr}$ is a free variable.
 Finally, for each $i \lequ j$ such that $\ov{p}_i+\ov{p}_j > 2n+2$,
 the entry $a_{j,2n+2-\ov{p}_i}$ is determined from the free variables
 and the isotropicity condition on the rows of $A$.  For example, if
 $m=3$, $n=5$, and $\ov{P} = \{3,5,10\}$ then the matrix $A$ has the
 shape
\[ A = \left[
   \begin{array}{ccccccccccc}
   *&*&1&0&0&0&0&0&0&0&0 \\
   *&*&0&*&1&0&0&0&0&0&0 \\
   *&=&0&*&0&*&=&*&=&1&0
\end{array}\right] \,. \]
By counting the number of free entries $*$ in $A$, we see that the codimension
of $X_{\la}(F_\bull)$ in $\OG$ is equal to $|\la|$.

\subsection{Type D}
\label{svD}

Let $V \cong \C^{2n+2}$ be a complex vector space equipped with a
non-degenerate symmetric bilinear form $(\ ,\, )$. 
For any nonnegative integer $m < n+1$, let $\OG' = \OG(m,2n+2)$. The
index sets $P=\{p_j\}$ parametrize the $B$-orbits
$X^{\circ}_P(F_\bull)$ in $\OG'$, as before. However the closures of
these orbits behave differently. Given another index set $Q=\{q_j\}$,
we will write $Q \preceq P$ when $q_j\lequ p_j$ for each $j$ and if $p_i
= n+2$ for some $i$ then $q_i \neq n+1$. The next result shows that
the Schubert variety $X_P(F_\bull)$ is equal to the union of the
orbits $X^\circ_Q(F_\bull)$ for all index sets $Q$ such that $Q \preceq
P$.

\begin{prop}
\label{closureD}
Let $P = \{ p_1 < p_2 < \dots < p_m \} \subset [1,2n+2]$ be an index set.
If $n+2\notin P$, then
\[
   X_P(F_\bull)=\{ \Sigma \in \OG' \mid \dim(\Sigma \cap
   F_{p_j}) \gequ j,  \ \ \forall \,  1 \lequ j \lequ m \} \,,
\]
while if $n+2\in P$, then
\begin{gather*}
 X_P(F_\bull)=\{ \Sigma \in \OG' \mid 
   \Sigma\cap F_n = \Sigma\cap F_{n+1}, \ 
   \dim(\Sigma \cap
   F_{p_j}) \gequ j,  \ \forall \,  1 \lequ j \lequ m \}  \\
= \{\Sigma \in \OG' \mid 
\dim(\Sigma \cap F_{p_j}) \gequ j, \ \text{{\em if} $p_j\neq n+2$}, \ 
\dim(\Sigma \cap \wt{F}_{n+1}) \gequ j, \  \text{{\em if} $p_j = n+2$}\} \,.
\end{gather*}
\end{prop}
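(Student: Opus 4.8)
The plan is to follow the template of the proof of Proposition~\ref{closureC}, adding the bookkeeping forced by the two families of maximal isotropic subspaces near the middle of the flag. Write $R_P$ for the right-hand side of the relevant displayed identity (in the case $n+2\in P$ the two forms shown are claimed equal, and this will drop out of the analysis below). First I would record that $R_P$ is closed — each condition $\dim(\Sigma\cap F_{p_j})\gequ j$ and $\dim(\Sigma\cap\wt F_{n+1})\gequ j$ is closed — and that it is a union of Schubert cells $X^\circ_Q(F_\bull)$. The crucial combinatorial point is to pin down which cells occur. Here one uses the elementary fact that $F_n^\perp/F_n$ is two-dimensional with a nondegenerate form, hence contains exactly two isotropic lines, namely the images of $F_{n+1}$ and of $\wt F_{n+1}$, and that consequently no index set contains both $n+1$ and $n+2$ (they sum to $2n+3$). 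From this one checks that, for $\Sigma\in X^\circ_Q(F_\bull)$, one has $\dim(\Sigma\cap\wt F_{n+1})=\#\{\,j\mid q_j\lequ n \text{ or }q_j=n+2\,\}$; this simultaneously shows that the two displayed descriptions in the case $n+2\in P$ agree and that $R_P=\bigcup_{Q\preceq P}X^\circ_Q(F_\bull)$. Since $X^\circ_P(F_\bull)\subset R_P$ and $R_P$ is closed, the inclusion $X_P(F_\bull)\subset R_P$ is immediate.

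For the reverse inclusion I would show $X^\circ_Q(F_\bull)\subset\overline{X^\circ_P(F_\bull)}$ for every $Q\preceq P$. As in type C it is enough to handle $Q\prec P$ one step at a time: choosing $j$ minimal with $q_j<p_j$, one produces an index set $P'$ with $Q\preceq P'\prec P$ together with a morphism $\bP^1\to\OG'$ lying in $X^\circ_{P'}(F_\bull)$ at one point and in $X^\circ_P(F_\bull)$ elsewhere. The two moves mirror those in Proposition~\ref{closureC}: when $N+1-q_j\notin P$ (with $N=2n+2$) the simple degeneration $[s:t]\mapsto\langle\ldots,se_{q_j}+te_{p_j},\ldots\rangle$ works, and otherwise one uses the swap $\{p_j,N+1-q_j\}\leftrightarrow\{q_j,N+1-p_j\}$ via the family $\langle\ldots\rangle\oplus\langle se_{q_j}+te_{p_j},\,se_{N+1-p_j}-te_{N+1-q_j}\rangle$, the sign being forced by isotropy exactly as in loc.\ cit.

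The genuinely new point, and the one I expect to require the most care, is the interaction of these moves with the middle indices $n+1$ and $n+2$. The constraint built into the order $\preceq$ — that $q_i\neq n+1$ whenever $p_i=n+2$ — must be seen to be respected: if $p_j=n+2$ one degenerates straight to $e_{q_j}$ with $q_j\lequ n$ rather than passing through $e_{n+1}$, and the swap move can never insert $n+1$ into the slot occupied by $n+2$ since $P$ is an index set, so $n+1\notin P$. Verifying that each explicitly written family of subspaces is isotropic along the whole of $\bP^1$ — where the family normalization of $F_{n+1}$ is precisely what makes the sign choices coherent — is the routine but slightly delicate part. As an alternative for the case $n+2\in P$, one may instead deduce it from the case $n+2\notin P$ by means of the involution $F_\bull\mapsto\wt F_\bull$: since $P(\Sigma,\wt F_\bull)$ is obtained from $P(\Sigma,F_\bull)$ by interchanging $n+1$ and $n+2$ whenever either occurs, this involution carries $X^\circ_P(F_\bull)$ onto $X^\circ_{\wt P}(\wt F_\bull)$, where $\wt P$ is $P$ with its entry $n+2$ replaced by $n+1$, hence carries $X_P(F_\bull)$ onto $X_{\wt P}(\wt F_\bull)$ and turns the first displayed identity, applied to the flag $\wt F_\bull$, into the description involving $\wt F_{n+1}$.
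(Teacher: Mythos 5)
Your overall architecture is right and matches the paper's: show the right-hand side is closed and is the union of the cells $X^\circ_Q$ with $Q\preceq P$, then reduce to producing, for each $Q\prec P$ and $j$ minimal with $q_j<p_j$, an intermediate $P'$ with $Q\preceq P'\prec P$ and an explicit $\bP^1$-family exhibiting $X^\circ_{P'}\subset\overline{X^\circ_P}$. But there is a genuine gap in the list of moves: you cannot get by with only the two type-C moves. The case $q_j+p_j=2n+3$ breaks both of them. Since the form is now symmetric, the vector $se_{q_j}+te_{p_j}$ satisfies $(se_{q_j}+te_{p_j},\,se_{q_j}+te_{p_j})=2st(e_{q_j},e_{p_j})=2st\neq 0$, so the ``simple degeneration'' is not a family of isotropic subspaces (in type C this vector was automatically isotropic by skew-symmetry, which is why Proposition~\ref{closureC} could route the case $q_j+p_j=2n+1$ through the linear move). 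Your dichotomy sends this case to the swap instead, but there $2n+3-q_j=p_j$ and $2n+3-p_j=q_j$, so the two-dimensional piece $\langle se_{q_j}+te_{p_j},\,se_{q_j}-te_{p_j}\rangle$ collapses to $\langle e_{q_j},e_{p_j}\rangle$ for $st\neq 0$, which is not isotropic and does not degenerate to anything useful. This case is unavoidable (already $P=\{2n+2\}$, $Q=\{1\}$ exhibits it), and it is precisely where the even orthogonal geometry differs from the symplectic one: the line in $\bP(V)$ joining $e_{q_j}$ and $e_{p_j}$ does not lie on the quadric, so one must replace it by a conic. The paper's proof introduces a third move for exactly this situation, namely
\[
[s:t]\ \mapsto\ \langle e_{p_1},\dots,e_{p_{j-1}},\ s^2e_{q_j}+st(e_{n+1}+e_{n+2})-t^2e_{p_j},\ e_{p_{j+1}},\dots,e_{p_m}\rangle,
\]
whose middle vector is isotropic because $-2s^2t^2(e_{q_j},e_{p_j})+s^2t^2(e_{n+1}+e_{n+2},e_{n+1}+e_{n+2})=-2s^2t^2+2s^2t^2=0$. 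Without this (or an equivalent re-routing argument that you would then have to supply and justify in general, not just for $m=1$), the induction does not close.

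The remainder of your write-up is fine: the closedness of the right-hand side, the identification of which cells it contains via $\dim(\Sigma\cap\wt F_{n+1})$, and the observation that the order $\preceq$ encodes the constraint $q_i\neq n+1$ when $p_i=n+2$ all agree with the paper, and your remark that the involution $F_\bull\mapsto\wt F_\bull$ interchanges the two descriptions in the case $n+2\in P$ is a legitimate way to see their equivalence. But that involution does not repair the missing move, since the problematic case $q_j+p_j=2n+3$ arises independently of whether $n+2\in P$.
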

\begin{proof}
  The sets displayed on the right hand sides of the above equations are
  closed. Following the proof of Proposition \ref{closureC}, it will
  suffice to construct, for every index set $Q \prec P$
  an index set $P'$ such that $Q \preceq P'\prec P$ and
  $X^\circ_{P'} \subset \overline{X^\circ_P}$.
  
  Choose $j$ minimal such that $q_j < p_j$ and fix a standard basis
  $\{e_1,\dots,e_{2n+2}\}$ of $V$ with respect to $F_\bull$.  If
  $2n+3-q_j \not \in P$ we  
  set $P' = \{q_1,\dots,q_j,p_{j+1},\dots,p_m\}$, and use the  
  morphism $\bP^1\to \OG'$ given by
\[
[s:t] \mapsto \Sigma_{[s:t]} =
  \langle e_{p_1},\dots,e_{p_{j-1}}, s e_{q_j} + t e_{p_j},
  e_{p_{j+1}}, \dots, e_{p_m} \rangle,
\]
as in the symplectic case. If $q_j+p_j = 2n+3$, use the same set $P'$
and the morphism
\[
[s:t] \mapsto \Sigma_{[s:t]} = \langle e_{p_1},\dots,e_{p_{j-1}}, s^2
  e_{q_j} + st(e_{n+1}+e_{n+2})-t^2 e_{p_j}, e_{p_{j+1}}, \dots,
  e_{p_m} \rangle.
\]
Finally, if $2n+3-q_j \in P$ and $q_j+p_j \neq 2n+3$,  
  set $P' = (P \smallsetminus \{p_j,2n+3-q_j\}) \cup
  \{q_j,2n+3-p_j\}$, and use the morphism
\[
[s:t] \mapsto \Sigma_{[s:t]} = \langle e_p : p \in P\cap P' \rangle
  \oplus \langle s e_{q_j} + t e_{p_j}, s e_{2n+3-p_j} - t
  e_{2n+3-q_j} \rangle. \qedhere
\]
\end{proof}

In type D, for any index set $P$, the dual index set $P^{\vee}$ is defined by
\[
p^{\vee}_j = \begin{cases}
2n+3-p_{m+1-j} & \text{if $n$ is odd or $p_j\notin\{n+1,n+2\}$}, \\
p_j & \text{if $n$ is even and $p_j\in\{n+1,n+2\}$}.
\end{cases}
\]
The next proposition is then proved exactly as in type C.

\begin{prop}
\label{Dduals}
For any index sets $P$ and $Q$, we have 
\[ 
\int_{\OG'} [X_P] \cdot [X_Q] = \delta_{Q,P^\vee} \,.
\]
\end{prop}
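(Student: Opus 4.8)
The plan is to follow the proof of Proposition~\ref{BCduals}, with the extra bookkeeping forced by the existence of two families of maximal isotropic subspaces. Fix general isotropic flags $F_\bull$ and $G_\bull$ of $V\cong\C^{2n+2}$, so that $F_{n+1}$ and $G_{n+1}$ both lie in the family of $L$. Let $P=\{p_1<\dots<p_m\}$ and $Q=\{q_1<\dots<q_m\}$ be index sets with $X^\circ_P(F_\bull)\cap X^\circ_Q(G_\bull)\neq\emptyset$, and pick a point $\Sigma$ in this intersection. By Proposition~\ref{closureD}, $\Sigma$ satisfies $\dim(\Sigma\cap F_{p_{m+1-j}})\gequ m+1-j$ and $\dim(\Sigma\cap G_{q_j})\gequ j$ for every $j$, with the convention that $F_{n+2}$ and $G_{n+2}$ are read as $\wt F_{n+1}$ and $\wt G_{n+1}$. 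Since $\dim\Sigma=m$, the two subspaces $\Sigma\cap F_{p_{m+1-j}}$ and $\Sigma\cap G_{q_j}$ meet nontrivially, hence so do the corresponding terms of $F_\bull$ and $G_\bull$ (with the tilde corrections) inside $V$.

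Next I would translate these nonvanishing intersections into numerical conditions on the pair $(P,Q)$. For a general pair of isotropic flags, $F_a$ meets $G_b$ nontrivially exactly when the naive count $a+b-(2n+2)$ is positive, \emph{except} in the middle range, where the parity of $n$ intervenes: $F_{n+1}$ and $G_{n+1}$, lying in the same family, meet nontrivially precisely when $n$ is even, whereas $F_{n+1}$ and $\wt G_{n+1}$, lying in opposite families, meet nontrivially precisely when $n$ is odd. Applied term by term, this yields $q_j\gequ p^\vee_j$ for all $j$ — the case split in the definition of $p^\vee_j$ (replacing $2n+3-p_{m+1-j}$ by $p_j$ when $n$ is even and $p_j\in\{n+1,n+2\}$) emerging exactly from the parity observation — together with the side clause ruling out $q_i=n+1$ when $p^\vee_i=n+2$. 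In the order $\preceq$ of Section~\ref{svD} this says precisely $P^\vee\preceq Q$, whence $X^\circ_Q(G_\bull)\subset X_{P^\vee}(G_\bull)$ and in particular $|Q|\lequ|P^\vee|$.

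Finally I would treat the extremal case $Q=P^\vee$: the same parity count forces each intersection $F_{p_j}\cap G_{p^\vee_{m+1-j}}$ to be one-dimensional, their direct sum $\Sigma_0=\bigoplus_j(F_{p_j}\cap G_{p^\vee_{m+1-j}})$ is an $m$-dimensional isotropic subspace lying in both $X^\circ_P(F_\bull)$ and $X^\circ_{P^\vee}(G_\bull)$, and a dimension count as in the symplectic case shows it is the unique such point; hence the two cells have complementary dimension in $\OG'$ and $\int_{\OG'}[X_P]\cdot[X_{P^\vee}]=1$, so that $|P|+|P^\vee|=\dim\OG'$. If $Q\neq P^\vee$, then either $X^\circ_P(F_\bull)\cap X^\circ_Q(G_\bull)$ is empty, or the previous paragraph gives $P^\vee\preceq Q$ with $P^\vee\neq Q$, so $|Q|<|P^\vee|$ and $|P|+|Q|<\dim\OG'$, forcing $\int_{\OG'}[X_P]\cdot[X_Q]=0$ for dimension reasons. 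This establishes the proposition.

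The one genuinely type-D ingredient, and therefore the main obstacle, is the parity/family accounting in the middle range $\{n+1,n+2\}$: one must verify carefully that the generic intersection dimensions of the relevant pairs of flag subspaces follow $\dim(F_a\cap G_b)=\max(0,a+b-2n-2)$ away from the middle but are shifted by one in the middle according to whether $F_{n+1}$ lies in the family of $G_{n+1}$ or of $\wt G_{n+1}$, and that this shift reproduces term by term both the two-case definition of $P^\vee$ and the extra clause in the order $\preceq$. Once this is settled, the argument is identical to that of Proposition~\ref{BCduals}.
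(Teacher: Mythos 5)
Your argument is correct and is essentially the paper's own proof: the paper simply asserts that this proposition ``is proved exactly as in type C,'' i.e.\ as Proposition~\ref{BCduals}, and your middle-column parity accounting (generic $\dim(F_a\cap G_b)=\max(0,a+b-2n-2)$ away from the middle, shifted by one in the middle according to whether the two maximal isotropic subspaces lie in the same family) is precisely the type-D ingredient needed to make that transfer work, and it does reproduce both the two-case definition of $P^\vee$ and the extra clause in $\preceq$. One small slip: in your second paragraph the side clause of $P^\vee\preceq Q$ should exclude $p^\vee_i=n+1$ when $q_i=n+2$ rather than the reverse (the case you state is already implied by $q_i\geq p^\vee_i$), but your final paragraph refers to the correct condition and the same parity computation rules it out.
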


We say that each index set $P$ has a {\em type}, which is a number
$\type(P)\in\{0,1,2\}$. If $P\cap\{n+1,n+2\}=\emptyset$, then
$\type(P)=0$. Otherwise, $\type(P)$ is equal to $1$ plus the parity 
mod 2 of the codimension of $\Sigma\cap F_{n+1}$ in $F_{n+1}$, 
for all $\Sigma$ in the 
Schubert cell $X^{\circ}_P(F_\bull)$. Equivalently, $\type(P)$, when 
non-zero, is equal to $1$ plus the parity of the number of integers in
$[1,n+1]\smallsetminus P$.

Set $k=n+1-m$. We will define a different set $\wt{\cP}(k,n)$ of
indices for the Schubert classes in $\OG'$ which makes their
codimension apparent. As with index sets, we agree that
any $k$-strict partition $\la$ has a type in $\{0,1,2\}$. If $\la_j=k$
for some $j$, then $\type(\la)\in\{1,2\}$; otherwise, $\type(\la)=0$.  
The elements of $\wt{\cP}(k,n)$ are the $k$-strict
partitions of all possible types which are contained in an
$m\times (n+k)$ rectangle.  

Given an index set $P' = \{ p'_1 < p'_2 < \dots < p'_m \} \subset
[1,2n+2]$, the corresponding element $\la\in\wt{\cP}(k,n)$ satisfies
$\type(\la)=\type(P')$, and its underlying partition is obtained by a
prescription similar to that in Section \ref{svC}. Choose $s$ such
that $p'_s \lequ n+1 < p'_{s+1}$, and write
\[
[n+2,2n+2] \smallsetminus \{ 2n+3-p'_1, \dots,
2n+3-p'_s \} = \{ r_1 < r_2 < \dots < r_{n+1-s} \}.
\] 
Next, choose indices $1\lequ t_{s+1} < \dots < t_m \lequ n+1-s$ 
so that $p'_j = r_{t_j}$ for $s+1
\lequ j \lequ m$.  Then $P'$ maps to the $k$-strict partition
$\lambda = (\lambda_1,\dots,\lambda_m)$ given by
\[
\la_j = \begin{cases}
           n+k+1-p'_j & \text{if $1 \lequ j \lequ s$}, \\
           k+j-s-t_j & \text{if $s+1 \lequ j \lequ m$}.
\end{cases}
\]

Arguing in the same way as in Proposition \ref{P:trnslC}, we prove

\begin{prop} \label{P:trnslD}
Let $P' = \{p'_1 < \dots < p'_m\} \subset [1,2n+2]$ be an index set and
$\lambda = (\lambda_1, \dots, \lambda_m)$ the corresponding element of
$\wt{\cP}(k,n)$.  For $1 \lequ i \lequ j \lequ m$ we have

\emph{(i)} $\lambda_j \lequ k$ if and only if $p'_j > n$;

\emph{(ii)} $\la_j = k < \la_{j-1}$ if and only if $p'_j \in \{n+1,n+2\}$;

\emph{(iii)} For $i<j$, $\lambda_i+\lambda_j \lequ 2k-1+j-i$ 
if and only if $p'_i+p'_j > 2n+3$; and 

\emph{(iv)} $\lambda_j = \begin{cases}
         n+k+1-p'_j & \text{if $p'_j\lequ n+1$}, \\
         n+k+2-p'_j + \#\{i<j : p'_i+p'_j > 2n+3\} & \text{if $p'_j>n+1$}.
\end{cases}$
\end{prop}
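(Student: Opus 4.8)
The plan is to follow the proof of Proposition~\ref{P:trnslC} essentially line by line, making only the substitutions forced by the type D conventions: the ambient dimension is $2n+2$ instead of $2n$, so the index-set pairing is $a+b\neq 2n+3$ and the complement in the construction lives in $[n+2,2n+2]$, and the integer $n$ of the symplectic argument is played by $n+1$ in the splitting $p'_s\lequ n+1<p'_{s+1}$.  Throughout I keep the notation $s$, $\{r_1<\dots<r_{n+1-s}\}$, and $\{t_{s+1}<\dots<t_m\}$ fixed just before the statement.  First I would settle (i): for $j\lequ s$ one has $\la_j=n+k+1-p'_j$ with $p'_j\lequ n+1$, so $\la_j\gequ k$ with equality exactly when $p'_j=n+1$; for $j>s$ one has $p'_j\gequ n+2$, and since $t_{s+1}<\dots<t_j$ are distinct positive integers, $t_j\gequ j-s$, whence $\la_j=k+j-s-t_j\lequ k$.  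Both halves of (i) follow at once.

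The central computation is the type D analogue of the formula for $t_j$.  For $j>s$ I would count the $r_l$ that are $\lequ p'_j$ to obtain $t_j=(p'_j-n-1)-\#\{i\lequ s:2n+3-p'_i\lequ p'_j\}$, and note that $2n+3-p'_i\lequ p'_j$ is equivalent to $p'_i+p'_j>2n+3$ since the index-set condition rules out $p'_i+p'_j=2n+3$.  Substituting into $\la_j=k+j-s-t_j$, and using that every $i$ with $s<i<j$ automatically satisfies $p'_i+p'_j\gequ(n+2)+(n+3)>2n+3$ (so the count over $i\lequ s$ and the count over $i<j$ differ by exactly $j-1-s$), yields $\la_j=n+k+2-p'_j+\#\{i<j:p'_i+p'_j>2n+3\}$, which is (iv) in the range $p'_j>n+1$; the complementary range $p'_j\lequ n+1$ is the defining equality $\la_j=n+k+1-p'_j$.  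Thus (i) and (iv) come out together.

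Next I would prove (ii).  By (i), $\la_j=k<\la_{j-1}$ forces $p'_j\gequ n+1$ and $p'_{j-1}\lequ n$.  If $p'_j=n+1$ then necessarily $j=s$, and $\la_j=k$, $\la_{j-1}>k$ are immediate.  If instead $p'_j\gequ n+2$, then $p'_{j-1}\lequ n$ forces $p'_s\lequ n$, hence $n+1\notin P'$ and $j=s+1$; but $n+1\notin P'$ makes $r_1=n+2$ (no $2n+3-p'_i$ with $i\lequ s$ can equal $n+2$), and $\la_{s+1}=k$ then forces $t_{s+1}=1$, i.e.\ $p'_{s+1}=r_1=n+2$.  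The converse implications follow by reversing these steps, using again that $n+2\in P'$ forces $n+1\notin P'$.

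Finally (iii), handled exactly as part (ii) of Proposition~\ref{P:trnslC}.  When $p'_j\lequ n+1$ or $p'_i>n+1$ the equivalence is immediate: in the first case $p'_i+p'_j\lequ 2n+1$ while the estimates $p'_i\lequ n+1-(s-i)$, $p'_j\gequ p'_i+(j-i)$, $j\lequ s$ force $\la_i+\la_j>2k-1+j-i$; in the second case $p'_i+p'_j>2n+3$ while $t_i\gequ i-s$, $t_j\gequ j-s$ (with $t_j>t_i$) force $\la_i+\la_j\lequ 2k-1+j-i$.  One is left with $p'_i\lequ n+1<p'_j$.  Writing $C=\#\{l<j:p'_l+p'_j>2n+3\}$, the formulas already proved give $\la_i+\la_j=2n+2k+3-(p'_i+p'_j)+C$, so (iii) reduces to the two counting bounds
\[
C\lequ (j-i)+(p'_i+p'_j-2n-4)\quad\text{if } p'_i+p'_j\gequ 2n+4,
\]
\[
C\gequ (j-i)+(p'_i+p'_j-2n-3)\quad\text{if } p'_i+p'_j\lequ 2n+2.
\]
Each of these I would prove by splitting the indices $l<j$ into those with $l\lequ i$ and those with $i<l<j$ and bounding each block by the number of integers available in the relevant interval, using the index-set condition to exclude the value $2n+3-p'_j$.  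Verifying these two estimates is the only place where genuine care is needed; everything else is bookkeeping identical in form to the symplectic case.
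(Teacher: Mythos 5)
Your proposal is correct and is exactly the argument the paper intends: the paper's proof of Proposition~\ref{P:trnslD} consists of the single remark that one argues as in Proposition~\ref{P:trnslC}, and you carry out precisely that adaptation (the $t_j$ count giving (iv), the two counting bounds giving (iii)), with the one genuinely new item — part (ii), which has no type C analogue — handled correctly via $r_1=n+2$ and $t_{s+1}=1$. Both of your counting inequalities for (iii) do check out by the interval-counting argument you describe.
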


Conversely, for every $\la\in\wt{\cP}(k,n)$, the 
associated index set $P'\subset [1,2n+2]$ satisfies
\begin{multline*} p'_j = n+k-\lambda_j + 
   \#\{\,i<j\,|\,\lambda_i+\lambda_j \lequ 2k-1+j-i\,\} \\
   {} + \begin{cases} 
      1 & \text{if $\lambda_j > k$, or $\lambda_j=k < \lambda_{j-1}$ and
        $n+j+\type(\lambda)$ is even}, \\
      2 & \text{otherwise}.
   \end{cases}
\end{multline*}
The conditions defining the Schubert variety
$X_{\la}(F_\bull)$ indexed by an element $\la$ of
$\wt{\cP}(k,n)$ are given in Section \ref{ogevenscs}.

Let $\{e_1,\dots,e_{2n+2}\}$ be a standard basis of $V$ with respect to 
the isotropic flag $F_\bull$. 
The Schubert cells indexed by a set $P'$ may then be
represented by an $m \times (2n+2)$ matrix $A=\{a_{jr}\}$ as before.
Each entry $a_{j,p'_j}$ of
$A$ is equal to 1, $a_{jr} = 0$ for $r > p'_j$, while $a_{j,p'_i} = 0$
for $i<j$.  If $r < p'_j$ and $r \not \in \{p'_i, 2n+3-p'_i\}$ for all
$i\lequ j$ then $a_{jr}$ is a free variable.  Finally, for each $i
\lequ j$ such that $p'_i+p'_j > 2n+3$, the entry $a_{j,2n+3-p'_i}$ is
determined from the free variables and the isotropicity condition on
the rows of $A$.  For example, if $m=3$, $n=5$, and $P' = \{4,8,11\}$
then the matrix $A$ has the shape
\[ A = \left[
   \begin{array}{cccccccccccc}
   *&*&*&1&0&0&0&0&0&0&0&0 \\
   *&*&*&0&=&*&*&1&0&0&0&0 \\
   *&=&*&0&=&*&*&0&=&*&1&0
\end{array}\right] \,. \]
It follows that the codimension
of $X_{\la}(F_\bull)$ in $\OG'$ is equal to $|\la|$.

For each $\la\in\wt{\cP}(k,n)$, we define a dual element $\la^\vee\in
\wt{\cP}(k,n)$ by requiring that 
\[
p'_j(\la^\vee)= 
\begin{cases}
2n+3 - p'_{m+1-j}(\la) & \text{if $n$ is odd or $p'_j(\la)\notin 
\{n+1,n+2\}$}, \\
p'_j(\la) & \text{if $n$ is even and $p'_j(\la)\in
\{n+1,n+2\}$}.
\end{cases}
\]
As in Proposition \ref{Dduals}, 
for any two elements $\la,\mu\in\wt{\cP}(k,n)$, we have 
\[ 
\int_{\OG'} [X_\lambda] \cdot [X_\mu] = \delta_{\mu,\lambda^\vee} \,.
\]

\subsection{Dual partitions}
\label{dualpar}

We now give a pictorial description of the relationship between a
$k$-strict partition $\lambda$, the corresponding index set, and the
Poincar\'e dual partition $\lambda^\vee$.  We begin our discussion in
types B and C.

Let $\Pi$ be the diagram obtained by attaching an $m \times k$
rectangle to the left side of a staircase partition with $n$ rows.
When $n=7$ and $k=3$, this looks as follows.
\[ \Pi \ \ = \ \ \ \raisebox{-36pt}{\pic{.6}{pi}} \]

The elements of $\cP(k,n)$ are exactly the $k$-strict partitions whose
diagrams fit inside $\Pi$.  For $\lambda \in \cP(k,n)$ we let
$\floor{\lambda}_k$ be the set of boxes of $\lambda$ in columns $k+1$
through $k+n$.  If $P = \{p_1 < \dots < p_m\}$ is the index set
corresponding to $\lambda$, then the values $p_i$ which are less than
or equal to $n$ are obtained by subtracting the number of boxes in the
$i$-th row of $\floor{\lambda}_k$ from $n+1$, for $1 \lequ i \lequ
\ell_k(\lambda)$.

We will organize the boxes of the staircase partition which are
outside $\lambda$ into south-west to north-east {\em diagonals}.
Notice that exactly $m-\ell_k(\lambda)$ of these diagonals are {\em
not\/} $k$-related to one of the bottom boxes in the first $k$
columns of $\lambda$.  We will call these diagonals {\em non-related}.
In type C we obtain the integers $p_i$ which are greater than $n$ by
adding $n$ to the length of each of the non-related diagonals; in type
B we add $n+1$ to these lengths.  For example, the partition
$\lambda = (7,4,2)$ results in the (type C) index set $P = \{ 8-4,
8-1, 7+3, 7+7\} = \{4, 7, 10, 14\}$.

\[ \lambda \ = \ \ \raisebox{-53pt}{\pic{.6}{idx}} 
   \ \ \ \ ; \ \ \ \ 
   \lambda^\vee \ = \ \ \raisebox{-22pt}{\pic{0.6}{dual}}
\]

\medskip
The parts of $\lambda^\vee$ which are larger than $k$ are obtained by
adding $k$ to the lengths of the non-related diagonals.  In other
words, these lengths are the parts of $\floor{\lambda^\vee}_k$.
Finally, for $1 \lequ j \lequ k$, the number of boxes in the $j$-th
column of $\lambda^\vee$ is equal to the number of boxes of the $j$-th
column of $\Pi$ which are outside $\lambda$, plus the length of the
diagonal that is $k$-related to the bottom box of column $j$ of
$\lambda$, minus $(k+1-j)$.  The dual of the partition $\lambda =
(7,4,2)$ is $\lambda^\vee = (10,6,3,2)$.

Consider the analogous picture in type D.  The shapes of the elements
of $\wt{\cP}(k,n)$ are the $k$-strict partitions that fit inside the
diagram $\Pi$ with $m=n+1-k$.  Let $\lambda \in \wt{\cP}(k,n)$ and let
$P' = \{p'_1 < \dots < p'_m\}$ be the corresponding index set.  The
values $p'_i$ which are less than or equal to $n$ are obtained by
subtracting the parts of $\floor{\lambda}_k$ from $n+1$.  The values
$p'_i$ which are greater than $n+1$ depend on $\type(\lambda)$ as well
as the lengths of the non-related diagonals.

The dual partition $\lambda^\vee$ can be constructed as follows.  The
parts of $\floor{\lambda^\vee}_k$ consist of the lengths of the
non-related diagonals, as in types B and C.  For $1 \lequ j \lequ
k$, the number of boxes in column $j$ of $\lambda^\vee$ is equal to
the number of boxes in the $j$-th column of $\Pi$ which are outside
$\lambda$, plus the length of the diagonal that is $k'$-related to the
bottom box of column $j$ of $\lambda$, minus $(k-j)$.  For example,
when $k=3$ and $n=7$, the dual of the partition $\lambda =
(10,8,3,2,1)$ is $\lambda^\vee = (7,5,3,1)$.

\[ \lambda \ = \ \ \raisebox{-50pt}{\pic{0.6}{idxD}}
   \ \ \ \ ; \ \ \ \ 
   \lambda^\vee \ = \ \ \raisebox{-18pt}{\pic{0.6}{dualD}} 
\]

\medskip
We have $\type(\la^{\vee})=0$ if and only if $\type(\la)=0$.  Notice
that when $\type(\lambda)>0$, we have $\ell_k(\la) + \ell_k(\la^\vee)
= n-k$, since $p'_i(\la)\in\{n+1,n+2\}$ for some $i$. As $\type(P')$ 
is equal to $1$ plus the parity of the number of integers in 
$[1,n+1]\smallsetminus P'$, we deduce that 
$\ell_k(\la)+\type(\la) \equiv n + \ell_k(\la^\vee) + \type(\la^\vee)
\ (\text{mod}\ 2)$.  It follows that when $\type(\la)>0$, we have
$\type(\la^\vee) \equiv k + \type(\la) \ (\text{mod}\ 2)$.

\subsection{Other parametrizations}
In this short section we indicate how our notation for Schubert 
varieties compares to that used in previous related works, in particular
\cite{PRpieri}, \cite{PRpieri2}, and \cite{T}. 

Let $W_n=S_n\ltimes\Z_2^n$ be the Weyl group for the root system
$C_n$, thought of as a group of permutations with a sign attached to
each entry. The group $W_n$ is generated by simple reflections
$s_0,\ldots,s_{n-1}$, and the symmetric group $S_n$ is the subgroup of
$W_n$ generated by the transpositions $s_i=(i,i+1)$ for $i>0$.  If
$W_k$ denotes the parabolic subgroup of $W_n$ generated by $\{s_i\ |\
i\neq k\}$, then it is well known that the set $W^{(k)}\subset W_n$ of
minimal length coset representatives of $W_k$ parametrizes the
Schubert varieties in $\IG(n-k,2n)$ and $\OG(n-k,2n+1)$.

Pragacz and Ratajski \cite{PRpieri} defined a set of partition pairs
in bijection with the elements of $W^{(k)}$. The Schubert varieties
are thus parametrized by the set of all partition pairs
$\pp{\alpha}{\beta}$ with $\alpha$ contained in a $k\times (n-k)$
rectangle and $\beta$ strict such that $\beta_1\lequ n$ and
$\alpha_k\gequ \ell(\beta)$. Each such partition pair corresponds to a
unique $k$-strict partition $\lambda\in\cP(k,n)$, defined by
$\lambda=\alpha'+\beta$, where $\alpha'$ is the conjugate (or
transpose) of $\alpha$.

Let $\wt{W}_{n+1}$ be the Weyl group for the root system of type
$D_{n+1}$ and $\wt{W}^{(k)}$ the corresponding parameter set for the
Schubert varieties in $\OG(n+1-k,2n+2)$.  The translation from
elements of $\wt{W}^{(k)}$ to partitions $\la\in \wt{\cP}(k,n)$ is
similar to the above, following \cite[6.1]{T}. However in the present
work our definition of $\type(\la)$ when $\type(\la)\in \{1,2\}$
differs from the convention in \cite{T}, and is essentially that
introduced in \cite{PRpieri2}. When $\type(\la)>0$, we have
$\type(\la)=1$ if and only if the first entry of the corresponding
Weyl group element is unbarred. This relates the notation used in this
paper to the cited earlier works; for further details, we refer the
reader to \cite[4.1, 5.1, 6.1]{T}.

\section{The classical Pieri rules}
\label{Pierirule}

We present here our proofs of the classical Pieri rules for the
Grassmannians $X$ parametrizing isotropic subspaces in a vector space
$V$. It is worth noting that one can achieve a uniform proof, in all
three Lie types, of a rule for products with the Chern classes
$c_r(\cQ)$ of the universal quotient bundle over $X$. To do this, we
introduce the projectivization $\bP(\cS)$ of the universal subbundle
with the natural projections $\pi: \bP(\cS) \to X$ and $\psi:\bP(\cS) \to
\bP(V)$. The projection formula then gives
\begin{equation}
\label{projform}
\int_X c_r(\cQ) \cdot [X_P]\cdot [X_{Q^\vee}] = 
\int_{\bP(V)} c_1(\cO(1))^{m-1+r}\cdot \psi_*\pi^*
[X_P(F_\bull) \cap X_{Q^\vee}(G_\bull)], 
\end{equation}
where $P$, $Q$ are index sets and 
$F_\bull$, $G_\bull$ are isotropic flags in $V$ in general
position. 

We claim that whenever $Y_{P,Q}:= X_P(F_\bull) \cap
X_{Q^\vee}(G_\bull)$ is non-empty, then the restriction of $\psi$ to
$\pi^{-1}(Y_{P,Q})$ either (i) has positive dimensional fibers, in
which case the integral (\ref{projform}) vanishes, or (ii) is a
birational isomorphism onto a target variety which is a complete
intersection of $N$ quadrics in $\bP(V)$, when the integral is equal
to the degree $2^N$.  The rule for multiplication by $c_r(\cQ)$ in the
ring $\HH^*(X)$ follows immediately, and for the Grassmannian $\IG(m,2n)$,
this is enough to finish the proof of the Pieri rule. In the
orthogonal cases, a supplementary analysis is required to establish the
rule for multiplication with the special Schubert classes; this 
uses the quadric in $\bP(V)$ defined by the symmetric form.

\subsection{Types C and B}

Let $V \cong \C^{2n}$ be a symplectic vector space and $\IG =
\IG(m,2n)$. For any index set 
$P = \{p_1 < \cdots < p_m\}\subset [1,2n]$ and isotropic 
flag $F_\bull \subset V$, let $X_P(F_\bull)$ be the Schubert 
variety in $\IG$ defined by (\ref{schvarineq}).
We set $p_0=0$ and $p_{m+1}=2n+1$ for convenience. 

If $Q$ is another index set, then we have $Q\lequ P$ if and only if
$X_P(F_\bull) \cap X_{Q^\vee}(G_\bull) \neq \emptyset$ for all
isotropic flags $F_\bull$ and $G_\bull$, where $Q^\vee$ is the index
set dual to $Q$.  When $Q \lequ P$ we set $D(P,Q) = \{(j,c) \mid q_j
\lequ c \lequ p_j \}$, and consider this as a northwest to southeast
skew diagram of boxes.  Define a {\em cut} through the diagram
$D(P,Q)$ to be any integer $c \in [0,2n]$ such that no row of
$D(P,Q)$ contains boxes in both column $c$ and column $c+1$.
Equivalently, we have $p_j \lequ c < q_{j+1}$ for some $j$.  Let
\[
I(P,Q) = \{ c \in [0,n] : c \text{ or } 2n-c \text{ is
  a cut through } D(P,Q)\},
\] 
and let $N(P,Q)$ be the number integers
$c \in I(P,Q)$ such that $c \gequ 2$ and $c-1 \not\in I(P,Q)$.

Fix a symplectic basis $e_1,\dots,e_{2n}$ of $V$, such that $(e_i,e_j)
= \pm \delta_{i+j,2n+1}$.  We let $x_1,\dots,x_{2n} \in V^*$ be the
dual basis.  Given index sets $P$, $Q$ with $Q \lequ P$,
 we define $Z_{P,Q} \subset
\bP(V)$ to be the subvariety cut out by the following equations.

\medskip
\noindent
(a) $x_{c+1}x_{2n-c} + x_{c+2}x_{2n-c-1} + \dots + x_d x_{2n+1-d} =
0$, whenever $c < d$ are consecutive elements of $I(P,Q)$ such that
$d-c \gequ 2$.

\medskip
\noin
(b) $x_c = 0$, whenever $D(P,Q)$ has no boxes in column $c$, or a row
of $D(P,Q)$ contains exactly one box, which is located in column
$2n+1-c$.

\medskip
\noin
It follows that $Z_{P,Q}$ is an irreducible complete intersection of
degree $2^{N(P,Q)}$.

Let $F_\bull$ and $G_\bull$ be the isotropic flags defined by $F_i =
\langle e_1,\dots,e_i\rangle$ and $G_i = \langle
e_{2n+1-i},\dots,e_{2n}\rangle$, and set $Y_{P,Q} = X_P(F_\bull) \cap
X_{Q^\vee}(G_\bull) \subset \IG$. Recall that we have natural smooth
maps $\pi : \bP(\cS) \to \IG$ and $\psi : \bP(\cS) \to \bP(V)$.

\begin{lemma}
  We have that $\psi(\pi^{-1}(Y_{P,Q})) \subset Z_{P,Q}$.
\end{lemma}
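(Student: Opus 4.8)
The plan is first to reduce to a pointwise statement. A point of $\bP(\cS)$ lying over $\Sigma\in\IG$ is a line $\langle v\rangle\subset\Sigma$, and $\psi$ carries it to $[v]\in\bP(V)$; so it suffices to show that whenever $\Sigma\in Y_{P,Q}$ and $0\neq v\in\Sigma$, the point $[v]$ satisfies the defining equations (a) and (b) of $Z_{P,Q}$. Write $v=\sum_l x_l(v)\,e_l$. Recall that $\Sigma\in X_P(F_\bull)$ means $\dim(\Sigma\cap F_{p_j})\gequ j$ for all $j$, while $\Sigma\in X_{Q^\vee}(G_\bull)$, after using $q^\vee_i=2n+1-q_{m+1-i}$ and $G_r=\langle e_{2n+1-r},\dots,e_{2n}\rangle$, says that $\dim\bigl(\Sigma\cap\langle e_{q_{j+1}},\dots,e_{2n}\rangle\bigr)\gequ m-j$ for $0\lequ j\lequ m$ (with $q_{m+1}:=2n+1$).

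The technical heart is a block decomposition of $\Sigma$ along the cuts of $D(P,Q)$. I claim that if $c\in[0,2n]$ is a cut, say with $p_j\lequ c<q_{j+1}$, then $\Sigma=(\Sigma\cap F_c)\oplus(\Sigma\cap\langle e_{c+1},\dots,e_{2n}\rangle)$: the first summand has dimension $\gequ\dim(\Sigma\cap F_{p_j})\gequ j$, the second $\gequ m-j$, and they meet only in $F_c\cap\langle e_{c+1},\dots,e_{2n}\rangle=0$, so dimension counting forces the sum to be all of $\Sigma$ and both dimensions to be exact (in particular $\Sigma\cap F_c=\Sigma\cap F_{p_j}$ and $\Sigma\cap\langle e_{c+1},\dots,e_{2n}\rangle=\Sigma\cap\langle e_{q_{j+1}},\dots,e_{2n}\rangle$). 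Iterating over the successive cuts $0=\gamma_0<\gamma_1<\dots<\gamma_s=2n$ writes $\Sigma=\bigoplus_i\Sigma_i$ with $\Sigma_i=\Sigma\cap\langle e_l:\gamma_{i-1}<l\lequ\gamma_i\rangle$ the ``block'' supported on the $i$-th run of columns between consecutive cuts.

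Equations (b) follow quickly. If column $c$ contains no box of $D(P,Q)$ then $c$ is a cut with witness $j$ satisfying $p_j<c$, and decomposing $v=v'+v''$ as above, with $v'$ supported in columns $\lequ p_j<c$ and $v''$ in columns $\gequ q_{j+1}>c$, gives $x_c(v)=0$. If row $j$ of $D(P,Q)$ is the single box in column $p_j=q_j$, then the decompositions at the two cuts $c=p_j-1$ and $c=p_j$ show $\dim(\Sigma\cap F_{p_j})=j$, $\dim(\Sigma\cap\langle e_{p_j},\dots,e_{2n}\rangle)=m-j+1$, hence $\Sigma\cap\langle e_{p_j}\rangle\neq 0$, i.e.\ $e_{p_j}\in\Sigma$; isotropy of $\Sigma$ then yields $0=(v,e_{p_j})=\pm\,x_{2n+1-p_j}(v)$, the remaining equation of (b).

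For (a), let $c<d$ be consecutive in $I(P,Q)$ with $d-c\gequ 2$. By definition of $I(P,Q)$, for every $l$ with $c<l<d$ neither $l$ nor its mirror $2n-l$ is a cut; a short case analysis on which of $c,2n-c$ (resp.\ $d,2n-d$) is the actual cut then shows that the columns $c+1,\dots,d$ all lie in one block $B$, their symplectic partners $2n+1-d,\dots,2n-c$ all lie in one block $B'$, and $B\neq B'$. Writing the corresponding components of $v$ as $b\in B$ and $b'\in B'$, and using that the symplectic form pairs $e_l$ with $e_{2n+1-l}$ with coefficient $1$ for $l\lequ n$ (note $d\lequ n$), the relation $(b,b')=0$ unravels to $\sum_{l=c+1}^{d}x_l(v)\,x_{2n+1-l}(v)=0$, which is precisely equation (a). The main obstacle is exactly this last combinatorial verification — locating the two coordinate ranges inside single, distinct blocks from the hypothesis $c+1,\dots,d-1\notin I(P,Q)$; once the block structure is pinned down, the quadratic relation drops out of isotropy together with the antidiagonal form of the symplectic pairing.
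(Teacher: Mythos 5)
Your proof is correct and takes essentially the same route as the paper's: a pointwise verification that each $0\neq v\in\Sigma$ satisfies (a) and (b), driven by the direct-sum decomposition of $\Sigma$ that the cuts force via the dimension count $\dim(\Sigma\cap F_{p_j})\gequ j$, $\dim(\Sigma\cap G_{2n+1-q_{j+1}})\gequ m-j$, combined with isotropy of $\Sigma$. The only difference is organizational — the paper writes $v=v_1+v_2+v_3$ using just the two cuts relevant to a given quadratic equation, whereas you set up the full block decomposition once and then locate the two pairing blocks — and your combinatorial checks (single blocks, $B\neq B'$, and that the only mirror pairs between $B$ and $B'$ are the columns $c+1,\dots,d$) all go through.
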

\begin{proof}
  Let $\Sigma \in Y_{P,Q}$ and let $v$ be a non-zero vector in
  $\Sigma$.  We must show that $v$ satisfies the equations (a) and
  (b).  Let $c, d$ be consecutive elements of $I(P,Q)$ such that
  $|d-c| \gequ 2$, and choose cuts $c' \in \{c,2n-c\}$ and $d' \in
  \{d,2n-d\}$ for the diagram $D(P,Q)$, as well as integers $i,j$ such
  that $p_i \lequ c' < q_{i+1}$ and $p_j \lequ d' < q_{j+1}$.  By the
  choice of $c$ and $d$, we must have $i\neq j$.  After possibly
  interchanging $c$ and $d$, we may assume that $i < j$.  Since
  $\dim(\Sigma \cap F_{p_i}) \gequ i$, $\dim(\Sigma \cap
  G_{2n+1-q_{i+1}} \cap F_{p_j}) \gequ j-i$, and $\dim(\Sigma \cap
  G_{2n+1-q_{j+1}}) \gequ m-j$, we can write $v = v_1 + v_2 + v_3$
  where $v_1 \in \Sigma \cap F_{p_i}$, $v_2 \in \Sigma \cap
  G_{2n+1-q_{i+1}} \cap F_{p_j}$, and $v_3 \in \Sigma \cap
  G_{2n+1-q_{j+1}}$.  The equation (a) amounts to $(v_2,v_3)=0$ if
  $c<d$ and to $(v_1,v_2)=0$ if $c>d$.
  
  Now consider an equation $x_c = 0$ of type (b).  If $D(P,Q)$ has no
  boxes in column $c$, then $p_j < c < q_{j+1}$ for some $j$, and the
  equation is true because $\Sigma = (\Sigma \cap F_{p_j}) + (\Sigma
  \cap G_{2n+1-q_{j+1}})$.  Otherwise we have $q_j = p_j = 2n+1-c$ for
  some $j$, and the equation follows because $e_{2n+1-c} \in \Sigma$.
\end{proof}

Given two index sets $Q \lequ P$, we will write $P \to Q$ if the diagram
$D(P,Q)$ contains no $2\times 2$ squares, and whenever this diagram
contains two boxes in column $c$, it contains one box in column
$2n+1-c$.  In other words we have $P \to Q$ if and only if (i) $Q \lequ
P$, (ii) $p_j \lequ q_{j+1}$ for $1\lequ j \lequ m-1$, and (iii) whenever
$p_j = q_{j+1}$ we have $q_i < 2n+1-p_j < p_i$ for some $i$.

\begin{prop}
\label{YtoZinC}
  For index sets $Q \lequ P$ we have $\dim(\pi^{-1}(Y_{P,Q})) \gequ
  \dim(Z_{P,Q})$, with equality if and only if $P \to Q$.  When the
  latter occurs, the map $\psi : \pi^{-1}(Y_{P,Q}) \to Z_{P,Q}$ is a
  birational isomorphism.
\end{prop}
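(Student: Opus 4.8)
The plan is to compute the dimension of $\pi^{-1}(Y_{P,Q})$ directly and compare it term by term with the dimension of $Z_{P,Q}$, which we can read off from the number of equations of types (a) and (b) and the fact that $Z_{P,Q}$ is an irreducible complete intersection of degree $2^{N(P,Q)}$. Since $\pi$ is a $\bP^{m-1}$-bundle, $\dim(\pi^{-1}(Y_{P,Q})) = \dim(Y_{P,Q}) + m-1$, and by Propositions~\ref{closureC} and \ref{BCduals} the intersection $Y_{P,Q} = X_P(F_\bull)\cap X_{Q^\vee}(G_\bull)$ is nonempty precisely when $Q\lequ P$, with $\codim_{\IG}(Y_{P,Q}) = |P| + |Q^\vee| = |P| + (\dim\IG - |Q|)$. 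So I would first derive a clean combinatorial formula for $|P|-|Q|$ in terms of the diagram $D(P,Q)$: using the matrix description of Schubert cells in Section~\ref{svC}, $|P|$ is the number of non-free entries, and $|P|-|Q|$ counts boxes of $D(P,Q)$ weighted by how the isotropicity forces "$=$" entries. The expected outcome is that $\dim Y_{P,Q}$ equals (number of boxes of $D(P,Q)$) minus (a correction coming from the isotropic pairing), and that $\dim Z_{P,Q} = 2n - 1 - (\text{number of type (b) equations}) - (\text{number of type (a) equations})$, the latter being $\sum (d-c-1)$ over consecutive pairs $c<d$ in $I(P,Q)$ with $d-c\ge 2$.

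Next I would match the two quantities. The inequality $\dim(\pi^{-1}(Y_{P,Q})) \gequ \dim Z_{P,Q}$ should follow because the previous lemma gives $\psi(\pi^{-1}(Y_{P,Q})) \subset Z_{P,Q}$, so it suffices to show $\psi$ is generically finite onto its image exactly when equality holds — but for the stated inequality it is cleaner to argue combinatorially that the box count always dominates. For the equality characterization, I would show that the "defect" $\dim(\pi^{-1}(Y_{P,Q})) - \dim Z_{P,Q}$ is a sum of non-negative local contributions, one for each $2\times 2$ square in $D(P,Q)$ and one for each column $c$ where $D(P,Q)$ has two boxes but column $2n+1-c$ has zero or two boxes. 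These are precisely the obstructions in the definition of $P\to Q$, so the defect vanishes iff $P\to Q$. Concretely: a $2\times 2$ square means some free entry in the matrix picture is unconstrained in an extra direction, contributing $+1$ to $\dim Y_{P,Q}$ without a corresponding generator of $Z_{P,Q}$; and a doubled column not balanced by the mirror column contributes an extra quadric relation in $Z_{P,Q}$ that is not matched on the Grassmannian side.

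For the birationality claim when $P\to Q$, I would construct an explicit inverse rational map $Z_{P,Q}\dashrightarrow \pi^{-1}(Y_{P,Q})$. Given a general point $[v]\in Z_{P,Q}$, the vanishing pattern (b) tells us which coordinates of $v$ are zero, and the conditions $P\to Q$ guarantee that the "next" index sets $P', Q'$ obtained by removing the row of $D(P,Q)$ containing $v$ (and its mirror row, if a $=$-entry was involved) again satisfy $P'\to Q'$ on a symplectic space of dimension $2n-2$ or $2n$; by induction on $m$ we recover a unique $\Sigma\in Y_{P,Q}$ with $v\in\Sigma$, giving a point of $\pi^{-1}(Y_{P,Q})$ mapping to $[v]$. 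Since the generic fiber of $\psi$ over $Z_{P,Q}$ is then a single reduced point and both varieties are irreducible of the same dimension, $\psi$ restricted to $\pi^{-1}(Y_{P,Q})$ is birational.

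\textbf{Main obstacle.} The delicate part will be the bookkeeping in the dimension comparison: correctly accounting for the interaction between a column $c$ and its mirror $2n+1-c$ in $D(P,Q)$, since the isotropicity forces some matrix entries to be determined ("$=$"-entries) in a way that depends on both columns simultaneously, and simultaneously a doubled column $c$ in $D(P,Q)$ may or may not generate a quadric of type (a) in $Z_{P,Q}$ depending on whether $c$ or $2n-c$ is recorded as a cut in $I(P,Q)$. Getting the exact statement that "defect $= \#\{2\times2\text{ squares}\} + \#\{\text{unbalanced doubled columns}\}$" — and in particular checking that these two phenomena do not double-count or cancel — is where the real work lies; the induction for birationality is then essentially a formalization of the same local analysis.
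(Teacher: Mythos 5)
Your overall strategy---compute $\dim(\pi^{-1}(Y_{P,Q}))=|Q|-|P|+m-1$ and $\dim(Z_{P,Q})$ in closed form and match them combinatorially---is genuinely different from the paper's, which never writes down either dimension explicitly: there the inequality is obtained by showing $\psi$ is \emph{surjective} onto $Z_{P,Q}$ whenever $D(P,Q)$ has no $2\times 2$ square (an inductive construction of a $\Sigma\in Y_{P,Q}$ through a generic point of $Z_{P,Q}$), strictness comes from exhibiting positive-dimensional fibers of $\psi$ when a doubled column has empty mirror, birationality from an injectivity argument on a dense open set $U_{P,Q}$, and the $2\times 2$ case is handled by a separate induction on $|Q|-|P|$ that modifies $P$ to $P'$. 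Your plan could in principle work, but its load-bearing step is false as stated. The claimed decomposition ``defect $=\#\{2\times2\text{ squares}\}+\#\{\text{unbalanced doubled columns}\}$, each contributing $+1$'' fails already for $\IG(2,6)$ with $P=\{5,6\}$, $Q=\{1,2\}$: here $X_P=X_{Q^\vee}=\IG$, so $\dim(\pi^{-1}(Y_{P,Q}))=7+1=8$, while $I(P,Q)=\{0\}$ and there are no equations of type (a) or (b), so $Z_{P,Q}=\bP(V)$ has dimension $5$; the defect is $3$, whereas your formula predicts $3+4=7$ (three unit $2\times2$ squares, and columns $2,3,4,5$ all doubled with doubled mirrors). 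So the contributions are not local and additive in the way you propose, and the ``iff'' cannot be read off from such a formula without a correct replacement for it. Note also that your derivation of the inequality from $\psi(\pi^{-1}(Y_{P,Q}))\subset Z_{P,Q}$ is backwards---containment of the image only bounds $\dim\psi(\pi^{-1}(Y_{P,Q}))$ from above---so the inequality genuinely needs either the surjectivity argument or a correct combinatorial count.

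Two smaller points. First, your birationality conclusion uses that $\pi^{-1}(Y_{P,Q})$ is irreducible; this is not free, and the paper gets it from Deodhar's theorem on intersections of Schubert cells in general position. Second, your inductive construction of the inverse rational map is essentially sound and is close to what the paper does (both the surjectivity and the uniqueness arguments there peel off the first row of $D(P,Q)$ and induct on $m$), but it must be accompanied by a proof that the generic point of $Y_{P,Q}$ has all the ``expected'' nonvanishing coordinates (the set $U_{P,Q}$ in the paper), which again requires the irreducibility statement.
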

\begin{proof}
  Assume at first that $D(P,Q)$ contains no $2\times 2$ squares,
  i.e.\ $p_j \lequ q_{j+1}$ for each $j$.  In this case we will show
  that $\psi$ maps $\pi^{-1}(Y_{P,Q})$ onto $Z_{P,Q}$.  We may assume
  that $m \gequ 2$, and, after possibly replacing $(P,Q)$ with
  $(Q^\vee,P^\vee)$, that $p_1+q_m \lequ 2n+1$.  It is enough to show
  that the image of $\psi$ contains any vector $x =
  (x_1,\dots,x_{2n})$ of $Z_{P,Q}$ such that each coordinate $x_c$ is
  non-zero, unless an equation (b) says otherwise.  Notice that $x \in
  \psi(\pi^{-1}(Y_{P,Q}))$ if and only if there exists $\Sigma \in
  Y_{P,Q}$ with $x \in \Sigma$.
  
  Set $P' = \{p_2,\dots,p_m\}$ and $Q' = \{q_2,\dots,q_m\}$.  We claim
  that for some $a \in \C$, the vectors $v = (x_1,\dots,x_{p_1}-a,
  0,\dots,0) \in V$ and $v' = (0,\dots,0, a,x_{p_1+1},\dots,x_{2n})
  \in V$ satisfy $(v,v') = 0$ and $v' \in Z_{P',Q'}$.  If $p_1 = q_2$
  then $x_{2n+1-p_1} \neq 0$, provided that $p_1+p_m>2n+1$, so it is possible to
  choose $a$ such that $(v,v') = 0$, while $(v,v')=0$ for any choice of $a$
  when $p_1+p_m<2n+1$.  The vector $v'$ then satisfies
  the quadratic equations defining $Z_{P',Q'}$ because $x=v+v'$
  satisfies the equations defining $Z_{P,Q}$.  Otherwise we have $p_1
  < q_2$, and we can take $a=0$.  Since $p_1$ is a cut through
  $D(P,Q)$, it follows from the equations defining $Z_{P,Q}$ that
  $(v,v')=0$.
  
  By induction there exists $\Sigma' \in Y_{P',Q'} \subset \IG(m-1,V)$
  such that $v' \in \Sigma'$.  Since $\dim(\Sigma' \cap F_{p_{m-1}})
  \gequ m-2$ and $v' \not\in F_{p_{m-1}}$ because $x_{p_m} \neq 0$, it
  follows that $\Sigma' \subset F_{p_{m-1}} \oplus \C v'$.  Now since
  $p_1+p_{m-1} \lequ p_1+q_m$ we obtain $p_1+p_{m-1} < 2n+1$, so
  $(v,\Sigma') = 0$.  Since $\Sigma' \subset G_{2n+1-q_2}$ and
  $x_{q_1} \neq 0$, we deduce that $\Sigma = \C v \oplus \Sigma'$ is a
  point of $\IG(m,V)$.  Since $\dim(\Sigma' \cap F_{p_j}) \gequ j-1$
  for each $j$ and $\dim(\Sigma' \cap G_{2n+1-q_j}) \gequ m+1-j$
  for $j\gequ 2$, it follows that $\Sigma \in Y_{P,Q}$, as required.
  
  Let $U_{P,Q} \subset Y_{P,Q}$ be the open subset of isotropic spaces
  $\Sigma = \langle u_1,\dots,u_m\rangle$, for which each vector $u_j$ can
  be written as $u_j = a_{j,q_j} e_{q_j} + \dots + a_{j,p_j} e_{p_j}$,
  with $a_{j,c} \neq 0$ unless $Z_{P,Q}$ satisfies the equation $x_c =
  0$.  We claim that $U_{P,Q} \neq \emptyset$.  In fact, by induction
  on $m$ there exists $\Sigma' = \langle u_2,\dots,u_m\rangle \in
  U_{P',Q'}$.  If $q_1=p_1$ then we set $u_1 = e_{p_1}$; if $p_1+p_m >
  2n+1$, so that $q_m < 2n+1-p_1 < p_m$, we furthermore replace the
  coordinate $a_{m,2n+1-p_1}$ of $u_m$ with zero.  Then $\Sigma =
  \langle u_1,\dots,u_m\rangle$ is a point of $U_{P,Q}$.  Otherwise we have
  $q_1<p_1$.  The condition that $(u_1,u_m) = 0$ amounts to the linear
  equation $a_{m,2n+1-q_1} a_{1,q_1} + \dots + a_{m,2n+1-p_1}
  a_{1,p_1}= 0$ in the coordinates of $u_1$.  If $p_1+p_m<2n+1$ then
  this equation is trivial, while if $p_1+p_m>2n+1$, then since $\Sigma' \in
  U_{P',Q'}$ we have $a_{m,2n-p_1} \neq 0$ and $a_{m,2n+1-p_1} \neq
  0$, so this equation has a solution for which $a_{1,c} \neq 0$ for
  all $q_1 \lequ c \lequ p_1$, and $\Sigma = \langle u_1,\dots,u_m\rangle 
  \in U_{P,Q}$, as required.
  
  The intersection of two Schubert cells in general position is
  irreducible by results of Deodhar \cite[Cor.~1.2, Prop.~5.3(iv)]{De},
  and this immediately implies the irreducibility of the intersection of
  two Schubert varieties in general position.  It
  follows that $U_{P,Q}$ is a dense open subset of $Y_{P,Q}$.
  We note that in our special case, this can also be
  checked directly, but leave this as an exercise to the reader.
  We deduce that $\pi^{-1}(Y_{P,Q})$ has a dense open subset of pairs
  $(\C x, \Sigma)$, for which $\Sigma = \langle u_1,\dots,u_n\rangle \in
  U_{P,Q}$ with $u_j \in F_{p_j}\cap G_{2n+1-q_j}$ and $x =
  u_1+\dots+u_n$.
  
  Suppose the diagram $D(P,Q)$ contains two boxes in some column $c$
  and no boxes in column $2n+1-c$, and let $\Sigma =
  \langle u_1,\dots,u_m\rangle \in U_{P,Q}$ and $x = u_1+\dots+u_m$ be as
  above.  Then $c = p_j = q_{j+1}$ for some $j$, and $\Sigma_t =
  \langle u_1,\dots,u_{j-1}, u_j+t\,e_c, u_{j+1}-t\,e_c,
  u_{j+2},\dots,u_m\rangle$ is a point of $Y_{P,Q}$ with $x \in \Sigma_t$
  for each $t \in \C$.  This shows that $\psi^{-1}(x)$ has positive
  dimension, so $\dim(\pi^{-1}(Y_{P,Q})) > \dim(Z_{P,Q})$.
  
  On the other hand, if the diagram $D(P,Q)$ contains a box in column
  $2n+1-c$ whenever there are two boxes in column $c$, then we have $P
  \to Q$, and we must show that $\psi : \pi^{-1}(Y_{P,Q}) \to Z_{P,Q}$
  is birational.  It is enough to show that if $\Sigma =
  \langle u_1,\dots,u_m\rangle \in U_{P,Q}$ with $u_j \in F_{p_j} \cap
  G_{2n+1-q_j}$, then $\psi^{-1}(u_1+\dots+u_m) \cap
  \pi^{-1}(U_{P,Q})$ contains a single point, or equivalently,
  $\Sigma$ is the only point of $U_{P,Q}$ containing the vector $x =
  u_1+\dots+u_m$.  In fact, we claim this is true with $U_{P,Q}$ replaced
  by the larger open subset $W_{P,Q}\subset Y_{P,Q}$ of isotropic spaces
  $\Sigma=\langle u_1,\ldots,u_m \rangle$, for which each vector $u_j$ can
  be written as $u_j=a_{j,q_j}e_{q_j}+\cdots+a_{j,p_j}e_{p_j}$ with
  $a_{j,c}\ne 0$ unless $2n+1-q_1<c<p_j$ or $Z_{P,Q}$ satisfies the
  equation $x_c=0$.  Suppose $\Sigma=\langle u_1,\ldots,u_m \rangle\in W_{P,Q}$
  and $x=u_1+\cdots+u_m$ is also contained in $\Sigma' =
  \langle u'_1,\dots,u'_m\rangle \in W_{P,Q}$, where $u'_j \in F_{p_j}\cap
  G_{2n+1-q_j}$.  Then $x = a_1 u'_1 + \dots + a_m u'_m$ where $a_j
  \in \C$.  We claim that $a_1 u'_1 = u_1$.  This is clear if $p_1 <
  q_2$.  Otherwise $p_1=q_2$, and we can write $a_1 u'_1 = u_1 + b
  e_{p_1}$ for some $b \in \C$.  Since $0 = (a_1 u'_1, x) =
  b\,(e_{p_1}, x) = b\, x_{2n+1-p_1}$ and $x_{2n+1-p_1} \neq 0$, it
  follows that $b=0$.  Now since $\langle u_2,\dots,u_m\rangle$ and
  $\langle u'_2,\dots,u'_m\rangle$ are points of $W_{P',Q'} \subset
  \IG(m-1,V)$, both containing $u_2+\dots+u_m$, it follows by
  induction on $m$ that $\langle u_2,\dots,u_m\rangle =
  \langle u'_2,\dots,u'_m\rangle$.  Therefore $\Sigma' = \Sigma$ as
  required.
  
  It remains to prove that if the diagram $D(P,Q)$ contains a $2\times
  2$ square, then $\dim(\pi^{-1}(Y_{P,Q})) > \dim(Z_{P,Q})$.  We do
  this by induction on $\dim(Y_{P,Q}) = |Q| - |P|$.
  Choose $j$ minimal such that $p_j > q_{j+1}$, i.e.\ $j$ is the top
  row of $D(P,Q)$ containing (the upper half of) some $2\times 2$
  square.  This implies that $p_{j-1} \lequ q_j < q_{j+1}$.
  
  Assume first that $2n+1-r \in P$ for all $r \in [q_{j+1}, p_j-1]$.
  Then choose $i$ such that $p_i = 2n+2-p_j$, and notice that $p_l =
  p_i-i+l$ for all $l \in [i, l_0]$, where $l_0=i+p_j-q_{j+1}-1$.
  This implies that $q_i < p_i$, since otherwise we would get $q_{l_0}
  = p_{l_0} = 2n+1-q_{j+1}$.  Set $P' = (P \smallsetminus \{p_i,p_j\})
  \cup \{p_i-1,p_j-1\}$.  Then we have $\dim(Y_{P',Q}) = \dim(Y_{P,Q})
  - 1$ and $\dim(Z_{P',Q}) \gequ \dim(Z_{P,Q}) - 1$.  We distinguish
  three cases.  If $p_i < q_{i+1}$ then the diagram $D(P',Q)$ has no
  boxes in column $p_i$, while it has two boxes in column $p_j-1$.  If
  $p_i = q_{i+1}$ then $q_{j+1} \lequ p_j-2$, which implies that
  $D(P',Q)$ contains a $2\times 2$ square.  In both of these cases
  we obtain by induction that $\dim(\pi^{-1}(Y_{P',Q})) >
  \dim(Z_{P',Q})$.  Finally, if $q_{i+1} < p_i$ then $Z_{P',Q} =
  Z_{P,Q}$, and by induction we have $\dim(\pi^{-1}(Y_{P',Q})) \gequ
  \dim(Z_{P',Q})$.  In all three cases we deduce that
  $\dim(\pi^{-1}(Y_{P,Q})) < \dim(Z_{P,Q})$ as required.
  
  If the above assumption fails, then choose $r < p_j$ maximal such
  that $2n+1-r \not\in P$.  Then $r \gequ q_{j+1}$.  Set $P' = (P
  \smallsetminus \{p_j\}) \cup \{r\}$.  In this case we obtain
  $\dim(Y_{P',Q}) = \dim(Y_{P,Q}) - 1$ and $Z_{P',Q} = Z_{P,Q}$, and
  the induction hypothesis shows that $\dim(\pi^{-1}(Y_{P',Q})) \gequ
  \dim(Z_{P',Q})$.  This completes the proof.
\end{proof}

\begin{thm}[Pieri rule for $\IG(m,2n)$] 
\label{indexpieriC}
For any index set $P$ and integer
 $r \in [1,n+k]$, we have
 \[
\sigma_r \cdot [X_P] = \sum 2^{N(P,Q)} \,[X_Q],
\]
where the sum is over all 
index sets $Q$ such that $P \to Q$ and $|Q| = |P|+r$.
\end{thm}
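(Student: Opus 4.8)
The plan is to read off each structure constant in the expansion $\sigma_r\cdot[X_P]=\sum_Q e^Q\,[X_Q]$ by pairing against a Poincar\'e dual class. By Proposition~\ref{BCduals}, $e^Q=\int_{\IG}\sigma_r\cdot[X_P]\cdot[X_{Q^\vee}]$; since $\sigma_r=c_r(\cQ)$, this is precisely the integral on the left-hand side of the projection formula~(\ref{projform}) with $Q^\vee$ in place of $Q$. Comparing codimensions and using $|Q|+|Q^\vee|=\dim\IG$, the coefficient $e^Q$ vanishes unless $|Q|=|P|+r$, so I assume this throughout; then $Y_{P,Q}:=X_P(F_\bull)\cap X_{Q^\vee}(G_\bull)$ has dimension $r$ for $F_\bull,G_\bull$ general, and is empty (so $e^Q=0$) unless $Q\lequ P$.

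Next I would handle the case $Q\lequ P$ with $P\not\to Q$. Since the intersection of two Schubert varieties in general position is irreducible, $Y_{P,Q}$ is irreducible of dimension $r$, hence $\pi^{-1}(Y_{P,Q})$ is irreducible of dimension $m-1+r$. The Lemma gives $\psi(\pi^{-1}(Y_{P,Q}))\subseteq Z_{P,Q}$, and Proposition~\ref{YtoZinC} then gives $\dim Z_{P,Q}<\dim\pi^{-1}(Y_{P,Q})$ in this case; therefore $\psi$ restricted to $\pi^{-1}(Y_{P,Q})$ has positive-dimensional fibers, so $\psi_*\pi^*[Y_{P,Q}]=0$ and $e^Q=0$ by~(\ref{projform}).

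Finally, in the case $P\to Q$, Proposition~\ref{YtoZinC} says $\psi$ restricts to a birational isomorphism $\pi^{-1}(Y_{P,Q})\to Z_{P,Q}$; in particular $\dim Z_{P,Q}=m-1+r$ and $\psi_*\pi^*[Y_{P,Q}]=[Z_{P,Q}]$. Substituting into~(\ref{projform}) gives $e^Q=\int_{\bP(V)}c_1(\cO(1))^{m-1+r}\cdot[Z_{P,Q}]=\deg Z_{P,Q}=2^{N(P,Q)}$, the last equality because $Z_{P,Q}$ is an irreducible complete intersection of degree $2^{N(P,Q)}$. Collecting the three cases yields the asserted formula $\sigma_r\cdot[X_P]=\sum_{P\to Q,\ |Q|=|P|+r}2^{N(P,Q)}[X_Q]$.

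I do not anticipate a real obstacle, because all the substantive geometry --- the containment $\psi(\pi^{-1}(Y_{P,Q}))\subseteq Z_{P,Q}$, the dimension dichotomy, the birationality when $P\to Q$, and the degree of $Z_{P,Q}$ --- is already furnished by the Lemma and Proposition~\ref{YtoZinC}. The only points demanding care are the bookkeeping that forces the three dimension counts to be mutually consistent (so that equality $\dim Z_{P,Q}=\dim\pi^{-1}(Y_{P,Q})$ holds exactly when $P\to Q$, matching Proposition~\ref{YtoZinC}), and checking that~(\ref{projform}) applies verbatim: that $\pi$ smooth makes $\pi^*[Y_{P,Q}]$ the fundamental class of the irreducible variety $\pi^{-1}(Y_{P,Q})$, and that $\psi_*$ of this class is $[Z_{P,Q}]$ or $0$ according to whether $\psi$ is generically finite there.
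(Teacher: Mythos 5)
Your proposal is correct and follows essentially the same route as the paper: Poincar\'e duality (Proposition~\ref{BCduals}), the projection formula~(\ref{projform}) for $\bP(\cS)\to\bP(V)$, and the dichotomy of Proposition~\ref{YtoZinC} (positive-dimensional fibers kill the pushforward when $P\not\to Q$; birationality onto the complete intersection $Z_{P,Q}$ of degree $2^{N(P,Q)}$ when $P\to Q$). You have merely spelled out the dimension bookkeeping that the paper leaves implicit in its one-line proof.
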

\begin{proof}
For any index set $Q$, the coefficient of $[X_Q]$ in the expansion of
the product $\sigma_r \cdot [X_P]$ is equal to $\int_{\IG} c_r(\cQ)
\cdot [X_P]\cdot [X_{Q^\vee}]$. The result now follows from
Proposition \ref{YtoZinC} and the projection formula, as explained 
earlier.
\end{proof}

Now suppose $V \cong \C^{2n+1}$ is an orthogonal space and consider
$\OG = \OG(m,2n+1)$. As noted in Section \ref{ogoddclpieri}, the Pieri
rule for $\OG$ is equivalent to the corresponding rule for $\IG$.
Given any two index sets $P$, $Q$ we define the diagram $D(P,Q)$ and
the relation $P \to Q$ as before, and set
\[
I'(P,Q) = \{ c \in [0,n] : c \text{ or } 2n+1-c \text{ is
  a cut through } D(P,Q)\} \cup \{n+1\}.
\]
Moreover, let $N'(P,Q)$ be the number (respectively, one less than the
number) of integers $c \in I'(P,Q)$ such that 
$c\gequ 2$ and $c-1 \not\in I'(P,Q)$, if $r\lequ k$ (respectively,
if $r>k$).

\begin{thm}[Pieri rule for $\OG(m,2n+1)$] 
\label{indexpieriB}
For any index set $P$ and integer $r \in [1,n+k]$, we have
 \[
\tau_r \cdot [X_P] = \sum 2^{N'(P,Q)} \,[X_Q],
\]
where the sum is over all index sets $Q$ such that $P \to Q$ and 
$|Q| = |P|+r$.
\end{thm}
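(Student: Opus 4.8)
The plan is to mimic, in the odd orthogonal setting, the geometric argument already carried out for $\IG(m,2n)$ in Theorem \ref{indexpieriC}, in the lemma preceding it, and in Proposition \ref{YtoZinC}, and then to handle the special classes $\ta_r$ with $r>k$ by an extra step that exploits the quadric in $\bP(V)$ defined by the symmetric form. Fix a standard orthogonal basis $e_1,\dots,e_{2n+1}$ of $V\cong\C^{2n+1}$ with $(e_i,e_j)=\pm\delta_{i+j,2n+2}$ and let $x_1,\dots,x_{2n+1}$ be the dual coordinates on $\bP(V)$. For index sets $Q\lequ P$, define the skew diagram $D(P,Q)$, its cuts, and the set $I'(P,Q)\subset[0,n+1]$ exactly as in the statement (so $0,n+1\in I'(P,Q)$), and let $Z_{P,Q}\subset\bP(V)$ be the closed subscheme defined by one quadratic equation $\sum_{j=c+1}^{d}x_j x_{2n+2-j}=0$ for each pair of consecutive elements $c<d$ of $I'(P,Q)$ with $d-c\gequ2$ — the equation for a gap reaching $n+1$ contains the term $x_{n+1}^2$ coming from the self-paired vector $e_{n+1}$ — together with the linear equations $x_c=0$ whenever column $c$ of $D(P,Q)$ contains no box, or some row of $D(P,Q)$ consists of a single box located in column $2n+2-c$. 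These equations involve pairwise disjoint sets of variables, so $Z_{P,Q}$ is an irreducible complete intersection, and its degree is $2^{M(P,Q)}$ where $M(P,Q)$ is the number of quadratic equations; since $0,n+1\in I'(P,Q)$, a short count shows $M(P,Q)=\#\{c\in I'(P,Q)\mid c\gequ2,\ c-1\notin I'(P,Q)\}$.

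With $F_\bull,G_\bull$ the two opposite coordinate flags, $Y_{P,Q}=X_P(F_\bull)\cap X_{Q^\vee}(G_\bull)$, and the smooth maps $\pi\colon\bP(\cS)\to\OG$ and $\psi\colon\bP(\cS)\to\bP(V)$, I would first prove $\psi(\pi^{-1}(Y_{P,Q}))\subset Z_{P,Q}$ as in the lemma before Theorem \ref{indexpieriC}: a vector $v$ in an isotropic $\Sigma\in Y_{P,Q}$ splits, according to the incidence conditions defining $X_P$ and $X_{Q^\vee}$, into pieces lying in nested or opposite flag steps whose pairwise pairings give the quadratic equations, the one reaching $n+1$ also using that $(v,v)=0$; the linear equations are immediate. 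Next I would establish the analogue of Proposition \ref{YtoZinC}: $\dim\pi^{-1}(Y_{P,Q})\gequ\dim Z_{P,Q}$, with equality if and only if $P\to Q$, in which case $\psi$ restricts to a birational isomorphism $\pi^{-1}(Y_{P,Q})\to Z_{P,Q}$. The proof is the same induction — surjectivity of $\psi$ onto $Z_{P,Q}$ when $D(P,Q)$ contains no $2\times2$ square, then generic injectivity, then the reduction moves when a $2\times2$ square is present — the only new bookkeeping occurring around the middle index $n+1\notin P$; irreducibility of $Y_{P,Q}$, needed for generic injectivity, again follows from Deodhar \cite{De}.

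Granting these facts, formula (\ref{projform}) and the projection formula give $\int_{\OG}c_r(\cQ)\cdot[X_P]\cdot[X_{Q^\vee}]=\deg Z_{P,Q}=2^{M(P,Q)}$ when $P\to Q$, and $0$ otherwise. For $r\lequ k$ we have $c_r(\cQ)=\ta_r$ and $M(P,Q)=N'(P,Q)$, which is exactly the assertion. For $r>k$ we have $c_r(\cQ)=2\ta_r$, and it remains to see that $M(P,Q)\gequ1$ for every $Q$ with $P\to Q$ and $|Q|=|P|+r$; this is where the symmetric form enters. Every point of $\pi^{-1}(Y_{P,Q})$ is a pair $(\ell,\Sigma)$ with $\ell\subset\Sigma$ an isotropic line, so $Z_{P,Q}$ is contained in the quadric $\{(v,v)=0\}\subset\bP(V)$; were $M(P,Q)=0$, then $Z_{P,Q}$ would be a linear subspace of $\bP(V)$ of dimension $\dim\pi^{-1}(Y_{P,Q})=n-k+r-1$, hence an isotropic linear subspace of that dimension, which is impossible since $r>k$ forces $n-k+r-1>n-1$ while the maximal isotropic subspaces of $V$ have dimension $n$. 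Therefore $\deg Z_{P,Q}$ is even and $\int_{\OG}\ta_r\cdot[X_P]\cdot[X_{Q^\vee}]=2^{M(P,Q)-1}=2^{N'(P,Q)}$, matching the two-case definition of $N'(P,Q)$.

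The part I expect to require the most care is the analogue of Proposition \ref{YtoZinC}: verifying that the surjectivity/injectivity induction for $\psi\colon\pi^{-1}(Y_{P,Q})\to Z_{P,Q}$ goes through without essential change past the middle index $n+1$, where $e_{n+1}$ is self-paired and the convention $n+1\in I'(P,Q)$ is imposed by hand, and where the degenerate quadratic equation $\sum_{j=c+1}^{n+1}x_jx_{2n+2-j}=0$ has odd rank. A secondary point to check is that the combinatorial identity $M(P,Q)=\#\{c\in I'(P,Q)\mid c\gequ2,\ c-1\notin I'(P,Q)\}$, together with the $r>k$ correction above, reproduces the definition of $N'(P,Q)$ in the statement. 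As a consistency check one may also translate index sets to $k$-strict partitions via Section \ref{svB} and compare with Theorem \ref{T:pieriC} and the type B--C relation (\ref{igogrel}), whose $\ell_k$-exponent accounts precisely for the $-1$ appearing when $r>k$.
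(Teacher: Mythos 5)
Your proposal is sound, but it takes a genuinely different route from the paper. The paper gives no separate geometric argument for Theorem \ref{indexpieriB} at all: it observes (Section \ref{ogoddclpieri}, citing \cite{BS}) that the Schubert structure constants of $\OG(n-k,2n+1)$ and $\IG(n-k,2n)$ differ by the explicit power of $2$ in (\ref{igogrel}), so the type B rule is read off from Theorem \ref{indexpieriC} in one line, the exponent $\ell_k(\nu)-\ell_k(\la)-\ell_k((r))$ converting $N(P,Q)$ into $N'(P,Q)$. You instead rerun the triple-intersection argument directly in $\bP(\C^{2n+1})$ and handle $r>k$ (where $c_r(\cQ)=2\ta_r$) by showing $Z_{P,Q}$ lies in the quadric and hence cannot be a linear space of projective dimension $n-k+r-1\gequ n$, forcing $M(P,Q)\gequ 1$; this is precisely the ``supplementary analysis using the quadric'' that the paper advertises in the preamble to Section \ref{Pierirule} but actually carries out only for type D. The paper's route buys brevity and avoids the self-paired vector $e_{n+1}$ entirely; yours buys a self-contained proof independent of the comparison (\ref{igogrel}) and serves as a warm-up for the type D case, at the cost of reverifying the analogue of Proposition \ref{YtoZinC} across the middle column, which you correctly identify as the delicate step but do not carry out.

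One concrete correction to your setup: for the gap $(c,n+1)$ in $I'(P,Q)$ the equation must be the restriction of the actual quadratic form to the middle block, namely $2\sum_{j=c+1}^{n}x_jx_{2n+2-j}+x_{n+1}^2=0$ (equivalently $(v_{\mathrm{mid}},v_{\mathrm{mid}})=0$, which follows from the isotropy of $\Sigma$ rather than from a cross-pairing of two distinct pieces of $v$). Your formal sum $\sum_{j=c+1}^{n+1}x_jx_{2n+2-j}$ weights the cross terms and $x_{n+1}^2$ equally and defines a different quadric, so the containment $\psi(\pi^{-1}(Y_{P,Q}))\subset Z_{P,Q}$ would fail as written. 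The rank, irreducibility, and degree are unaffected, so the rest of your argument is untouched once this is fixed.
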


\subsection{Type D}

Let $V \cong \C^{2n+2}$ be an orthogonal vector space and $\OG' =
\OG(m,2n+2)$. For any  isotropic flag $F_\bull \subset V$ and index set 
$P$, we have an open Schubert cell
\[ X_P^\circ(F_\bull) = \{ \Sigma \in \OG \mid
   \Sigma \cap F_{p_j} \supsetneq \Sigma\cap F_{p_j-1} 
   ~\forall 1 \lequ j \lequ m \} \,.
\]
The Schubert variety $X_P(F_\bull)$ is the closure of this set.

We will use the order $Q \preceq P$ on the index sets defined in 
Section \ref{svD}; this is equivalent to the condition
$X_P(F_\bull) \cap X_{Q^\vee}(G_\bull) \neq \emptyset$ for all
isotropic flags $F_\bull$ and $G_\bull$.
When $Q \preceq P$ we set $D(P,Q) = \{(j,c) \mid q_j \lequ c \lequ p_j\}$. 
Define a {\em cut\/} through the diagram $D(P,Q)$ to be any integer $c
\in [0,2n+2]$ such that $p_j \lequ c < q_{j+1}$ for
some $j$.  Set
\[I'(P,Q) = \{ c \in [0,n] : c \text{ or } 2n+2-c
\text{ is a cut through } D(P,Q) \} \cup \{n+1\},\]
and let 
$N(P,Q)$ be the number of integers $c \in I'(P,Q)$
such that $c \gequ 2$ and $c-1 \not\in I'(P,Q)$.

Given index sets $Q \preceq P$ for $\OG'$ we will write $P \to Q$ if 
(i) the diagram $D(P,Q)$ contains
no $2\times 2$ squares, except that a single $2\times 2$ square is
allowed across the middle (in columns $n+1$ and $n+2$), (ii)
whenever column $c$ of $D(P,Q)$ contains two boxes, column
$2n+3-c$ contains at least one box, and (iii) $D(P,Q)$ cannot have exactly 
3 boxes in columns $n+1$ and $n+2$.

Fix an orthogonal basis $e_1,\dots,e_{2n+2}$ of $V$ such that $(e_i,e_j)
= \delta_{i+j,2n+3}$ and $\langle e_1,\ldots,e_{n+1} \rangle$ is in the
same family as our chosen maximal isotropic subspace $L$. We let
$x_1,\dots,x_{2n+2} \in V^*$ be the dual basis.  For index sets $Q \preceq P$
we define $Z_{P,Q} \subset \bP(V)$ to be the subvariety cut out by the
following equations.

\medskip
\noin
(a) $x_{c+1}x_{2n+2-c} + x_{c+2}x_{2n+1-c} + \dots + x_d x_{2n+3-d} = 0$,
whenever $c < d$ are consecutive elements of $I'(P,Q)$ such that $d-c
\gequ 2$.

\medskip
\noin (b) $x_c = 0$, whenever $D(P,Q)$ has no boxes in column $c$; or
some row of $D(P,Q)$ contains exactly one box, which is located in
column $2n+3-c$; or $c\in\{n+1, n+2\}$ and only one row of $D(P,Q)$
contains a box in column $c$, and this row starts or terminates at
column $2n+3-c$.

\medskip
\noin
We see that $Z'_{P,Q}$ is an irreducible complete intersection of
degree $2^{N(P,Q)}$.

Let $F_\bull$ and $G_\bull$ be the isotropic flags defined by $F_i =
\langle e_1,\ldots,e_i\rangle$, for each $i$, and $G_i = \langle
e_{2n+3-i},\ldots,e_{2n+2}\rangle$ for $1\lequ i \lequ n$, while
\[
G_{n+1}=
\begin{cases} 
 \langle e_{n+2},\ldots, e_{2n+2} \rangle & 
 \text{if $n$ is odd}, \\ 
 \langle e_{n+1}, e_{n+3}, \ldots, e_{2n+2} \rangle & 
 \text{if $n$ is even}.
\end{cases}
\] 
Set $Y_{P,Q} = X_P(F_\bull)
\cap X_{Q^\vee}(G_\bull) \subset \OG'$.  Let $\cS$ be the tautological
subbundle over $\OG'$, and let $\pi : \bP(\cS) \to \OG'$ and $\psi :
\bP(\cS) \to \bP(V)$ be the natural projections. Arguing as in the
previous section, we prove the following.

\begin{prop}
  For index sets $Q \preceq P$ we have $\dim(\pi^{-1}(Y_{P,Q})) \gequ
  \dim(Z_{P,Q})$, with equality if and only if $P \to Q$.  When the
  latter occurs, the map $\psi : \pi^{-1}(Y_{P,Q}) \to Z_{P,Q}$ is a
  birational isomorphism.
\end{prop}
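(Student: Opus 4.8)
The plan is to follow the template set by the proof of Proposition~\ref{YtoZinC} in type C, adapting the argument to account for the two families of maximal isotropic subspaces and the peculiarities of the middle columns $n+1$ and $n+2$. First I would dispose of the ``easy direction'': assuming the diagram $D(P,Q)$ contains no $2\times 2$ square (except possibly a single one straddling columns $n+1,n+2$), I would show by induction on $m$ that $\psi$ maps $\pi^{-1}(Y_{P,Q})$ \emph{onto} $Z_{P,Q}$. As in type C, after possibly replacing $(P,Q)$ by $(Q^\vee,P^\vee)$ one normalizes so that $p_1+q_m$ is small, peels off the first row, and writes a generic vector $x\in Z_{P,Q}$ as $v+v'$ with $v\in F_{p_1}$ and $v'\in Z_{P',Q'}$ (where $P'=\{p_2,\dots,p_m\}$, $Q'=\{q_2,\dots,q_m\}$), choosing the scalar so that $(v,v')=0$; the quadratic relations (a) defining $Z_{P,Q}$ guarantee this orthogonality exactly as before. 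The one genuinely new feature is that when a row of $D(P,Q)$ passes through column $n+1$ or $n+2$ one must keep track of which family the resulting isotropic $\Sigma$ lies in, using the flag $G_{n+1}$ chosen according to the parity of $n$; the equations of type (b) attached to the middle columns were designed precisely so that this family constraint is automatically satisfied, and I would check that the inductively produced $\Sigma$ lies in $X_P(F_\bull)\cap X_{Q^\vee}(G_\bull)$ by verifying the dimension inequalities against $F_{p'_j}$ (and against $\widetilde F_{n+1}$ when $p'_j=n+2$), which follows from Proposition~\ref{closureD}.

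Next I would establish the surjectivity onto a dense open subset: define $U_{P,Q}\subset Y_{P,Q}$ to be the locus of $\Sigma=\langle u_1,\dots,u_m\rangle$ with $u_j=\sum_{q_j\le c\le p_j} a_{j,c}e_c$ and all ``expected'' coordinates $a_{j,c}$ nonzero, show $U_{P,Q}\ne\emptyset$ by the same inductive construction, and invoke Deodhar's irreducibility of an intersection of two general-position Schubert varieties \cite{De} to conclude $U_{P,Q}$ is dense in $Y_{P,Q}$. Then for the equivalence: if some column $c\notin\{n+1,n+2\}$ of $D(P,Q)$ has two boxes but column $2n+3-c$ has none, or if $D(P,Q)$ has a $2\times2$ square away from the middle, one exhibits a positive-dimensional fiber of $\psi$ over a generic point of $Z_{P,Q}$ (by the substitution $u_j+te_c$, $u_{j+1}-te_c$, isotropic for all $t$), so $\dim\pi^{-1}(Y_{P,Q})>\dim Z_{P,Q}$; the extra forbidden configuration ``exactly $3$ boxes in columns $n+1,n+2$'' must be handled separately, again producing a moving family. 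Conversely, when $P\to Q$ holds, birationality of $\psi:\pi^{-1}(Y_{P,Q})\to Z_{P,Q}$ is proved as in type C by enlarging $U_{P,Q}$ to $W_{P,Q}$ and showing a generic vector $x=u_1+\cdots+u_m$ lies in a unique such $\Sigma$, the scalar-matching argument $a_1u'_1=u_1$ going through because the relevant middle-column coordinate $x_{2n+3-p_1}$ is nonzero.

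Finally I would run the induction on $\dim Y_{P,Q}=|Q|-|P|$ that reduces the case where $D(P,Q)$ contains a (non-middle) $2\times2$ square to smaller diagrams, exactly mirroring the last two paragraphs of the proof of Proposition~\ref{YtoZinC}: pick $j$ minimal with $p_j>q_{j+1}$, and according to whether $2n+3-r\in P$ for all $r\in[q_{j+1},p_j-1]$ or not, replace $P$ by a suitable $P'$ with $\dim Y_{P',Q}=\dim Y_{P,Q}-1$ and either $Z_{P',Q}=Z_{P,Q}$ or $\dim Z_{P',Q}\ge\dim Z_{P,Q}-1$, and appeal to the induction hypothesis. The main obstacle, I expect, will be the careful bookkeeping around columns $n+1$ and $n+2$: verifying that the type~(b) equations there correctly encode the family condition in both parities of $n$, that the ``single $2\times2$ square across the middle'' is genuinely allowed (i.e.\ does \emph{not} force a moving family) while ``$3$ boxes in the middle'' is not, and that all of the reduction steps respect these middle-column constraints. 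Everything else is a routine transcription of the type~C argument with $2n$ replaced by $2n+2$ and $2n+1$ by $2n+3$.
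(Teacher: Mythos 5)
Your plan follows exactly the route the paper takes: the paper gives no separate proof of this type~D proposition, stating only that one argues as in the type~C case (Proposition~\ref{YtoZinC}), and your proposal is a faithful transcription of that argument with the correct identification of the genuinely new points --- the family condition encoded by the middle-column equations of type (b), the fact that the single $2\times 2$ square across columns $n+1,n+2$ does not produce a moving family (because $e_{n+1}$ and $e_{n+2}$ pair nontrivially with each other), and the forbidden three-box configuration. No gaps beyond the middle-column bookkeeping you already flag.
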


\begin{cor}
\label{indexcorD}
For any index set $P$ and integer $r \in [1,n+k]$, we have
 \[
c_r(\cQ) \cdot [X_P] = \sum 2^{N(P,Q)} \,[X_Q],
\]
where the sum is over all 
index sets $Q$ such that $P \to Q$ and $|Q| = |P|+r$.
\end{cor}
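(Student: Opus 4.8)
The plan is to follow the same strategy used in the symplectic case (Proposition~\ref{YtoZinC} and Theorem~\ref{indexpieriC}), reducing everything to the projection formula \eqref{projform}. Specifically, for any index set $Q$, the coefficient of $[X_Q]$ in the expansion of $c_r(\cQ)\cdot[X_P]$ in $\HH^*(\OG')$ equals $\int_{\OG'} c_r(\cQ)\cdot [X_P]\cdot [X_{Q^\vee}]$. Choosing the isotropic flags $F_\bull$ and $G_\bull$ as specified just before the Proposition (in general position, which is achieved by this explicit choice together with a genericity argument), we have $Y_{P,Q}=X_P(F_\bull)\cap X_{Q^\vee}(G_\bull)$, and \eqref{projform} rewrites the integral as $\int_{\bP(V)} c_1(\cO(1))^{m-1+r}\cdot \psi_*\pi^*[Y_{P,Q}]$.

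First I would invoke the Proposition (stated immediately above the Corollary) to control the map $\psi:\pi^{-1}(Y_{P,Q})\to Z_{P,Q}$. Concretely: if $P\to Q$ then $\psi$ is a birational isomorphism onto $Z_{P,Q}$, which is an irreducible complete intersection of $N(P,Q)$ quadrics in $\bP(V)$ of the correct codimension, so $\psi_*\pi^*[Y_{P,Q}]=[Z_{P,Q}]$ and $\int_{\bP(V)} c_1(\cO(1))^{m-1+r}\cdot[Z_{P,Q}]=2^{N(P,Q)}$; note the dimension bookkeeping forces $|Q|=|P|+r$ exactly when the cycle classes are complementary. If $P\not\to Q$, then either $Q\not\preceq P$ (in which case $Y_{P,Q}=\emptyset$ and the integral vanishes), or $Q\preceq P$ but $\dim(\pi^{-1}(Y_{P,Q}))>\dim(Z_{P,Q})$, so the fibers of $\psi$ over its image are positive-dimensional and hence $\psi_*\pi^*[Y_{P,Q}]=0$, giving a vanishing integral. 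Assembling these cases yields exactly the stated sum over $\{Q : P\to Q,\ |Q|=|P|+r\}$ with coefficients $2^{N(P,Q)}$.

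The main obstacle is purely one of citation-plumbing rather than new mathematics: one must be sure the Proposition as stated genuinely covers all the edge cases peculiar to type~D — the allowed single $2\times 2$ square across columns $n+1,n+2$, the parity/family subtleties in the definition of $G_{n+1}$, the extra clause in the definition of $P\to Q$ forbidding exactly three boxes in the middle two columns, and the corresponding extra clause in equation~(b) defining $Z_{P,Q}$. Since the excerpt asserts the Proposition "is proved exactly as in the previous section" (i.e.\ the analogue of Proposition~\ref{YtoZinC}), I would simply quote it; the Corollary's proof is then a two-line application of the projection formula, identical in form to the proof of Theorem~\ref{indexpieriC}. I would also remark that, just as in type~C where the intersection of two Schubert cells in general position is irreducible by Deodhar's results \cite{De}, the same irreducibility holds here, which is what lets us identify $\psi_*\pi^*[Y_{P,Q}]$ with the fundamental class of its image when $\psi$ is generically one-to-one.

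\begin{proof}
For any index set $Q$, the coefficient of $[X_Q]$ in the product
$c_r(\cQ)\cdot[X_P]$ equals $\int_{\OG'} c_r(\cQ)\cdot[X_P]\cdot[X_{Q^\vee}]$.
Using \eqref{projform} with the flags $F_\bull$, $G_\bull$ chosen above,
this integral equals
$\int_{\bP(V)} c_1(\cO(1))^{m-1+r}\cdot\psi_*\pi^*[Y_{P,Q}]$.
If $Q\not\preceq P$ then $Y_{P,Q}=\emptyset$ and the integral vanishes.
If $Q\preceq P$ but $P\not\to Q$, then by the Proposition
$\dim(\pi^{-1}(Y_{P,Q}))>\dim(Z_{P,Q})$, so $\psi$ has positive-dimensional
general fibers over $\psi(\pi^{-1}(Y_{P,Q}))$ and $\psi_*\pi^*[Y_{P,Q}]=0$;
hence the integral vanishes.
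If $P\to Q$, then $\psi:\pi^{-1}(Y_{P,Q})\to Z_{P,Q}$ is a birational
isomorphism onto the irreducible complete intersection $Z_{P,Q}$ of
$N(P,Q)$ quadrics in $\bP(V)$, whose codimension matches, so
$\psi_*\pi^*[Y_{P,Q}]=[Z_{P,Q}]$ and the integral equals
$\deg Z_{P,Q}=2^{N(P,Q)}$, the degree count also forcing $|Q|=|P|+r$.
Summing over all index sets $Q$ gives the claimed formula.
\end{proof}
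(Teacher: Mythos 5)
Your proposal is correct and follows exactly the paper's strategy: the paper derives this Corollary from the type~D analogue of Proposition~\ref{YtoZinC} via the projection formula \eqref{projform}, precisely as in the proof of Theorem~\ref{indexpieriC}, with the same case analysis ($Q\not\preceq P$ empty, $P\not\to Q$ positive-dimensional fibers killing the pushforward, $P\to Q$ birational onto the degree-$2^{N(P,Q)}$ complete intersection $Z_{P,Q}$). Your remarks on the type~D edge cases and on Deodhar's irreducibility result match the concerns the paper itself addresses in establishing the Proposition.
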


We now refine Corollary \ref{indexcorD} to obtain the Pieri rule for
products by the special Schubert classes in $\OG'$.  Define 
\begin{gather*}
S= \{i \in [1,n+1]\ :\  q_j\lequ i \lequ p_j \ \text{for
some $j$}\}, \\
S'= \{p\in P\ :\ p \gequ n+2 \ \text{and} \  2n+3-p \in S \},
\end{gather*}
and set $h(P,Q) = |S|+|S'|+n$.
Let $N'(P,Q)=N(P,Q)$ (respectively, $N'(P,Q)=N(P,Q)-1$) if $r \lequ k$
(respectively, if $r>k$). If $r \neq k$, then set $\delta_{PQ}=1$. If
$r=k$ and $N'(P,Q)>0$, then set
\[
\delta_{PQ} =\delta'_{PQ} = 1/2,
\]
while if $N'(P,Q)=0$, define
\[
\delta_{PQ} = 
\begin{cases}
1 & \text{if $h(P,Q)$ is odd}, \\
0 & \text{otherwise}, 
\end{cases}
\qquad \mathrm{and}
\qquad 
\delta'_{PQ} = 
\begin{cases}
1 & \text{if $h(P,Q)$ is even}, \\
0 & \text{otherwise}. 
\end{cases}
\]

\begin{thm}[Pieri rule for $\OG(m,2n+2)$] 
\label{indexpieriD}
For any index set $P$ and integer $r \in [1,n+k]$, 
we have
\begin{equation}
\label{indexDpieri}
\tau_r \cdot [X_P] = \sum \delta_{PQ}\,2^{N'(P,Q)}\,[X_Q],
\end{equation}
where the sum is over all index sets $Q$ such that $P \to Q$ and 
$|Q| = |P|+r$. Furthermore, the product
$\ta'_k\ [X_P]$ is obtained by replacing $\delta_{PQ}$
with $\delta'_{PQ}$ throughout.
\end{thm}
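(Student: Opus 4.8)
The plan is to deduce Theorem~\ref{indexpieriD} from Corollary~\ref{indexcorD} together with the relations~(\ref{ctotau}) that express $c_p(\cQ)$ in terms of the special Schubert classes, supplemented in the single delicate case $r=k$ by a geometric analysis using the symmetric quadric in $\bP(V)$. I would organize everything according to the three ranges $r<k$, $r>k$, and $r=k$; only the last needs genuinely new input.

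For $r<k$ we have $c_r(\cQ)=\ta_r$, and since $N'(P,Q)=N(P,Q)$ and $\delta_{PQ}=1$ whenever $r\lequ k$, the asserted identity~(\ref{indexDpieri}) is literally Corollary~\ref{indexcorD}. For $r>k$ we have $c_r(\cQ)=2\ta_r$; the coefficient of any Schubert class $[X_Q]$ in $\ta_r\cdot[X_P]$ is the nonnegative intersection number $\int_{\OG'}\ta_r\cdot[X_P]\cdot[X_{Q^\vee}]$, so comparison with Corollary~\ref{indexcorD} forces $2^{N(P,Q)}$ to be even, hence $N(P,Q)\gequ 1$, and dividing by $2$ yields $\ta_r\cdot[X_P]=\sum 2^{N(P,Q)-1}[X_Q]=\sum\delta_{PQ}2^{N'(P,Q)}[X_Q]$ with $\delta_{PQ}=1$ and $N'(P,Q)=N(P,Q)-1$. (One may instead verify $N(P,Q)\gequ 1$ directly from the combinatorics of $D(P,Q)$, but the formal argument suffices.)

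The case $r=k$ is the heart of the matter. Corollary~\ref{indexcorD} and~(\ref{ctotau}) give $(\ta_k+\ta'_k)\cdot[X_P]=\sum_{P\to Q}2^{N(P,Q)}[X_Q]$, so it is enough to compute the coefficient of $[X_Q]$ in $\ta_k\cdot[X_P]$, namely $\int_{\OG'}\ta_k\cdot[X_P]\cdot[X_{Q^\vee}]$, the coefficient in $\ta'_k\cdot[X_P]$ being obtained by subtraction. I would evaluate this with $[X_P]$, $[X_{Q^\vee}]$ taken relative to the standard opposite flags $F_\bull$, $G_\bull$ of the proof of Corollary~\ref{indexcorD}, and $\ta_k$ represented by $X_k(E_\bull)$ for a third flag $E_\bull$. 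Here $X_k(E_\bull)=\{\,\Sigma\mid\Sigma\cap M\neq 0\,\}$ for a maximal isotropic $M$ lying in the family of $L$ when $n$ is even and in the opposite family when $n$ is odd. Pulling this condition back along $\pi^{-1}(Y_{P,Q})$ and pushing forward through the birational map $\psi\colon\pi^{-1}(Y_{P,Q})\to Z_{P,Q}$, it becomes a hyperplane-type section of the complete intersection of quadrics $Z_{P,Q}$; since every point of $Z_{P,Q}$ already lies on the symmetric quadric $\{(x,x)=0\}$, this section is governed by how the quadrics in list~(a) interact with the form. When $N(P,Q)\gequ 1$, the swap $e_{n+1}\leftrightarrow e_{n+2}$ preserves the combinatorics of $D(P,Q)$ — for $P\to Q$ forces columns $n+1$ and $n+2$ of $D(P,Q)$ to carry equally many boxes — and exchanges the two families, so $\ta_k$ and $\ta'_k$ each pick up coefficient $2^{N(P,Q)-1}$, matching $\delta_{PQ}=\delta'_{PQ}=1/2$. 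When $N(P,Q)=0$, $Z_{P,Q}$ is a linear $\bP^n$, the intersection $X_k(E_\bull)\cap X_P(F_\bull)\cap X_{Q^\vee}(G_\bull)$ together with its $X'_k$ analogue is a single reduced point, and that point lies in exactly one of $X_k(E_\bull)$, $X'_k(E_\bull)$.

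The main obstacle is the bookkeeping in this last case $N(P,Q)=0$: one must locate the unique point $\Sigma$ of the triple intersection, describe the maximal isotropic subspace it determines, and decide whether the parity of the dimension of that subspace's intersection with $L$ puts $\Sigma$ in $X_k$ or in $X'_k$, matching this with the parity of $h(P,Q)=|S|+|S'|+n$. The definitions of $S$ and $S'$ are arranged so that $|S|$ records the coordinate directions $e_1,\dots,e_{n+1}$ that survive in $Z_{P,Q}$ and $|S'|$ corrects for the pairs of surviving directions symmetric about the middle, while the summand $n$ absorbs the parity discrepancy between the two families built into the definition of $X_k$ versus $X'_k$ (the interchange of $F_{n+1}$ and $\wt F_{n+1}$ according to the parity of $n$). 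I would carry this out by induction on $m$, exactly as in the proof of Proposition~\ref{YtoZinC}: peel off the first rows of $P$ and $Q$, track how the surviving middle coordinates and hence the family change at each step, and conclude. Finally, the formula for $\ta'_k\cdot[X_P]$ is then immediate, since $\ta'_k\cdot[X_P]=c_k(\cQ)\cdot[X_P]-\ta_k\cdot[X_P]$ and $\delta_{PQ}+\delta'_{PQ}$ equals $1$ when $N'(P,Q)=0$ and equals $1$ (as $\tfrac12+\tfrac12$) when $N'(P,Q)>0$.
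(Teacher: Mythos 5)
Your overall strategy — dispose of $r\neq k$ via (\ref{ctotau}) and Corollary~\ref{indexcorD}, then for $r=k$ compute $I_1=\int_{\OG'}\ta_k\cdot[X_P]\cdot[X_{Q^\vee}]$ and $I_2=\int_{\OG'}\ta'_k\cdot[X_P]\cdot[X_{Q^\vee}]$ by pushing $Z_{P,Q}$ into the quadric $W\subset\bP(V)$ and comparing with its two rulings — is exactly the paper's. But the step that is supposed to give $I_1=I_2=2^{N(P,Q)-1}$ when $N(P,Q)\gequ 1$ has a genuine gap. First, the combinatorial claim you lean on is false: $P\to Q$ does \emph{not} force columns $n+1$ and $n+2$ of $D(P,Q)$ to carry equally many boxes (condition (iii) only excludes exactly three boxes there; a single box, e.g.\ a row ending at column $n+1$, is allowed, and such configurations do occur with $N(P,Q)\gequ 1$ — take $P=\{2,5,10\}$, $Q=\{1,3,8\}$ in $\OG(3,10)$). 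Second, and more seriously, the swap $\sigma\colon e_{n+1}\leftrightarrow e_{n+2}$ lies in $\mathrm{O}(V)\smallsetminus\SO(V)$ and therefore does not merely exchange $\ta_k$ with $\ta'_k$: it replaces $F_{n+1}$ by $\wt F_{n+1}$, hence replaces $[X_P]$ and $[X_{Q^\vee}]$ by the classes of the opposite type whenever $n+1$ or $n+2$ belongs to $P$ or $Q^\vee$. The symmetry only yields $\int\ta_k\cdot[X_P]\cdot[X_{Q^\vee}]=\int\ta'_k\cdot[X_{P'}]\cdot[X_{(Q^\vee)'}]$ for the type-switched index sets, which is not the equality $I_1=I_2$ you need. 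The paper closes this by a purely cohomological observation: the image of $\iota^*\colon\HH^{2n}(\bP(V))\to\HH^{2n}(W)$ consists of classes with equal coefficients on the two rulings $e$ and $f$, and when $\deg Z_{P,Q}=2^{N(P,Q)}>1$ the class $\theta_*\pi^*[Y_{P,Q}]$ must lie in this image; the projection formula then gives $I_1=I_2=2^{N(P,Q)-1}$ with no case analysis.

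For $N(P,Q)=0$ your plan (determine which ruling the linear space $Z_{P,Q}$ belongs to and match its parity against $h(P,Q)$) is the right one, but the inductive "peel off rows" scheme is not carried out, and this is where the actual work lies: one must show that every $p_j\in S'$ with $p_j>n+2$ satisfies $q_j=p_j$ (this uses $I'(P,Q)=[0,n+1]$), conclude that $Z_{P,Q}\cap\langle e_1,\dots,e_{n+1}\rangle=\langle e_i\mid i\in S\smallsetminus T\rangle$ with $T=\{2n+3-p:p\in S'\}$, and then read off the family of $Z_{P,Q}$ from the parity of $\dim(Z_{P,Q}\cap\bP(L))$ together with the relations $e^2=f^2=0$, $ef=eh^n$ ($n$ odd) and $e^2=f^2=eh^n$, $ef=0$ ($n$ even). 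Your $r>k$ argument via integrality of $\int\ta_r\cdot[X_P]\cdot[X_{Q^\vee}]$ is fine and slightly more explicit than the paper's.
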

\begin{proof}
When $r\neq k$, equation (\ref{indexDpieri}) follows immediately from 
(\ref{ctotau}) and Corollary \ref{indexcorD}. For the remaining terms
we must compute the integrals
\[
I_1=\int_{\OG'} \ta_k \cdot [X_P]\cdot [X_{Q^\vee}]
\qquad 
\mathrm{and}
\qquad
I_2=\int_{\OG'} \ta'_k \cdot [X_P]\cdot [X_{Q^\vee}].
\]
The argument uses the projection formula and the 
quadric hypersurface $W\subset\bP(V)$ defined by the orthogonal form on
$V$. Let $h=c_1(\cO_{\bP(V)}(1)\vert_W)$ denote the hyperplane class in
$\HH^*(W)$, so that $h^n= e + f$, where $e$ is the class of the ruling
$\bP(L)\subset W$, and $f$ is the class of the opposite ruling. 
If $\theta:\bP(\cS)\to W$
denotes the natural morphism, then
\begin{equation}
\label{ef}
\pi_*\theta^*e = \begin{cases}
  \ta_k & \text{if $n$ is even}, \\
  \ta'_k & \text{if $n$ is odd},
\end{cases}
\end{equation}
while the opposite relations hold for $\pi_*\theta^*f$.

Let $\iota : W \hookrightarrow \bP(V)$ be the inclusion map, so that 
$\psi = \iota\theta$. The
image of the map $\iota^*: \HH^{2n}(\bP(V)) \to \HH^{2n}(W)$ consists of
classes with equal coefficients in the rulings $e$ and $f$.  If the
degree $2^{N(P,Q)}$ of $Z_{P,Q}$ is greater than one, then the class
$\theta_*\pi^*[Y_{P,Q}]$ must lie in the image of $\iota^*$. Applying
the projection formula, we deduce from this and the relations
(\ref{ef}) that $I_1=I_2=2^{N(P,Q)-1}$, as required.

On the other hand, if $Z_{P,Q}$ is a linear subspace of $\bP(V)$ lying
inside the quadric $W$, it suffices to determine its ruling class. Note
that in this situation we have $I'(P,Q)=[0,n+1]$ and $N(P,Q)=0$.  
Let $T= \{2n+3- p\ : \ p\in S'\}$; it follows 
from the linear equations (b) defining $Z_{P,Q}$ that $Z_{P,Q} \cap
\langle e_1,\ldots, e_{n+1} \rangle =  
\langle e_i\ |\ i \in S\ssm T\rangle$.
To see this, note that if $p_j\in S'$ is such that $p_j>n+2$, then 
$q_j=p_j$, i.e., there is only one box in row $j$ of $D(P,Q)$. Indeed,
if $q_j<p_j$, then $p_j-1$ is not a cut, and since there is a box in
column $2n+3-p_j$ of $D(P,Q)$ and $2n+3-p_j\neq p_i$ for all $i$, we 
see that $2n+2-(p_j-1)$ is not a cut either. This contradicts the fact
that $I'(P,Q)=[0,n+1]$. 

We deduce that if $h(P,Q)$ is odd, then $Z_{P,Q}$ and $\bP(L)$ are in 
the same family, and in opposite families if $h(P,Q)$ is even. 
In $\HH^*(W)$ we have the relations 
$e^2=f^2=0$ and $ef=eh^n$, if $n$ is odd, and
$e^2=f^2=eh^n$ and $ef=0$, if $n$ is even. Using the projection formula
again, we conclude that $I_1=1$ and $I_2=0$, if $h(P,Q)$ is odd, with 
the roles reversed if $h(P,Q)$ is even. This completes the proof.
\end{proof}

\subsection{Proofs of Theorems \ref{T:pieriC} and \ref{thm.ogevenpieri}}
\label{indtopar}

We show how to derive Theorem~\ref{T:pieriC} from Theorem
\ref{indexpieriC}, by going from index sets to partitions. 
It suffices to establish the equivalence of the
Pieri relations ``$\lambda \to \mu$'' and ``$P \to Q$'', together with
the equality of the corresponding intersection multiplicities.  This
is the content of the following two propositions.

\begin{prop}
\label{pqlm}
  Let $P$ and $Q$ be index sets for $\IG(n-k,2n)$ and let $\lambda$
  and $\mu$ be the corresponding $k$-strict partitions.  Then $P \to
  Q$ if and only if $\lambda \to \mu$. 
\end{prop}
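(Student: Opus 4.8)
The plan is to translate both sides of the claimed equivalence into explicit conditions on the index sets $P=\{p_1<\cdots<p_m\}$ and $Q=\{q_1<\cdots<q_m\}$, using the dictionary between index sets and $k$-strict partitions established in Section~\ref{svC} (in particular Proposition~\ref{P:trnslC}). Recall that $P\to Q$ was characterized (just before Proposition~\ref{YtoZinC}) by the three conditions: (i) $Q\lequ P$, i.e.\ $q_j\lequ p_j$ for all $j$; (ii) $p_j\lequ q_{j+1}$ for $1\lequ j\lequ m-1$; and (iii) whenever $p_j=q_{j+1}$ there exists $i$ with $q_i<2n+1-p_j<p_i$. The relation $\lambda\to\mu$ from Definition~\ref{D:pieriarrow} is phrased in terms of removing a vertical strip from the first $k$ columns, adding a horizontal strip, and the $k$-related conditions (1) and (2). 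First I would unwind what $\lambda\to\mu$ means purely numerically: since $p_j(\lambda)=n+k+1-\lambda_j+\#\{i<j:\lambda_i+\lambda_j\lequ 2k+j-i\}$, I can compute the effect on the index set of removing a vertical strip from the first $k$ columns (this only affects the rows $j$ with $\lambda_j\lequ k$, i.e.\ $p_j>n$) and of adding a horizontal strip.

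The key steps, in order, would be: (a) Show $Q\lequ P \iff \mu$ is obtained from $\lambda$ by removing a vertical strip from the first $k$ columns and then adding boxes so that no two added boxes lie in the same row outside the first $k$ columns — more precisely, unwind $q_j\lequ p_j$ using Proposition~\ref{P:trnslC}(iv) and the fact that the $\#$-corrections are controlled. Here one has to be careful that the partition $\mu$ need not be contained in $\lambda$ nor contain it, so the comparison $q_j\lequ p_j$ mixes both the vertical-strip-removal part (in the first $k$ columns) and the horizontal-strip-addition part. (b) Show that condition (ii), $p_j\lequ q_{j+1}$, together with (i), is equivalent to the ``horizontal strip'' condition on $\mu/(\text{result of vertical strip removal})$ combined with the constraints in (1) and (2) of Definition~\ref{D:pieriarrow}; the inequality $p_j\lequ q_{j+1}$ says the $j$-th row of $\mu$ does not extend past where the $(j+1)$-st row of $\lambda$ began, which is exactly the horizontal-strip / no-two-boxes-in-a-column condition once the shifts are accounted for. (c) Handle the degenerate case (iii): when $p_j=q_{j+1}$, which geometrically forces a box to be shared in a column, relate the existence of $i$ with $q_i<2n+1-p_j<p_i$ to the $k$-related conditions — this is where the $k$-relatedness ``$|c-k-1|+r=|c'-k-1|+r'$'' enters, since $2n+1-p_j$ being strictly between $q_i$ and $p_i$ encodes precisely that the relevant removed/bottom boxes are $k$-related to boxes of $\mu\ssm\lambda$.

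The main obstacle I expect is step (a) combined with (c): keeping careful track of how the correction terms $\#\{i<j:\lambda_i+\lambda_j\lequ 2k+j-i\}$ change when passing from $\lambda$ to $\mu$, since both the vertical strip removal in the first $k$ columns and the horizontal strip addition can alter which pairs $(i,j)$ satisfy the inequality, and these changes must be shown to cascade exactly into conditions (1) and (2). It will help to use the pictorial description in Section~\ref{dualpar}: the non-related diagonals and the $k$-related structure give a clean geometric translation of $p_j>n$ and $p_j\lequ n$ into row lengths of $\floor{\lambda}_k$ and diagonal lengths, so that conditions (ii) and (iii) can be read off the diagram of $\mu$ relative to $\lambda$ rather than from raw inequalities. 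I would organize the argument by first treating rows $j$ with $\lambda_j>k$ (where $p_j\lequ n$ and the correspondence is essentially the type-A Pieri move) and then rows with $\lambda_j\lequ k$ (where $p_j>n$ and the vertical-strip/$k$-related phenomena occur), finally checking compatibility at the boundary $\lambda_j=k$. Throughout, Proposition~\ref{P:trnslC} parts (i)--(iv) are the workhorses converting each inequality between $p$'s and $q$'s into a statement about $\lambda$ and $\mu$, and I do not expect any step beyond bookkeeping once the dictionary is set up.
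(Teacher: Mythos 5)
Your overall strategy -- translate everything through the dictionary of Proposition~\ref{P:trnslC}, with $\lambda_j = n+k+1-p_j+|A_j|$, $\mu_j=n+k+1-q_j+|B_j|$ where $A_j=\{i<j: p_i+p_j>2n+1\}$, $B_j=\{i<j: q_i+q_j>2n+1\}$, and track how the correction terms change -- is exactly the paper's, and you have correctly located the main difficulty in the bookkeeping of those correction terms.

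However, the proposed decomposition into three stand-alone equivalences (a), (b), (c), matching conditions (i), (ii), (iii) of $P\to Q$ one-by-one with pieces of Definition~\ref{D:pieriarrow}, does not hold, and step (a) in particular would fail if you tried to prove it as stated. The condition $Q\lequ P$ is \emph{not} equivalent to any strip condition on its own: in the direction ``$\lambda\to\mu$ implies $q_j\lequ p_j$'' one already needs the $k$-relatedness conditions (1) and (2) -- the paper first proves the inclusion $B_j\subset A_j$ using condition (2), and the final contradiction ruling out $q_j>p_j$ again invokes the $k$-related boxes; the strip conditions alone do not suffice. Conversely, extracting $\mu_j\lequ\lambda_{j-1}$ (part of the strip structure) from the index-set side requires condition (iii) in the boundary case $p_{j-1}=q_j$, not just $Q\lequ P$. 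So the hypotheses interact across your proposed steps in both directions. The repair is structural rather than conceptual: prove the two implications of the single biconditional wholesale, keeping all of (i)--(iii), respectively all of the strip and $k$-related conditions, available simultaneously in each direction. That is what the paper does, and with that reorganization your plan goes through.
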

\begin{proof}
  The assumptions of the proposition tell us that $\lambda_j =
  n+k+1-p_j+|A_j|$ and $\mu_j = n+k+1-q_j+|B_j|$ for each $j$, where
  $A_j = \{i<j : p_i+p_j > 2n+1\}$ and $B_j = \{i<j : q_i+q_j >
  2n+1\}$.
  
  Assume that $P \to Q$.  We first show that $\mu$ can be obtained by
  removing a horizontal strip from the first $k$ columns of $\lambda$
  and adding a horizontal strip to the result.  We need to check that
  $\lambda_j-1 \lequ \mu_j \lequ \lambda_{j-1}$ for each $j$, and
  $\lambda_j \lequ \mu_j$ when $\lambda_j > k$.  To see that $\mu_j
  \lequ \lambda_{j-1}$, notice that if $i \in B_j \smallsetminus
  A_{j-1}$ and $i < j-1$ then $2n+1-q_j < q_i \lequ p_i <
  2n+1-p_{j-1}$, which can hold for at most $q_j-p_{j-1}-1$ integers
  $i$.  If $p_{j-1}<q_j$ we therefore get $|B_j| \lequ
  |A_{j-1}|+q_j-p_{j-1}$ and $\lambda_{j-1}-\mu_j =
  q_j-p_{j-1}+|A_{j-1}|-|B_j| \gequ 0$.  If $p_{j-1} = q_j$, then the
  condition $P\to Q$ implies that $q_i < 2n+1-q_j < p_i$ for some $i$.
  If $i < j-1$ then $i \in A_{j-1} \smallsetminus B_j$, while if $i
  \gequ j-1$ then $q_{j-1}+q_j \lequ q_i+q_j < 2n+1$, so $j-1 \not\in
  B_j$.  It follows that $|B_j| \lequ |A_{j-1}|$ and $\mu_j \lequ
  \lambda_{j-1}$, as required.  If $\lambda_j > k$ then $q_j \lequ p_j
  \lequ n$, which implies that $\lambda_j = n+k+1-p_j \lequ n+k+1-q_j =
  \mu_j$.  In general we note that if $i<j-1$ satisfies that $i \in
  A_j$ and $i+1 \not\in B_j$, then $2n+1-p_j < p_i \lequ q_{i+1} <
  2n+1-q_j$, which is true for at most $p_j-q_j-1$ integers $i$.  If
  $q_j < p_j$, this implies that $|B_j| \gequ |A_j| - p_j+q_j$ and
  $\mu_j \gequ \lambda_j$.  If $q_j = p_j$ then $|B_j| \gequ |A_j| - 1$,
  which implies that $\mu_j \gequ \lambda_j-1$, as required.
  
  We next verify that condition (2) of Definition~\ref{D:pieriarrow}
  holds.  Assume that the box $(j,\lambda_j)$ is not in $\mu$, i.e.\ 
  $\mu_j = \lambda_j-1$.  Then the above analysis shows that $q_j=p_j$
  and $|B_j| = |A_j| - 1$.  If we set $i = \min(A_j)$, then $A_j =
  [i,j-1]$, $B_j = [i+1,j-1]$, and $\lambda_j = n+k+1-p_j+j-i$.  Since
  $A_i \subset A_j$ we see that $A_i = \emptyset$, so $\lambda_i =
  n+k+1-p_i$ and $\mu_i = n+k+1-q_i$.  Set $c = p_j-n+k+1$.  Then
  $c-k-1+i = k+1-\lambda_j+j$, so the box $(j,\lambda_j)$ is
  $k$-related to $(i,c)$.  Since $i \in A_j \smallsetminus B_j$ we
  furthermore obtain $q_i < 2n+1-p_j < p_i$, which is equivalent to
  $\lambda_i < c-1 < \mu_i$.  We deduce that the boxes $(i,c-1)$ and
  $(i,c)$ belong to $\mu \smallsetminus \lambda$, and these boxes are
  $k$-related to $(j-1,\lambda_j)$ and $(j,\lambda_j)$.
  
  For condition (2) it remains to prove that $\mu_{i+1} \lequ c-3$ and
  $\lambda_{i-1} \gequ c+1$.  Notice that $q_{i+1} + q_j \neq 2n+2$;
  otherwise we obtain from $2n+1-p_j < p_i \lequ q_{i+1}$ that $p_i =
  q_{i+1}$, so we must have $q_l < 2n+1-p_i < p_l$ for some $l$, and
  this would imply that $q_l < q_j = p_j \lequ p_l$.
  We therefore obtain $c-3-\mu_{i+1} = q_{i+1}+q_j-2n-3 \gequ 0$.
  Similarly, if $p_{i-1}+p_j = 2n$ then $p_{i-1} \lequ q_i < 2n+1-q_j$
  shows that $p_{i-1}=q_i$, so $q_l < 2n+1-q_i < p_l$ for some $l$,
  but this implies that $q_l \lequ q_j=p_j < p_l$.
  It follows that $\lambda_{i-1}-c-1 = 2n-1-p_{i-1}-p_j \gequ 0$,
  as required.
  
  We finally verify that condition (1) holds.  Assume that $\lambda$
  and $\mu$ have equally many boxes in column $c$, where $c \lequ k$,
  and let $j$ be the number of boxes in this column.  Then we have
  $\mu_{j+1} < c \lequ \lambda_j$.  Now suppose the box $(j,c)$ is
  $k$-related to two boxes $(i,c')$ and $(i-1,c'+1)$ of $\mu
  \smallsetminus \lambda$, where $c' > k$.  Then $c+c' = 2k+2+j-i$ and
  $c' = \mu_i = \lambda_{i-1}$.  Since $c' > k$, the latter equality
  implies that $p_{i-1} = q_i$.  Since $\lambda_{i-1}+\lambda_j \gequ
  c'+c > 2k+j-i$, it follows that $q_i+p_j = p_{i-1}+p_j < 2n+1$.
  Similarly, since $\mu_i+\mu_{j+1} \lequ c'+c-1 = 2k+(j+1)-i$, we
  obtain $q_i+q_{j+1} > 2n+1$.  But now we have $p_j < 2n+1-q_i <
  q_{j+1}$, so the diagram $D(P,Q)$ has no boxes in column $2n+1-q_i$,
  a contradiction.
  
  Assume now that $\lambda \to \mu$. We claim that if $i\leq j$ and
  $\la_i+\la_j >2k+j-i$, then $\mu_i+\mu_j >2k+j-i$. If not, then
  since $\la_i>k$, we obtain $\la_i+\la_j-1\leq \mu_i+\mu_j\leq
  2k+j-i<\la_i+\la_j$. Therefore $\mu_i=\la_i$, $\mu_j=\la_j-1$, and
  $\la_i+\la_j = 2k+1+j-i$. It follows that box $(j,\la_j)$ is
  $k$-related to box $(i+1,\la_i)$, contradicting (2) of Definition
  \ref{D:pieriarrow}. The claim implies that $B_j\subset A_j$ for
  every $j$.

  We must show that $P \to Q$, and start by checking that $Q \lequ P$.
  If this is false, then choose $j$ such that $q_j > p_j$, and note
  that $\lambda_j - \mu_j = |A_j| - |B_j| + q_j - p_j \gequ 1$.  The
  assumption $\lambda \to \mu$ then implies that $\mu_j =
  \lambda_j-1$, so $A_j = B_j$ and $\lambda_j \lequ k$.  Furthermore,
  the boxes $(j,\lambda_j)$ and $(j-1,\lambda_j)$ must be $k$-related
  to boxes $(i,c)$ and $(i,c-1)$ of $\mu \smallsetminus \lambda$, with
  $c = 2k+2-\lambda_j+j-i$.  We obtain $\lambda_i < c-1 < \mu_i$,
  which implies that $\lambda_i+\lambda_j \lequ 2k+j-i$ and
  $\mu_i+\mu_j > 2k+j-i$.  But this means that $i \in A_j
  \smallsetminus B_j$, a contradiction.
  
  Now suppose that $D(P,Q)$ contains a $2\times 2$ square, and
  choose $j$ minimal such that $p_j > q_{j+1}$.  If $i \in A_j$
  satisfies that $i+1 \not\in B_{j+1}$, then $2n+1-p_j < p_i \leq
  q_{i+1} < 2n+1-q_{j+1}$, which is true for at most $p_j-q_{j+1}-1$
  integers $i$.  It follows that $\mu_{j+1}-\lambda_j =
  |B_{j+1}|-|A_j|+p_j-q_{j+1} > 0$, contradicting that $\mu
  \smallsetminus \lambda$ is a horizontal strip.  This shows that
  $D(P,Q)$ contains no $2\times 2$ squares.
  
  Finally, suppose that $p_j=q_{j+1}$ for some $j<n-k$; we will show that
  there exists an $i$ such that $q_i < 2n+1-p_j < p_i$. Equivalently, 
  assuming that $\la_j-\mu_{j+1} =|A_j|-|B_{j+1}|\gequ 0$, we must prove 
  that
\[
\la_j+\la_i \lequ 2k+|j-i| \ \ \ (\text{I1})  \ \ \ 
\ \text{and} \ \ \ \ 
\mu_{j+1} + \mu_i > 2k+ |j+1-i|  \ \ \ (\text{I2})
\]
  for some $i$. 

  If $p_j > n$, then $i=j$ satisfies (I1). Any $i\in A_j\ssm B_{j+1}$
  solves our problem, so assume that $A_j \subset B_{j+1}$. Since
  $|A_j| \gequ |B_{j+1}|$, we must have $A_j=B_{j+1}$. Hence $j\notin
  B_{j+1}$, and thus $i=j$ also satisfies (I2).

  If $p_j \lequ n$, then
  $\la_j$ and $\mu_{j+1}$ are both greater than $k$, and hence $A_j = 
  B_{j+1} = \emptyset$. Therefore $\mu_j > \la_j=\mu_{j+1}> k$, and 
  the boxes $x=(j,\la_j+1)$ and $y=(j+1, \la_j)$ are both in $\mu\ssm\la$. 
  Set $c=n+k+1-\la_j-j$.  If $\la_{n-k}\gequ c$ then the boxes $x$ and $y$ are
  $k$-related to the box $(n-k,c)$, and (1) of Definition \ref{D:pieriarrow}
  is violated. So $\la_{n-k}\lequ c-1=2k+(n-k)-j-\la_j$, and so $i=n-k$
  satisfies (I1). 

  Let $i>j$ be minimal such that (I1) holds; we claim 
  that (I2) is also true for this $i$. Note that $\la_i \lequ k$. If 
  $\la_j < k+i-j$, then by minimality of $i$ we have $\la_{i-1}>k$, and
  this is impossible because $\la$ is $k$-strict.
  It follows that $c:=2k+i-j-\la_j$ is at most $k$. The box 
  $(i-1,c)$ lies in $\la$; otherwise $\la_{i-1}\lequ c-1$ contradicts 
  our choice of $i$. If (I2) is false, then $\mu_i < c$. Since
  $(i-1,c)$ is $k$-related to both $x$ and $y$, we again contradict
  Definition \ref{D:pieriarrow}. 
\end{proof}

Suppose now that $P$ and $Q$ are index sets for $\IG(n-k,2n)$ with 
$P\to Q$. Recall that a {\em cut} is an integer $c\in [0,2n]$ such that
$p_i\lequ c < q_{i+1}$ for some $i$. We call an element $c\in [0,2n]$
which is not a cut a {\em crossing}. It is easy to see that the set of
crossings is equal to $\cup_i[q_i,p_i)$. Indeed, $c$ is a crossing if and only
if $c \gequ p_j$ implies $c \gequ q_{j+1}$ for all $j$. If $q_i\lequ c <p_i$
then if $c \gequ p_j$ we have $j+1\lequ i$, so $c\gequ q_i\gequ q_{j+1}$. 
Conversely, if $c$ is a crossing then set $i=\max(j\ |\ 
q_j\lequ c)$; then we have $c < p_i$.

Let $J(P,Q)= \{ e\in [0,n-1]\ |\ n-e \text{ and } n+e \text{ are both
crossings}\}$; then $N(P,Q)= \#\{e \in [1,n-1]\ |\ e\in J(P,Q) \text{
and } e-1\notin J(P,Q)\}$.  Recall that $\A$ is the set of boxes of
$\mu\ssm \la$ in columns $k+1$ through $k+n$ which are {\em not}
mentioned in (1) or (2) of Definition \ref{D:pieriarrow}.

\begin{lemma}
\label{fundbij}
For $0\lequ e \lequ n-1$, we have $e\in J(P,Q)$ if and only if there 
exists a box in column $k+1+e$ of $\A$. 
\end{lemma}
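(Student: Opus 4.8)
The plan is to translate both sides of the asserted equivalence into the index-set language of the preceding discussion. By Proposition~\ref{pqlm} the hypothesis $P\to Q$ gives $\lambda\to\mu$, so Definition~\ref{D:pieriarrow} applies to $(\lambda,\mu)$; write $\lambda'_c$, $\mu'_c$ for the number of boxes in column $c$. First I would set up a dictionary for box positions. Fix $e$ with $0\le e\le n-1$ and put $c=k+1+e$, so $k+1\le c\le k+n$. Since $c>k$, row $j$ contributes a box in column $c$ of $\lambda$ (resp.\ $\mu$) iff $\lambda_j\ge c$ (resp.\ $\mu_j\ge c$), which by Proposition~\ref{P:trnslC}(i),(iii) forces $\lambda_j>k$ (resp.\ $\mu_j>k$) and is then equivalent to $p_j\le n+k+1-c$ (resp.\ $q_j\le n+k+1-c$); and when $\lambda_j\le k$ (resp.\ $\mu_j\le k$) the box is absent while automatically $p_j>n+k+1-c$ (resp.\ $q_j>n+k+1-c$). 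Hence $(j,c)\in\mu\smallsetminus\lambda$ iff $q_j\le n+k+1-c<p_j$, i.e.\ iff $n-e=n+k+1-c$ lies in $[q_j,p_j)$, i.e.\ iff $n-e$ is a crossing witnessed by row $j$. Moreover $\mu/\lambda$ is a horizontal strip in the columns beyond $k$: the intermediate partition $\nu$ with $\lambda/\nu$ a vertical strip in the first $k$ columns has $\nu'_c=\lambda'_c$ for $c>k$, and $\mu/\nu$ is a horizontal strip, so there is at most one box of $\mu\smallsetminus\lambda$ in any column $>k$. Thus column $k+1+e$ contains a box of $\mu\smallsetminus\lambda$ exactly when $n-e$ is a crossing, and since $e\in J(P,Q)$ means ``$n-e$ and $n+e$ are both crossings'', the lemma reduces to showing: when $n-e$ is a crossing, the unique box $(j,k+1+e)\in\mu\smallsetminus\lambda$ lies in $\A$ if and only if $n+e$ is a crossing.

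Next I would pin down which boxes of $\mu\smallsetminus\lambda$ are ``mentioned'' in Definition~\ref{D:pieriarrow}. A box $(j,c)\in\mu\smallsetminus\lambda$ with $c=k+1+e>k$ is $k$-related to the bottom box $(\mu'_{c_0},c_0)$ of a column $c_0\le k$ of $\mu$ iff $e+j=(k+1-c_0)+\mu'_{c_0}$; since $(j,c)\in\mu$ forces $\mu'_{c_0}\ge j$ while the two sides of this equation are, respectively, decreasing and increasing in $c_0$, there is at most one such $c_0$, and it lies in $[\max(1,k+1-e),k]$ -- in particular none exists when $e=0$, so boxes in column $k+1$ are always in $\A$, matching the lemma since then $n-e=n+e$. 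Consequently ``$(j,c)$ is mentioned'' means exactly: this partner column $c_0$ exists, $\mu'_{c_0}\le\lambda'_{c_0}$, and $(j,c)$ is one of the boxes of $\mu\smallsetminus\lambda$ singled out in Definition~\ref{D:pieriarrow}(1) (the equality case $\mu'_{c_0}=\lambda'_{c_0}$) or (2) (the strict case, where $(j,c)$ is $k$-related to one of the removed boxes of column $c_0$ or to the bottom box of column $c_0$ of $\mu$, these boxes all lying in a common row). I would record this reformulation as a sub-claim.

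Finally I would prove the equivalence ``$(j,k+1+e)$ is mentioned $\Longleftrightarrow$ $n+e$ is a cut (equivalently, not a crossing)'' by carrying the $k$-relation equation $\mu'_{c_0}=e+j+c_0-k-1$ and the clauses of Definition~\ref{D:pieriarrow} through the translations of Proposition~\ref{P:trnslC}, reusing the explicit index-set configurations from the proof of Proposition~\ref{pqlm}. For the forward direction, a partner column $c_0$ in the case-(2) situation is precisely the configuration analysed there in which a removed box sits in a full-height column, giving the relation $q_{j'}=p_{j'}$; transporting this and the ``common row'' condition through the $k$-relation produces an index $l$ with $p_l\le n+e<q_{l+1}$, while the case-(1) situation $\mu'_{c_0}=\lambda'_{c_0}$ yields such an $l$ using in addition that $\lambda$ is $k$-strict. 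For the converse, from $p_l\le n+e<q_{l+1}$ one recovers the partner column (it is determined by reflecting column $k+1+e$ about column $k+1$ and shifting by the row count encoded in the cut at $n+e$) and then checks $\mu'_{c_0}\le\lambda'_{c_0}$ together with the extremality clauses of (1)/(2), again from the $k$-strictness of $\lambda$, $\mu$ and the inequalities (i)--(iii) defining $P\to Q$. Combining the three steps gives the lemma.

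The hard part will be this last step: matching, case by case -- box $k$-related to a removed box versus to a bottom box, $\lambda_j\le k$ versus $\lambda_j>k$, and the boundary cases where $n\pm e$ coincides with some $p_i$ or $q_i$ or sits near the middle -- the several clauses of Definition~\ref{D:pieriarrow} against the single inequality ``$n+e$ is a cut''. Most of the required bookkeeping is already present in the proof of Proposition~\ref{pqlm}; the one genuinely new ingredient here is keeping track of the column index (equivalently, the value $n+e$) rather than only the rows.
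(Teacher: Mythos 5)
Your reduction is sound and matches the paper's opening moves: the translation ``$(j,k+1+e)\in\mu\smallsetminus\lambda$ iff $q_j\lequ n-e<p_j$'' (using that $\#\{i<j: p_i+p_j>2n+1\}=0$ when $p_j\lequ n$) is exactly how the paper's proof begins, the horizontal-strip uniqueness of the box in a given column beyond $k$ is correct, and the observation that a box in column $k+1$ can never be $k$-related to a relevant box in the first $k$ columns (consistent with $n-e=n+e$ when $e=0$) is a valid special case. You have correctly isolated the remaining claim: for the unique box $B=(i,k+1+e)$ of $\mu\smallsetminus\lambda$, one must show that $B$ is mentioned in (1) or (2) of Definition~\ref{D:pieriarrow} if and only if $n+e$ is a cut.

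The problem is that this equivalence is essentially the entire content of the lemma, and your proposal does not prove it; it asserts that it will follow by ``carrying the $k$-relation through the translations,'' defers the case analysis as ``the hard part,'' and claims most of the bookkeeping is already in the proof of Proposition~\ref{pqlm}. That last claim is too optimistic: the proof of Proposition~\ref{pqlm} only compares row data of $P,Q$ with $\lambda,\mu$, and nothing in it produces inequalities pinned to the specific value $n+e$. Concretely, in the direction ``$B\in\A\Rightarrow n+e$ is a crossing'' one must split according to whether $B$ is $k$-related to the bottom box $(r,c)$ of a column of $\lambda$ that gained a box (where one derives $q_{r+1}\lequ n+e<p_{r+1}$ from $\mu_i\gequ k+1+e$, $\mu_{r+1}\gequ c$ and $r+k+1-c=e+i$ by estimating the correction terms in Proposition~\ref{P:trnslC}(iv)), or to no bottom box of $\lambda$ at all (where one must first construct a row $r$ with $\lambda_r=c$ and $k-c+r=e+i$ before running analogous estimates to get $q_r\lequ n+e<p_r$); the converse requires the separate bounds $p_{r-s}\lequ n+e$ and $q_{r-s+1}\gequ n+e+1$, using $\lambda_{i-1}\gequ k+1+e$ and $\mu_{i+1}<k+1+e$ from the horizontal-strip condition. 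None of this is routine substitution, and none of it is carried out in your write-up. As it stands you have a correct reduction plus a plan, not a proof.
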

\begin{proof}
Suppose that there exists a box $B$ in position $(i,k+1+e)$ of $\mu\ssm\la$. 
Then $\mu_i\gequ k+1+e > \la_i$, hence $q_i\lequ n-e< p_i$ and thus
$n-e$ is a crossing. Suppose that $B$ is in $\A$; we will show that 
$n+e$ is then also a crossing. Now either (i) $B$ is $k$-related to a box
$(r,c)$ with $1\lequ c \lequ k$ such that $\mu\ssm\la$ has a box in 
position $(r+1,c)$, or else (ii) $B$ is not related to any box $(r,c)$ 
with $1\lequ c \lequ k$ which is the bottom box of $\lambda$ in column $c$. 

In case (i), we have $r+k+1-c=e+i$, and claim that $q_{r+1} \lequ n+e
< p_{r+1}$. Indeed, since $\mu_i\gequ k+1+e$ and $\mu_{r+1}\gequ c$,
we have $\mu_i+\mu_{r+1} > c+e+k=r+2k+1-i$. Therefore
$\#\{h < r+1\ |\ \mu_h+\mu_{r+1} \lequ 2k+r+1-h\} \lequ r-i$, hence
\begin{align*}
q_{r+1} & = n+k+1 - \mu_{r+1}+ 
\#\{h < r+1\ |\ \mu_h+\mu_{r+1} \lequ 2k+r+1-h\} \\
& \lequ n+k+1-c+r-i= n+e.
\end{align*}
We also have $\la_{r+1}<c$ and $\la_i< k+1+e$, hence
$\la_i+\la_{r+1}\lequ 2k+r-i$, and by similar reasoning
$p_{r+1} \gequ n+k+1-(c-1)+(r+1-i)\gequ n+e+2$. 

In case (ii), we claim that there exists a box $(r,c)$ of $\la$ with
$\la_r =c$ such that $k-\la_r+r=e+i$. Indeed, choose $r$ minimal such
that $k+1-\la_r+r > e+i$. If $k+1-\la_r+r \gequ e+i+2$ then
$\la_{r-1}>\la_r$, and the box in position $(r-1, k+r-e-i)$ is the
bottom box in its column. It follows that $k-c+r = e+i$, where
$c=\la_r$. 

We claim that $q_r \lequ n+e < p_r$. To see this, note that if 
$\mu_r< \la_r$ then the box $(i,k+1+e)$ does not lie in $\A$; hence 
$\mu_r\gequ \la_r$. Also, $\mu_i\gequ k+1+e$ and therefore
$\mu_r+\mu_i\gequ \la_r+k+1+e > 2k+r-i$. It follows that 
\[
q_r = n+k+1 - \mu_r + \#\{h<r \ |\ \mu_h+\mu_r\lequ 2k+r-h\}
\lequ n+k+1-\la_r+ r-1-i = n+e.
\]
We also have $\la_i+\la_r = \la_i + (k+r-e-i) \lequ 
(e+k)+(k+r-e-i)=2k+r-i$. Hence
\[
p_r= n+k+1-\la_r+\#\{h<r\ |\ \la_h+\la_r\lequ 2k+r-h\}
> n+e.
\]

It remains to show the converse. If $n-e$ is a crossing, then there exists
a box $B=(i,k+1+e)$
of $\mu\ssm\la$.
Now if $B$ does not lie in $\A$, then
we need to show that $n+e$ is a
cut, i.e., there exists a $j$ such that $p_j \lequ n+e < q_{j+1}$.
Either the box $(r,c)$ is a bottom box of $\la$ in column $c$
$k$-related to $B$ which is also a bottom box of $\mu$ in column $c$,
or some $d$ boxes $(r-d+1,c),\ldots, (r,c)$ are removed from $\la$ and
$B$ is $k$-related to box $(r-s,c)$, with $0\lequ s \lequ d$.  In the
former case, set $s=0$. We have $k+1-c+r-s=e+i$ and $\la_{r-s}\gequ
c$, while $\mu_{r-s+1}<c$. Since the part of $\mu\ssm\la$ in columns
$k+1$ through $k+n$ is a
horizontal strip containing $B$, we have $\la_{i-1}\gequ k+1+e$ and
$\mu_{i+1}<k+1+e$.  Now
\[
\la_{i-1}+\la_{r-s}\gequ k+1+e+c=2k+2+r-s-i,
\]
hence $\la_{i-1}+\la_{r-s}> 2k+(r-s)-(i-1)$, so 
\[
p_{r-s}\lequ n+k+1-\la_{r-s}+ \#\{h< r-s\ |\ 
\la_h+\la_{r-s}\lequ 2k+r-s-h\} \lequ n+e.
\]
Furthermore, $\mu_{r-s+1}+\mu_{i+1}\lequ 
c-1+e+k = 2k + (r-s+1)-(i+1)$, thus
\[
q_{r-s+1}=n+k+1-\mu_{r-s+1} + \#\{h \lequ r-s\,|\,
\mu_h+\mu_{r-s+1} \lequ 2k + r-s+1-h \} \gequ n+e+1. \qedhere
\]
\end{proof}
Lemma \ref{fundbij} immediately implies the following result.
\begin{prop}
\label{multpqlm}
If $P\to Q$ are index sets for $\IG(n-k,2n)$ and $\lambda
\to\mu$ are the corresponding $k$-strict partitions, then 
we have $N(P,Q)=N(\la,\mu)$. 
\end{prop}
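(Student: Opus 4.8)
The plan is to deduce Proposition~\ref{multpqlm} from Lemma~\ref{fundbij} together with one structural fact about $\A$. Recall that, in the reformulation given just before the statement, $N(P,Q)$ is the number of $e\in[1,n-1]$ with $e\in J(P,Q)$ but $e-1\notin J(P,Q)$; equivalently it is the number of maximal blocks of consecutive integers contained in $J(P,Q)$ that do not contain $0$. By Lemma~\ref{fundbij} the set $J(P,Q)$ is precisely $\{e\in[0,n-1] : \text{column } k+1+e \text{ of } \A \text{ is nonempty}\}$, with column $k+1$ corresponding to $e=0$. So the proposition will follow once one checks that the connected components of $\A$ are exactly the maximal blocks of consecutive nonempty columns of $\A$: for then a ``component with no box in column $k+1$'' matches a ``block avoiding $e=0$'', and the two counts agree.

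First I would record that $\A$ has at most one box in each column: since $\la\to\mu$, the diagram $\mu$ is obtained from $\la$ by deleting a vertical strip from the first $k$ columns and then adjoining a horizontal strip, and in columns beyond the $k$-th the intermediate diagram coincides with $\la$, so the portion of $\mu\ssm\la$ lying in columns $>k$ — which contains $\A$ — is a horizontal strip. The heart of the argument is then the claim that whenever $c>k$ and $\mu\ssm\la$ has boxes $(i,c)$ and $(j,c+1)$ in two adjacent columns, those two boxes share a vertex. To see this I would use the skew-shape inequalities of the horizontal strip to get $j\lequ i$, and then observe that if $j\lequ i-2$ the same inequalities force $\la_r=c$ for every $r$ with $j\lequ r\lequ i-1$; as $c>k$ this exhibits a part of $\la$ exceeding $k$ that is repeated, contradicting $k$-strictness. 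Hence $j\in\{i-1,i\}$, and in both cases the cells $(i,c)$ and $(j,c+1)$ meet in a vertex.

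Granting this, the identification of components with column-blocks is routine bookkeeping: a connected component of $\A$ must occupy a gap-free range of columns (two cells two columns apart can only be linked within $\A$ through a cell in the column between, of which there is at most one), while cells in adjacent nonempty columns always lie in one component by the claim, so a component is exactly a maximal run of nonempty columns; and such a component contains a box in column $k+1$ iff its run reaches $e=0$ in the $J(P,Q)$ picture. Transporting this back through Lemma~\ref{fundbij} yields $N(\la,\mu)=N(P,Q)$. The one step I expect to require care is the ``adjacent cells share a vertex'' claim of the middle paragraph — it is where $k$-strictness of $\la$ is actually used — whereas the translation among cuts, crossings, $J(P,Q)$ and columns has already been set up in the lines preceding the statement, so that part is purely formal.
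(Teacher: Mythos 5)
Your proposal is correct and follows the same route as the paper, which simply states that Lemma~\ref{fundbij} immediately implies the proposition. The only content you add is the explicit verification (via $k$-strictness of $\la$ and the horizontal-strip structure of $\mu\ssm\la$ beyond column $k$) that connected components of $\A$ coincide with maximal runs of consecutive nonempty columns — a step the paper leaves implicit, and which you carry out correctly.
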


We next discuss the analogous proof of Theorem~\ref{thm.ogevenpieri}.

\begin{prop}
\label{pqlmD}
  Let $P$ and $Q$ be index sets for $\OG(n+1-k,2n+2)$ and let $\lambda$
  and $\mu$ be the corresponding elements of $\wt{P}(k,n)$.  Then $P \to
  Q$ if and only if $\lambda \to \mu$. In this situation we also have
  $N'(P,Q) = N'(\lambda,\mu)$, and if $|\mu|=|\la|+k$ and 
  $N'(P,Q)=N'(\la,\mu)=0$, then $h(P,Q)\equiv h(\la,\mu)\ \text{mod}\ 2$.
\end{prop}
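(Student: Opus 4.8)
The plan is to run the argument of Proposition~\ref{pqlm} and Lemma~\ref{fundbij} essentially verbatim, feeding in the translation data of Proposition~\ref{P:trnslD} in place of Proposition~\ref{P:trnslC}, and then to isolate the three genuinely new phenomena of type D: the middle columns $n+1,n+2$, the single $2\times2$ square that is allowed across the middle, and the type constraints. First I would record, using Proposition~\ref{P:trnslD}, the exact relation between $p'_j$, $q'_j$ and $\lambda_j$, $\mu_j$ together with the correction sets $A_j=\{i<j:\lambda_i+\lambda_j\lequ 2k-1+j-i\}$, $B_j=\{i<j:\mu_i+\mu_j\lequ 2k-1+j-i\}$ and the type-dependent summand ($1$ or $2$) in the formula for $p'_j$. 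Away from columns $n+1,n+2$ the bookkeeping is formally identical to that in the proof of Proposition~\ref{pqlm} after replacing $2n+1$ by $2n+3$ and the $k$-related convention by the $k'$-related convention, so the same chain of inequalities shows that "$Q\preceq P$, $D(P,Q)$ has no $2\times2$ square, and a box in column $c$ is reflected by a box in column $2n+3-c$" is equivalent to "$\mu$ is obtained from $\lambda$ by removing a vertical strip from the first $k$ columns and adding a horizontal strip satisfying (1) and (2) of Definition~\ref{D:pieriarrow}".

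Next I would handle the middle. By Proposition~\ref{P:trnslD}(ii) a part $\lambda_j=k<\lambda_{j-1}$ corresponds to $p'_j\in\{n+1,n+2\}$, the choice being dictated by the parity of $n+j+\type(\lambda)$. I would then check two parity statements, each a finite case analysis in $\type(\lambda),\type(\mu)\in\{0,1,2\}$ using the closed formula for $p'_j(\lambda)$ from the end of Section~\ref{svD}: (a) the exceptional single $2\times2$ square across columns $n+1,n+2$ in the type-D definition of $P\to Q$ occurs exactly when a box of $\mu\ssm\lambda$ in a column $\lequ k$ is $k'$-related to a middle box that is the bottom box of a column of $\lambda$, i.e.\ exactly when both $\lambda$ and $\mu$ have a part equal to $k$; and (b) the condition that $D(P,Q)$ not have exactly three boxes in columns $n+1,n+2$ is precisely the condition $\type(\lambda)+\type(\mu)\neq 3$ attached to the relation $\lambda\to\mu$ in $\wt{\cP}(k,n)$. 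This completes the equivalence $P\to Q\iff\lambda\to\mu$.

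For the multiplicity I would transcribe Lemma~\ref{fundbij}. Define cuts and crossings of $D(P,Q)$ as in Section~\ref{Pierirule}, let $J'(P,Q)$ be the set of $e\in[0,n-1]$ such that both positions of $D(P,Q)$ symmetric about the middle are crossings, and observe that, because $n+1\in I'(P,Q)$ always, $N(P,Q)=\#\{e\in[1,n-1]:e\in J'(P,Q),\ e-1\notin J'(P,Q)\}$. Then the same two-case argument as in Lemma~\ref{fundbij} (box of $\mathcal{A}$ $k'$-related to a bottom box of $\lambda$ among the first $k$ columns, versus not) shows that $e\in J'(P,Q)$ if and only if $\mathcal{A}$ has a box in column $k+1+e$; this yields $N(P,Q)=N(\lambda,\mu)$, and since the $r>k$ normalization subtracts $1$ on both sides we get $N'(P,Q)=N'(\lambda,\mu)$.

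Finally, the parity statement, which I expect to be the main obstacle. Assume $|\mu|=|\lambda|+k$ and $N'(P,Q)=N'(\lambda,\mu)=0$; I must show $|S|+|S'|+n\equiv g(\lambda,\mu)+\max(\type(\lambda),\type(\mu))\pmod 2$. I would split $S$ according to the picture of Section~\ref{dualpar}: the part of $S$ coming from the first $k$ columns records, via the crossing intervals $[q_j,p_j)$, exactly the $g(\lambda,\mu)$ columns of $\mu$ among the first $k$ that did not gain boxes, while the remaining contribution $|S\cap[1,n+1]\setminus(\text{first }k\text{ columns})|+|S'|$ counts, through the involution $i\leftrightarrow 2n+3-i$, the diagonals straddling the middle. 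The count of the latter is controlled modulo $2$ by $\ell_k(\lambda)$, $\ell_k(\mu)$, and hence by $\type(\lambda)$, $\type(\mu)$ and the parity of $n$, via the congruence $\ell_k(\la)+\type(\la)\equiv n+\ell_k(\la^\vee)+\type(\la^\vee)\pmod 2$ proved in Section~\ref{dualpar}. Matching this against $\max(\type(\lambda),\type(\mu))$ in all type cases is the delicate step; everything else is a routine transcription of the type-C arguments.
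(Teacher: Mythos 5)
Your handling of the first two assertions tracks the paper's proof: the equivalence $P\to Q\iff\la\to\mu$ and the equality $N'(P,Q)=N'(\la,\mu)$ are indeed obtained by transcribing Proposition~\ref{pqlm} and Lemma~\ref{fundbij} with the translation data of Proposition~\ref{P:trnslD}, and the ``no three boxes in columns $n+1,n+2$'' clause is matched with $\type(\la)+\type(\mu)\neq 3$ exactly as you say, via the observation that a nonzero $\type(P)$ equals $1$ plus the parity of $\#([1,n+1]\ssm P)$.

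The gap is in the final parity statement $h(P,Q)\equiv h(\la,\mu)\pmod 2$, which you correctly identify as the main obstacle but do not carry out, and whose first step is wrong as stated. The set $S\subset[1,n+1]$ consists of flag positions covered by rows of $D(P,Q)$; via $\la_i+p_i=n+k+1$ these encode the rows of $\floor{\la}_k$ and $\floor{\mu}_k$, i.e.\ the boxes of the partitions in columns $k+1$ through $k+n$ --- not the first $k$ columns of the Young diagram. So ``the part of $S$ coming from the first $k$ columns records exactly the $g(\la,\mu)$ columns of $\mu$ that did not gain boxes'' is not a correct reading of $S$: those columns are detected through the $k'$-relation, not through $S$. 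The paper's route is different and concrete: using $|\mu|=|\la|+k$ one first rewrites $g(\la,\mu)$ as $\#(\mu\ssm\la$ in columns $>k)-\#(\la\ssm\mu)$; then one shows $\#(\mu\ssm\la$ in columns $>k)=|S|-r$, where $r$ is the number of nonempty rows in the left half of $D(P,Q)$; and finally one identifies the removed boxes $\la\ssm\mu$ bijectively with $S''=\{p\in S':p>n+2\}$ (this uses that $q_j=p_j$ whenever $p_j\in S''$ and that $\A=\emptyset$ because $N'=0$). These three identities reduce the claim to $n+r+\max(\type(P),\type(Q))\equiv 0$ or $1\pmod 2$ according as $n+2\notin S'$ or $n+2\in S'$, which is settled by a short case analysis on $(\type(P),\type(Q))$ using the explicit formula for $p'_j(\la)$.

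By contrast, the tool you propose for this step --- the congruence $\ell_k(\la)+\type(\la)\equiv n+\ell_k(\la^\vee)+\type(\la^\vee)\pmod 2$ from Section~\ref{dualpar} --- concerns Poincar\'e duality and does not obviously bear on a Pieri pair $\la\to\mu$ (here $\mu$ is not $\la^\vee$), so it is not clear how ``matching against $\max(\type(\la),\type(\mu))$'' would be completed along your route. Since the $h$-parity is the one genuinely new computation in type D (everything else being a transcription of type C), this constitutes a real gap rather than a stylistic difference.
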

\begin{proof}
The equivalence of the relations $P \to Q$ and $\la \to \mu$ is proved
in a similar way to Proposition \ref{pqlm}. The condition in the
definition of $P\to Q$ that $D(P,Q)$ cannot have exactly three boxes
in columns $n+1$ and $n+2$ is equivalent to the condition
$\type(\la)+\type(\mu)\neq 3$ in the definition of $\la \to\mu$. This
follows because the type of an index set $P$, when non-zero, equals 1
plus the parity of the number of integers in $[1,n+1]\ssm P$. We thus
see that when $\type(P)$ and $\type(Q)$ are
both positive, we must have $\type(P)=\type(Q)$.

One also checks as in Lemma \ref{fundbij} that in this
situation, we have $N'(P,Q) = N'(\lambda,\mu)$, and we will assume
this in the following.  To complete the proof, we show that whenever
$P \to Q$ (equivalently $\la \to \mu$), $|\mu|=|\la|+k$, and
$N'(P,Q)=N'(\la,\mu)=0$, we have $h(P,Q)\equiv h(\la,\mu)\ \text{mod}\
2$. We need to verify that
\[
g(\la,\mu)+\max(\type(P),\type(Q))+|S|+|S'|+n \equiv 0\ \text{mod}\ 2,
\]
where the sets $S$ and $S'$ have been defined before Theorem
\ref{indexpieriD}.

Since $|\mu|=|\la|+k$ and $\mu\ssm\la$ can have at most one box in
each of the first $k$ columns, we deduce that for each 
column among these where $\mu$ has the same number of
boxes as the corresponding column of $\la$, the bottom box of $\mu$ in
this column is $k'$-related to exactly one box of $\mu\ssm\la$. It 
follows that $g(\la,\mu)$ is equal to the number of boxes of 
$\mu\ssm\la$ in columns $k+1$ through $k+n$ minus the number of boxes
in $\la\ssm\mu$. 

We claim that at least one of $\type(\la)$, $\type(\mu)$ must be
positive. Indeed, if $\type(\la)=0$ then the Pieri move $\la\to\mu$
must add a box to column $k$, which implies that $\type(\mu)>0$.  This
fact has the following consequence for the diagram $D(P,Q)$: exactly
one row of $D(P,Q)$ has boxes in columns $n+1$ or $n+2$, and this row
begins or ends in these two columns. Moreover, if this row has boxes
in both central columns, then it must contain at least one more box. 
It also follows from the definitions that no two boxes of $D(P,Q)$ can
lie in the same column.

Using Proposition \ref{P:trnslD} it is easy to translate 
between parts $\la_i>k$ and elements of $p_i$ of $P$ less 
than $n+1$: they are related by the equation $\la_i+p_i=n+k+1$. 
We deduce that the number of boxes of $\mu\ssm\la$ in columns
$k+1$ through $k+n$ equals $|S|$ minus the number $r$ of nonempty 
rows in the left half of $D(P,Q)$ (the part of $D(P,Q)$ which lies
in the first $n+1$ columns).

Let $S'' = \{p\in S'\ :\ p > n+2\}$. In the proof of Theorem
\ref{indexpieriD} we saw that $q_j=p_j$, for all $p_j\in S''$;
moreover, since $N'(\la,\mu)=0$, the set $\A$ is empty. Using these
facts, it is straightforward to check that the elements of $S''$ are
in 1-1 correspondence with the {\em removed} boxes from $\la$, that
is, with $\la\ssm\mu$. Combining this with the previous analysis, we see
that $g(\la,\mu)+|S''|$ has the same parity as $|S|+r$. We are reduced
then to showing that
\[
n+r+\max(\type(P), \type(Q)) \equiv
\begin{cases}
0\ \text{mod}\ 2 & \text{if $n+2 \notin S'$},\\
1\ \text{mod}\ 2 & \text{if $n+2 \in S'$}.
\end{cases}
\]
The above is proved by a case by case analysis, according to three
possibilities for $(\type(P),\type(Q))$. If
$\type(P)=0$ and $\type(Q)>0$, then $n+2 \notin S'$.  Then, if $q_j=n+1$ for
some $j$ we have $j=r$ and $\type(Q) \equiv 1+ (n+1) - j \equiv n+r \
\text{mod}\ 2$, and if $q_j=n+2$ for some $j$ then $j=r+1$ and $\type(Q)
\equiv 1+ (n+1) - (j-1) \equiv n+r \ \text{mod}\ 2$. If
$\type(P)>0$ and $\type(Q)=0$,
then there are two possibilities.
Either $p_j=n+1$ for some $j$, so
$n+2 \notin S'$, $j=r$, and $\type(P) \equiv 1+ (n+1) - j \equiv n+r \
\text{mod}\ 2$, or else $p_j=n+2$ for some $j$, so $q_j<n+1$ and $n+2\in
S'$. Then $j=r$ and $\type(P) \equiv 1+ (n+1) - (j-1) \equiv n+r+1 \
\text{mod}\ 2$, as required. Finally if
$\type(P)=\type(Q)>0$ we must have $q_j=p_j\in \{n+1,n+2\}$ for some
$j$ while $n+2 \notin S'$, and the result again follows.
\end{proof}

\appendix
\section{Quantum cohomology of $\OG(n,2n+2)$}
\label{appendix}

In this section $\OG'$ will denote the Grassmannian
$\OG(n,2n+2)$. A presentation and Pieri rule for the classical
cohomology ring of $\OG'$ were obtained in Section \ref{QCOGeven}. 

Given nonnegative integers $d_1$ and $d_2$, a rational map of degree 
$(d_1,d_2)$ to $\OG'$ is a morphism $f: \bP^1 \to \OG'$ such that 
\[
\int_{\OG'} f_*[\bP^1] \cdot \ta_1 = d_1 
\ \ \ \mathrm{and} \ \ \
\int_{\OG'} f_*[\bP^1] \cdot \ta'_1 = d_2.
\]
For three elements 
$\la$, $\mu$, and $\nu$ in $\wt{\cP}(1,n)$ such that 
$|\lambda| + |\mu| + |\nu| = \dim(\OG')+(d_1+d_2)(n+1)$, the 
Gromov-Witten invariant $\langle \ta_\lambda, \ta_\mu, \ta_\nu
\rangle_{d_1,d_2}$ is defined to be the number of rational maps 
$f\colon \bP^1\to \OG'$ of
degree $(d_1,d_2)$ such that $f(0)\in X_\lambda(E_\bull)$, $f(1)\in
X_\mu(F_\bull)$, and $f(\infty)\in X_\nu(G_\bull)$, for given
isotropic flags $E_\bull$, $F_\bull$, and $G_\bull$ in general
position. 

The parameter space of lines on $\OG'$ is the space $Z$ of pairs $(A,B)$
where $A$ and $B$ are isotropic subspaces of $\C^{2n+2}$ of respective
dimensions $n-1$ and $n+1$.
Observe that $Z$ consists of two connected components, each isomorphic
to the isotropic two-step flag variety
$\OF'=\OF(n-1,n+1;2n+2)$.
One component, which we call $Z_1$, parametrizes lines of degree $(1,0)$
on $\OG'$, and the other component, which we call $Z_2$, parametrizes
lines of degree $(0,1)$.
It follows that 
\[
\langle \ta_{\lambda},\ta_\mu, \ta_{\nu}\rangle_{1,0} = 
\int_{Z_1} [Z_{\lambda}]\cdot [Z_\mu] \cdot [Z_\nu]
\]
where $Z_{\lambda}$, $Z_\mu$, and $Z_\nu$ are the associated Schubert
varieties in $Z_1$, defined as usual.  For any
$\lambda\in \wt{\cP}(1,n)$, let $\ov{\la}$ denote the strict partition
(without type) obtained by deleting the leftmost column of
$\lambda$. With this convention, we have
\begin{prop}
\label{dprop0n}
For any integer $p\in [1,n+1]$ and $\lambda$, $\mu\in \wt{\cP}(1,n)$
with $|\lambda|+|\mu|+p=\dim \OG' + n + 1$, we have
\[
\langle \ta_\lambda, \ta_{\mu}, \ta_p\rangle_{1,0} =
\int_{\OG(n+1,2n+2)} \ta_{\ov{\lambda}} \cdot \ta_{\ov{\mu}} \cdot
\ta_{p-1}
\]
if $\lambda$ and $\mu$ are both of type $0$ or $1$, and $\langle
\ta_\lambda, \ta_{\mu}, \ta_p\rangle_{1,0} =0$ if $\lambda$ or $\mu$
has type $2$.
\end{prop}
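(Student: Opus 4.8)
The plan is to reduce Proposition \ref{dprop0n} to the already-established Proposition \ref{dprop0} (or rather to the exact argument used to prove Proposition \ref{bprop0}) by exhibiting a correspondence between lines of degree $(1,0)$ on $\OG' = \OG(n,2n+2)$ and points of the orthogonal flag variety $\OF(n+1,2n+2)$, exactly parallel to the augmented-flag construction in the proof of Proposition \ref{P:gwdegone}. First I would fix isotropic flags $E_\bull,F_\bull,G_\bull$ in general position and write $V = V_0 \oplus H$ with $V_0 \cong \C^{2n}$, $H \cong \C^2$ a hyperbolic plane, augmenting $E_\bull,F_\bull,G_\bull$ to flags $E^+_\bull,F^+_\bull,G^+_\bull$ in $V$. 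The key point is that a degree $(1,0)$ rational map $f:\bP^1\to\OG'$ has $\Ker(f)$ of dimension $n-1$ and $\Span(f)$ of dimension $n+1$, with $(\Ker(f),\Span(f))$ lying in the component $Z_1$; the image curve is then a line on a copy of $\overline{\G}(1,\Span(f)/\Ker(f))\cong\bP^1$. I would set up the assignment $\Sigma^+\mapsto(\Sigma^+\cap V_0,\ (\Sigma^+ + H)\cap V_0)$ from the relevant triple intersection in $\OG(n+1,2n+2)$ (using the augmented flags) to $Z_1$, and check — via the orbit analysis on $X^+ = \OG(n+1,2n+2)$ under $\SO(V_0)\times\SL(H)$, exactly as in the orbits $O_1,\dots,O_4$ of the proof of Proposition \ref{P:gwdegone} — that the intersection of the three augmented Schubert varieties lies in the open orbit and that the map is a bijection (rather than $2$-to-$1$), because in the even case the fiber of the relevant $\LG(1,2)\cong\bP^1$-bundle step contributes no doubling.

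Next I would handle the type bookkeeping, which is the genuinely new ingredient here relative to Proposition \ref{bprop0}. The variable $q_1$ (degree $(1,0)$) is the square root coming from one family, and one must track which family $\Sigma^+$ and its intersections with $V_0$ lie in. I would argue that if $\lambda$ (or $\mu$) has type $2$ while we are computing the degree $(1,0)$ invariant, then the Schubert variety $X^+_\lambda$ forces $\Sigma^+$ into the \emph{wrong} connected component of the space of maximal isotropic subspaces, so the triple intersection with a general $X^+_\nu$ (whose $\nu$ is constrained by $|\lambda|+|\mu|+|\nu| = \dim\OG'+(d_1+d_2)(n+1)$ with $(d_1,d_2)=(1,0)$) is empty; this gives the vanishing clause. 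When $\lambda$ and $\mu$ are both of type $0$ or $1$, the correspondence identifies the Gromov–Witten count $\langle\ta_\lambda,\ta_\mu,\ta_p\rangle_{1,0}$ with the classical triple intersection $\int_{\OG(n+1,2n+2)}\ta_{\ov\lambda}\cdot\ta_{\ov\mu}\cdot\ta_{p-1}$, where passing from $\lambda$ to $\ov\lambda$ (deleting the first column) comes from intersecting the augmented Schubert condition with $V_0$, and $p\mapsto p-1$ similarly from the special class $\ta_p$; here I would note that $\OG(n+1,2n+2)\cong\OG(n,2n+1)$, so $\ov\lambda$ is naturally a strict partition without type, consistent with the statement.

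The main obstacle I anticipate is the orbit-and-dimension analysis showing the triple intersection avoids all the smaller orbits, together with carefully proving the map is exactly $1$-to-$1$ (no factor of $2$): in the odd case of Proposition \ref{bprop0} the corresponding map is $2$-to-$1$ because $\LG(2,4)$ appears and two isotropic planes are incident to three general subspaces, whereas here the analogous fiber is $\LG(1,2)\cong\bP^1$ and there is a unique incident point, so the factor $\tfrac12$ of Proposition \ref{bprop0} disappears — but verifying this cleanly, including that $B_0\cap B_0^\perp = A_0$ (the analogue of Theorem \ref{T:qclasC}(a)/Theorem \ref{T:qclasD}(a) in the $r=0$ locus) holds on the relevant intersection, will require redoing the Kleiman–Bertini dimension count for the augmented space. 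Once that is in place, the identification of Schubert classes under $\Sigma^+\mapsto(\Sigma^+\cap V_0,(\Sigma^++H)\cap V_0)$ and the projection-formula manipulation are routine, following the proof of Proposition \ref{bprop0} line by line, and the only extra step is the family/type tracking described above.
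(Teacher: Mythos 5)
Your strategy is mismatched with the statement, and as set up it cannot produce the claimed formula. The augmented-flag construction you are importing from the proof of Proposition \ref{P:gwdegone} compares degree-one curves on a Grassmannian of isotropic subspaces of a space $V$ with points of an isotropic Grassmannian of $V \oplus H$, i.e.\ of a space of dimension \emph{two larger}; carried out for $\OG' = \OG(n,2n+2)$ it would at best relate $\gw{\ta_\la,\ta_\mu,\ta_p}{1,0}$ to a triple intersection on an orthogonal Grassmannian inside $\C^{2n+4}$, with the partitions $\la,\mu$ unchanged and an overall multiplicity to be determined --- not to $\int_{\OG(n+1,2n+2)} \ta_{\ov{\la}}\cdot\ta_{\ov{\mu}}\cdot\ta_{p-1}$. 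Concretely, the map you write down, $\Sigma^+ \mapsto (\Sigma^+\cap V_0, (\Sigma^+ + H)\cap V_0)$ with $V_0 \cong \C^{2n}$, sends a point of $\OG(n+1,2n+2)$ to a pair of subspaces contained in the nondegenerate $2n$-dimensional subspace $V_0$; but the span of a degree $(1,0)$ line on $\OG'$ is a maximal isotropic $(n+1)$-plane of $\C^{2n+2}$, and an $(n+1)$-dimensional subspace of $V_0$ can never be isotropic. So your map does not land in $Z_1$. Moreover the passage $\la \mapsto \ov{\la}$, $p \mapsto p-1$ in the statement is not produced by any kernel--span correspondence of this kind: in Proposition \ref{P:gwdegone} the three partitions stay fixed and a factor $\tfrac12$ appears instead. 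The deletion of the first column comes from push--pull along the fibrations $\OF(n,n+1;2n+2) \to \OG(n,2n+2)$ and $\OF(n,n+1;2n+2) \to \OG(n+1,2n+2)$, via the identity $\pi(\varphi_1^{-1}(X_\la)) = X_{\ov{\la}}$, exactly as in the proof of Proposition \ref{bprop0}.

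The point being missed is the one stressed in the introduction: in the orthogonal cases the parameter space of lines is already a homogeneous flag variety, so no augmentation is needed. Here the degree $(1,0)$ lines are parametrized by $Z_1 \cong \OF(n-1,n+1;2n+2)$, one has $\gw{\ta_\la,\ta_\mu,\ta_p}{1,0} = \int_{Z_1}[Z_\la]\cdot[Z_\mu]\cdot[Z_{(p)}]$ directly, and the projection-formula computation of Proposition \ref{bprop0} runs verbatim through $U = \OF(n-1,n,n+1;2n+2)$. The only genuinely new ingredient is the type bookkeeping: the spans of degree $(1,0)$ lines lie in one fixed family of maximal isotropic subspaces, which forces the vanishing when $\type(\la) = 2$ or $\type(\mu) = 2$ and explains why $\ov{\la}$ carries no type. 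You correctly identified that bookkeeping as the new content, but the geometric engine you chose is the one the paper reserves for the symplectic case precisely because the space of lines fails to be homogeneous there; repairing your argument would amount to replacing it with the direct one.
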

\noindent
A corresponding analysis applies to the degree $(0,1)$ Gromov-Witten
invariants. 

The quantum cohomology ring $\QH^*(\OG')$ is a $\Z[q_1,q_2]$-algebra
which is isomorphic to $\HH^*(\OG',\Z)\otimes_{\Z}\Z[q_1,q_2]$ as a
module over $\Z[q_1,q_2]$. Here both $q_1$ and $q_2$ are formal
variables of degree $n+1$.  The ring structure on $\QH^*(\OG')$ is
determined by the relation
\[
  \ta_\lambda \cdot \ta_\mu = 
  \sum \gw{\ta_\lambda, \ta_\mu, \ta_{\nu^\vee}}{d_1,d_2} 
\, \ta_\nu \, {q_1}^{d_1}{q_2}^{d_2},
\]
the sum over $d_1,d_2\gequ 0$ and $\nu\in \wt{\cP}(1,n)$ with
$|\nu| = |\lambda| + |\mu| - (n+1)(d_1+d_2)$.

With the definitions of $\wt{\cP}'(1,n+1)$, $\wt{\nu}$, and $\la^*$ as
in Section \ref{qcOGeven}, and $\text{t}(\nu)=\type(\nu)$, we have the
following analogue of Theorem \ref{ogevenqupieri}.

\begin{thm}[Quantum Pieri rule for $\OG'$] 
\label{ogevenqupierin}
For any $1$-strict partition $\la\in\wt{\cP}(1,n)$ and integer 
$p\in [1,n+1]$, we have 
\[
\ta_p \cdot \ta_\la =
\sum_{\la\to\mu} \delta_{\la\mu} \, 2^{N'(\la,\mu)}\,\tau_\mu +
\sum_{\la\to\nu} 
\delta_{\la\nu} \, 2^{N'(\la,\nu)}\,\tau_{\wt{\nu}}\, 
q_{\text{\em t}(\nu)} \, +\, 
\sum_{\la^*\to\rho} \delta_{\la^*\rho} \,
2^{N'(\la^*,\rho)} \,\ta_{\rho^*}\, q_1q_2
\]
in the quantum cohomology ring $\QH^*(\OG(n,2n+2))$. Here (i) the
first sum is classical, as in (\ref{ogevenclass}), (ii) the second sum
is over $\nu\in \wt{\cP}'(1,n+1)$ with $\la\to\nu$ and
$|\nu|=|\la|+p$, and (iii) the third sum is empty unless $\la_1=n+1$,
and over $\rho\in\wt{\cP}(1,n)$ such that $\rho_1=n+1$,
$\la^*\to\rho$, and $|\rho|=|\la|-n-1+p$.  Furthermore, 
the product
$\tau'_1\cdot\ta_{\la}$ is obtained by replacing 
$\delta$ with $\delta'$ throughout.
\end{thm}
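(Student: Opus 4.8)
The plan is to run the proof of Theorem~\ref{ogevenqupieri} essentially word for word, replacing the single deformation parameter of $\QH^*(\OG(n+1-k,2n+2))$ for $k\gequ 2$ by the two parameters $q_1$, $q_2$ of $\QH^*(\OG(n,2n+2))$ and inserting the bookkeeping that distinguishes them. Three ingredients are needed: (a)~a vanishing statement showing that the only nonzero quantum corrections to a Pieri product occur in bidegrees $(0,0)$, $(1,0)$, $(0,1)$ and $(1,1)$; (b)~a reduction of the bidegree $(1,0)$ and $(0,1)$ three-point invariants to classical triple intersection numbers on a maximal orthogonal Grassmannian; and (c)~the basic relation $\ta_{n+1}^2=q_1q_2$ in $\QH^*(\OG')$. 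Granting these, the identification of $\ta_p\cdot\ta_\la$ (and of $\ta'_1\cdot\ta_\la$) with the right-hand side of the asserted formula is completed exactly as for Theorem~\ref{ogevenqupieri}: one uses the bijection $\nu\mapsto\wt\nu$ of $\wt{\cP}'(1,n+1)$ onto $\{\mu\in\wt{\cP}(1,n) : \mu_1<\ell(\mu)+1\}$ to match the linear $q$-terms, and the substitution $\ta_\la=\ta_{n+1}\,\ta_{\la^*}$ when $\la_1=n+1$, together with the analogue of Proposition~\ref{bprop2}, to bootstrap the $q_1q_2$-terms; the passage from $\ta_p$ to $\ta'_1$ is the usual swap $F_{n+1}\leftrightarrow\wt F_{n+1}$ of families.

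For (b) I would invoke Proposition~\ref{dprop0n} and its parallel bidegree $(0,1)$ statement: when $\la$ and $\mu$ have type $0$ or $1$, $\gw{\ta_\la,\ta_\mu,\ta_p}{1,0}$ equals $\int_{\OG(n+1,2n+2)}\ta_{\ov\la}\cdot\ta_{\ov\mu}\cdot\ta_{p-1}$, and it vanishes if $\la$ or $\mu$ has type $2$, with the invariants for $\ta'_1$ in place of $\ta_p$ handled by the family-swapped version. Following the passage from Proposition~\ref{dprop0} to Proposition~\ref{dcor1} and thence to the linear part of Theorem~\ref{ogevenqupieri} --- dualising, then applying the classical Pieri rule (Theorem~\ref{thm.ogevenpieri}) on a larger even orthogonal Grassmannian --- this becomes the statement that the coefficient of $\ta_{\wt\nu}\,q_{\text{t}(\nu)}$ in $\ta_p\cdot\ta_\la$ equals the coefficient of $\ta_\nu$ in a classical product $\ta_p\,\ta_\la$, for $\nu\in\wt{\cP}'(1,n+1)$. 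The essential new feature is the appearance of $q_{\text{t}(\nu)}$ rather than a single $q$: the line space of $\OG'$ has two components, each isomorphic to $\OF(n-1,n+1;2n+2)$, carrying the lines of degrees $(1,0)$ and $(0,1)$ respectively, and the component on which a given line lies --- equivalently the family of the $(n+1)$-dimensional span of its kernel--span pair --- is recorded precisely by $\type(\nu)$.

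For (a) and (c): the vanishing of the bidegree $(2,0)$ and $(0,2)$ contributions is the analogue of Proposition~\ref{bprop2}, and I would prove it by the same conic-degeneration argument --- a degree $(2,0)$ rational curve in $\OG'$ has kernel of dimension $n-2$ and span of dimension $n+2$ forming a kernel--span pair in a single $\SO$-orbit, and three general Schubert conditions then cannot be satisfied on that parameter space for dimension reasons, forcing the locus of such conics to be empty. The vanishing of all bidegrees with $d_1+d_2\gequ 2$ other than $(1,1)$ follows, and the $(1,1)$ contribution is governed by $\ta_{n+1}^2=q_1q_2$; that relation is obtained as in Theorem~\ref{ogoddqupieri}, since $\ta_{n+1}^2=0$ classically and carries no $q_1$- or $q_2$-linear correction (by the type analysis of step (b), the requisite $\nu$ does not occur), so $\ta_{n+1}^2=c\,q_1q_2$ with $c=1$ either by exhibiting the unique bidegree $(1,1)$ conic through a point and two general translates of $X_{n+1}$, or by applying associativity to $\ta_{n+1}\,\ta_{(1^n)}$. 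The main obstacle is precisely step (a): carrying out the conic analysis of Proposition~\ref{bprop2} in the rank-two Picard setting, where the two families of maximal isotropic subspaces must be tracked through all the degenerations, so that $q_1^2$ and $q_2^2$ (rather than $q_1q_2$) are correctly excluded. This is, at the level of curves, the ``degree-doubling'' phenomenon whereby the single quantum parameter of $\OG(n+1,2n+2)$ splits into the two square roots $q_1$, $q_2$ on $\OG(n,2n+2)$.
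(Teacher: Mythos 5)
Your proposal is correct and follows essentially the same route as the paper's own argument, which likewise reduces everything to the proof of Theorem~\ref{ogevenqupieri}: Proposition~\ref{dprop0n} (with the two components $Z_1$, $Z_2$ of the line space determining which of $q_1$, $q_2$ appears, matching $\type(\nu)$), the relation $\ta_{n+1}^2=q_1q_2$ together with the bootstrapping $\ta_\la=\ta_{n+1}\ta_{\la^*}$ for the $q_1q_2$-terms, and the $k=1$ analogues of Lemma~\ref{L:fiberD} and Theorem~\ref{T:qclasD}(d) to kill degrees $d_1+d_2\gequ 3$. The only point to make explicit when writing this up is that the exclusion of cubic and higher bidegrees really does rest on that last vanishing statement (valid since $\ell((p))=1<\ell(\rho_{d-1})$ for $d\gequ 3$) and not merely on the conic analysis of the Proposition~\ref{bprop2} analogue.
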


Observe that if we set $q_1=q_2=q$ in Theorem \ref{ogevenqupierin},
then we obtain exactly the statement of Theorem \ref{ogevenqupieri}
with $k=1$.  
It should therefore not come as a surprise that 
the proof
of Theorem \ref{ogevenqupierin} is along the same lines as that of
Theorem \ref{ogevenqupieri}. There are no
linear $q$ terms in a quantum Pieri product $\ta_p \ta_\la$ unless
$\ell(\la)=n$.
All $\nu\in \wt{\cP}'(1,n+1)$ have
positive type, since $\nu_1 < n+2$, i.e., $\text{t}(\nu)\in\{1,2\}$
for all $\nu$ in the statement of the theorem.  In
addition, the quadratic $q$ terms are handled as in the proof of
Theorem \ref{ogoddqupieri}, using the relation $\ta_{n+1}^2=q_1q_2$,
which is easily checked directly. 
The assertions of Lemma \ref{L:fiberD} are true when
$k=1$, $d>2$, and $\ell(\la)=1$, with unaltered proof, and the
argument of the proof of Theorem \ref{T:qclasD}(d) then shows that there
are no cubic or higher-degree $q$ terms.

\begin{thm}[Ring presentation] 
\label{ogevenqpresn}
The quantum cohomology ring
$\QH^*(\OG(n,2n+2))$ is presented as a quotient of the polynomial
ring $\Z[\ta_1,\ta_1',\ta_2,\ldots,\ta_{n+1},
q_1,q_2]$ modulo the relations 
\begin{gather*}
\label{ogevenQR1'n}
\ta_1\Delta_n-q_1=
\ta'_1 \Delta_n-q_2=
\sum_{p=2}^{n+1}(-1)^p\tau_p
\Delta_{n+1-p}, \\
\ta_r^2 + \sum_{i=1}^r(-1)^i \ta_{r+i}c_{r-i}= 0,
 \ \  \ \ 2 \lequ r \lequ n,
\end{gather*}
and
\[
\ta_1\ta'_1-\ta_2=0,
\]
where the variables $c_p$ are defined by (\ref{ctotau}).
\end{thm}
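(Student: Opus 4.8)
The plan is to follow the template of the proofs of Theorems \ref{ogoddqpres} and \ref{ogevenqpres} and to appeal to the criterion of \cite{ST}: since $\QH^*(\OG')$ is a free $\Z[q_1,q_2]$-module that reduces modulo $(q_1,q_2)$ to $\HH^*(\OG',\Z)$, it suffices to check that the displayed relations hold in $\QH^*(\OG')$ and that setting $q_1=q_2=0$ returns a presentation of the classical ring. The second point is immediate: for $k=1$ the relations (\ref{ogevenR1}) and (\ref{ogevenR1''}) of Theorem \ref{presOGeven} are vacuous, so killing $q_1$ and $q_2$ in the three families of relations of the theorem recovers exactly (\ref{ogevenR1'}), (\ref{ogevenR2}) and (\ref{ogevenR2'}) with $k=1$. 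Combined with the facts (Theorem \ref{presOGeven}(a)) that the special classes generate $\HH^*(\OG',\Z)$ and that $\rank\HH^*(\OG',\Z)=(n+1)2^n$ equals the product of the degrees of the listed relations divided by the product of the degrees of the generators, the hypotheses needed to invoke \cite{ST} (equivalently Lemma \ref{preslemma} applied over each residue field) are in place.

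First I would dispose of the relations $\ta_r^2+\sum_{i=1}^r(-1)^i\ta_{r+i}c_{r-i}=0$ for $2\lequ r\lequ n$ together with $\ta_1\ta_1'-\ta_2=0$. Every monomial occurring in these is a quantum Pieri product $\ta_p\cdot\ta_{(a)}$ whose second factor is indexed by a one-row partition of length $1$ with first part $a\lequ n<n+1$. By the quantum Pieri rule (Theorem \ref{ogevenqupierin}) a linear $q$-correction to a product $\ta_p\cdot\ta_\lambda$ requires $\ell(\lambda)=n$, and a $q_1q_2$-correction requires $\lambda_1=n+1$; neither holds here when $n>1$, so these relations lift verbatim from their classical counterparts (\ref{ogevenR2}) and (\ref{ogevenR2'}). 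The exceptional case $n=1$ is the quadric surface $\OG(1,4)$, where the family $\ta_r^2+\cdots=0$ is empty and the quantum Pieri rule gives at once $\ta_1^2=q_1$, $(\ta_1')^2=q_2$ and $\ta_1\ta_1'=\ta_2$, from which the remaining relations (with $\Delta_1=\ta_1+\ta_1'$) follow by inspection. The assertions of Lemma \ref{L:fiberD} and Theorem \ref{T:qclasD}(d) used along the way remain valid for $k=1$, as remarked in the Appendix.

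The substance of the theorem is the degree-$(n+1)$ relation, namely that $\ta_1\Delta_n-\sum_{p=2}^{n+1}(-1)^p\tau_p\Delta_{n+1-p}=q_1$, and the same identity with $\ta_1'$ and $q_2$ in place of $\ta_1$ and $q_1$. The classical identity $\ta_1\Delta_n=\ta_1'\Delta_n=\sum_{p=2}^{n+1}(-1)^p\tau_p\Delta_{n+1-p}$ is precisely relation (\ref{ogevenR1'}) for $k=1$, so each side differs from its classical value only by a $\Z$-combination of $q_1$ and $q_2$. The sum on the right acquires no correction at all: for $p\gequ 2$ every term $\ta_\lambda$ in the Schubert expansion of $\Delta_{n+1-p}$ has $\ell(\lambda)\lequ n+1-p<n$ and $\lambda_1<n+1$, and since a Pieri move $\lambda\to\nu$ increases the number of rows by at most one, no $\nu\in\wt{\cP}'(1,n+1)$ (which requires $\ell(\nu)=n+1$) satisfies $\lambda\to\nu$; thus the quantum Pieri rule for $\tau_p$ against $\Delta_{n+1-p}$ is purely classical, and the entire correction is carried by $\ta_1\Delta_n$ and $\ta_1'\Delta_n$. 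To evaluate it I would use the diagram automorphism of $D_{n+1}$, which induces an automorphism of $\QH^*(\OG')$ interchanging $\ta_1\leftrightarrow\ta_1'$ and $q_1\leftrightarrow q_2$ and compatible with the quantum Pieri rule (it swaps $\delta$ with $\delta'$): it forces the correction of $(\ta_1-\ta_1')\Delta_n$ to be a multiple of $q_1-q_2$ and that of $(\ta_1+\ta_1')\Delta_n$ — equivalently, via the determinantal recursion $\Delta_{n+1}=(\ta_1+\ta_1')\Delta_n-2\ta_2\Delta_{n-1}+2\ta_3\Delta_{n-2}-\cdots$, the correction of $\Delta_{n+1}$ — to be a multiple of $q_1+q_2$. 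It then remains to pin both scalars down to $1$; this I would do by a direct application of the quantum Pieri rule, as in the proof of Theorem \ref{ogoddqpres}, using also the relation $\ta_{n+1}^2=q_1q_2$ and associativity, or equivalently by identifying the linear $q$-coefficients of $\ta_1\cdot\Delta_n$ with classical triple intersection numbers on $\OG(n+1,2n+2)$ via Propositions \ref{dprop0} and \ref{dprop0n} and reading off that the $\ta_1$-side contributes $q_1$ only and the $\ta_1'$-side contributes $q_2$ only. This asymmetric splitting of one degree-$(n+1)$ correction into $q_1$ versus $q_2$ is the main obstacle, and is exactly the ``two square roots'' phenomenon of the Introduction; once it is established, all displayed relations hold in $\QH^*(\OG')$ and the theorem follows from \cite{ST}.
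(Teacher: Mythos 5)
Your proposal is correct and follows essentially the same route as the paper: verify the displayed relations in $\QH^*(\OG')$ via the quantum Pieri rule (noting that only the degree-$(n+1)$ relation acquires quantum corrections) and conclude by the deformation argument of \cite{ST}, the classical presentation being Theorem \ref{presOGeven} with $k=1$. The one step you leave as a plan --- pinning the corrections of $\ta_1\Delta_n$ and $\ta'_1\Delta_n$ down to exactly $q_1$ and $q_2$ --- is precisely what the paper executes concretely, by writing $\Delta_n=\ta_{(1^n)}+\ta'_{(1^n)}+\sum_\mu c_\mu\ta_\mu$ with $\ell(\mu)<n$ and reading off $\ta_1\ta_{(1^n)}=\ta_{(2,1^{n-1})}+q_1$, $\ta_1\ta'_{(1^n)}=\ta'_{(2,1^{n-1})}+\ta_{n+1}$ (and their primed counterparts) from the quantum Pieri rule, so your diagram-automorphism reduction is a harmless but unnecessary detour.
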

\begin{proof}
Let $(1^r)$ denote the partition $(1,\ldots,1)$ of length $r$,
and $\ta_{(1^r)}$ and $\ta'_{(1^r)}$ denote the corresponding Schubert 
classes of types $1$ and $2$, respectively. The quantum Pieri rule
gives the relations
\begin{gather}
\label{qurels1}
\ta_1\ta_{(1^n)} = \ta_{(2,1^{n-1})}+q_1, \ \ \ \ 
\ta'_1\ta'_{(1^n)} = \ta'_{(2,1^{n-1})}+q_2, \\
\label{qurels2}
\ta_1\ta'_{(1^n)}= \ta'_{(2,1^{n-1})}+\ta_{n+1}, \ \ \ \ 
\ta'_1\ta_{(1^n)}= \ta_{(2,1^{n-1})}+\ta_{n+1}
\end{gather}
in $\QH^*(\OG')$. In contrast to the case when $k>1$, it is not true
here that the Schur determinant $\Delta_r$, for $1\lequ r\lequ n$, is equal
to a Schubert class in $\HH^*(\OG',\Z)$. However, by applying the 
Pieri rule to the monomials in the expansion of $\Delta_r$, noting 
that $\ta_1\ta'_1=\ta_2$, we deduce that
\[
\Delta_r = \ta_{(1^r)} + \ta'_{(1^r)} + \sum_{\mu} c_{\mu}\,\ta_{\mu},
\]
where the sum is over typed partitions $\mu$ with $\ell(\mu)<r$. The 
quantum Pieri rule and equations (\ref{qurels1}), (\ref{qurels2}) 
now easily imply all of the required relations.
\end{proof}

\end{document}